\newtheorem{thm}{Theorem}[section]
\newtheorem*{tthm}{Theorem}
\newtheorem{lem}[thm]{Lemma}
\newtheorem{prop}[thm]{Proposition}
\newtheorem{defn}[thm]{Definition}
\newtheorem{rem}[thm]{Remark}
\providecommand{\mcal}[1]{{\mathcal #1}}
\newcommand{\norm}[1]{\left\Vert#1\right\Vert}
\newcommand{\abs}[1]{\left\vert#1\right\vert}
\newcommand{\ssc}{\text{sc}}
\renewcommand{\epsilon}{\varepsilon}
\newcommand{\wh}{\widehat}
\newcommand{\wt}{\widetilde}
\newcommand{\ov}{\overline}
\newcommand{\ind}{\operatorname{Ind}}
\newcommand{\supp}{\operatorname{supp}}
\newcommand{\R}{{\mathbb R}}
\renewcommand{\qedsymbol}{$\blacksquare$}
\newcommand{\mo}{\mathcal O}
\providecommand{\codim}[1]{$\text{codim}\ {#1}$}
\providecommand{\ker}[1]{$\text{ker}\ {#1}$}
\providecommand{\dim}[1]{$\text{dim}\ {#1}$}
\providecommand{\supp}[1]{$\text{supp}\ {#1}$}
\newcommand{\N}{{\mathbb N}}
\def\abs#1{\mathopen|#1\mathclose|}
\def\norm#1{\mathopen\|#1\mathclose\|}
\gdef\hex{"}}
\mathchardef\laplace=\hex0001
\mathchardef\nabla=\hex0272
\def\@@dalembert#1#2{\setbox0\hbox{$#1\mathrm I$}

  \vrule height\ht0 depth\z@ width.04\ht0

  \rlap{\vrule height\ht0 depth-.96\ht0 width.8\ht0}

  \vrule height.1\ht0 depth\z@ width.8\ht0

  \vrule height\ht0 depth\z@ width.1\ht0 }
\def\dalembert{\mathbin{\mathpalette\@@dalembert{}}\,}
\begin{document}

\title{{ A General Fredholm Theory {II}:\\
Implicit Function Theorems}\\
by\\
 H. Hofer, K. Wysocki and E. Zehnder}

\maketitle

\tableofcontents

\section{Introduction}
This paper is a continuation of \cite{HWZ2}. There we introduced a
new concept of smoothness in infinite dimensional spaces extending
the familiar finite-dimensional smoothness concept. This allowed us
to introduce the concept of a splicing leading to new local models
for smooth spaces. These local models can be of finite or infinite
dimension and it is an interesting feature that the local dimensions
 need not to be constant. In \cite{HWZ2} we defined smooth maps between
these local models and constructed a tangent functor. Applying
classical constructions from differential geometry to these new
objects we  constructed  a generalized differential geometry, the
so-called {\bf splicing based differential geometry}.  The
generalization of an orbifold to the splicing world leads to the
notion of a polyfold and the generalization  of a manifold to the notion  of a M-polyfold.
Not surprisingly the generality of the new theory allows
constructions which do not parallel any classical construction. 
Benefits of this new differential geometric world include 
new types of spaces needed for an abstract and efficient
treatment of theories like Floer-Theory, Gromov-Witten or Symplectic
Field Theory from a common point of view.  From our point of view
the afore-mentioned theories are built on a suitable counting of
zeros of a smooth Fredholm section of a strong polyfold bundle. The
fact that the Fredholm sections  on different connected components are related in subtle ways leads to interesting algebraic
structures which can be captured on an abstract level, see
\cite{HWZ6}.  An abstract Sard-Smale-type perturbation theory takes
care of the usual intrinsic transversality difficulties well-known
in al these theories.

The implicit function theorem and Sard-type transversality theorems
are  crucial analytical tools in differential geometry and nonlinear
analysis (where one usually refers to a nonlinear Fredholm theory
and a Sard-Smale-type transversality theory). In the present
paper we develop the analogous `splicing based' theory. We carry out
the analysis needed for a generalized Fredholm theory for
sections of strong M-polyfold bundles. The analytical results are a
crucial technical tool in the theory of Fredholm functors and polyfolds presented  in the forthcoming part III in \cite{HWZ3} which is needed for
symplectic field theory treated in  \cite{HWZ5}.

Before we describe the contents of the present paper  we should  briefly
mention the prerequisites, namely the material introduced in part I of this series in \cite{HWZ2}. For the convenience of the  reader a glossary about the new concepts is added  in section 7. In there we recall, in particular, the notion of sc-smoothness, the notions of an M-polyfold and of a strong bundle splicing. Moreover, the definitions of an $\ssc^+$-section of a strong M-polyfold bundle and of a linearized section are recalled. A more comprehensive   treatment can be found in \cite{HWZ-polyfolds1}.

The main technical results concern the very special 
sc-smooth maps called contraction germs (Definition
\ref{def5.1}). They arise in  the normal forms for Fredholm sections
introduced in Definition \ref{defoffred}. Contraction germs allow an
'infinitesimal smooth implicit function theorem'  (this extremely
local version of an implicit function theorem is a special feature
of the sc-world). The difficult part is the
regularity of the solution, while the existence of a continuous solution germ is  an immediate consequence of the contraction principle in complete spaces
(see Theorems \ref{thm5.3} and \ref{newthm5.4}). Armed with the
notion of a Fredholm section and the infinitesimal implicit function
theorem we derive more  familiar versions of the implicit function
theorem. Of particular interest are  the Theorems \ref{newtheoremA} and
\ref{LOCAX1} which describe the local structure of the solution set of Fredholm sections.
Based on  these results we then develop in section \ref{globalfred} the global Fredholm
theory. It consists of a transversality and perturbation theory. Two results of particular interest are the
Theorems \ref{thmsec6-trans} and \ref{poil}. These  results look like
those familiar from classical Fredholm theory. The reader should,
however, keep in mind that the ambient spaces are M-polyfolds which
are much more general spaces than Banach manifolds and that the
notion of  a Fredholm section is much more general as well.
Here is, as a sample, a version of Theorem \ref{poil}.
\begin{tthm}
Let $p:Y\rightarrow X$ be a  fillable strong M-polyfold bundle and $f$ a
proper Fredholm section. Then given an  open neighborhood $U$
of $f^{-1}(0)$ there exist arbitrarily small $\ssc^+$-sections $s$
supported in $U$ having the property  that the Fredholm section $f+s$ is in general position to the boundary $\partial X$ (as defined in Definition \ref{generalpos})  so that the solution set 
$(f+s)^{-1}(0)$ is a smooth compact manifold with boundary with
corners contained in $U$.
\end{tthm}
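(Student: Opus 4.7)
The plan is to combine the local structure results from the implicit function theorems with a global transversality argument made possible by the properness of $f$. First I would exploit properness: the zero set $f^{-1}(0)$ is compact, so after shrinking $U$ if necessary I would choose a relatively compact open neighborhood $V$ with $f^{-1}(0)\subset V \subset \overline{V}\subset U$. The compactness of $f^{-1}(0)$ also gives control over the perturbation size at which properness is preserved: any $\ssc^+$-section $s$ with sufficiently small $\ssc^0$-norm and support in $U$ still yields a proper section $f+s$, simply because off $U$ nothing changes and on $U$ the estimate on $s$ keeps the zero set of $f+s$ inside $V$.

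Next, I would invoke the local implicit function theorems (Theorem \ref{newtheoremA} and Theorem \ref{LOCAX1}) at each point of $f^{-1}(0)$. They provide local M-polyfold charts in which a Fredholm section takes a standard normal form, and in which the sc$^+$-structure makes it possible to realize finitely many linearly independent sc$^+$-directions as variations of $f$. By compactness, finitely many such charts cover $f^{-1}(0)$. Using an sc-smooth partition of unity subordinate to this cover (and to the containment $\overline{V}\subset U$), one can produce, locally around each chart, a finite-dimensional family of sc$^+$-sections supported in $U$ whose linearization spans the relevant cokernel directions, both at interior points and transversely to each boundary stratum. This is the content of the transversality theorem \ref{thmsec6-trans}, which I would quote directly for the abstract Sard--Smale step: a generic choice $s$ in this finite-dimensional family makes $f+s$ transverse in the sense of being in general position to $\partial X$ as in Definition \ref{generalpos}.

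The fillability hypothesis enters here in an essential way. It allows the linearized Fredholm operators to be realized as classical Fredholm operators on an auxiliary bundle, so that the Sard--Smale theorem applies to the finite-dimensional parameter family to select a set of parameters of full measure for which $f+s$ is transverse and simultaneously in general position to every boundary face of $X$. Taking $s$ in the common good set, with $\|s\|$ arbitrarily small, the perturbed section $f+s$ is still proper by the preceding paragraph, so $(f+s)^{-1}(0)$ is compact and contained in $V\subset U$.

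The main obstacle, and the step I would be most careful with, is reconciling three conditions at once: the support constraint $\supp s\subset U$, the smallness of $s$ needed to preserve properness and to keep zeros inside $V$, and the general position requirement along every stratum of the corner structure of $\partial X$. The first two can be arranged by cutting off and rescaling, but cut-offs and rescalings can a priori destroy transversality along deeper boundary strata. The way around this is to run the transversality argument stratum by stratum, using that boundary strata form a locally finite stratification and that each is itself an M-polyfold on which the restricted section is Fredholm; an induction on the depth of the corner, combined with the openness of the transversality condition once it is attained, then delivers a single $s$ satisfying all requirements. Once general position is established, the local implicit function theorem in the stratified setting (Theorem \ref{LOCAX1}) identifies $(f+s)^{-1}(0)$ locally with a smooth manifold with boundary with corners, and compactness promotes this to the global statement.
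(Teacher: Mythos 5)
Your overall skeleton (compactness control via properness and an auxiliary norm, a finite-dimensional family of $\ssc^+$-sections built from finitely many charts covering $f^{-1}(0)$, then Sard on the projection to the parameter space) is the same as the paper's, but the step that carries the actual content of the theorem has a genuine gap. You propose to quote Theorem \ref{thmsec6-trans} ``directly'' to conclude that a generic member of the family puts $f+s$ in general position to $\partial X$; that theorem is stated and proved only for $\partial X=\emptyset$ and yields only surjectivity of the linearization, not condition (ii) of Definition \ref{generalpos} (transversality of $\ker (f+s)'(x)$ to $T^\partial_x X$). The boundary statement is precisely what Theorem \ref{poil} must establish, and it does not follow from the boundaryless transversality theorem by genericity of the parameter alone: a regular value of the projection $S_\varepsilon\to\R^l$ gives surjectivity of $(f+s^*)'(x)$ at all zeros, but says nothing about the position of its kernel relative to the corner structure.

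The paper's mechanism, which is absent from your proposal, is the following. In the analogue of your finite-dimensional family (Lemma \ref{lemth5.19}) one arranges, using the good parametrizations of Theorem \ref{LOCAX1} and Lemma \ref{lem7.9.4}, not only that $F'(\lambda,x)$ is surjective with kernel transversal to $T^\partial_{(\lambda,x)}(\R^l\oplus X)$, but also that the restrictions of $F'$ to $\R^l\oplus\bigcap_{j\in\sigma}{\mathcal F}^j$ are surjective with kernels transversal to the boundary of each such face intersection. Consequently both $S_\varepsilon=F^{-1}(0)$ and all the stratum pieces $S^{j,\sigma}_\varepsilon$ are manifolds with boundary with corners, and one applies Sard simultaneously to the finitely many projections $\beta$ and $\beta_{j,\sigma}$ onto $\R^l$, choosing a \emph{common} regular value $\lambda^*$; regularity for the face projections is exactly what converts into transversality of $\ker(f+s^*)'(x)$ to $T^\partial_x X$ at every zero, after which Theorem \ref{manwbc} finishes the argument. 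Your substitute --- an induction over strata of the corner structure combined with ``openness of transversality'' --- is a different route, but as sketched it does not close the gap: general position is a statement about the kernel of the \emph{full} linearization relative to $T^\partial_x X$, not about transversality of the sections restricted to the strata, and you give no argument that perturbations introduced at later stages of the induction (supported near shallower strata, cut off and rescaled) preserve the kernel condition already achieved at deeper corners, nor that each stratum carries the fillable strong bundle and Fredholm structure your induction needs. Without either the common-regular-value device or a worked-out replacement, the central claim of the theorem remains unproved.
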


If  $s_0$ and $s_1$ we are  two such $\ssc^+$-sections 
which are sufficiently small (Theorem \ref{poil} formulates this quantitatively) and  ${\mathcal M}_i=(f+s_i)^{-1}(0)$)  are associated solutions sets,
  then we can find a smooth arc $s_t$ for $t\in [0,1]$ of
$\ssc^+$-sections connecting $s_0$ with  $s_1$ so that
$$
(t,x)\mapsto  f(x)+s_t(x)
$$
has a regular compact solution set ${\mathcal M}=\{(t,x)\ |\ f(x)+s_t(x)=0\}$ which  lies in general position to the boundary of $[0,1]\times X$. In particular,  ${\mathcal M}$ is a manifold with boundary with corners and moreover
the subsets of points $(t,x)$ in ${\mathcal M}$ with $t=0$ or $t=1$ are the solution sets ${\mathcal M}_i$ for $i=0,1$. 

Under the assumptions of our theorem, we conclude  in the case $\partial X=\emptyset$  that the (un-oriented) cobordism class of the solution set is an invariant. If we are  dealing with oriented Fredholm sections  we arrive at  invariants in the oriented cobordism category.

In order to describe another sample result we make use of  the  notion of an auxiliary norm   which will be explained in chapter  \ref{globalfred}. It allows to measure the size of perturbations. We shall also introduce later on  in section \ref{someinvariants} our version of the de Rham cohomology 
$H_{\text{dR}}^*(X, \R)$ in  the sc-smooth setting.

\begin{tthm}[I{\bf  nvariants}]
Let $p:Y\rightarrow X$ be a fillable strong M-polyfold bundle,  where the M-polyfold $X$ has no boundary. We assume  that 
$f$ is a proper oriented Fredholm section. We suppose further that $N$ is an
auxiliary norm and $U$ an open neighborhood of  the solution set $f^{-1}(0)$,  so that
$(N,U)$ controls compactness. Then there is a well-defined map
$$
\Phi_f:H_{dR}^{\ast}(X,{\mathbb R})\rightarrow {\mathbb R}
$$
satisfying 
$$
\Phi_f([\omega]) = \int_{{\mathcal M}^{f+s}}\omega 
$$
for every generic solution set ${\mathcal M}^{f+s}=(f+s)^{-1}(0)$, where $s$ is an  $\ssc^+$-section having  support in $U$ and satisfying $N(s)<1$. 
Moreover, if $t\rightarrow f_t$ is a proper homotopy of oriented
Fredholm sections then 
$$\Phi_{f_0}=\Phi_{f_1}.$$
\end{tthm}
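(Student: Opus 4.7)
The plan is to produce, for each admissible perturbation $s$, a candidate number $\Phi_f^s([\omega])=\int_{\mathcal{M}^{f+s}}\omega$, and then show that the result depends neither on the closed representative of $[\omega]$ nor on $s$. By Theorem \ref{poil}, a generic $\ssc^+$-section $s$ supported in $U$ with $N(s)<1$ makes the solution set $\mathcal{M}^{f+s}$ a compact smooth manifold; since $\partial X=\emptyset$ and $f$ is oriented, this manifold inherits a canonical orientation and has no boundary. Hence the restriction of any closed $\omega$ to $\mathcal{M}^{f+s}$ is a closed form on a closed oriented manifold, and Stokes' theorem immediately makes $\int_{\mathcal{M}^{f+s}}\omega$ depend only on $[\omega]$ (if $\omega=d\eta$, then $\int_{\mathcal{M}^{f+s}}d\eta=\int_{\partial\mathcal{M}^{f+s}}\eta=0$).

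Independence of $s$ follows from the parametrized version of Theorem \ref{poil} sketched immediately after the first sample theorem in the introduction: given two admissible $s_0,s_1$ sufficiently small in $N$, one produces a smooth arc $s_t$, $t\in[0,1]$, connecting them so that the combined zero set
$$
\mathcal{M}=\{(t,x)\in [0,1]\times X : f(x)+s_t(x)=0\}
$$
is a compact oriented smooth manifold with boundary $\mathcal{M}^{f+s_1}\sqcup(-\mathcal{M}^{f+s_0})$ contained in $[0,1]\times U$. Let $\pi:[0,1]\times X\to X$ be the projection; then $\pi^*\omega$ is closed on $[0,1]\times X$, and Stokes' theorem on $\mathcal{M}$ yields
$$
\int_{\mathcal{M}^{f+s_1}}\omega-\int_{\mathcal{M}^{f+s_0}}\omega=\int_{\mathcal{M}}d\pi^*\omega=0.
$$
For an arbitrary pair $(s_0,s_1)$ one chains through finitely many intermediate perturbations, so $\Phi_f([\omega])$ is well defined.

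Homotopy invariance uses the same cobordism machinery applied to a family. Given a proper homotopy $(f_t)_{t\in[0,1]}$, the combined map $F(t,x)=f_t(x)$ is a proper Fredholm section of the pullback bundle $\pi^*Y$ over $[0,1]\times X$. Choose an auxiliary norm and open neighborhood dominating the entire family together with generic perturbations $s_0,s_1$ of $f_0,f_1$. Applying Theorem \ref{poil} to $F$ with these as boundary data produces a global perturbation of $F$ whose zero set is a compact oriented manifold with boundary $\mathcal{M}^{f_1+s_1}\sqcup(-\mathcal{M}^{f_0+s_0})$, and Stokes' theorem applied to $\pi^*\omega$ on this cobordism gives $\Phi_{f_0}([\omega])=\Phi_{f_1}([\omega])$.

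The hard part is not the formal cobordism bookkeeping, which parallels the classical case, but verifying that a single pair $(N,U)$ controls compactness uniformly throughout the parametrized arc or homotopy, so that all intermediate zero sets remain compact and inside the region where the perturbation theorem applies. This uniform control is exactly what enables the construction of the connecting arc $s_t$ in Theorem \ref{poil}; once it is in hand the rest is Stokes' theorem in the sc-smooth de Rham complex developed in section \ref{someinvariants}.
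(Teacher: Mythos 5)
Your proposal follows essentially the same route as the paper: perturb to a generic $s$ so that ${\mathcal M}^{f+s}$ is a compact oriented manifold without boundary, then obtain independence of $s$ and homotopy invariance by constructing an oriented cobordism over $[0,1]\times X$ (interpolating the perturbations and correcting by small sections supported away from the ends) and applying Stokes' theorem in the sc-smooth de Rham complex. The only slips are citational: since $\partial X=\emptyset$ the transversality statement to invoke is Theorem \ref{thmsec6-trans} rather than Theorem \ref{poil}, and for the cobordism step the paper uses the boundary-preserving variant (Theorem \ref{newgposition}, with perturbations vanishing near $\partial([0,1]\times X)$ and the ends already transversal) so that the boundary of the cobordism is exactly ${\mathcal M}^{f+s_1}\cup(-{\mathcal M}^{f+s_0})$; also no chaining through intermediate perturbations is needed, as the convex combination $(1-t)s_0+ts_1$ stays in ${\mathcal O}$.
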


There are also results  for $M$-polyfolds  with boundaries  $\partial X$ involving
the relative de Rham cohomology $H^\ast_{dR}(X,\partial X)$. We have proved
such generalizations for Fredholm sections of strong polyfold bundles in \cite{HWZ3} and will not address this topic here.

As already mentioned,  the traditional concept of nonlinear Fredholm maps is not sufficient for our purposes.   In order to motivate our new definition of a Fredholm section of sc-vector bundles we first recall from \cite{BSV} the classical local analysis of  a smooth map $f:E\to F$ between Banach spaces defined near the origin in $E$ and satisfying $f(0)=0$.
We assume  that the derivative
$Df(0)\in {\mathcal L}(E, F)$ is a Fredholm operator. Consequently,
there are the topological splittings
$$E=K\oplus X\quad \text{and}\quad F=C\oplus Y$$
where $K=\text{ker} Df(0)$ and $Y=\text{im} Df(0)$ and $\text{dim} K<\infty$, $\text{dim}C<\infty$. In particular,
$$Df(0):X\to Y$$
is a linear isomorphism of Banach spaces. It allows to introduce the linear isomorphism
$\sigma:K\oplus X\to K\oplus Y$ defined by $\sigma (k, x)=(k, Df(0)x).$
In the new coordinates of the domain, the map $f$ becomes the composition $h=f\circ \sigma^{-1}:K\oplus Y\to C\oplus Y,$
satisfying $h(0)=0$ and
$$Dh(0)[k, y]=[0,y].$$
If we write $h=(h_1, h_2)$ according to the splitting of the target space and denote by $P:C\oplus Y\to Y$ the canonical projection, then the mapping $h_2(k, y)=P\circ f\circ\sigma^{-1}(k, y)$ has the following normal form
$$h_2(k, y)=y+B(k, y),$$
where $B(0)=0$ and $DB(0)=0$. Hence  the maps
$$y\mapsto B(k, y)$$
are contractions near the origin in $Y$ having arbitrary small contraction constants $0<\varepsilon<1$ if $k$ and $y$ are sufficiently close to $0$ depending on the contraction constant.  In other words, taking a suitable coordinate representation of $f$ and a suitable projection onto a subspace of finite
codimension in the target space, the non linear Fredholm
 map looks locally near $0$ like a parametrized contraction
perturbation of of the identity mapping.  

In the setting of splicing based differential geometry one cannot rely anymore on the classical implicit function theorem. This classical  theorem gives an insight into the behavior near a point at which only the linearization is known. In our approach to  Fredholm sections we shall instead start from the contraction normal form and shall call a section Fredholm, if in appropriate coordinates it has a contraction normal form of the type above. Consequently, we begin the analysis with an implicit function theorem for contraction germs.
\mbox{}\\[1ex]

{\bf Acknowledgement:} We  would like to thank P.
       Albers, B. Bramham, J. Fish, U. Hrynievicz, J. Johns, R. Lipshitz and C. Manolescu, A. Momin, S. Pinnamaneni and K. Wehrheim  for useful  discussions and valuable suggestions. We thank the referees for improvements.

\section{Analysis of Contraction Germs}
In this section we study  the existence and  the regularity of solution germs
 of  special equations called contraction germs. The
results form the analytical back bone for our implicit function
theorems and Fredholm theory.
\subsection{Contraction Germs}
In \cite{HWZ2} we have introduced an $\ssc$-structure on a Banach space $E$. It consists of a nested sequence
$$
E=E_0\supset E_1\supset E_1 \supset \cdots \supset \bigcap_{m\geq 0}E_m=:E_{\infty}
$$
of Banach spaces $E_m$ having the properties that the inclusions $E_n\to E_m$ are compact operators if $m<n$ and the
vector space $E_{\infty}$ is dense in $E_m$ for every $m\geq 0$.  Points and sets contained in $E_{\infty}$ are called {\bf smooth points}  and {\bf smooth sets}.  A Banach space equipped with an $\ssc$-structure is called an
$\ssc$-Banach space.
We write $E^m$ to emphasize that we are dealing with the Banach space $E_m$ equipped with the sc-structure $(E^m)_k:=E_{m+k}$ for $k\geq 0$.

A {\bf  partial quadrant}  $C$ in the $\ssc$-Banach space $E$ is a closed  subset which under a suitable sc-isomorphism
$T:E\rightarrow {\mathbb R}^n\oplus W$ is mapped onto
$[0,\infty)^n\oplus W$. 

We denote by $\mo(C, 0)$ an $\ssc$-germ of
C-relative open neighborhoods of $0$ consisting of a decreasing
sequence
$$U_0\supset U_1\supset U_2\supset \cdots \supset U_m\supset \cdots $$
of open neighborhoods $U_m$  of $0$ in $C_m=C\cap E_m$. The associated {\bf tangent germ} $T\mo (C,0)$ of ${\mathcal O}(C,0)$ consists of the decreasing
sequence
$$
U_1\oplus E_0\supset U_2\oplus E_1\supset  \cdots \supset U_{m+1}\oplus
E_m \supset \cdots .
$$
Assume that $F$ is a second Banach space equipped with the $\ssc$-structure $(F_m)$.
 Then  an {$\mathbf{sc}^{\boldsymbol{0}}${\bf -germ}}
 $$f:\mo(C, 0)\to (F, 0)$$
is a continuous map $f:U_0\to F_0$ satisfying $f(0)=0$ and
$f(U_m)\subset F_m$ such that
$$f:U_m\to F_m$$
is continuous for every $m\geq 0$. We extend the idea of an
$\ssc^1$-map as defined in Definition 2.13 in  \cite{HWZ2} to the germ level as follows.
An $\mathbf{sc}^{\boldsymbol{1}}${\bf -germ}  $f:\mo(C, 0)\to (F,
0)$ is an $\ssc^0$-germ which is  also of class $\ssc^1$ in  the
following sense.  For every $x\in U_1$ there exists the linearisation
$Df(x)\in {\mathcal L}(E_0, F_0)$ such that for all $h\in E_1$ with $x+h\in U_1$,
$$\dfrac{1}{\norm{h}_1}\norm{f(x+h)-f(x)-Df(x)h}_0\to 0\quad \text{as\, $\norm{h}_1\to 0$.}$$
Moreover, the tangent map $Tf:U_1\oplus E_0\to TF$, defined by
$$Tf(x,h)=(f(x),Df(x)h)$$
for $(x,h)\in U_1\oplus E_0$,  satisfies $Tf(U_{m+1}\oplus E_m)\subset
F_{m+1}\oplus F_m$ and $Tf:T{\mathcal O}(C,0)\rightarrow (TF,0)$ is
an  $\ssc^0$-germ.

\begin{defn}\label{def5.1}
Let $E$ be an sc-Banach space and let $V$ be a partial quadrant in a finite-dimensional vector space $F$.  Then an $\ssc^0$-germ $f:\mo(V\oplus  E, 0)\to  (E, 0)$ is
called  an {\bf $\mathbf{sc}^{\boldsymbol{0}}$-contraction germ} if
it has the form
\begin{equation}\label{cgerm}
f(v,u)=u-B(v,u)
\end{equation}
so that the following holds. For every level $m$ and every $0<\varepsilon<1$
we have the estimate
$$
\norm{B(v,u)-B(v,u')}_m\leq \varepsilon\cdot\norm{u-u'}_m
$$
for all $(v,u)$ and $(v,u')$ close to $(0,0)$ in $V\oplus E_m$. Here the notion of close depends on the level $m$ and the contraction constant $\varepsilon$.
\end{defn} 

The following existence  theorem is an immediate  consequence of a parameter
dependent version of Banach's fixed point theorem.
\begin{thm}\label{thm5.2}
Let $f:\mo(V\oplus {E},0)\rightarrow (E,0)$ be an sc$^0$-contraction
germ. Then there exists a uniquely determined sc$^0$-germ
$\delta:\mo(V,0)\rightarrow ({ E},0)$ so that the associated graph
germ $gr(\delta):V\to V\oplus E$,  defined by $ v\mapsto (v,\delta(v))$,
satisfies
$$
f\circ gr(\delta)=0.
$$
\end{thm}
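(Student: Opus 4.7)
The plan is to reduce the equation $f \circ gr(\delta)=0$ to a parametrized fixed-point problem and apply Banach's fixed-point theorem level by level, using the fact that uniqueness on level $0$ forces the solutions obtained at higher levels to agree.

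First I would rewrite the problem: since $f(v,u)=u-B(v,u)$, solving $f(v,\delta(v))=0$ is equivalent to finding $\delta(v)$ with $\delta(v)=B(v,\delta(v))$, i.e.\ a fixed point of the map $u\mapsto B(v,u)$. The contraction germ hypothesis gives us, for every $m\geq 0$ and every $0<\varepsilon<1$, neighborhoods on which $B(v,\cdot)$ is a strict $\varepsilon$-contraction in the $E_m$-norm. I will fix $\varepsilon=\tfrac12$ once and for all for simplicity.

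Second, I would construct $\delta$ on level $0$. Choose $\rho_0,\sigma_0>0$ so that $\|B(v,u)-B(v,u')\|_0\leq\tfrac12\|u-u'\|_0$ whenever $\|v\|<\rho_0$ and $u,u'$ lie in the closed $E_0$-ball $\overline{B}_{\sigma_0}$. Since $B$ is sc$^0$, hence continuous at $0\in V\oplus E_0$, and $B(0,0)=0$, shrinking $\rho_0$ gives $\|B(v,0)\|_0\leq\sigma_0/2$, so $B(v,\cdot)$ maps $\overline{B}_{\sigma_0}\subset E_0$ into itself. The parametric Banach fixed-point theorem then yields a unique fixed point $\delta(v)\in\overline{B}_{\sigma_0}$ depending continuously on $v$, defining an sc$^0$-candidate on level $0$.

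Third, I would upgrade $\delta$ to each higher level. For $m\geq 1$, the contraction hypothesis on level $m$ together with continuity of $B:U_m\to E_m$ at $0$ gives radii $\rho_m\leq\rho_{m-1}$ and $\sigma_m>0$ such that $B(v,\cdot)$ is a $\tfrac12$-contraction of the closed $E_m$-ball $\overline{B}_{\sigma_m}$ into itself whenever $\|v\|<\rho_m$, producing a unique fixed point $\delta_m(v)\in E_m$ with continuous dependence on $v$. By further shrinking $\rho_m$, using the continuous inclusion $E_m\hookrightarrow E_0$, we may assume $\delta_m(v)$ lies in the level-$0$ ball $\overline{B}_{\sigma_0}\subset E_0$; uniqueness on level $0$ then forces $\delta_m(v)=\delta(v)$. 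Hence $\delta(v)\in E_m$ and the restriction $\delta:V\cap\{\|v\|<\rho_m\}\to E_m$ is continuous, which is exactly what is needed for $\delta$ to be an sc$^0$-germ. Uniqueness as a germ is inherited from uniqueness of the fixed point on each level.

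There is no real obstacle here: the entire argument is the parametric contraction mapping principle run on each sc-level, and the sc-structure enters only to guarantee that the fixed points obtained at different levels coincide, via the uniqueness statement on level $0$ combined with the continuous embeddings $E_m\hookrightarrow E_0$. The genuinely hard analytic work—differentiability of $\delta$—is postponed to Theorems \ref{thm5.3} and \ref{newthm5.4}.
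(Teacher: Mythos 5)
Your proposal is correct and is exactly the argument the paper has in mind: the paper dispatches Theorem \ref{thm5.2} as "an immediate consequence of a parameter dependent version of Banach's fixed point theorem," and the level-by-level contraction argument with identification of the fixed points across levels via uniqueness on level $0$ (which you spell out) is precisely how this is used later, e.g.\ in the proof of Theorem \ref{infty-p}. No gaps; your write-up just makes explicit the bookkeeping the paper leaves to the reader.
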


\subsection{Regularity of Solution Germs}
Our  main concern now is the regularity of the unique solution $\delta$ of the equation $f(v, \delta (v))=0$ in Theorem \ref{thm5.2}. We
shall prove that the solution germ $\delta$ is of class $\ssc^k$ if
the given germ $f$ is of class $\ssc^k$. We first study the case
$k=1$ and abbreviate
$$V=[0,\infty )^l\times \R^{n-l}\subset \R^n.$$

\begin{thm}\label{thm5.3}
If the $\ssc^0$-contraction germ $f:\mo(V\oplus  E,0)\rightarrow (
E,0)$ is of class $\ssc^1$, then the solution germ $\delta:\mo(V,
0)\to (E, 0)$ in Theorem \ref{thm5.2} is also of class  $\ssc^1$.
\end{thm}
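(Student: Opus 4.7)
I would pin down a candidate for the derivative by formally differentiating the fixed-point identity $\delta(v) = B(v,\delta(v))$, which (reading off $(I - D_u B)\, D\delta(v) = D_v B$) suggests
$$L(v) := (I - D_u B(v,\delta(v)))^{-1}\, D_v B(v,\delta(v)).$$
The invertibility used here is justified by passing the contraction estimate to difference quotients: plugging $u' = u + tk$ into $\|B(v,u)-B(v,u')\|_m \le \varepsilon\|u-u'\|_m$ and letting $t\to 0^+$ yields $\|D_u B(v,\delta(v))\|_{\mathcal{L}(E_m,E_m)}\le\varepsilon<1$ on every level $m$ near the origin. A Neumann series then gives $(I-D_u B(v,\delta(v)))^{-1}$ of operator norm at most $1/(1-\varepsilon)$ on each $E_m$, and the $\ssc^1$-regularity of $B$ ensures $D_v B(v,\delta(v))\in\mathcal{L}(F,E_m)$ for $v\in V_{m+1}$ small. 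In particular $L(v)\in\mathcal{L}(F,E_m)$ is well defined.

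The core of the proof is a pair of nested estimates. First, an a priori Lipschitz bound: from the fixed-point equation and the level-$m$ contraction one gets
\begin{align*}
\|\delta(v+h)-\delta(v)\|_m &\le \varepsilon\,\|\delta(v+h)-\delta(v)\|_m \\
&\quad + \|B(v+h,\delta(v))-B(v,\delta(v))\|_m,
\end{align*}
and because $F$ is finite-dimensional, a mean-value argument applied to the $\ssc^1$ map $B$ bounds the last term by $C_m\|h\|$ (using continuity of $D_v B(\cdot,\delta(v))$ into $\mathcal{L}(F,E_m)$ from $\ssc^0$-ness of $TB$); absorbing the $\varepsilon$-term yields $\|\delta(v+h)-\delta(v)\|_m = O(\|h\|)$ for $v\in V_{m+1}$ small. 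Second, the differentiability at level $0$: expanding $B$ around $(v,\delta(v))$ with $v\in V_1$ gives
$$B(v+h,\delta(v+h))-B(v,\delta(v)) = D_v B\cdot h + D_u B\cdot(\delta(v+h)-\delta(v)) + R(h),$$
where $\|R(h)\|_0 = o(\|h\|+\|\delta(v+h)-\delta(v)\|_1)$. Inserting the level-$1$ Lipschitz bound just established promotes this to $\|R(h)\|_0 = o(\|h\|)$. The left side equals $\delta(v+h)-\delta(v)$ by the fixed-point identity, so rearranging and applying $(I-D_u B)^{-1}$ gives
$$\delta(v+h)-\delta(v) - L(v) h = (I-D_u B)^{-1} R(h) = o(\|h\|),$$
that is, $\delta$ is differentiable at $v$ with $D\delta(v) = L(v)$.

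To finish, one needs $T\delta:V_{m+1}\oplus F \to E_{m+1}\oplus E_m$ to be $\ssc^0$, i.e.~continuous. The first coordinate is handled by Theorem \ref{thm5.2}. Continuity of $v\mapsto L(v)$ into $\mathcal{L}(F,E_m)$ follows from continuity of $v\mapsto(v,\delta(v))$ on level $m+1$, the $\ssc^0$-property of $TB$ (giving continuity of $D_u B$ and $D_v B$ at the relevant level), and continuity of operator inversion on the open set $\{\|A-I\|<1\}$. The main obstacle throughout is the level bookkeeping: the $\ssc^1$-definition only provides the remainder estimate in the weaker $E_0$-norm, so to close the loop on level $0$ one must first bootstrap via the stronger level-$1$ Lipschitz control of $\delta$. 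This one-level descent, made possible by the contraction estimate holding on every level simultaneously, is what replaces the automatic inheritance of smoothness across a single norm in the classical Banach-space implicit function theorem.
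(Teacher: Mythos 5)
Most of your argument coincides with the paper's proof of Theorem \ref{thm5.3}: the a priori Lipschitz bound obtained by splitting the increment of $B$ and absorbing the contraction term, the difference-quotient estimate $\norm{D_2B(v,\delta(v))h}_m\leq \varepsilon\norm{h}_m$ yielding invertibility of $I-D_2B(v,\delta(v))$ by a Neumann series, the expansion of $B$ around $(v,\delta(v))$ with the remainder measured one level down, and the promotion of that remainder to $o(\abs{h})$ by the Lipschitz control one level up -- this is exactly the chain of estimates \eqref{thm5.3eq2}--\eqref{thm5.3eq8}. One small omission: your difference-quotient argument only produces the bound on $D_2B(v,\delta(v))$ for increments $k\in E_{m+1}$, since the $C^1$-property of $B$ into $E_m$ is only available from level $m+1$; you must then use the density of $E_{m+1}$ in $E_m$ (and the fact that $D_2B(v,\delta(v))$ is a bounded operator on $E_m$) to extend the bound to all of $E_m$ before the Neumann series on $E_m$ is legitimate. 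The paper makes this density step explicit in passing from \eqref{thm5.3eq6} to \eqref{thm5.3eq7}.

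The genuine gap is in your final continuity step. You assert that continuity of $v\mapsto L(v)\in{\mathcal L}(\R^n,E_m)$ follows from ``the $\ssc^0$-property of $TB$ (giving continuity of $D_uB$ and $D_vB$ at the relevant level)'' together with ``continuity of operator inversion on the open set $\{\norm{A-I}<1\}$.'' But $\ssc^0$-ness of $TB$ gives only the joint continuity of $(x,h)\mapsto DB(x)h$ into $E_m$; it does \emph{not} give continuity of $x\mapsto DB(x)$ in the operator norm of ${\mathcal L}(E_m,E_m)$, and in the sc-framework linearizations are in general not norm-continuous in the base point (the paper stresses precisely this failure in its discussion of determinant bundles). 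So the operator-inversion argument has no hypothesis to act on, and this is exactly the spot where the paper departs from the classical proof: it defines $F(v,b,h)=D_1B(v,\delta(v))b+D_2B(v,\delta(v))h$, which is jointly continuous and a uniform contraction in $h$ by \eqref{thm5.3eq7}, applies a parametrized Banach fixed point theorem to obtain a continuous solution $(v,b)\mapsto h(v,b)$, identifies it with $\delta'(v)b$ by uniqueness, and only then uses the finite-dimensionality of $\R^n$ to conclude continuity of $v\mapsto\delta'(v)$. Your route can be repaired without that trick, but you must argue with strong (pointwise) continuity only: writing $A_v=D_2B(v,\delta(v))$, use the resolvent identity $(I-A_v)^{-1}y-(I-A_{v_0})^{-1}y=(I-A_v)^{-1}(A_v-A_{v_0})(I-A_{v_0})^{-1}y$, the uniform bound $\norm{A_v}_{{\mathcal L}(E_m,E_m)}\leq\rho_m<1$, and the joint continuity of $(v,h)\mapsto A_vh$ applied to the fixed vector $(I-A_{v_0})^{-1}y$; since $\R^n$ is finite dimensional, this pointwise continuity suffices for continuity into ${\mathcal L}(\R^n,E_m)$. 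As written, however, the step is not justified.
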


We are going to prove Theorem \ref{thm5.3} under the following weaker  assumptions on the $\ssc^0$-germ $f:{\mathcal O}(V\oplus E, 0)\to (E, 0)$ having the above form 
$f(u, v)=u-B(v, u)$. We merely assume that for every level $m\geq 0$ there exists a constant $0<\rho_m<1$ such that 
$$\norm{B(v, u)-B(v, u')}_m\leq \rho_m \norm{u-u'}_m$$
for $(v,u)$ and $(v,u')$ close to $(0,0)$ in $V\oplus E_m$ where  the notion of close depends on the level $m$. However, we should point out that the stronger contraction  assumption (for every contraction constant $0<\varepsilon<1$) is crucial  later on for the stability of $\ssc^0$-contraction germs under perturbations. 

\begin{proof}[Proof of Theorem \ref{thm5.3}]
We fix  $m\geq 0$ and  first show   that  the set
\begin{equation}\label{thm5.3eq1}
\frac{1}{\abs{b}}\norm{\delta (v+b)-\delta (v)}_m
\end{equation}
is bounded for $v$ and $b\neq 0$ belonging to a small ball around
zero in $V$  whose radius depends on $m$. Since the map $B$ is of
class $\ssc^1$, there exists at $(v, u)\in U_{m+1}$ a bounded linear
map $DB(u, v)\in {\mathcal L}(\R^n\oplus E_m, F_m)$. We introduce
the following notation for the partial derivatives,
\begin{equation*}
\begin{split}
DB(v, u)(\wh{v}, \wh{u})&=
DB(v, u)(\wh{v}, 0)+DB(v, u)(0, \wh{u})\\
&=D_1B(v, u)\wh{v}+D_2B(v, u)\wh{u}.
\end{split}
\end{equation*}
Since $v\mapsto \delta (v)$  is a continuous map into $E_{m+1}$  and
since the map $B$ is of class $C^1$ as a map from an open
neighborhood of $0$ in $V\oplus E_{m+1}$ into $E_m$, we have the
identity
\begin{equation*}
B(v+b, \delta (v+b))-B(v, \delta (v+b))= \biggl(
\int_0^1D_1B(v+sb,\delta (v+b))ds\biggr)\cdot  b.
\end{equation*}
As a consequence,
\begin{equation}\label{thm5.3eq2}
\begin{split}
\frac{1}{\abs{b}}\norm{B(v+b, \delta (v+b))&-B(v, \delta (v+b))}_m\\
&\leq \int_0^1 \norm{D_1B(v+sb, \delta (v+b))}_mds \leq C_m.
\end{split}
\end{equation}
Recalling $\delta (v)=B(v, \delta (v))$ and $\delta (v+b)=B(v+b,
\delta (v+b))$, we have the identity
\begin{equation}\label{thm5.3eq3}
\begin{split}
\delta(v+b)-\delta(v)& -(B(v,\delta(v+b))-B(v,\delta(v)))\\
& = B(v+b,\delta(v+b))-B(v,\delta(v+b)).
\end{split}
\end{equation}
From the contraction property of $B$ in the second variable one
concludes

\begin{equation}\label{thm5.3eq4}
\norm{B(v, \delta (v+b))-B(v,\delta (v))}_m\leq \rho_m\cdot \norm{\delta (v+b)-\delta (v)}_m.\mbox{}\\[1pt]
\end{equation}
Now, using $0<\rho_m<1$,  one derives from \eqref{thm5.3eq3} using
\eqref{thm5.3eq2} and \eqref{thm5.3eq4} the estimate
\begin{equation*}
\begin{split}
\tfrac{1}{\abs{b}}
\norm{\delta (v+b)&-\delta (v)}_m\\
&\leq  \frac{1}{1-\rho_m}\cdot \frac{1}{\abs{b}}
\norm{ B(v+b,\delta(v+b))-B(v,\delta(v+b))}_m\\
&\leq \frac{1}{1-\rho_m}\cdot C_m
\end{split}
\end{equation*}
as claimed in \eqref{thm5.3eq1}.  Since $B$ is of class $C^1$ from
$V\oplus E_{m+1}$ into $E_m$ and since $\norm{\delta (v+b)-\delta
(v)}_{m+1}\leq C_{m+1}'\cdot  \abs{b}$ by \eqref{thm5.3eq1}, the estimate

\begin{equation}\label{thm5.3eq5}
\delta (v+b)-\delta (v)-DB (v,\delta (v))\cdot
 (b, \delta (v+b)-\delta (v))=o_m(b)\mbox{}\\[3pt]
\end{equation}
holds true, where $o_m(b)\in E_m$ is a function satisfying
$\frac{1}{\abs{b}}o_m(b)\to 0$ in $E_m$ as $b\to 0$ in $V$. We next
prove

\begin{equation}\label{thm5.3eq6}
\norm{D_2B(v, \delta (v))h}_m\leq \rho_m \cdot \norm{h}_m\mbox{}\\[1pt]
\end{equation}
for all $h\in E_{m+1}$. Fixing $h\in E_{m+1}$, we can estimate

\begin{equation*}
\begin{gathered}
\norm{D_2B(v,\delta (v))h}_m\\
\leq
\frac{1}{\abs{t}}\cdot \norm{B(v,\delta(v)+th)-B(v,\delta(v))-D_2B(v,\delta(v))[th]}_m\\
+\frac{1}{\abs{t}}\cdot \norm{B(v,\delta(v)+th)-B(v,\delta(v))}_m.
\end{gathered}
\end{equation*}
In view of the postulated contraction property of $B$, the second
term is bounded by $\rho_m\cdot \norm{h}_m$, while the first term
tends to $0$ as $t\to 0$ because $B$ is of class $C^1$.  Hence the
claim \eqref{thm5.3eq6} follows. Using \eqref{thm5.3eq6}  and the
fact that $E_{m+1}$ is dense in $E_m$, we derive for  the continuous
linear operator $D_2B(v,\delta (v)):E_m\to E_m$ the bound
\begin{equation}\label{thm5.3eq7}
\norm{D_2B(v,\delta (v))h}_m\leq \rho_m\cdot \norm{h}_m
\end{equation}
for all $h\in E_m$. Thus, since  $\rho_m<1$, the continuous linear
map
\begin{gather*}
L(v):E_m\to E_m\\
L(v):=1-D_2B(v, \delta (v))
\end{gather*}
is an isomorphism. Applying the inverse $L(v)^{-1}$ to both sides of
\eqref{thm5.3eq5}  we obtain  the estimate
$$\delta (v+b)-\delta (v)-L(v)^{-1}D_1B(v, \delta (v))b=o_m(b)$$
in $E_m$. Therefore, the map $v\mapsto \delta (v)$ from $V$ into
$E_m$ is differentiable and its derivative $\delta' (v)\in {\mathcal
L}(\R^n, E_m)$ is given by the formula
\begin{equation}\label{thm5.3eq8}
\delta' (v)=L(v)^{-1} D_1B(v, \delta (v)).
\end{equation}

It remains to show that $v\mapsto \delta '(v)\in {\mathcal L}(\R^n,
E_m)$ is continuous. To see this  we define the map $F:(V\oplus
\R^n)\oplus E_m\to E_m$ by setting
$$F(v, b, h)=D_1B(v, \delta (v))b+ D_2B(v, \delta (v))h.$$
The map $F$ is continuous and, in view of \eqref{thm5.3eq7}, it is a
contraction in $h$.  Applying a parameter dependent  version of
Banach's fixed point theorem to $F$ we find a continuous function
$(v, b)\mapsto h(v, b)$ from a small neighborhood of $0$ in $V\oplus
\R^n$ into $E_m$ satisfying $F(v, b, h(v, b))=h(v, b)$. Since we also
have $F(v, b, \delta'(v)b)=\delta'(v)b$, it follows from the
uniqueness that $h(v, b)=\delta'(v)b$ and so  the map $(v, b)\mapsto
\delta '(v)b$ is continuous.  Now using the fact that $V$ is contained in a
finite dimensional space,  we conclude that  $v\mapsto \delta '(v)\in
{\mathcal L}(\R^n, E_m)$ is a continuous map.  The proof of Theorem
\ref{thm5.3}  is complete.
\end{proof}

\subsection{Higher Regularity}
Theorem \ref{thm5.3} shows that the $\ssc^0$-contraction germ $f$
which is also of class $\ssc^1$ has a
 solution germ $\delta$ satisfying $f(v, \delta (v))=0$ which is also of class $\ssc^1$. Our next aim is
 to show
 that if $f$ is of class $\ssc^k$, then   $\delta$ is  also of class $\ssc^k$.
 To do so we begin with a construction.
\begin{lem}\label{newlemma5.4}
Assume that $f:{\mathcal O}(V\oplus E,0)\rightarrow (E,0)$ is an
$\ssc^0$-contraction germ and of class $\ssc^k$ with $k\geq 1$. Denote
by $\delta$ the solution germ and assume it is of class $\ssc^j$.
(By Theorem \ref{thm5.3}, $\delta$ is  at least of class $\ssc^1$.)
 Define the germ $f^{(1)}$ by
\begin{equation*}
f^{(1)}:{\mathcal O}(TV\oplus TE,0)\rightarrow TE
\end{equation*}
\begin{equation}\label{germequation}
\begin{split}
f^{(1)}(v,b,u,w)&=\left(u-B(v,u),w-DB(v,\delta(v))\left(b,w\right)\right)\\
&=(u,w)-B^{(1)}(v,b,u,w),
\end{split}
\end{equation}
where the last line defines the map $B^{(1)}$. Then $f^{(1)}$ is an
$\ssc^0$-contraction germ and of class $\ssc^{\min\{k-1,j\}}$.
\end{lem}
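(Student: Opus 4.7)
Write $B^{(1)} = \bigl( B(v,u),\; DB(v,\delta(v))(b,w) \bigr)$, a map taking values in $TE = E^1 \oplus E^0$. The plan is to verify the two assertions separately, exploiting the fact that $B^{(1)}$ splits along the two components of $TE$: its first component depends only on $(v,u)$ and its second is linear in $(b,w)$.

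For the contraction property, at level $m$ the norm on $TE$ is the norm of $E_{m+1}\oplus E_m$, so I need $\varepsilon$-contraction of the first component in $u$ at level $m{+}1$ and of the second in $w$ at level $m$, for every $0 < \varepsilon < 1$ on some small neighborhood of $0$. The first is the contraction-germ hypothesis on $B$ applied at level $m{+}1$. For the second, the difference $DB(v,\delta(v))(b,w) - DB(v,\delta(v))(b,w')$ equals $D_2B(v,\delta(v))(w-w')$, so the task reduces to bounding the operator norm of $D_2B(v,\delta(v)) : E_m \to E_m$ by $\varepsilon$. This is exactly what is done to obtain \eqref{thm5.3eq6}--\eqref{thm5.3eq7} in the proof of Theorem \ref{thm5.3}: for $h\in E_{m+1}$ split the difference quotient $\tfrac{1}{t}\bigl[B(v,\delta(v)+th)-B(v,\delta(v))\bigr]$ into an $\ssc^1$-remainder that vanishes as $t\to 0$ and a term to which the contraction property of $B$ applies, then pass to $t\to 0$ and extend to all of $E_m$ by density of $E_{m+1}$. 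Since $\delta$ is $\ssc^0$ with $\delta(0) = 0$, the point $(v,\delta(v))$ is arbitrarily close to $(0,0)$ in $V \oplus E_m$ once $v$ is small, so the bound $\varepsilon$ can indeed be taken arbitrarily small.

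For the regularity, the first component of $B^{(1)}$ is $B(v,u)$, of class $\ssc^k$. For the second component I would use the factorization
\[
(v,b,u,w) \longmapsto DB(v,\delta(v))(b,w) = \pi_2 \circ TB \circ G(v,b,u,w),
\]
with $G(v,b,u,w) = (v,\delta(v),b,w)$ and $\pi_2 : TE \to E$ the projection onto the tangent fiber. The map $G$ is of class $\ssc^j$ because $\delta$ is of class $\ssc^j$ and because, being at least $\ssc^1$ by Theorem \ref{thm5.3}, $\delta$ sends level-$m$ points of $V$ into $E_{m+1}$, so $G$ respects the sc-structure on $T\mo(V \oplus E, 0)$. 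Since $B$ is $\ssc^k$, the tangent map $TB$ is $\ssc^{k-1}$; the sc-chain rule from \cite{HWZ2} then gives the composition class $\ssc^{\min\{k-1,j\}}$, and combining with the $\ssc^k$-regularity of the first component yields the claimed regularity of $B^{(1)}$, and hence of $f^{(1)}$.

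The main obstacle is the contraction estimate for the second component: the operator-norm bound $\norm{D_2B(v,\delta(v))}_{\mathcal{L}(E_m)} \leq \varepsilon$ is not a direct consequence of $\ssc^1$-regularity and crucially uses the \emph{stronger} form of the contraction hypothesis in Definition \ref{def5.1} (for \emph{every} $\varepsilon \in (0,1)$, not just some fixed $\rho_m < 1$), since the limit $t\to 0$ must be taken without losing the arbitrariness of $\varepsilon$ needed to ensure that $B^{(1)}$ itself qualifies as a Definition \ref{def5.1} contraction germ.
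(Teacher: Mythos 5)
Your proposal is correct and follows essentially the same route as the paper: the contraction property of $B^{(1)}$ on $(TE)_m=E_{m+1}\oplus E_m$ is obtained from the contraction of $B$ at level $m{+}1$ together with the operator bound $\norm{D_2B(v,\delta(v))h}_m\leq \varepsilon\norm{h}_m$ established as in \eqref{thm5.3eq6}--\eqref{thm5.3eq7}, and the regularity $\ssc^{\min\{k-1,j\}}$ by comparing $f^{(1)}$ with the tangent map of $f$ (equivalently your factorization through $TB$ and $(v,b,u,w)\mapsto(v,\delta(v),b,w)$) using that $\delta$ is of class $\ssc^j$. Your explicit remark that the arbitrariness of $\varepsilon$ in Definition \ref{def5.1} is what makes $B^{(1)}$ again a contraction germ is exactly the point implicit in the paper's $\max\{\rho_{m+1},\rho_m\}$ estimate.
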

\begin{proof}
For $v$ small, the map $B^{(1)}$ has the contraction property with
respect to $(u, w)$.
 Indeed,   on the $m$-level of $(TE)_m=E_{m+1}\oplus E_m$, i.e.,
 for $(u, w)\in E_{m+1}\oplus E_m$ and for $v$ small we can estimate,  using \eqref{thm5.3eq6},
 \begin{equation*}
\begin{split}
&\norm{B^{(1)}(v,b,u',w')-B^{(1)}(v,b,u,w)}_m\\
&\phantom{===}=\norm{ B(v,u')-B(v,u) }_{m+1}\\
&\phantom{=====}+
\norm{DB(v,\delta(v))(b,w')-DB(v,\delta(v))(b,w)} _{m}\\
&\phantom{===}\leq \rho_{m+1}\norm{u'-u}_{m+1} +\norm{ D_2B(v,\delta(v))[w'-w] }_m\\
&\phantom{===}\leq \max \{\rho_{m+1},\rho_m\} \cdot \bigl(\norm{
u'-u}_{m+1}+\norm{w'-w}_m\bigr)\\
&\phantom{===}= \max \{\rho_{m+1},\rho_m\} \cdot \norm{ (u', w')-(u,
w)}_m.
\end{split}
\end{equation*}
Consequently,  the  germ $f^{(1)}$ is an  $\ssc^0$-contraction germ.
If  now $f$ is of class $\ssc^k$ and $\delta$ of class $\ssc^j$,
then the germ $f^{(1)}$ is of class $\ssc^{\min\{k-1,j\}}$, as one
verifies by comparing the tangent map $Tf$ with the map $f^{(1)}$
and using the fact that the solution $\delta$ is of class $\ssc^j$.
By Theorem \ref{thm5.2}, the solution germ $\delta^{(1)}$ of
$f^{(1)}$ is at least of class $\ssc^0$. It solves the equation
\begin{equation}\label{chap5eq9}
f^{(1)}(v,b, \delta^{(1)}(v, b))=0.
\end{equation}
But also the tangent germ $T\delta$ defined by $T\delta
(v,b)=(\delta (v), D\delta (v)b)$ is a solution of \eqref{chap5eq9}.
From the uniqueness we conclude  $\delta^{(1)}=T\delta $.
\end{proof}
For our higher regularity theorem we need the following lemma.
\begin{lem}\label{newcontrlem}
Assume we are given an $\text{sc}^0$-contraction germ  $f$ of class
$\text{sc}^k$ and a solution germ $\delta$ of class $\text{sc}^j$
with $j\leq k$. Then  there exists an $\text{sc}^0$-contraction germ
$f^{(j)}$ of class $\text{sc}^{\min\{k-j, 1\}}$ having
$\delta^{(j)}:=T^j\delta$ as the solution  germ.
\end{lem}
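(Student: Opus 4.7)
The plan is to prove Lemma \ref{newcontrlem} by iterating Lemma \ref{newlemma5.4} exactly $j$ times. Set $f^{(0)}:=f$, which is an $\ssc^0$-contraction germ of class $\ssc^k$ whose unique solution germ is $T^{0}\delta=\delta$, of class $\ssc^j$. For $1\leq i\leq j$, define $f^{(i)}:=(f^{(i-1)})^{(1)}$ by applying the construction of Lemma \ref{newlemma5.4} to the pair $(f^{(i-1)},\,T^{i-1}\delta)$.

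The inductive claim I would establish is that for each $0\leq i\leq j$, $f^{(i)}$ is an $\ssc^0$-contraction germ which is (at least) of class $\ssc^{\min\{k-i,\,j-i+1\}}$ and whose unique solution germ equals $T^i\delta$ (itself of class $\ssc^{j-i}$). Specializing this at $i=j$ gives $f^{(j)}$ as an $\ssc^0$-contraction germ of class $\ssc^{\min\{k-j,\,1\}}$ with solution germ $T^j\delta$, which is precisely the statement of Lemma \ref{newcontrlem}.

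For the induction step I assume the claim at stage $i-1\geq 0$. Since $j\leq k$ and $i\leq j$, one has $\min\{k-i+1,\,j-i+2\}\geq 1$ and $j-i+1\geq 1$, so both $f^{(i-1)}$ and $T^{i-1}\delta$ are at least of class $\ssc^1$, and Lemma \ref{newlemma5.4} applies to the pair $(f^{(i-1)},\,T^{i-1}\delta)$. That lemma produces $f^{(i)}$ as an $\ssc^0$-contraction germ of class at least
\[
\ssc^{\min\{\min\{k-i+1,\,j-i+2\}-1,\,j-i+1\}}=\ssc^{\min\{k-i,\,j-i+1\}},
\]
and identifies its unique solution germ as $T(T^{i-1}\delta)=T^i\delta$. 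This closes the induction.

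The argument is essentially routine bookkeeping of regularity indices, with the substantive content supplied by Lemma \ref{newlemma5.4}. The two points that need care are: (i) checking at each stage that the hypotheses of Lemma \ref{newlemma5.4} are met, which is precisely where the assumption $j\leq k$ enters (it prevents the induction from running past the point where either germ has dropped below class $\ssc^1$); and (ii) verifying that the regularity index of $f^{(i)}$ drops by at most one per step while that of $T^i\delta$ drops by exactly one, so that after $j$ iterations the bottleneck is exactly $\min\{k-j,\,1\}$. I do not expect any genuine obstacle beyond this bookkeeping.
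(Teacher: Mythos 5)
Your proof is correct and is essentially the paper's argument: both establish the lemma by repeatedly invoking Lemma \ref{newlemma5.4} and tracking the regularity indices, the paper organizing this as an induction on $j$ (applying the inductive hypothesis to the pair $(f^{(1)}, T\delta)$) while you unroll the same induction as a forward iteration with the explicit invariant $\ssc^{\min\{k-i,\,j-i+1\}}$. The bookkeeping $\min\{\min\{k-i+1,j-i+2\}-1,\,j-i+1\}=\min\{k-i,\,j-i+1\}$ matches the paper's computation $\min\{\min\{k-1,j+1\}-j,1\}=\min\{k-(j+1),1\}$, so there is no gap.
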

\begin{proof}
We prove the lemma by induction with respect to $j$. If $j=0$ and
$f$ is an $\ssc^0$-contraction germ of class $\ssc^k$, $k\geq 0$,
then we  set $f^{(0)}=f$ and $\delta^{(0)}=\delta$.  Hence the
result holds true if $j=0$. Assuming the result has been proved for
$j$,  we show it is true for $j+1$. Since $j+1\geq 1$ and $k\geq
j+1$,   the map $f^{(1)}$ defined by \eqref{germequation}  is of
class $\ssc^{\min \{k-1, j+1\}}$ in view of Lemma \ref{newlemma5.4}.
Moreover, the solution germ $\delta^{(1)}=T\delta$  satisfies
$$
f^{(1)}\circ \text{gr}(\delta^{(1)})=0,
$$
and is of class $\ssc^j$.  Since $\min \{k-1, j+1\}\geq j$, by  the
induction hypothesis there exists  a map
${(f^{(1)})}^{(j)}=:f^{(j+1)}$ of regularity class
$\min\{\min\{k-1,j+1\}-j,1\}=\min\{k-(j+1),1\}$ so that
$$
f^{(j+1)}\circ\text{gr}({(\delta^{(1)})}^{(j)})=0.
$$
Setting  $\delta^{(j+1)}={(\delta^{(1)})}^{(j)}=T^{j}(T\delta
)=T^{j+1}\delta$   the result follows.
\end{proof}

Now comes the main result of this section.

\begin{thm}[{\bf Germ-Implicit Function Theorem}] \label{newthm5.4}
If $f:\mo(V\oplus {E},0)\to  ({ E},0)$ is  an
$\text{sc}^0$-contraction germ which is, in addition, of class
$\ssc^k$, then the solution germ
$$\delta: \mo(V,0)\rightarrow ({ E},0)$$
satisfying
$$f(v,\delta (v))=0$$
is also of class $\ssc^k$.
\end{thm}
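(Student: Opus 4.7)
The plan is to prove this by induction on $j$, establishing that $\delta$ is of class $\ssc^j$ for each $j=1,2,\dots,k$. The preparatory Lemmas \ref{newlemma5.4} and \ref{newcontrlem} have already packaged all the geometric content: they exhibit the iterated tangent $T^j\delta$ as the unique solution germ of a new $\ssc^0$-contraction germ $f^{(j)}$, built from $f$ and $\delta$ by repeated linearization, whose regularity class is $\ssc^{\min\{k-j,\,1\}}$.

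The base case $j=1$ is exactly Theorem \ref{thm5.3}, which upgrades an $\ssc^0$-contraction germ of class $\ssc^1$ to an $\ssc^1$ solution germ. For the inductive step, suppose $\delta$ is known to be of class $\ssc^j$ for some $1\leq j<k$. Lemma \ref{newcontrlem} then produces an $\ssc^0$-contraction germ $f^{(j)}$ with solution germ $T^j\delta$ and of regularity class $\ssc^{\min\{k-j,\,1\}}=\ssc^1$, where the last equality uses $k-j\geq 1$. Applying Theorem \ref{thm5.3} to $f^{(j)}$, we conclude that its solution germ $T^j\delta$ is in fact of class $\ssc^1$. Taking one more tangent, $T^{j+1}\delta$ is of class $\ssc^0$, and combined with the inductive hypothesis that $\delta$ is of class $\ssc^j$, this means $\delta$ is of class $\ssc^{j+1}$. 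After $k-1$ iterations we arrive at the desired conclusion that $\delta$ is of class $\ssc^k$.

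Essentially all the analytical content has already been carried out elsewhere: the contraction estimate and its careful linearization were absorbed into the proof of Theorem \ref{thm5.3}, while Lemmas \ref{newlemma5.4} and \ref{newcontrlem} verify that the linearized equation retains the contraction normal form at each stage. The present theorem therefore reduces to a clean bookkeeping induction. The only point that needs to be watched is that we must have $k-j\geq 1$ in order for $f^{(j)}$ to be of class $\ssc^1$ (and not merely $\ssc^0$), so that Theorem \ref{thm5.3} is applicable; this inequality is ensured precisely by restricting the induction to the range $1\leq j<k$, and so the scheme terminates exactly at $j=k$ and yields the full $\ssc^k$ regularity of $\delta$.
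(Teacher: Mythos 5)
Your argument is correct and is essentially the paper's own proof: both rest on Lemma \ref{newcontrlem} producing the contraction germ $f^{(j)}$ of class $\ssc^{\min\{k-j,1\}}=\ssc^1$, and on Theorem \ref{thm5.3} upgrading its solution germ $T^j\delta$ to class $\ssc^1$, hence $\delta$ to class $\ssc^{j+1}$. The paper merely packages this as a proof by contradiction (taking the largest $j<k$ with $\delta\in\ssc^j\setminus\ssc^{j+1}$) rather than as your direct induction, which is only a cosmetic difference.
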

In case the germ $f$ is sc-smooth at $0$, it follows  that for every $m$
and $k$  there is an open neighborhood $V_{m,k}$ of $0$ in $V$ on
which the map $v\mapsto \delta (v)$ goes into the $m$-level and
belongs to $C^k$. In particular, the solution $\delta$ is sc-smooth
at the  smooth point  $0$.  The above theorem will be one of the key
building blocks for all future versions of implicit function
theorems as well as for the transversality theory.
\begin{proof}
Arguing by contradiction assume that the solution germ $\delta$ is
of class $\text{sc}^j$ but not of class $\text{sc}^{j+1}$ with
$j<k$. In view of Lemma \ref{newcontrlem}, there exists an
$\text{sc}^0$-contraction germ $f^{(j)}$ of class $\text{sc}^{\min
\{k-j, 1\}}$ so that  $\delta^{(j)}=T^j\delta$ satisfies
$$
f^{(j)}\circ \text{gr}(\delta^{(j)})=0.
$$
Since also $k-j\geq 1$, it follows that $f^{(j)}$ is at least of
class $\text{sc}^1$. Consequently,  the solution germ $\delta^{(j)}$
is at least of class $\text{sc}^{1}$. Since
$\delta^{(j)}=T^j\delta$, we conclude that $\delta$ is at least of
class $\text{sc}^{j+1}$ contradicting our assumption. The proof of
the theorem is complete.
\end{proof}

The same discussion remains valid  for germs $f$ defined
on $V\oplus E$, where $V$ is any  finite-dimensional partial quadrant.
\begin{thm}\label{newthmboundary5.4}
Let $V$ be a finite-dimensional partial quadrant. If $f:\mo(V\oplus
{E},0)\to ({ E},0)$ is an $\text{sc}^0$-contraction germ which is,
in addition, of class $\ssc^k$, then the solution germ
$$\delta: \mo(V,0)\rightarrow ({ E},0)$$
satisfying
$$f(v,\delta (v))=0$$
is also of class $\ssc^k$.
\end{thm}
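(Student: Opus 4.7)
The plan is to reduce Theorem \ref{newthmboundary5.4} directly to Theorem \ref{newthm5.4} by a linear change of coordinates on the parameter space. By the definition of a partial quadrant recalled in the introduction, there is an sc-isomorphism $T:F\to \R^n\oplus W$ mapping $V$ onto $[0,\infty)^n\oplus W$, where $F$ denotes the ambient finite-dimensional vector space of $V$. Since $F$ is finite-dimensional, so is $W$, and after relabeling coordinates $T$ becomes a plain linear isomorphism carrying $V$ onto the standard quadrant $V_0:=[0,\infty)^l\times\R^{n-l}$ already treated in Theorem \ref{newthm5.4}.

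Next, I would pull back the given contraction germ along $T$. Define
$$
\wtilde f:\mo(V_0\oplus E,0)\to (E,0),\qquad \wtilde f(\wtilde v,u):=f(T^{-1}\wtilde v,u)=u-\wtilde B(\wtilde v,u),
$$
with $\wtilde B(\wtilde v,u):=B(T^{-1}\wtilde v,u)$. Because $T^{-1}$ is linear and hence sc-smooth, the composition $\wtilde f$ is again an $\ssc^0$-germ of class $\ssc^k$. Since only the first argument of $B$ is altered, the contraction estimate for $\wtilde B$ in the second variable is inherited verbatim from the one for $B$, and the levelwise notion of ``close to $0$'' in $V_0\oplus E_m$ corresponds, via $T$, to ``close to $0$'' in $V\oplus E_m$ because $T$ and $T^{-1}$ are bounded linear maps on a finite-dimensional space. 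Hence $\wtilde f$ is an $\ssc^0$-contraction germ of class $\ssc^k$ on the standard partial quadrant.

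I then apply Theorem \ref{newthm5.4} to $\wtilde f$, obtaining a unique $\ssc^k$-germ $\wtilde\delta:\mo(V_0,0)\to (E,0)$ satisfying $\wtilde f(\wtilde v,\wtilde\delta(\wtilde v))=0$, and set $\delta(v):=\wtilde\delta(Tv)$. Composition with the linear sc-smooth map $T$ preserves the class $\ssc^k$, and by construction $f(v,\delta(v))=\wtilde f(Tv,\wtilde\delta(Tv))=0$. Uniqueness of $\delta$ follows from uniqueness of $\wtilde\delta$ by the same transport of structure.

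I do not expect a genuine analytic obstacle in this argument: all of the hard work, namely the parameter-dependent fixed-point analysis together with the induction on regularity, is already contained in Theorem \ref{newthm5.4}. The only point requiring any brief verification is that the contraction property and the levelwise neighborhood condition are genuinely preserved under the linear change of coordinates $T$, which is automatic because $T$ is a bi-Lipschitz bijection of finite-dimensional spaces and only affects the $V$-factor.
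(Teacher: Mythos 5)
Your reduction is correct and is essentially what the paper intends: the paper gives no separate proof of Theorem \ref{newthmboundary5.4}, merely remarking that the discussion for the standard quadrant $[0,\infty)^l\times\R^{n-l}$ remains valid for an arbitrary finite-dimensional partial quadrant. Conjugating by the linear sc-isomorphism $T$ that brings $V$ into standard form, checking that the contraction estimates and the class $\ssc^k$ are preserved (they are, since $T$ only touches the finite-dimensional parameter factor), and then invoking Theorem \ref{newthm5.4} is exactly the right way to make that remark precise.
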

\section{Fredholm Sections}

In this section we introduce the notion of a Fredholm section and
develop its local theory.

\subsection{Regularizing Sections}
Let $p:Y\rightarrow X$ be a strong M-polyfold bundle as defined in Definition 4.9 in \cite{HWZ2}. By
$\Gamma(p)$ we denote  the vector space of $\ssc$-smooth sections of the bundle $p$ and by
$\Gamma^+(p)$ the linear subspace of $\ssc^+$-sections. The following definition can be viewed as a formalization of ``elliptic bootstrapping''.
\begin{defn}\label{defreg}
A section $f\in\Gamma(p)$ is said to be {\bf regularizing} provided
$f(q)\in Y_{m,m+1}$ for a point $q\in X$ implies that $q\in
X_{m+1}$. We denote by $\Gamma_{reg}(p)$ the subset of $\Gamma(p)$
consisting of regularizing sections.
\end{defn}
We observe that if $f\in\Gamma_{reg}(p)$ and $s\in\Gamma^+(p)$, then  the
sum $f+s$ belongs to $\Gamma_{reg}(p)$. Indeed, if $(f+s)(q)=y\in
Y_{m,m+1}$, then  $q\in X_{m}$. Hence $f(q)=y-s(q)\in Y_{m,m+1}$
implies  $q\in X_{m+1}$ since $f$ is regularizing. Therefore, we have the
map
$$
\Gamma_{reg}(p)\times \Gamma^+(p)\rightarrow
\Gamma_{reg}(p)$$
defined by $(f,s)\rightarrow f+s$. Note however that the sum of two regularizing sections is, in general, not regularizing.
\subsection{Fillers and Filled Versions}\label{subsection3.2}
The filler is a very convenient technical devise in the local investigation of the solution set of a section of a strong M-polyfold bundle.  Namely, it turns the local study of the section, which perhaps is defined on a complicated space having varying dimensions into the equivalent local study of a filled section which is defined on a fixed open set of a partial quadrant  in an $\ssc$-Banach space and which has its image in a fixed $\ssc$-Banach space.
In the following we  summarize the discussion about fillers from  \cite{HWZ2}.
We consider the fillable strong M-polyfold bundle $p:Y\rightarrow X$ as defined in
Definition  4.10 in \cite{HWZ2}. Then, given  the section $f$ of the bundle $p$ and a smooth  point $q\in X$, there exists locally a filled version of $f$ which is an $\ssc$-smooth map
$$
\bar{\bf f}:\wh{O}\rightarrow F
$$
between an open set $\wh{O}$ of a partial cone in an $\ssc$-Banach space and an   $\ssc$-Banach space $F$. This filled version is obtained as follows.

We choose a strong bundle chart  (as in Definition 4.8 in \cite{HWZ2})
$$\Phi:Y|U\rightarrow K^{\mathcal R}\vert O$$
covering the $\ssc$-diffeomorphism $\varphi: U\to O$ between an open neighborhood $U\subset X$ of the point $q$ and  the open subset $O$ of the splicing core $K^{\mathcal S}=\{(v, e)\in V\oplus E\vert \, \pi_v (e)=e\}$ associated with the splicing ${\mathcal S}=(\pi, E, V)$.  Here $V$ is an open subset of a partial quadrant in an $\ssc$-Banach space $W$ and $E$ is an $\ssc$-Banach space. We assume that $\varphi (q)=0$.

The bundle $K^{{\mathcal R}}\vert O$ is defined by
$K^{{\mathcal R}}\vert O:=\{(w, u)\in O\oplus F\vert \, \rho_{w}( u)=u\}$ where we have abbreviated $w=(v, e)$. The bundle  is the splicing core of the strong bundle splicing ${\mathcal R}=(\rho,F, (O, {\mathcal S}))$ in which  $F$ is an $\ssc$-Banach space. 

In the notation of \cite{HWZ2} which is recalled in the glossary
$$K^{\mathcal R}=K^{{\mathcal R}^0}=K^{\mathcal R}(0).$$
Having the splicing ${\mathcal S}=(\pi, E, V)$, we introduce the complementary splicing ${\mathcal S}^c=(1-\pi, E, O)$ whose  associated splicing core is given by 
$K^{{\mathcal S}^c}=\{(v, \varepsilon )\in V\oplus E\vert \, \pi_v (\varepsilon )=0\}.$  According to the decomposition $E=\pi_v (E)\oplus (1-\pi_v )(E)$ every element of $E$ has the unique representation
$$e=e'+\varepsilon, \quad \pi_v (e')=e', \quad \pi_v (\varepsilon )=0.$$
 Using the natural projections $O\to V$ and $K^{{\mathcal S}^c}\to V$ we can  form the Whitney-sum 
 $O\oplus_V K^{{\mathcal S}^c}\subset K^{\mathcal S}\oplus_V K^{{\mathcal S}^c}$ which can be identified  with the  open subset of $\wh{O}\subset V\oplus E$ defined by 
\begin{equation*}
\begin{split}
\wh{O}&=\{(v, e)\in V\oplus E\vert \, (v, \pi_v (e))\in O\}\\
&=\{(v, e'+ \varepsilon )\in V\oplus E\vert \, \text{$\pi_v (e' )=e',\, \pi_v (\varepsilon )=0$,  and $ (v, e')\in O$} \}.
\end{split}
\end{equation*}
We define  the projection $r:\wh{O}\to O$ by $r(v, e)=(v, \pi_v (e))$.Then  the preimage of a point  $(v, \pi_v(e))$,
$$r^{-1}(v,\pi_v (e))=\{(v, \pi_v (e))\} \times \ker \pi_v \subset \{(v, \pi_v (e))\} \times  E,$$
has the structure of a Banach space so that we may view $r:\wh{O}\to O$ set-theoretically as a bundle over $O$. 

Associated with the strong bundle  splicing ${\mathcal R}=(\rho , F, (O, {\mathcal S} ))$ there is the complementary 
strong bundle splicing ${\mathcal R}^c=(1-\rho , F, (O, {\mathcal S}))$ whose splicing core is given by 
$$K^{{\mathcal R}^c}=\{(w, u)\in O\oplus F\vert \, \rho_w (u)=0\}$$
where we have abbreviated $w=(v, e)\in O$.

By  definition, a {\bf filler} for ${\mathcal R}$ is an sc-smooth map 
$$f^c:\wh{O}\to K^{{\mathcal R}^c}$$
covering the identity $O\to O$ so that $f^c$ is fiber-wise a {\bf linear isomorphism}.  More in detail, for every $(v, e')\in O$, the map 
\begin{align*}
\ker \pi_v \to \ker \rho_{(v, e')}\\
\varepsilon\mapsto f^c (v, e'+\varepsilon)
\end{align*}
is a linear isomorphism.

If $f$ is a  given section of $K^{{\mathcal R}}\to O$, we introduce the composition 
$$f\circ r:\wh{O}\to K^{\mathcal R}$$
and define the {\bf filled section} $\ov{f}$ of the bundle $\wh{O}\triangleleft F\to \wh{O}$ by the formula
\begin{equation*}
\begin{split}
\ov{f}(v, e)&=((v,e), \ov{\bf f} (v, e))\\
&=f\circ r(v, e)+f^c(v,e)
\end{split}
\end{equation*}
for all $(v, e)\in \wh{O}$. 
The principal part $\ov{\bf f}:\wh{O}\to F$ of the filled section $\ov{f}$ is an sc-smooth map which splits into a sum of two sc-smooth maps
$$\ov{\bf f}(v, e)={\bf f}(v, e)+{\bf f}^c(v, e)\in \ker (1-\rho_{(v, \pi_v (e)) } )\oplus \ker  \rho_{(v, \pi_v (e))}=F$$
where ${\bf f}$ is the principal part of the given section $f\circ r:\wh{O}\to K^{\mathcal R}$ and
 ${\bf f}^c$ is the principal part of the filler $f^c:\wh{O}\to K^{{\mathcal R}^c}$. 

Now we assume that  $(v, e)\in \wh{O}$ is a zero of the filled section,  
$$\ov{\bf f}(v, u)=0.$$
Then ${\bf f}(v, e)=0$ and ${\bf f}^c(v, e)=0$. We set $e=e'+\varepsilon$  with $\pi_v(e')=e'$ and $\pi_v (\varepsilon )=0$. By assumptions of the filler $f^c$, the map $\varepsilon\mapsto  {\bf f}^c(v, e'+\varepsilon)$ is a linear isomorphism. 
We conclude that $\varepsilon=0$ and so $r(v, e)=(v, \pi_v (e))=(v, e)\in O$. Therefore, 
$$0={\bf f}(v, e),$$
 implying that $(v, e)$ is a zero of the given section $f$ of  the bundle $K^{\mathcal R}\to O$.

Having the above construction in mind we recall from \cite{HWZ2} that a strong M-polyfold bundle $p:Y\to X$ is called {\bf fillable} if around every point $q\in X$ there exists a compatible strong M-polyfold  bundle chart $(U, \Phi, (K^{\mathcal R}, {\mathcal R}))$ whose strong bundle splicing ${\mathcal R}$ admits a filler as introduced above. In principle there can be many different bundle charts admitting different fillers.

The representation of a  given section $g$ of the M-polyfold bundle $p:Y\to X$ in the above strong bundle chart $\Phi:Y\vert O\to K^{\mathcal R}\vert O$ is the push-forward 
$$f:=\Phi_\ast g=\Phi\circ g\circ \varphi^{-1}$$
which is an sc-smooth section of the above strong bundle chart $K^{\mathcal R}\to O$.  Following the above construction we consider the filled section $\ov{f}$ of the local bundle $\wh{O}\triangleleft F\to \wh{O}$  given by 
$$\ov{f}(v, e)=f\circ r(v, e)+f^c (v, e).$$
Then $\ov{f}(v, e)=0$ if and only if $r(v, e)=(v, e)$, i.e., $(v, e)\in O$, so that $x=\varphi^{-1}(v, e)$ solves $g(x)=0$.

Now we assume that the smooth point $q\in X$ corresponds to $\varphi (q)=(0, 0)\in O\subset \wh{O}$ and assume also that 
$$f(0, 0)=0.$$
Then the linearization of the local section $f$ at the solution $(0, 0)$,
$$f'(0,0):T_{(0, 0)}O\to \ker (1-\rho_{(0, 0)}),$$
is equal to $D\ov{\bf f}\vert T_{(0, 0)}O$ where  the tangent space is given by
$$T_{(0, 0)}O=\{ (\delta w, \delta e)\in W\oplus E\vert \, \pi_0 (\delta e)=\delta e\}.$$
In \cite{HWZ2} we have verified the following relation between $f'(0, 0)$ and the linearization
$$D\bar{\bf f}(0, 0): W\oplus E\to F$$
of the filled section $\bar{\bf f}$. Abbreviating the splittings $E=\text{ker}\ (1-\pi_0 )\oplus \text{ker}\ \pi_0=E^+\oplus E^-$ and
$F=\text{ker}\ (1-\rho_{(0,0)} )\oplus \text{ker}\ \rho_{(0,0)} =F^+\oplus F^-$, we use the notation $\delta e=(\delta a, \delta b)\in E^+\oplus E^-$. Then
$$D\bar{\bf f}(0, 0): ( W\oplus E^+)\oplus E^-\to F^+\oplus F^-$$
has the following matrix form
$$
\begin{bmatrix}
\delta w\oplus \delta a\\ \delta b
\end{bmatrix}\mapsto
\begin{bmatrix}
f'(0, 0)&0\\
0&C
\end{bmatrix}
\begin{bmatrix}
\delta w\oplus \delta a\\ \delta b
\end{bmatrix}.
$$
where
$$
C={\bf f}^c(0, \cdot ):E^-\to F^-$$
is an $\ssc$-isomorphism by  the definition of a filler.

We conclude that the linearization $f'(0, 0)$ of the section $f=\Phi_{\ast}g$ is  surjective if and only if the linearization $D\bar{\bf f}(0, 0)$ of the filled section   is surjective.
Moreover,  the kernel $\ker f'(0, 0)$ is equal to the kernel of
 $D\bar{\bf f}(0, 0)$ in the sense that
 $$\ker f'(0,0)\oplus \{0\}=\ker D{\bf f}^c(0,0)$$
 where the splitting on the left side refers to the $(W\oplus E^+)\oplus E^-$.  One also reads off from the matrix form that $f' (0, 0)$ is an $\ssc$-Fredholm operator if and only if $D\bar{\bf f}(0, 0)$ is an $\ssc$-Fredholm operator and in this case their Fredholm indices  agree.

Summarizing, the properties of a section germ
$[g,q]$ of a fillable strong M-polyfold bundle $p:Y\to X$ are fully reflected in the properties of the filled section germ  $[{\bar f}, (0,0)]$ of the local bundle  $\wh{O}\triangleleft F\rightarrow \wh{O}$. 

Let us finally point out  once more that the fillers are not unique. A section germ $[f, q]$ of a fillable M-polyfold bundle has,
 in general, many filled versions. They all have the same properties.

\subsection{Basic sc-Germs and Fredholm Sections}\label{sect5.3}
The aim of this section is the introduction of the crucial concept of a Fredholm section of a fillable strong M-polyfold bundle.
We start  with local considerations  and consider  $\ssc$-smooth germs
$$
f:\mo (C,0)\rightarrow F,
$$
where $C\subset E$ is a partial quadrant in the sc-Banach space $E$ and where $F$ is another $\ssc$-Banach space. We do not require that $f(0)=0$ but we observe that $ f(0)$   is also a smooth point, since $0$ is a smooth point  and $f$ is an $\ssc$-smooth map.
We view these germs  as $\ssc$-smooth section germs of  the local strong bundle $C\triangleleft F\rightarrow C$.  The definition of a special local strong bundle from \cite{HWZ2} is recalled in the glossary.

The product $C\triangleleft F$ is the product $C\oplus F$ equipped with the two  filtrations 
 $$(C\triangleleft  F)_{m, m}=C_m\oplus F_{m}\quad \text{and}\quad (C\triangleleft  F)_{m, m+1}=C_m\oplus F_{m+1}$$
 for all $m\geq 0$.
A map $\Phi:C\triangleleft F\to C'\triangleleft F'$  which is of the form 
$$
\Phi(x,h)=(a(x), b(x,h))
$$
and linear  in $h$ is called a strong bundle isomorphism if $\Phi$ is a bijection and $\Phi$ and its inverse are  of class $\ssc^{\infty}_{\triangleleft}$.  The notion of an  $\ssc^{\infty}_{\triangleleft}$-map   introduced in \cite{HWZ2}  is recalled in the glossary. 

\begin{defn}\label{stronglyeq}
Two sc-smooth germs $f:\mo (C, 0)\to F$ and $g:\mo (C', 0)\to F'$ are called {\bf strongly equivalent}, denoted by $f\sim g$, if there exists a germ
of strong bundle isomorphism $\Phi:C\triangleleft F\rightarrow
C'\triangleleft F'$ near $0$ covering the $\ssc$-diffeomorphism germ $\varphi:(C, 0)\to (C', 0)$ so that $ g$ equals the push-forward $\Phi_\ast f=\Phi\circ f\circ \varphi^{-1}$.
\end{defn}

 We recall from Section 4.4 in \cite{HWZ2} that an $\ssc$-smooth  germ $[f,0]$ is called  linearized Fredholm  at the smooth point $0$, if the
linearization
$$f'_{[s]}(0):=D(f-s)(0):E\rightarrow F$$
is  an $\ssc$-Fredholm operator, where $[s, 0]$ is any $\ssc^+$-germ satisfying $s(0)=f(0)$. The integer
$$\ind(f,0):=i(f'_{[s]}(0)),$$
where $i$ on the right hand side denotes the Fredholm index, is independent of the
$\ssc^+ $-germ $[s, 0]$ as is demonstrated in \cite{HWZ2}. Using the chain rule for
$\ssc$-smooth maps, the following proposition follows immediately from the definitions.

We  denote by $\mathfrak{C}$ the class of all section germs.

\begin{prop}\label{propA}
If $[f,0]$ and $[g,0]$ are strongly equivalent elements in
$\mathfrak{C}$ and one of them is linearized Fredholm at $0$, then so is the other
 and in this case their Fredholm indices agree,
$$
\ind(f,0)=\ind(g,0).
$$
\end{prop}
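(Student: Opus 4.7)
The plan is to transport an $\ssc^+$-germ certifying the linearized Fredholm property of $f$ to one certifying the same property for $g$, and then use the chain rule on a composition of maps, two of which are isomorphisms, to identify the linearizations up to pre- and post-composition with sc-isomorphisms.

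Suppose $g=\Phi_\ast f$, where the strong bundle isomorphism germ has the form
$$
\Phi(x,h)=(\varphi(x),b(x,h)),
$$
with $b(x,\cdot)\colon F\to F'$ linear and sc-isomorphism for each $x$, and $\varphi\colon(C,0)\to(C',0)$ an sc-diffeomorphism. Assume that $[f,0]$ is linearized Fredholm and pick an $\ssc^+$-germ $s$ with $s(0)=f(0)$ so that $D(f-s)(0)$ is sc-Fredholm with index $\ind(f,0)$. Define
$$
t(y):=b\bigl(\varphi^{-1}(y),\,s(\varphi^{-1}(y))\bigr).
$$
First I would check that $[t,0]$ is an $\ssc^+$-germ with $t(0)=g(0)$. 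This is a routine consequence of the facts that $\Phi$ is $\ssc^\infty_\triangleleft$, that $\ssc^\infty_\triangleleft$-maps send the level shifted by one in the fiber into the corresponding level shifted by one, and that $s$ raises the fiber level by one; in effect, pushing an $\ssc^+$-section forward under a strong bundle isomorphism produces an $\ssc^+$-section, and at $y=0$ we get $t(0)=b(0,s(0))=b(0,f(0))=g(0)$.

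Next, using the linearity of $b$ in the fiber variable we obtain the exact identity
$$
(g-t)(y)=b\bigl(\varphi^{-1}(y),\,(f-s)(\varphi^{-1}(y))\bigr).
$$
Applying the chain rule at $y=0$ and using that $(f-s)(\varphi^{-1}(0))=0$ together with $b(x,0)=0$ for all $x$ (so that $D_1b(0,0)[\cdot,0]=0$), the first-variable contribution drops out and only the fiber derivative survives:
$$
D(g-t)(0)=b(0,\cdot)\circ D(f-s)(0)\circ D\varphi^{-1}(0).
$$
Here $D\varphi^{-1}(0)\colon E'\to E$ is an sc-isomorphism because $\varphi$ is an sc-diffeomorphism germ, and $b(0,\cdot)\colon F\to F'$ is an sc-isomorphism because $\Phi$ is a strong bundle isomorphism (so fiberwise an sc-linear isomorphism). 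Composition with sc-isomorphisms on either side preserves the sc-Fredholm property and the Fredholm index, so $D(g-t)(0)$ is sc-Fredholm with the same index as $D(f-s)(0)$. Hence $[g,0]$ is linearized Fredholm and, by the index invariance under the choice of $\ssc^+$-germ already established in \cite{HWZ2}, $\ind(g,0)=\ind(f,0)$. The converse direction is the symmetric statement obtained by replacing $\Phi$ with $\Phi^{-1}$, which is again a strong bundle isomorphism of the same type.

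The only non-routine point is the verification that the transported germ $t$ is genuinely $\ssc^+$; this is where the precise meaning of ``strong bundle isomorphism'' (as opposed to a mere sc-smooth bundle isomorphism) is used, and it is the step on which the whole argument hinges.
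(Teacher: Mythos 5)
Your argument is correct and is exactly the route the paper intends: the paper dismisses the proof as an immediate consequence of the chain rule and the definitions, and your write-up simply fills in those details (transporting the $\ssc^+$-germ by the strong bundle isomorphism, killing the base-derivative term via $b(x,0)=0$, and using that pre- and post-composition with sc-isomorphisms preserves the sc-Fredholm property and index, together with the independence of the choice of $\ssc^+$-germ from \cite{HWZ2}). The fact you flag as the hinge, that push-forwards of $\ssc^+$-sections under strong bundle maps remain $\ssc^+$, is indeed the relevant point and is the same fact the paper invokes later in the proof of Theorem \ref{prop5.21}.
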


Among all elements of the section germs in $\mathfrak{C}$ there is a distinguished class of
special elements called basic class,  denoted by  $\mathfrak{C}_{basic}$ and defined as follows.

\begin{defn}\label{defn3.4}
The {\bf basic class}   $\mathfrak{C}_{basic}$  consists of  all
$\ssc$-smooth germs $[g, 0]$ in   $\mathfrak{C}$ of the form
$$
g:{\mathcal O}(([0,\infty)^k\oplus{\mathbb R}^{n-k})\oplus
W,0)\rightarrow {\mathbb R}^N\oplus W,
$$
where $W$ is an  sc-Banach space, so that
with  the projection $P:{\mathbb R}^N\oplus W\rightarrow W$,
the germ
$$
P\circ (g-g(0,0)):{\mathcal O}(([0,\infty)^k\oplus {\mathbb
R}^{n-k})\oplus W,0)\rightarrow W
$$
is an  $\ssc^0$-contraction germ in the sense of Definition
\ref{def5.1}.
\end{defn}
The basic elements have special properties as the following proposition demonstrates.\begin{prop}\label{computation}
A germ  $g:{\mathcal O}(([0,\infty)^k\oplus {\mathbb R}^{n-k})\oplus
W,0)\rightarrow {\mathbb R}^N\oplus W$  in $\mathfrak{C}_{basic}$ is
linearized Fredholm  at $0$ and its Fredholm index is equal to
$$
\ind(g,0)=n-N.
$$
\end{prop}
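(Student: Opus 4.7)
My plan is to pick a particularly convenient $\ssc^+$-germ $s$ with $s(0,0)=g(0,0)$ and then compute $D(g-s)(0,0)$ explicitly, exploiting the contraction structure. Because $g(0,0)$ is automatically a smooth point of $\R^N\oplus W$, the constant germ $s(v,w)\equiv g(0,0)$ is an $\ssc^+$-germ with the required value at the origin. Thus it suffices to show that the ordinary linearization $A:=Dg(0,0):\R^n\oplus W\to \R^N\oplus W$ is an $\ssc$-Fredholm operator of index $n-N$. Shifting by the constant and writing $g-g(0,0)=(h_1,h_2)$ with $h_1$ taking values in $\R^N$ and $h_2$ in $W$, the basic-class hypothesis says $h_2(v,w)=w-B(v,w)$ where $B$ is the contraction germ; in particular $B(0,0)=0$.

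The crucial algebraic input is that the strong contraction condition forces $D_2B(0,0)=0$ on every level. Indeed, setting $v=0$, $u'=0$, $u=th$ in the defining inequality gives $\|B(0,th)\|_m\leq \varepsilon t\|h\|_m$ for all sufficiently small $t$; dividing by $t$ and letting $t\to 0$ shows $\|D_2B(0,0)h\|_m\leq\varepsilon\|h\|_m$ for every $\varepsilon\in(0,1)$, whence $D_2B(0,0)=0$ at every level $m$. Consequently, with $T:=D_1B(0,0):\R^n\to W$, $A_1:=D_vh_1(0,0):\R^n\to\R^N$ and $A_2:=D_wh_1(0,0):W\to\R^N$, the linearization takes the matrix form
\[
A(\hat v,\hat w)=\bigl(A_1\hat v+A_2\hat w,\ \hat w-T\hat v\bigr).
\]
Composing with the $\ssc$-isomorphism $(y,z)\mapsto (y-A_2z,z)$ of $\R^N\oplus W$ converts $A$ into the block-triangular operator $(\hat v,\hat w)\mapsto \bigl((A_1+A_2T)\hat v,\ \hat w-T\hat v\bigr)$, with the identity on the $W$-component and with the only nontrivial block being the finite-dimensional map $\widetilde A:=A_1+A_2T:\R^n\to\R^N$.

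From this reduction the Fredholm analysis is immediate. The kernel of $A$ is $\{(\hat v,T\hat v):\hat v\in\ker\widetilde A\}$, which is finite-dimensional; because $B$ is $\ssc$-smooth, $T$ sends $\R^n$ into $W_\infty$, so the kernel consists of smooth points. A finite-dimensional complement to the image inside $\R^N\oplus W$ can be taken to be a complement to $\mathrm{range}\,\widetilde A$ in $\R^N$, which also consists of smooth points. Since kernel and image-complement are subspaces of finite-dimensional smooth subspaces, they are independent of the level, and one concludes that $A$ is $\ssc$-Fredholm. Rank-nullity in finite dimensions then yields
\[
\mathrm{ind}(g,0)=\dim\ker\widetilde A-\mathrm{codim}\,\mathrm{range}\,\widetilde A=n-N.
\]

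The only real obstacle is bookkeeping, namely confirming that the operator satisfies the full $\ssc$-Fredholm condition of \cite{HWZ2} rather than merely being Banach-Fredholm on each level; but the block-triangular reduction shows that the whole $\ssc$-structural content is carried by the identity on $W$ and by a map between finite-dimensional smooth spaces, so nothing deeper than $\ssc$-smoothness of $B$ (for $T(\R^n)\subset W_\infty$) is needed.
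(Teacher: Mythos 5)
Your proposal is correct, but it takes a genuinely different route from the paper's proof after the common first step (both take the constant $\ssc^+$-germ $s\equiv g(0,0)$, reducing the task to showing that $Dg(0,0)$, equivalently $D(g-g(0,0))(0,0)$, is an $\ssc$-Fredholm operator of index $n-N$). The paper never claims $D_2B(0,0)=0$: it only uses that $D_2B(0,0)$ is a contraction on every level, so that the operator $(\delta r,\delta w)\mapsto (0,(I-D_2B(0,0))\delta w)$ is an $\ssc$-Fredholm operator with kernel $\R^n\oplus\{0\}$, cokernel $\R^N\oplus\{0\}$ and index $n-N$, and then absorbs the remaining terms $D_1B(0,0)\delta r$ and $(I-P)Dg(0,0)(\delta r,\delta w)$ as $\ssc^+$-operators, invoking Proposition 2.11 of \cite{HWZ2} (stability of $\ssc$-Fredholmness and of the index under $\ssc^+$-perturbations, which are level-wise compact). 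You instead exploit the full strength of Definition \ref{def5.1} (contraction constant arbitrarily small) to conclude $D_2B(0,0)=0$, then compute the linearization exactly and reduce it by the target $\ssc$-isomorphism $(y,z)\mapsto(y-A_2z,z)$ to a block-triangular operator whose kernel and cokernel you identify by hand, checking smoothness of the kernel via $D_1B(0,0)(\R^n)\subset W_\infty$. Your version buys explicitness and avoids the $\ssc^+$-perturbation theorem; the paper's version buys robustness, since its argument goes through under the weaker hypothesis that $B$ contracts with some fixed $\rho_m<1$ on each level (the setting in which Theorem \ref{thm5.3} was proved), whereas $D_2B(0,0)=0$ genuinely requires the ``every $\varepsilon$'' form. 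One small point in your derivation of $D_2B(0,0)=0$: the convergence $B(0,th)/t\to D_2B(0,0)h$ in $\norm{\cdot}_m$ is available for $h\in W_{m+1}$ (sc-smoothness gives a $C^1$ map from level $m+1$ to level $m$), so the bound $\norm{D_2B(0,0)h}_m\leq\varepsilon\norm{h}_m$ should first be stated for such $h$ and then extended to all of $W_m$ by density — exactly the level-shift-plus-density step used for \eqref{thm5.3eq6} and \eqref{thm5.3eq7}; this is a presentational gap only, not a mathematical one.
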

\begin{proof}
Abbreviate the partial  quadrant $[0, \infty )^k\oplus \R^{n-k}$ in $\R^n$ by $C$.
Denoting by $(r, w)$ the elements in $C\oplus W$, we take
the $\ssc$-germ
$$
s:{\mathcal O}(C\oplus
W,0)\rightarrow {\mathbb R}^N\oplus W
$$
defined as the constant section $s(r,w)=g(0,0)$.  Since $g(0,0)$ is a smooth point,  the section $s$ is   an $\ssc^+$-section. The germ  $f=g-s=g-g(0,0)$
satisfies $f(0)=0$. By assumption, the map
$$P\circ \bigl(g(r, w)-g(0, 0)\bigr)=w-B(r, w)$$
is an $\ssc^0$-contraction germ in the sense of Definition \ref{def5.1}. Hence the linearization
$$Df(0, 0):\R^n\oplus W\to \R^N\oplus W$$
of the section $f$ at the point $(0,0)\in C\oplus W$ is given by the formula

\begin{equation*}\begin{split}
&Df(0,0)(\delta r,\delta w)\\
&\phantom{=}=\delta w - D_2B(0,0)\delta w \\
&\phantom{=-}- D_1B(0,0)\delta r  + (I-P)Dg(0,0)(\delta r,\delta w).
\end{split}
\end{equation*}
As in the proof of Theorem \ref{thm5.3} one verifies that the linear map
$\delta w\mapsto D_2B(0,0)\delta w$ from $W_m$ to $W_m$ is a contraction for every $m\geq 0$, so that
$$I-D_2B(0, 0):W\to W$$
is an $\ssc$-isomorphism.  Therefore, the $\ssc$-operator
\begin{equation}\label{operator}
\begin{gathered}
\R^n\oplus W\to \R^N\oplus W\\
(\delta r, \delta w)\mapsto (0, \delta w- D_2B(0,0 )\delta w)
\end{gathered}
\end{equation}
is an $\ssc$-Fredholm operator whose kernel is equal to $\R^n\oplus \{0\}$ and whose cokernel is  equal to $\R^N\oplus \{0\}$, so that the Fredholm index is equal to $n-N$. We shall show that the linearized $\ssc$-operator is an $\ssc^+$-perturbation of the operator
\eqref{operator}.
Indeed, since $r\mapsto Pg(r, 0)$ is an $\ssc^+$-section, its linearization $\delta r\mapsto D_1B(0, 0)\delta r$ is an $\ssc^+$-operator. Moreover, because $I-P$ has its image in a
finite-dimensional smooth subspace, the operator
$
(\delta r,\delta w)\rightarrow (I-P)Dg(0,0)(\delta r,\delta w)
$
is also an $\ssc^+$-operator.  By Proposition 2.11 in \cite{HWZ2}, the perturbation of an
$\ssc$-Fredholm operator by an $\ssc^+$-operator is again an $\ssc$-Fredholm operator. Therefore, we conclude that the operator $Df(0, 0):\R^n\oplus W\to \R^N\oplus W$ is an $\ssc$-Fredholm operator. Because  an $\ssc^+$-operator is compact if considered on the same level, the Fredholm  index is unchanged and $\ind (g, 0)=n-N$ as claimed.
\end{proof}
Finally, we are in the position to introduce the crucial concept of  a
polyfold Fredholm section.  
\begin{defn}\label{defoffred}
Let $p:Y\rightarrow X$ be a fillable strong M-polyfold bundle. An
$\ssc$-smooth section $f$  of the bundle $p$ is called a {\bf
(polyfold) Fredholm section} if it possesses the following two
properties
\begin{itemize}
\item[$\bullet$] {\em  ({\bf Regularization property})} The section $f$ is regularizing according to Definition \ref{defreg}.
\item[$\bullet$]  {\em  ({\bf Basic germ property}) } For every smooth $q\in X$,  there {\bf exists a
filled version}  $\bar{\bf f}:\wh{O}\rightarrow F$  whose  germ
$[\bar{\bf f},0]$ is strongly equivalent to an element in
$\mathfrak{C}_{basic}$.
\end{itemize}
\end{defn}

In order to keep the notation short, we shall skip the word ``polyfold'' and refer to these sections simply as to Fredholm sections.

As we shall prove next,  a linearization $f'_{[s]}(q):T_qX\rightarrow
Y_q$ of a Fredholm section at a smooth point $q\in X$,  is an $\ssc$-Fredholm
operator.

\begin{defn}
 If $q\in X$ is a smooth point,  we call a section germ $[f,q]$ of a fillable strong
 M-polyfold bundle a {\bf  Fredholm germ}, if  $f$ is regularizing locally near $q$ and a filled
version $[\bar{\bf f},0]$ is strongly equivalent to an  element in
$\mathfrak{C}_{basic}$.
\end{defn}

\begin{prop}\label{prop5.9}
Let $[f,q]$ be a Fredholm germ of the fillable strong M-polyfold bundle
$p:Y\rightarrow X$ at the smooth point $q$. Assume a filled version
$$
\bar{\bf f}:{\mathcal O}(\wh{O},0)\rightarrow F
$$
is equivalent to the basic element $g$ in $\mathfrak{C}_{basic}$
given by
$$
g:{\mathcal O}(([0,\infty)^k\oplus {\mathbb R}^{n-k})\oplus
W,0)\rightarrow {\mathbb R}^N\oplus W.
$$
Then any linearization $f'_{[s]}(q):T_qX\rightarrow Y_q$ is an
$\ssc$-Fredholm operator and the Fredholm index
$\ind(f,q):=i(f'_{[s]}(q))$  satisfies
$$
\ind(f,q)=n-N.
$$
\end{prop}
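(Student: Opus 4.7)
The proof is essentially a chain of reductions using machinery already in place: (basic element) $\to$ (filled version) $\to$ (section in a chart) $\to$ (section on $X$), combined with the index invariance of $\ssc^+$-perturbations.

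First, I would apply Proposition \ref{computation} directly to $g$. Since $g\in \mathfrak{C}_{basic}$, its linearization at $0$ is an $\ssc$-Fredholm operator with index $n-N$, and this value does not depend on the choice of $\ssc^+$-germ used to kill $g(0,0)$. Next, because $[\bar{\bf f},0]$ is strongly equivalent to $[g,0]$, Proposition \ref{propA} yields that $[\bar{\bf f},0]$ is also linearized Fredholm at $0$ with the same index $n-N$. In particular, for any $\ssc^+$-germ $\bar s$ with $\bar s(0)=\bar{\bf f}(0)$, the operator $D(\bar{\bf f}-\bar s)(0):V\oplus E\to F$ is sc-Fredholm of index $n-N$.

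The next step is to transport this statement from the filled version down to the section germ $[f,q]$. Let $\Phi:Y|U\to K^{\mathcal R}|O$ be the strong bundle chart covering the sc-diffeomorphism $\varphi:U\to O$ with $\varphi(q)=0$, and let $f_{\text{loc}}:=\Phi_\ast f$ be the corresponding section germ of $K^{\mathcal R}\to O$. The filler construction recalled in Section \ref{subsection3.2} gives the matrix decomposition
\[
D\bar{\bf f}(0,0)=\begin{bmatrix} f_{\text{loc}}'(0)& 0\\ 0 & C\end{bmatrix}
\]
on $(W\oplus E^+)\oplus E^-\to F^+\oplus F^-$, where $C$ is an $\ssc$-isomorphism. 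Applying the same identity to the $\ssc^+$-perturbation produced by any $\ssc^+$-germ $s$ with $s(q)=f(q)$ (and its filled version $\bar s$), one concludes that $f_{\text{loc}}'_{[s]}(0)$ is an $\ssc$-Fredholm operator if and only if $D(\bar{\bf f}-\bar s)(0)$ is, and the two have the same Fredholm index. Combining with the previous paragraph gives $\ind(f_{\text{loc}}, 0)=n-N$.

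Finally, the strong bundle chart $\Phi$ identifies, via its derivative at the smooth point $q$, the tangent space $T_qX$ with $T_{(0,0)}O$ and the fiber $Y_q$ with the fiber of $K^{\mathcal R}$ at $(0,0)$; under this identification $f'_{[s]}(q)$ is conjugate to $f_{\text{loc}}'_{[s]}(0)$, so it is sc-Fredholm with the same index $n-N$. Independence of the choice of the $\ssc^+$-germ $s$ is precisely the statement recalled just before Proposition \ref{propA}. The main conceptual point here is not any one step but the bookkeeping of sc-Fredholm indices across the three layers (chart, filler, basic form); each individual passage is handled by an already-established result, so the only work is to check that these results compose coherently, which the matrix form of $D\bar{\bf f}(0,0)$ makes essentially automatic.
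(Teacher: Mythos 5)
Your argument is correct and follows essentially the same route as the paper: Proposition \ref{computation} for the basic element, Proposition \ref{propA} for invariance under strong equivalence, and the passage between the section germ and its filled version for the final reduction. The only difference is cosmetic: where the paper simply cites Proposition 4.16 of \cite{HWZ2} for the equivalence of the Fredholm property and index between $f'_{[s]}(q)$ and the linearized filled version, you re-derive it from the block-diagonal matrix form of $D\bar{\bf f}(0,0)$ recalled in Section \ref{subsection3.2}, which is the same content.
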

\begin{proof}
From Proposition 4.16  in \cite{HWZ2} we know  that a linearization
$f'_{[s]}(q)$ is an $\ssc$-Fredholm operator if and only if  the linearization $\bar{\bf
f}'_{[t]}(0)$ of a filled version is an $\ssc$-Fredholm operator. The equivalence relation $\sim$  in Definition \ref{stronglyeq} respects the Fredholm
property as well as the Fredholm index. Hence the result follows from Proposition
\ref{computation}.
\end{proof}
\subsection{Stability of Fredholm sections}
Let us assume that $p:Y\rightarrow X$ is a fillable strong
M-polyfold bundle. Then $p^1:Y^1\rightarrow X^1$ is also a fillable
strong M-polyfold bundle. The filtrations are defined by ${(X^1)}_m=X_{m+1}$ and 
${(Y^1)}_{m,k}=Y_{m+1,k+1}$  for $m\geq 0$ and $0\leq k\leq m+1$ . We denote by
${\mathcal F}(p)$ the collections of all Fredholm sections of the
bundle $p$  and by
$\Gamma^+(p)$ the vector space of all $\ssc^+$-sections.
The main result in this subsection is the following
stability theorem for Fredholm sections.

\begin{thm}\label{prop5.21}
Let $p:Y\rightarrow X$ be a fillable strong
M-polyfold bundle. If $f\in {\mathcal F}(p)$ is a Fredholm section of $p$ and $s\in
\Gamma^+(p)$ an $\ssc^+$-section,  then $f+s$ is a Fredholm section of the bundle $p^1:Y^1\to X^1$, i.e., $f+s\in{\mathcal F}(p^1)$,  and the Fredholm
indices  satisfy
$$
\ind(f,q)=\ind(f+s,q)
$$
for every smooth point $q\in X$.  Further, if $s_1,...,s_k\in\Gamma^+(p)$, then the map
$$
F:{\mathbb R}^k\oplus X\rightarrow Y, \quad F(\lambda,x)=f(x)+\sum_{j=1}^k
\lambda_j s_j(x)
$$
is a  Fredholm section of  the bundle $(r^\ast Y)^1\rightarrow
({\mathbb R}^k\oplus X)^1$, where the map $r:{\mathbb R}^k\oplus
X\rightarrow X$ is the projection and $r^\ast Y$ the pull-back
bundle. Moreover,
$$
\ind(F,(0,q))=\ind(f,q) +k.
$$
\end{thm}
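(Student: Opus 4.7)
The argument splits naturally into verifying the regularizing property of $f+s$ on $p^1$, verifying the basic germ property, and then handling the index and parameterized statements; the main difficulty lies in the basic germ property.

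The regularizing property is a one-line check: if $q\in X^1_m=X_{m+1}$ and $(f+s)(q)\in Y^1_{m,m+1}=Y_{m+1,m+2}$, then since $s$ is $\ssc^+$ and $q\in X_{m+1}$ we have $s(q)\in Y_{m+1,m+2}$, hence $f(q)=(f+s)(q)-s(q)\in Y_{m+1,m+2}$, and the regularizing property of $f$ forces $q\in X_{m+2}=X^1_{m+1}$.

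For the basic germ property, fix a smooth point $q$ and a filled version $\bar{\bf f}$ of $f$ near $q$ that is strongly equivalent to a basic germ $g(r,w)=(g_1(r,w),w-B(r,w))$. Since the filler is a property of the bundle alone and not of the section, the filled version of $f+s$ is $\bar{\bf f}+\hat s$, where $\hat s$ is the local principal part of $s$ composed with the projection $r:\hat O\to O$. Push-forward is additive in the fiber, so the same strong equivalence carries $\bar{\bf f}+\hat s$ to $g+\tilde s$ with $\tilde s$ still an $\ssc^+$-germ. It remains to produce a further strong equivalence converting $g+\tilde s$ into an element of $\mathfrak{C}_{basic}$ on the shifted sc-structure $X^1$. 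The contraction $B$ automatically satisfies the arbitrary-$\epsilon$ condition of Definition \ref{def5.1} at the shifted levels; the obstruction is that the $W$-component $\tilde s_2$ has only bounded, not arbitrarily small, Lipschitz constant at level $W_{m+1}$. Using that $D_2\tilde s_2(0,0):W\to W$ is an $\ssc^+$-operator and therefore compact at each fixed level, I would construct a fiber-linear strong bundle isomorphism---together, if needed, with an enlargement of the $\R^N$ factor to absorb any finite-dimensional Fredholm obstruction coming from the spectral decomposition of $I+D_2\tilde s_2(0,0)$---that puts the sum into contraction normal form. Carrying out this change of coordinates compatibly across all sc-levels is the main technical point; it reflects the classical fact that compact perturbations of the identity are Fredholm and admit local normal forms of contraction type after finite-dimensional reduction.

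The index equality $\ind(f,q)=\ind(f+s,q)$ is then immediate: the linearizations differ by an $\ssc^+$-operator, which preserves the Fredholm property and index by Proposition 2.11 of \cite{HWZ2}. For the parameterized version, at the smooth point $(0,q)$ a filled version of $F$ is obtained by adjoining the trivial factor $\R^k$ to the base of a filled version of $f$, so after strong equivalence the leading-order term has a basic germ form with $n+k$ finite-dimensional base coordinates and the same $\R^N$ factor. Applying the stability argument above to the $\ssc^+$-perturbation $\sum\lambda_j s_j(x)$ then shows that $F$ is a Fredholm section of $(r^\ast Y)^1$, and the formula $\ind(F,(0,q))=\ind(f,q)+k$ follows at once from Proposition \ref{computation}, since the finite-dimensional base dimension has grown by $k$ while the cokernel direction is unchanged.
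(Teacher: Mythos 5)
Your strategy is the paper's own: pass to a filled contraction normal form for $f$, note that the fiber part of the $\ssc^+$-perturbation has compact linearization $A=PD_2s(0)$ in the $W$-direction so that $1+A$ is an sc-Fredholm operator of index $0$, and then recover a contraction normal form by a finite-dimensional reduction; the regularization check, the index equality via the $\ssc^+$-difference of linearizations, and the treatment of the parameterized section all match the paper (the last is there left to the reader as well).

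The gap is that the step carrying all of the weight is only announced (``I would construct a fiber-linear strong bundle isomorphism \dots''), not performed, and its description is incomplete in one essential respect. Writing $P[(f+s)(a,w)-(f+s)(0,0)]=(1+A)w-\ov{B}(a,w)$ with $\ov{B}$ a contraction in $w$ with arbitrarily small constant on levels $m\geq 1$, one must introduce the sc-splittings $W=C\oplus X$ with $C=\ker(1+A)$ and $W=R\oplus Z$ with $R$ the range, invert $L=(1+A)|X:X\to R$, and, crucially, re-apportion the \emph{base} variable: the kernel directions $C$ must be adjoined to the finite-dimensional parameter block of the germ (new parameters $(a,(1-P_1)w)$), leaving only $P_1w\in X$ as the contraction variable, while the finite-dimensional target is enlarged from $\R^N$ to $\R^N\oplus C$ via an isomorphism $\tau:Z\to C$. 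One must then check that $\wh{B}\bigl((a,(1-P_1)w),P_1w\bigr)=L^{-1}P_2\ov{B}(a,w)$ is again an $\ssc^0$-contraction germ with arbitrarily small constants on every level $m\geq 1$, and that this normal form is realized by a germ of strong bundle isomorphism in the sense of Definition \ref{stronglyeq}, namely the fiber map $\Psi(\delta a\oplus\delta w)=\delta a\oplus L^{-1}P_2\delta w\oplus\tau(1-P_2)\delta w$ covering the linear sc-diffeomorphism $\psi(a,w)=(a,(1-P_1)w,P_1w)$, so that the push-forward of $f+s$ lies literally in $\mathfrak{C}_{basic}$ as required by Definitions \ref{defn3.4} and \ref{defoffred}. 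Your sketch mentions only an ``enlargement of the $\R^N$ factor''; it never states that the domain's parameter block must grow by $C$ as well (without this no strong equivalence to a basic germ is possible, since the contraction variable has to range over a space on which the linear part is invertible), nor does it verify the smallness of the new contraction constants or the admissibility of the coordinate change across the bi-filtration. An appeal to the classical fact that compact perturbations of the identity admit normal forms after finite-dimensional reduction does not substitute for this sc-level construction; supplying it is precisely the content of the paper's proof.
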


\begin{proof}
In view of $s\in \Gamma^+(p)$, with $f$ also the section $f+s$ has  the regularizing property. Since
 under the push-forwards of strong bundle maps  the  $\ssc^+$-sections stay
$\ssc^+$-sections,  it is sufficient to prove the result in a very
special situation. Namely,  we may assume that $f$ is already filled
and has the contraction normal form. More precisely, we assume that
the section $f$ is of the form
$$
f: O\subset ([0,\infty)^k\oplus {\mathbb R}^{n-k})\oplus
W\rightarrow {\mathbb R}^N\oplus W,
$$
where $O$ is a relatively open neighborhood of $(0, 0)$ which corresponds to $q$.
With the projection $P:{\mathbb R}^N\oplus W\rightarrow W$, the expression
$$
P[f(v,w)-f(0,0)]=w-B(v,w)
$$
has the contraction germ property near
$(0,0)$.

The linearization of the section $s$ with respect to the variable
$w\in W$ at the point $0$ is denoted by $D_2s(0)$. Since $s$ is an
$\ssc^+$-section and since the spaces  $W_{m+1}\subset W_m$  are
compactly embedded, the linear operator $D_2s(0):W_m\to \R^N\oplus
W_m$ is a compact operator for every level $m\geq 0$. Therefore,
introducing $A:=PD_2s(0):W\to W$ and using Proposition 2.11 in \cite{HWZ2},  the operator $1+A: W\to W$ is an sc-
Fredholm operator (as defined in Definition 2.8 in \cite{HWZ2}) of index $0$. The associated
sc-decompositions of $W$ are as follows,
$$1+A:W=C\oplus X\to W=R\oplus Z,$$
where $C=\text{ker}(1+A)$ and $R=\text{range}\ (1+A)$ and
$\text{dim}\ C=\text{dim}\ Z<\infty$.  Because  the section $s$ is
of class $C^1$ on every level $m\geq 1$, one deduces the
representation
$$
P[s(a, w)-s(0,0)]=Aw+S(a, w)
$$
where $D_2S(0,0)=0$. Hence $S$ is a contraction
(with arbitrarily small contraction constant)  with respect to the second
variable on every level $m\geq 1$ if $a$ and $w$ are sufficiently
small depending on the level $m$ and the contraction constant.  We can write
\begin{equation*}
\begin{split}
P[ (f+s)(a, w)-(f+s)(0,0) ]&=w-B(a, w)+Aw+S(a, w)\\
&=(1+A)w-[ B(a, w)-S(a, w) ]\\
&=(1+A)w-\ov{B}(a, w),
\end{split}
\end{equation*}
where we have abbreviated
$$\ov{B}(a, w)=B(a, w)-S(a, w).$$
By assumption,  the map $B$ belongs to the $\ssc^0$-contraction germ and hence the map $\ov{B}$ is a contraction in the second
variable on every level $m\geq 1$ with arbitrary small contraction
constant $\varepsilon>0$ if $a$ and $w$ are sufficiently small
depending on the level $m$ and the contraction constant $\varepsilon$.  Denoting the canonical projections by
\begin{align*}
P_1&:W=C\oplus X\to X\\
P_2&:W=R\oplus Z\to R,
\end{align*}
we abbreviate the map
\begin{equation*}
\begin{split}
\varphi (a, w)&:=P_2\circ P\circ[(f+s)(a,w)-(f+s)(0,0)]\\
&=P_2  [(1+A)w-\ov{B}(a, w)]\\
&=P_2[ (1+A)P_1 w- \ov{B}(a, w)].
\end{split}
\end{equation*}
We have used the relation  $(1+A)(1-P_1)=0$. The operator
$L:=(1+A)\vert X:X\to R$ is an sc-isomorphism. In view of
$L^{-1}\circ P_2\circ (1+A)P_1w=P_1w$, we obtain the formula
$$
L^{-1}\circ \varphi (a, w)=P_1w-L^{-1}\circ P_2\circ \ov{B}(a, w).$$

\noindent Writing $w=(1-P_1)w\oplus P_1w$,  we  shall consider $(a,
(1-P_1)w)$ as our  new parameter  and
correspondingly  define the map $\wh{B}$ by
$$
\wh{B}((a, (1-P_1)w),
P_1w)=L^{-1}\circ P_2\circ  \ov{B}(a, (1-P_1)w+P_1w).
$$
Since $\ov{B}(a, w)$ is a contraction in the second variable on every level $m\geq 1$ with arbitrary small contraction constant if $a$ and $w$ are sufficiently small depending on the level $m$ and the contraction constant,  the right hand side of
$$
L^{-1}\circ \varphi (a, (1-P_1)w+P_1w)=P_1w-\wh{B}(a, (1-P_1)w, P_1w)
$$
possesses the required  contraction normal
form with respect to the variable $P_1w$ on all levels $m\geq 1$,
again if $a$ and $w$ are small enough depending on $m$ and the contraction constant. 

 It remains
to prove that the above normal form is the result of an admissible
coordinate transformation of the perturbed section $f+s$. Choose a
linear isomorphism $\tau :Z\to C$ and define the fiber
transformation $\Psi:\R^N\oplus W\to \R^N\oplus X\oplus C$ by
\begin{equation*}
\begin{split}
\Psi (\delta a\oplus \delta w):= \delta a \oplus L^{-1}\circ P_2 \cdot \delta w \oplus  \tau  \circ (1-P_2)\cdot \delta w.
\end{split}
\end{equation*}
We shall view $\Psi$ as a strong bundle map covering the
sc-diffeomorphism $\psi:V\oplus W\to V\oplus C\oplus X$ defined by $\psi (a,
w)=(a, (1-P_1)w, P_1w)$ where $V=[0,\infty )^k\oplus \R^{n-k}.$ With
the canonical projection
\begin{gather*}
\ov{P}: (\R^N\oplus C)\oplus X\to X\\
\ov{P}(a\oplus (1-P_1)w\oplus P_1w)=P_1w,
\end{gather*}
and the relation  $\ov{P}\circ \Psi \circ (1-P)=0$,  we obtain the
desired formula
\begin{gather*}
\ov{P}\circ \Psi [(f+s)\circ \psi^{-1} (a, (1-P_1)w, P_1w)-(f+s)\circ \psi^{-1}(0,0, 0)]\\
=P_1w-\wh{B}(a, (1-P_1)w, P_1w).
\end{gather*}
We have proved that $[f+s, q]$ is a strong polyfold Fredholm germ of the
bundle $[b^1,q]$. Since at $0$ the linearisations of $f$ and $f+s$
only differ by an $\ssc^+$-operator the Fredholm indices are the
same.

The second part of the theorem follows along the same lines and is
left to the reader. The proof of Theorem \ref{prop5.21} is complete.
\end{proof}

In summary, starting with a fillable strong M-polyfold bundle
$p:Y\rightarrow X$ and a Fredholm section $f$
of $p$ we can consider the set of perturbations $\{f+s\ |\
s\in\Gamma^+(p)\}$. These are all Fredholm sections,   not of
the bundle $p$ but of the bundle of $p^1:Y^1\rightarrow X^1$. As we
shall see later,  most of these sections $f+s$ are transversal to the
zero section in the sense that at a zero $q$, which by the
regularizing property has to be smooth,  the linearization
$$
(f+s)'(q):T_qX^1\rightarrow Y_q^1
$$
is a surjective  sc-Fredholm operator. We shall also see that the solution set
near $q$ admits the structure of a smooth finite-dimensional
manifold in a natural way. If the underlying M-polyfold $X$ has a
boundary,  a generic $\ssc^+$-perturbation $s$ will not only make the
section $f+s$ transversal to the zero-section but it will also put
the solution set of $f+s=0$ into a general position to the boundary $\partial X$
so that the solution set is a smooth manifold with boundary with corners.

\section{Local Solutions of Fredholm Sections}\label{lsfrsect}
In this section we shall show that a Fredholm section $f$ of a fillable
strong M-polyfold bundle $p:Y\rightarrow X$ near a solution $q$ of
$f(q)=0$ has a smooth solution manifold provided the linearization
$f'(q):T_qX\rightarrow Y_q$ is surjective. We first consider the  case in which the  solution $q$ of $f(q)=0$ does not belong to the boundary $\partial X$ of the M-polyfold $X$. The boundary case will be studied in section \ref{boundarycase}. The boundary $\partial X$ of the M-polyfold $X$ is defined  by means of  the degeneracy index $d:X\to \N$ 
introduced in Section 3.4 of \cite{HWZ2}.

For the convenience of the reader we therefore  recall first the definition of the degeneracy index and the definition of a face of the M-polyfold $X$.
Around a point $x\in X$ we take a
M-polyfold chart $\varphi:U\rightarrow K^{\mathcal S}$ where
$K^{\mathcal S}$ is the splicing core associated with the splicing
${\mathcal S}=(\pi,E,V)$. Here $V$ is an open subset of a partial
quadrant $C$ contained in the sc-Banach space $W$. By definition
there exists a linear isomorphism from $W$ to ${\mathbb R}^n\oplus
Q$ mapping  $C$ onto $[0,\infty)^n\oplus Q$.  Identifying the
partial quadrant $C$ with $[0,\infty )^n\oplus Q$ we shall use the
notation $\varphi=(\varphi_1,\varphi_2)\in [0,\infty)^n \oplus
(Q\oplus E)$ according to the splitting of the target space of
$\varphi$. We associate with the point $x\in U$ the integer  $d(x)$
defined by
\begin{equation*}\label{di}
d(x)=\sharp \{\text{coordinates of $\varphi_1(x)$ which are equal to
$0$}\}.
\end{equation*}
By Theorem 3.11 in \cite{HWZ2}, the integer $d$ does not depend on the choice of the M-polyfold chart used. A point $x\in X$ satisfying $d(x)=0$ is called an interior point of $X$. The set $\partial X$ of the boundary points of $X$ is defined as
$$\partial X=\{x\in X\vert \, d(x)>0\}.$$
A point $x\in X$ satisfying $d(x)=1$ is called a {\bf good boundary point}. A point satisfying $d(x)\geq 2$ is called a corner and $d(x)$ is the order of this corner.

\begin{defn}\label{dindex}
The closure of a connected component of the set $X(1)=\{x\in X\vert \, d(x)=1\}$ is called a {\bf face} of the M-polyfold $X$.
\end{defn}

Around every point $x_0\in X$ there exists an open neighborhood $U=U(x_0)$ so that every $x\in U$ belongs to precisely $d(x)$ many faces of $U$. This is easily verified. Globally it is always true that $x\in X$ belongs to at most $d(x)$ many faces and the strict inequality is possible.

\subsection{Good Parameterizations}\label{goodpar}
We assume that ${\mathcal R}=(\rho, F, (O, {\mathcal S}))$ is a fillable strong
bundle splicing. The splicing ${\mathcal S}$ is  the triple ${\mathcal S}=(\pi, E, V)$. 
We begin with the case in which  {\bf  $V$ is an open subset of  the sc-Banach
space $G$}. This is equivalent to requiring that the open set $O$ in
the splicing core $K^{\mathcal S}\subset G\oplus E$ has no boundary,
i.e., that the degeneracy index vanishes.  Associated with the
splicing ${\mathcal R}$ is the splicing core $K=K^{\mathcal R}=
\{((v, e), u)\in O\oplus F\vert \, \rho_{(v, e)}(u)=u\}$ and the local
strong M-polyfold bundle
$$
b:K\to O.
$$
Now we study the linearized Fredholm section $f$ of the bundle $b$. Given a
smooth point $q\in O$ satisfying $f(q)=0$,  we can linearize the
section $f$ at this point and obtain the linear sc-Fredholm operator
$$
f'(q):T_qO\to K_q:=b^{-1}(q).
$$
Its kernel $N=\text{ker}\ f'(q)$ is  finite dimensional and consists
of smooth points in view of Proposition 2.9 in \cite{HWZ2}.  Hence according to Proposition 2.7 in \cite{HWZ2}, the kernel $N$ possesses an $\ssc$-complement. Any $\ssc$-complement of $N$ in $G\oplus E$ will be
denoted by $N^{\perp}$, so that
$$
N\oplus N^{\perp}=G\oplus E.
$$
The  following definition  of a good parametrization 
will be useful in the description of the
zero set $\{f=0\}$ of a Fredholm section $f$ of a fillable M-polyfold bundle $b:K\to O$ near a zero $q$ at which the linearization $f'(q)$ is surjective. 

In view of the traditional approach to Fredholm theory, the definition might look  at first sight like what ought to be the consequences of the assumption that the regularizing section has at every zero $q$ a linearization $f'(q)$ which is a surjective Fredholm operator. However, as we shall 
see later on (Proposition \ref{newtheoremA}) the existence of a good parametrization requires much stronger hypotheses  than simply linearized Fredholm and  $f'(q)$ surjective,  because in our setting of M-polyfolds we do not have the familiar implicit function theorem available in order to deduce the behavior of $f$ near a point $q$ from the properties of its linearization $f'(q)$ at the point $q$. This is the reason for the introduction of the new concept of a Fredholm section in Definition \ref{defoffred}.

\begin{defn}\label{newdefinition5.5.1}
Consider a local strong bundle $b:K\rightarrow O$ where $O$ has no
boundary. Assume that the section   $f$  of the  bundle $b$  is 
regularizing and at every solution $p\in O$ of $f(p)=0$ the section $f$ is linearized Fredholm and its  linearization 
$f'(p):T_pO\to K_p$ is surjective.
If  $q\in O$
is a solution of $f(q)=0$, the section $f$ is said to have a  {\bf
good parametrization of its solution set  near $\mathbf{q}$}, if
there exist an open neighborhood $Q\subset N$ of $0$ in the kernel
$N$ of $f'(q)$, an open neighborhood $U(q)$ in $O$, and an
sc-smooth map
$$
A:Q\rightarrow N^\perp
$$
into a complement $N^\perp$ such that the following holds true.
\begin{itemize}
\item[(1)]  $A(0)=0$ and $DA(0)=0$.
\item[(2)] The map
$$
\Gamma:Q\rightarrow G\oplus E, \quad n\mapsto q+n+A(n)
$$
has its image in $U(q)$ and parameterizes all solutions $p$ in
$U(q)$ of the equation $f(p)=0$.
\item[(3)]  If $p=\Gamma (n_0)\in U(q)$ is a solution of $f(p)=0$,  then the map
\begin{gather*}
\ker  f'(q)\rightarrow \ker f'(p)\\
\delta n\mapsto  \delta n+ DA(n_0)\delta n
\end{gather*}
is a linear isomorphism.
\end{itemize}
The map $\Gamma$ is called a  {\bf good parametrization of the zero
set near $\mathbf{q}$}.
\end{defn}

\begin{figure}[htbp]
\mbox{}\\[2ex]
\centerline{\relabelbox \epsfxsize 2.8truein \epsfbox{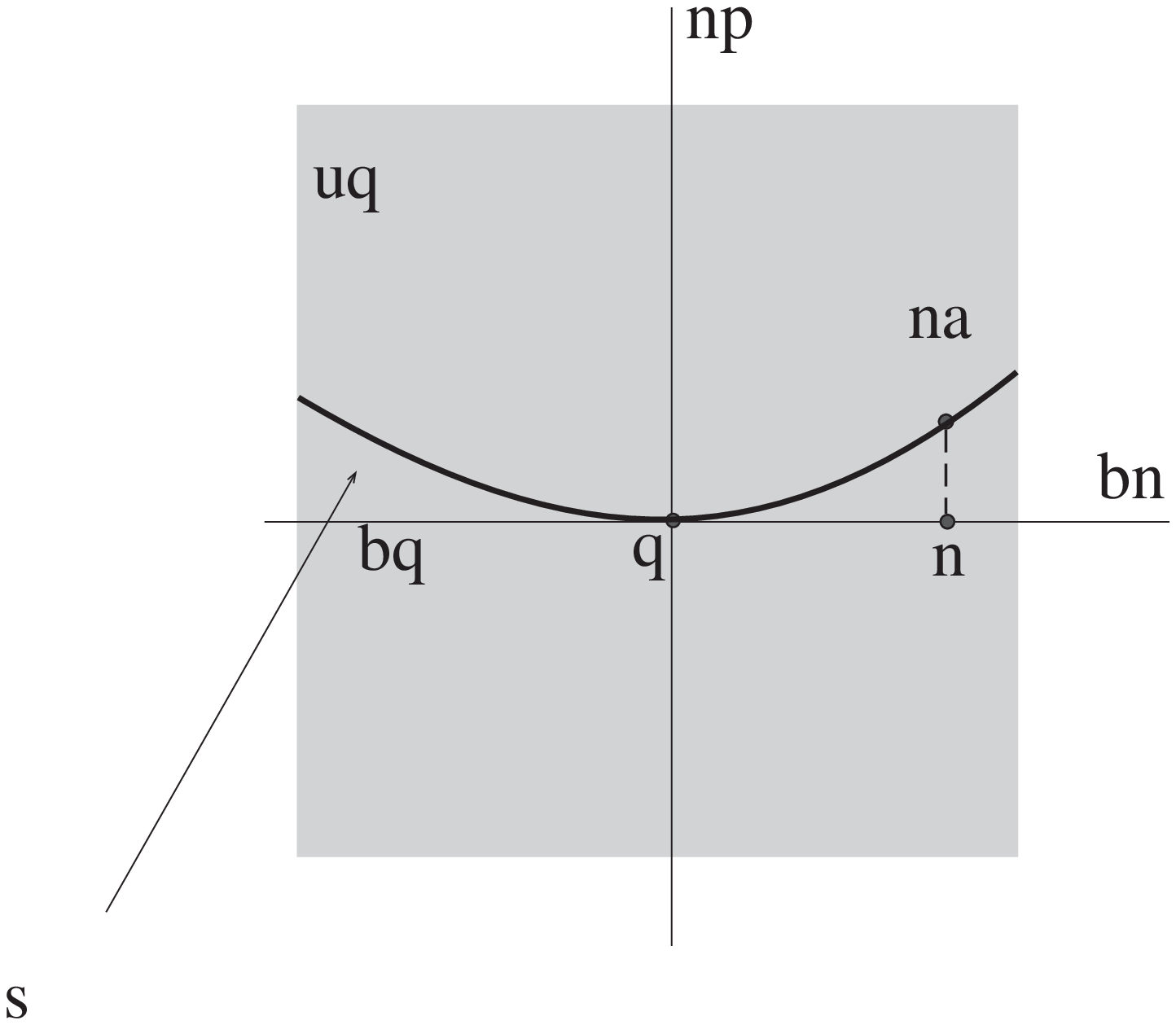}
\relabel {bq}{$Q$} \relabel {q}{$q$} \relabel {uq}{$U(q)$} \relabel
{n}{$n$} \relabel {bn}{$N$} \relabel {np}{$N^{\perp}$} \relabel
{na}{$n+A(n)$} \relabel {s}{$\small{\{ p\in U(q)\vert \,
f(p)=0\}=\Gamma (Q).}$}
\endrelabelbox}\label{Fig5.1}
\end{figure}

Every solution $q$ of the above equation $f(q)=0$ is smooth since
$f$ is assumed to be regularizing. The kernel $N=\text{ker}\ f' (q)$
at the solution is finite dimensional since $f$ is linearized
Fredholm on the solution set and can be identified
with the tangent space of the solution set at the point $q$.

The following propositions describe the  ``calculus of good
parameterizations". We  shall assume in  the following that
$b:K\rightarrow O$ is a local strong bundle and $f$ an sc-smooth
section which is regularizing and linearized sc-Fredholm at all
solutions of $f=0$.

\begin{prop}\label{newproposition5.4}
If\,  $q\in O$ is a zero of the section $f$ and if  there exists a good
parametrization $\Gamma:Q\rightarrow G\oplus E$ around $q$, where $Q$
is an open neighborhood of the origin in  the kernel $N$ of $f'(q)$,
then given any zero $q_0=\Gamma(n_0)$ of $f$ in $U(q)$, there exists
a good parametrization around $q_0$.
\end{prop}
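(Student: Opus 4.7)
The plan is to reparametrize the existing good parametrization $\Gamma(n)=q+n+A(n)$ in a neighborhood of $n_0$ so that the new parameter lives in the kernel $N_0:=\ker f'(q_0)$ rather than in $N=\ker f'(q)$. Since property (2) of $\Gamma$ already identifies the solution set near $q_0=\Gamma(n_0)$ with the image of $\Gamma$ on some neighborhood of $n_0$, the whole construction will reduce to changing coordinates on a finite-dimensional kernel.

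First, fix an sc-complement $N_0^{\perp}$ of $N_0$ in $G\oplus E$ with associated projection $\pi_0:G\oplus E\to N_0$. Define
\[
\mu:\mo(N,0)\to N_0,\qquad \mu(m):=\pi_0\bigl(\Gamma(n_0+m)-q_0\bigr)=\pi_0\bigl(m+A(n_0+m)-A(n_0)\bigr).
\]
A direct computation gives $D\mu(0)m=\pi_0\bigl(m+DA(n_0)m\bigr)$. By property (3) of the original good parametrization, the map $m\mapsto m+DA(n_0)m$ is a linear isomorphism from $N$ onto $N_0$; in particular its image lies in $N_0$, on which $\pi_0$ acts as the identity, so $D\mu(0):N\to N_0$ is itself a linear isomorphism. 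Because $N$ and $N_0$ are finite-dimensional, the classical inverse function theorem produces an sc-smooth (equivalently, $C^\infty$) local inverse $\psi:\mo(N_0,0)\to N$ with $\psi(0)=0$.

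Now set
\[
A_0:\mo(N_0,0)\to N_0^{\perp},\qquad A_0(\tilde n):=(1-\pi_0)\bigl(\Gamma(n_0+\psi(\tilde n))-q_0\bigr),
\]
and $\Gamma_0(\tilde n):=q_0+\tilde n+A_0(\tilde n)$. By the definitions of $\mu$, $\psi$ and $A_0$, splitting the displacement $\Gamma(n_0+\psi(\tilde n))-q_0$ along $N_0\oplus N_0^{\perp}$ shows
\[
\Gamma_0(\tilde n)=\Gamma(n_0+\psi(\tilde n)),
\]
so $\Gamma_0$ parametrizes exactly the zeros of $f$ in a (smaller) neighborhood $U(q_0)\subset U(q)$. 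Clearly $A_0(0)=0$, and differentiating at $0$ gives $DA_0(0)\tilde n=(1-\pi_0)\bigl((I+DA(n_0))\,D\psi(0)\tilde n\bigr)$; since $(I+DA(n_0))D\psi(0)\tilde n\in N_0$ by property (3) of $\Gamma$, this quantity is annihilated by $1-\pi_0$, hence $DA_0(0)=0$. This verifies (1) and (2).

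For (3), let $q_1=\Gamma_0(\tilde n_1)=\Gamma(n_0+\psi(\tilde n_1))$ be a zero in $U(q_0)$. Then
\[
\delta\tilde n+DA_0(\tilde n_1)\delta\tilde n=D\Gamma_0(\tilde n_1)\delta\tilde n=D\Gamma\bigl(n_0+\psi(\tilde n_1)\bigr)\bigl[D\psi(\tilde n_1)\delta\tilde n\bigr],
\]
which is the composition of the isomorphism $D\psi(\tilde n_1):N_0\to N$ with the isomorphism $N\to\ker f'(q_1)$, $\delta n\mapsto \delta n+DA(n_0+\psi(\tilde n_1))\delta n$, guaranteed by property (3) of $\Gamma$. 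Hence the resulting map $\ker f'(q_0)\to\ker f'(q_1)$ is a linear isomorphism. There is no genuine obstacle here: the only thing one has to watch is that $N^{\perp}$ and $N_0^{\perp}$ need not coincide, which is precisely why one projects the displacement through $\pi_0$ before inverting $\mu$.
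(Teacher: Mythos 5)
Your proposal is correct: every step checks out, including the key use of property (3) of $\Gamma$ to see that $D\mu(0)=\pi_0\circ(I+DA(n_0))|_N$ is an isomorphism onto $N_0$, and the chain-rule computation showing $DA_0(0)=0$ and that $\delta\tilde n\mapsto\delta\tilde n+DA_0(\tilde n_1)\delta\tilde n$ factors as a composition of two isomorphisms. The route differs from the paper's in one interesting way. The paper keeps the \emph{original} complement $N^\perp$: since $M=\ker f'(q_0)=\{\delta n+DA(n_0)\delta n\}$ is the graph of a linear map $N\to N^\perp$, one still has $M\oplus N^\perp=G\oplus E$, so the reparametrization is purely linear, $k\mapsto n_0+\sigma(k)$ with $\sigma=(I+DA(n_0))^{-1}:M\to N$, and the new corrector is $B=\Delta\circ\sigma$ with $\Delta$ the Taylor remainder of $A$ at $n_0$; no implicit or inverse function theorem is invoked. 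You instead fix an \emph{arbitrary} sc-complement $N_0^\perp$ of $\ker f'(q_0)$ and invert the nonlinear projected map $\mu$ by the classical finite-dimensional inverse function theorem, which is legitimate here because $\mu$ is sc-smooth between smooth finite-dimensional spaces and hence $C^\infty$ (the same observation the paper uses in the compatibility Proposition \ref{newproposition5.6}); $A_0$ is then sc-smooth as a composition of sc-smooth maps. Your version buys the freedom to produce a good parametrization relative to any chosen complement of $\ker f'(q_0)$ (the paper's output is tied to the original $N^\perp$), at the mild cost of an extra appeal to the inverse function theorem; the paper's version is more elementary, being a linear change of coordinates followed by a splitting of the Taylor remainder. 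Your closing remark about why one must project through $\pi_0$ before inverting is exactly the point at which the two constructions diverge.
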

\begin{proof}
By assumption, $f(q)=0$ and there is a good parametrization
$$
\Gamma:Q\rightarrow G\oplus E, \quad n\mapsto  q+n+A(n).
$$
In particular,  we have an open neighborhood $U(q)$ in $O$ so that
every solution $p\in U(q)$ of $f(p)=0$ lies in the image of $\Gamma$
and  at every solution $p \in U(q)$ the linearization
$f'(p):T_pO\rightarrow K_p$ is a surjective sc-Fredholm operator.
Now pick $n_0\in Q$ and let $q_0=\Gamma(n_0)$. Then $f(q_0)=0$. We
shall construct a good parametrization for the solutions set of $f$
around $q_0$. Denote the
projection from $G\oplus E=N\oplus N^{\perp}$ onto $N$ by $P$. The kernel $M$ 
 of $f'(q_0)$  is given by
$$
M=\{\delta n+DA(n_0)\delta n\ |\ \delta n\in N\}.
$$
If  $n=n_0+h$, then
\begin{equation*}
\begin{split}
\Gamma(n_0+h)&=q+ (n_0+h)+A(n_0+h)\\
&=q+(n_0+h) +A(n_0)+ DA(n_0)h \\
&\phantom{=}+[ A(n_0+h)-A(n_0)-DA(n_0)h]\\
&=: q_0 +(h+DA(n_0)h)+ \Delta(h),
\end{split}
\end{equation*}
where $q_0=\Gamma (n_0)$ and where  $\Delta(0)=0$ and $D\Delta(0)=0$. Observing that $\delta n \mapsto \delta n+DA(n_0)\delta n$ is a linear isomorphism
between $N$ and $M$ we take its inverse $\sigma$ and define for
$k\in M$ near $0$ the map $B:M\to N^{\perp}$ by
$$
B(k):= \Delta\circ\sigma(k).
$$
Then
$$
\Gamma\circ (n_0+\sigma(k))=q_0+k+B(k)
$$
and $M\oplus N^{\perp}=N\oplus N^{\perp}$. Hence the
reparametrization $k\rightarrow \Gamma(n_0+\sigma(k))$ of all the
solutions near $q_0$ has the desired form
$$
k\rightarrow q_0+k+B(k),
$$
where all the data are  sc-smooth by construction. Now take an open
neighborhood  $U(q_0)$ in   $U(q)$ and an open neighborhood $Q'$ of
$0\in M$  so that the map $k\rightarrow q_0+k+B(k)$ as a map from
$Q'$ into $U(q_0)$ parametrizes all solutions in $U(q_0)$. The proof of Proposition
\ref{newproposition5.4} is finished.
\end{proof}

Next we study  coordinate changes.
\begin{prop}\label{newproposition5.5}
Assume that $U(q)$ is a small open neighborhood of our zero $q$ and
$\Phi:K\vert U(q)\rightarrow K'\vert U'(q')$ is a strong bundle
isomorphism covering the diffeomorphism $\varphi$ and denote by $g=\Phi_\ast(f)$
the push-forward of the section  $f$. If the section $f$  has a good
parametrization $\Gamma_0$ near $q$, then there is  a good
parametrization $\Gamma$ for the section $g$ near $q'=\varphi  (q)$ constructed from
the good parametrization $\Gamma_0$  and the transformation $\Phi$.
\end{prop}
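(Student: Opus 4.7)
The plan is to transport the good parametrization $\Gamma_0$ of $f$ through the bundle isomorphism $\Phi$ and then straighten the result back into normal form. Set $q' = \varphi(q)$ and recall $g\circ\varphi = \Phi\circ f$; since $\Phi$ acts fiber-wise linearly, the zero sets correspond bijectively under $\varphi$, and at any zero $p$ of $f$ one has $g'(\varphi(p)) = \Phi_p\circ f'(p)\circ D\varphi(p)^{-1}$. In particular $N':=\ker g'(q') = D\varphi(q)(N)$ is finite-dimensional and consists of smooth points, so by Proposition~2.7 of \cite{HWZ2} one can choose an sc-complement $(N')^\perp$ of $N'$ in the ambient sc-Banach space. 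The naive transport $\tilde\Gamma := \varphi\circ\Gamma_0 : Q \to U'(q')$ then parametrizes all zeros of $g$ in $\varphi(U(q))$, but not yet in the normal form $n'\mapsto q'+n'+A'(n')$.

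To straighten $\tilde\Gamma$, let $P'$ denote the projection onto $N'$ along $(N')^\perp$ and define $\alpha(n) := P'(\tilde\Gamma(n)-q')$ for $n\in Q$. Because $DA(0)=0$, the derivative $D\alpha(0) = P'\circ D\varphi(q)\vert_N : N \to N'$ is a linear isomorphism. As $N$ and $N'$ are finite-dimensional, the classical inverse function theorem produces a smooth local inverse $\sigma : Q' \to Q$ with $\sigma(0) = 0$, where $Q'\subset N'$ is an open neighborhood of $0$. Setting $\Gamma(n') := \tilde\Gamma(\sigma(n'))$ and decomposing $\Gamma(n') - q'$ along $N'\oplus(N')^\perp$, the $N'$-component equals $\alpha(\sigma(n')) = n'$ by construction, while the $(N')^\perp$-component defines $A'(n'):=(I-P')(\tilde\Gamma(\sigma(n'))-q')$. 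Thus $\Gamma(n') = q' + n' + A'(n')$ with $A'(0)=0$; and differentiating at $0$, using $(I-P')\circ D\varphi(q)\vert_N = 0$, gives $DA'(0)=0$.

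It remains to verify conditions~(2) and~(3) of Definition~\ref{newdefinition5.5.1}. For~(2), since $\sigma$ is a diffeomorphism between neighborhoods of $0$ in $N$ and $N'$ and $\tilde\Gamma$ was a parametrization of the zero set in $\varphi(U(q))$, the composition $\Gamma$ parametrizes all zeros of $g$ in a sufficiently small neighborhood $U'(q')\subset\varphi(U(q))$. For~(3), fix a zero $p' = \Gamma(n'_0)$ and set $p := \Gamma_0(\sigma(n'_0))$, so that $p' = \varphi(p)$. The chain rule gives
\[
D\Gamma(n'_0) \;=\; D\varphi(p)\circ D\Gamma_0(\sigma(n'_0))\circ D\sigma(n'_0),
\]
a composition of three isomorphisms: $D\sigma(n'_0):N'\to N$, the kernel-parametrization $\delta n \mapsto \delta n + DA(\sigma(n'_0))\delta n$ mapping $N$ onto $\ker f'(p)$ (from the good parametrization hypothesis on $\Gamma_0$), and the conjugation $D\varphi(p):\ker f'(p)\to\ker g'(p')$. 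Comparing with $D\Gamma(n'_0)\delta n' = \delta n' + DA'(n'_0)\delta n'$ shows that $\delta n'\mapsto \delta n' + DA'(n'_0)\delta n'$ is the required isomorphism $N'\to\ker g'(p')$.

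There is no serious analytic obstacle: the infinite-dimensional nonlinearities are already packaged into $A$ by hypothesis, and the straightening step $\sigma$ is purely finite-dimensional. The main point to keep track of is the bookkeeping around the projections and complements, together with the use of fiber-wise linearity of $\Phi$ at zeros to guarantee that $D\varphi$ really does conjugate the kernels of $f'$ and $g'$. (If one wishes, the most natural choice $(N')^\perp := D\varphi(q)(N^\perp)$ simplifies the formula for $\alpha$, but any sc-complement works.)
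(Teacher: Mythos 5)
Your proof is correct and follows essentially the same route as the paper: push the parametrization forward by $\varphi$, use that $T\varphi(q)$ carries $\ker f'(q)$ onto $\ker g'(q')$, and straighten via the projection onto the new kernel together with the finite-dimensional inverse function theorem (sc-smooth maps between smooth finite-dimensional spaces being $C^\infty$). The only cosmetic differences are that the paper first precomposes with the linear isomorphism $(T\varphi(q))^{-1}\vert_{N'}:N'\to N$ so that its straightening map $\tau$ has derivative the identity at $0$, whereas your $\alpha$ has derivative $P'\circ D\varphi(q)\vert_N$, and that you verify condition (3) of Definition \ref{newdefinition5.5.1} explicitly, which the paper leaves implicit.
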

\begin{proof}
Since $f(q)=0$, we obtain for the linearized sections
$$
g'(q')\circ T\varphi (q)=\Phi (q)\circ f' (q).
$$
Hence the kernel $N'$ of $g'(q')$ is the image   under $T\varphi (q)$ of  the kernel $N$ of $f'(q)$. Consider  the following composition defined
on a sufficiently small open neighborhood $Q'$ of $0$ in $N'$
$$
\Gamma:Q'\rightarrow G'\oplus E', \quad n'\mapsto
\varphi (q+\sigma(n')+A(\sigma(n'))),
$$
where $\sigma:N'\rightarrow N$ is the linear isomorphism obtained
from the restriction of the inverse  $T\varphi(q)^{-1}$. This map is
well-defined since $q+n+A(n)$ belongs to $U(q)$ for $n$ near $0$ in $N$. Clearly,  $\Gamma$
parameterizes all solution of $g=0$ near $q'=\Gamma(0)$. Using that
$DA(0)=0$,   the linearisation of $\Gamma$ at $0$ is given by
$$
D\Gamma (0)\cdot \delta n'=T\varphi(q)\circ \sigma(\delta n')=\delta
n'.
$$
Hence $D\Gamma(0)=1$. Take a complement  ${(N')}^\perp$ and
consider the associated projection $P:G'\oplus E'=N'\oplus
(N')^{\perp}\rightarrow N'$. Then the map $\tau: Q'\rightarrow N'$,
defined  by
$$
 \tau(n'):=P(\Gamma(n')-q'),
$$
is a local isomorphism near $0$ preserving $0$ whose  linearization
at $0$ is  the identity. Define on a perhaps smaller open
neighborhood $Q''$ of $0\in N'$ the mapping
$$
A':Q''\rightarrow {(N')}^\perp, \quad n'\mapsto
(I-P)(\Gamma\circ\tau^{-1}(n')-q').
$$
Then
\begin{equation*}
\begin{split}
\Gamma\circ\tau^{-1}(n')&= q'+
P(\Gamma\circ\tau^{-1}(n')-q')+(1-P)(\Gamma\circ\tau^{-1}(n')-q')\\
&=q'+n'+A'(n').
\end{split}
\end{equation*}
In  other words, using the good  parametrization $n\mapsto
q+n+A(n)$ of the zero set of the section $f$ near $q$,  we can build a good  parametrization
$n'\mapsto q'+n'+A'(n')$ of  the zero set of  the push-forward section $\Phi_\ast f$  near $q'$ by only using the
parametrization  for $f$ and data coming from the transformation
$\Phi$.
\end{proof}

Next we study the smooth compatibility of good parameterizations. It is useful to recal from \cite{HWZ2} that an sc-smooth map between open sets of smooth inite  dimensional spaces is a $C^{\infty}$-map in the familiar sense of calculus.
\begin{prop}\label{newproposition5.6}
Let  $q_1$ and $q_2$ be zeros of the section $f$ and 
let $N_1$ and $N_2$ denote  the kernels of  the linearizations $f'(q_1)$ and $f'(q_2)$.  Assume that 
$$
\Gamma_i:Q_i\rightarrow N^\perp_i, \quad n \mapsto q_i+n+A_i(n)
$$
are good parameterizations of the zero set  of the section $f$ near $q_i$ for $i=1,2$. If $q\in U(q_1)\cap U(q_2)$ and
$\Gamma_1(n_1)=\Gamma_2(n_2)=q$, then the local transition map
$\Gamma^{-1}_1\circ\Gamma_2$  between open sets of smooth  finite dimensional vector spaces defined near the point $n_2$ is  of class $C^{\infty}$.
\end{prop}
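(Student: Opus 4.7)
The plan is to write down an explicit formula for $\Gamma_i^{-1}$ on the image of $\Gamma_i$, reduce the transition map to a composition of sc-smooth maps, and then invoke the fact that an sc-smooth map between open subsets of finite-dimensional sc-Banach spaces is a $C^\infty$-map in the classical sense.

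First I would observe that each good parametrization has a left inverse built out of a linear projection. Indeed, let $P_i:G\oplus E\rightarrow N_i$ be the continuous projection associated with the sc-splitting $G\oplus E = N_i\oplus N_i^\perp$; by Proposition~2.7 of \cite{HWZ2} this projection is an sc-operator. Since $A_i(n)\in N_i^\perp$, for every $n\in Q_i$ we have
$$P_i\bigl(\Gamma_i(n)-q_i\bigr)=P_i\bigl(n+A_i(n)\bigr)=n.$$
Therefore, on the image $\Gamma_i(Q_i)$, the inverse is simply the restriction of the affine sc-smooth map $p\mapsto P_i(p-q_i)$. In particular $\Gamma_i^{-1}$ extends to an sc-smooth map defined on an open neighborhood of $\Gamma_i(Q_i)$ in $G\oplus E$.

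With this in hand, on the (open) intersection of domains where the composition is defined, we obtain the explicit formula
$$\Gamma_1^{-1}\circ\Gamma_2(m)=P_1\bigl(\Gamma_2(m)-q_1\bigr)=P_1(q_2-q_1)+P_1(m)+P_1\bigl(A_2(m)\bigr).$$
The first term is a constant, $m\mapsto P_1(m)$ is the restriction of a linear sc-operator to the finite-dimensional subspace $N_2$, and $m\mapsto A_2(m)$ is sc-smooth by Definition~\ref{newdefinition5.5.1}. Composing with the sc-operator $P_1$ keeps the map sc-smooth. Consequently $\Gamma_1^{-1}\circ\Gamma_2$ is sc-smooth on the open subset of $N_2$ on which it is defined, viewed as a map between two finite-dimensional vector spaces $N_2$ and $N_1$ equipped with their unique sc-structure (all levels coincide).

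Finally, I would invoke the general principle, recalled in the paragraph preceding the proposition, that an sc-smooth map between open sets of smooth finite-dimensional sc-Banach spaces is $C^\infty$ in the classical sense. This turns the sc-smoothness just established into classical $C^\infty$-smoothness of the transition map near $n_2$, which is the desired conclusion. There is no essential obstacle here; the only subtlety is the conceptual point of recognizing that $\Gamma_i^{-1}$ is the restriction of a globally defined affine sc-smooth map thanks to the special form $q_i+n+A_i(n)$ of a good parametrization, so that the transition map becomes a plain composition of sc-smooth pieces rather than a genuine inverse that would require an implicit function theorem.
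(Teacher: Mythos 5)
Your proposal is correct and follows essentially the same route as the paper: the paper also applies the projection $P:N_1\oplus N_1^{\perp}\to N_1$ to the identity $q_1+\sigma(n)+A_1(\sigma(n))=q_2+n+A_2(n)$, obtaining $\sigma(n)=P(q_2-q_1+n+A_2(n))$, which is exactly your formula, and then concludes sc-smoothness by composition and $C^{\infty}$-smoothness because the domain and target are smooth finite-dimensional spaces. The only point the paper makes slightly more explicitly is that the transition map is well defined near $n_2$ because both $\Gamma_1$ and $\Gamma_2$ parametrize all solutions of $f=0$ near $q$, which you use implicitly when restricting to the set where the composition is defined.
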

\begin{proof}
By the properties of good parametrizations, solutions of $f(x)=0$ near $q$ are parametrized by the maps $\Gamma_1$ and $\Gamma_2$. Hence for every $n$ close to $n_2$ in $N_2$ there exists a unique $\sigma  (n)$ close to $n_1$ in $N_1$ satisfying
$$
q_1+\sigma (n)+A_1(\sigma (n))=q_2+n+A_2(n).
$$
Applying the projection $P:Q\oplus E=N_1\oplus N_1^{\perp} \to  N_1$, we find  $$
\sigma (n)= P(q_2-q_1+n+A_2(n)).
$$
The map $n\mapsto \sigma (n)$ defined for $n$ close to $n_2$ in $N_2$ into $N_1$ is sc-smooth as the  composition
of sc-smooth maps.  Hence it  is of class $C^{\infty}$  since $N_1$ and $N_2$ are smooth  finite dimensional vector spaces.
 \end{proof}

\subsection{Local Solutions Sets in the Interior
Case}

Our next aim is the proof of the existence of good parametrizations for the zero set of Fredholm sections as introduced in Definition \ref{defoffred}. We recall from 
Proposition \ref{prop5.9} that the linearization of a Fredholm section is an sc-Fredholm operator. But, as already pointed out, the mere requirement of the section to be linearized Fredholm with surjective linearization is not sufficient for the existence of good parametrizations.
\begin{thm}\label{newtheoremA}
Assume that $f$ is a Fredholm section of the fillable  strong
M-polyfold bundle $b:K\to O$. Assume that the set $O$ has no
boundary. If the smooth point $q \in O$ is a solution of $f(q)=0$
and if  the linearization  at this point,
$$
f'(q):T_qO\rightarrow K_q
$$
is surjective,  then there exists a good parametrization of the
solution set  $\{f=0\}$  near $q$.
\end{thm}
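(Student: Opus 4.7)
The approach is to pass to a filled version of $f$ and use the basic germ property to put it into contraction normal form, then solve the infinite-dimensional part via the germ implicit function theorem (Theorem \ref{newthm5.4}) and the remaining finite-dimensional equation via the classical implicit function theorem. Fix a filled version $\bar{\bf f}:\mo(\wh{O},0)\to F$ of $[f,q]$ and a strong bundle equivalence identifying $[\bar{\bf f},0]$ with a basic germ
$$
g:\mo\bigl(\mathbb{R}^n\oplus W,0\bigr)\rightarrow \mathbb{R}^N\oplus W, \qquad Pg(v,w)=w-B(v,w),
$$
with $g(0,0)=0$. Because $O$ has no boundary and strong bundle isomorphisms preserve the degeneracy index, no partial-quadrant factor is needed in the parameter (i.e.\ $k=0$). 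By Proposition \ref{newproposition5.5} together with the correspondence between the zero set of $f$ and that of its filled version described in Section \ref{subsection3.2} (in particular $\ker f'(q)\oplus\{0\}=\ker D\bar{\bf f}(0)$), it is enough to produce a good parametrization for $g$ near $(0,0)$.

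Apply Theorem \ref{newthm5.4} to the $\ssc^0$-contraction germ $w-B(v,w)=0$ to obtain a unique sc-smooth germ $\delta:\mo(\mathbb{R}^n,0)\to W$ with $\delta(0)=0$ and $\delta(v)=B(v,\delta(v))$. Zeros of $g$ near $(0,0)$ are then precisely the points $(v,\delta(v))$ for which the finite-dimensional residual equation
$$
h(v):=(I-P)\,g\bigl(v,\delta(v)\bigr)=0, \qquad h:\mo(\mathbb{R}^n,0)\to \mathbb{R}^N,
$$
holds. The chain rule yields $Dh(0)\delta v=(I-P)Dg(0,0)(\delta v,D\delta(0)\delta v)$, and since the $W$-component of $Dg(0,0)$ is automatically surjective (because $I-D_2B(0,0)$ is an sc-isomorphism, as used in the proof of Proposition \ref{computation}) while $Dg(0,0)$ itself is surjective (equivalent to the surjectivity of $f'(q)$), it follows that $Dh(0):\mathbb{R}^n\to\mathbb{R}^N$ is surjective. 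The classical implicit function theorem then parametrizes $h^{-1}(0)$ near $0$ as a smooth graph $v=t+a(t)$ with $t\in\ker Dh(0)$, $a(0)=0$, $Da(0)=0$, so every zero of $g$ near $(0,0)$ is of the form
$$
\Gamma(t)=\bigl(t+a(t),\,\delta(t+a(t))\bigr), \qquad t\in\ker Dh(0).
$$

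Set $N=\ker Dg(0,0)\subset\mathbb{R}^n\oplus W$. The projection onto $\mathbb{R}^n$ restricts to a linear isomorphism $N\to\ker Dh(0)$ with inverse $\delta v\mapsto(\delta v,D\delta(0)\delta v)$; choosing an sc-complement $N^{\perp}$ of $N$ in $\mathbb{R}^n\oplus W$ and reparametrizing recasts $\Gamma$ in the required form $n\mapsto n+A(n)$ with $A:N\to N^{\perp}$ sc-smooth, $A(0)=0$, $DA(0)=0$, verifying (1) and (2) of Definition \ref{newdefinition5.5.1}. For (3), differentiating the identity $g(n+A(n))\equiv 0$ at $n_0$ shows that $\delta n\mapsto \delta n+DA(n_0)\delta n$ carries $N$ into $\ker g'(p)$ for $p=n_0+A(n_0)$ and is injective because $\delta n\in N$ and $DA(n_0)\delta n\in N^{\perp}$. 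The step I expect to be the main obstacle is upgrading this to an isomorphism onto $\ker g'(p)$, for which one needs $g'(p)$ to remain surjective at every nearby zero; this should follow from the local constancy of the sc-Fredholm index and the upper semi-continuity of cokernel dimension, propagating the surjectivity of $g'(0)$ to all nearby zeros, after which the dimension count $\dim\ker g'(p)=\ind(g,0)=n-N=\dim N$ yields the isomorphism required in (3).
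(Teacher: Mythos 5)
Your outline reproduces the first half of the paper's argument (fill, reduce to the contraction normal form, solve $Pg=0$ by Theorem \ref{newthm5.4}, solve the finite-dimensional residual equation by the classical implicit function theorem, and rewrite the solution set as a graph $n\mapsto n+A(n)$ over $N=\ker Dg(0,0)$), but it contains a genuine gap at the decisive step: you assert that $A:Q\to N^{\perp}$ is sc-smooth, whereas the construction only yields sc-smoothness \emph{at the distinguished point} $0$. Theorem \ref{newthm5.4} is a germ theorem: for every pair $(m,k)$ it gives a possibly smaller neighborhood $V_{m,k}$ of $0$ on which $\delta$ maps into $W_m$ and is of class $C^k$; these neighborhoods shrink with $m$ and $k$, so one obtains exactly the conclusion of Proposition \ref{prop5.4.14}, namely a $C^1$-map $\Gamma(n)=n+A(n)$ which is sc-smooth at $0$ only --- the paper stresses this point explicitly right after that proposition. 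Since a good parametrization in the sense of Definition \ref{newdefinition5.5.1} requires $A$ to be sc-smooth on all of $Q$, the theorem is not yet proved at the point where your argument stops. The paper closes this gap by using the hypothesis that $f$ is a Fredholm section a second time: at each nearby zero $q_0=\Gamma(n_0)$ the basic germ property provides (after a possibly different filler and new coordinates) a contraction normal form centered at $q_0$, Proposition \ref{prop5.4.14} applied there produces a second parametrization $\Theta(m)=q_0+m+B(m)$ sc-smooth at $m=0$, and a finite-dimensional transition map $\alpha$ (built from the projection onto $N$) identifies the two parametrizations and transfers the regularity of $B$ at $0$ to regularity of $A$ at $n_0$. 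Nothing in your argument replaces this re-centering step; the contraction germ at $(0,0)$ does not by itself give sc-smoothness of $\delta$ at nearby parameter values.

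A secondary, fixable weakness concerns property (3): you propose to propagate surjectivity of $g'(0)$ to nearby zeros by ``local constancy of the sc-Fredholm index and upper semi-continuity of the cokernel dimension.'' In the sc-framework the linearizations $Df(\Gamma(n))$ need not depend continuously on the base point in the operator norm, so the classical openness of surjectivity is not directly available. The paper instead works inside the contraction normal form, where $1-D_2B(q_0)$ is an sc-isomorphism and the question reduces to the surjectivity of a genuinely continuous finite-dimensional matrix family $M(a)$ (Lemma \ref{lem5.4.1.2}); that reduction also delivers the isomorphism of kernels under the projection onto $N$, which is what (3) asks for.
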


In the proof of this theorem we shall make repeated use of the following result applied to various filled versions of the germ section.

\begin{prop}\label{prop5.4.14}
Let $O$ be an open neighborhood of $0$ in some sc-Banach space
${\mathbb R}^n\oplus W$ and let  $f:O\rightarrow {\mathbb R}^N\oplus
W$ be an sc-smooth map satisfying $f(0)=0$. We assume that the
following holds.
\begin{itemize}
\item The map $f$ is  regularizing and its linearization $Df(0)$ at the smooth point $q=0\in O$  is  an sc-Fredholm operator.
\item  The linearization $Df(0):\R^n\oplus W\to \R^N\oplus W$ is surjective.
\item With  the projection $P:{\mathbb R}^N\oplus W \rightarrow W$
and writing an element in $O$ as $(v,w)$,  the $\ssc$-smooth map
$$
O\rightarrow W, \quad (v,w)\mapsto Pf(v,w)
$$
is an $\ssc^0$-contraction germ in the sense of Definition \ref{def5.1}. In particular,
$$
Pf(v,w)=w-B(v,w)
$$
where $B$ is a contraction in $w$  on every level for the
points $(v,w)$ sufficiently close to $(0,0)$ (the notion of close
depending on the level $m$).
\end{itemize}
Let $N$ be the kernel of $Df (0)$ and  let $N^\perp$ be an
sc-complement of $N$ so that $N\oplus N^{\perp}=\R^n\oplus W$. Then there exists an open neighborhood $Q$ of
$0$ in $N$, an open neighborhood $U$ of $0$ in $O$ and a $C^1$-map
$$
A:Q\rightarrow N^\perp, \quad n\mapsto  A(n)
$$
so that, with $\Gamma(n)=n+A(n)$, the following statements hold.
\begin{itemize}
\item[(1)] $A(0)=0$ and $DA(0)=0$ and for every $n\in Q$ the
point $\Gamma(n)$ is  smooth and solves the equation
$f(\Gamma(n))=0$.
\item[(2)] Every solution of $f(p)=0$ with $p\in U$ has the form
$p=n+A(n)$ for a unique $n\in Q$.
\item[(3)]For every given  pair $j,m\in {\mathbb N}$,  there exists an open
neighborhood $Q^{j,m}\subset Q$ of $0$ in $N$ so that
$$
\Gamma:Q^{m,j}\rightarrow {\mathbb R}^n\oplus W_m
$$
is  of class $C^j$.
\item[(4)] At  every $q=\Gamma(n)$ with $n\in Q$,  the linearization
$Df(q)$ is surjective.  Moreover, the projection $N\oplus N^{\perp}\to N$ induces an isomorphism $\text{ker}\ Df(q)\to \text{ker}\ Df(0)$ between the kernels of the linearizations.
\end{itemize}
\end{prop}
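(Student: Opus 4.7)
The strategy is to split the equation $f(v,w)=0$ into its two components via the projection $P$. The $W$-component is the contraction germ $Pf(v,w) = w - B(v,w)$, so Theorem \ref{newthm5.4} produces a unique sc-smooth germ $\delta:\mathcal{O}(\mathbb{R}^n,0)\to(W,0)$ satisfying $\delta(v) = B(v,\delta(v))$. Substituting back, the equation $f=0$ reduces to the finite-dimensional equation
$$
\phi(v) := (I-P) f(v,\delta(v)) = 0, \qquad \phi:\mathcal{O}(\mathbb{R}^n,0)\to \mathbb{R}^N,
$$
which is classically $C^\infty$ because $\delta$ is sc-smooth and the target lies in a finite-dimensional smooth subspace. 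Differentiating $\delta = B(\cdot,\delta(\cdot))$ at the origin gives $D\delta(0) = (I-D_2B(0,0))^{-1} D_1B(0,0)$, and a direct computation with the block form of $Df(0)$ shows that the projection onto the $\mathbb{R}^n$-factor identifies $N = \ker Df(0)$ with $\ker D\phi(0)$ (inverse $\delta v \mapsto (\delta v, D\delta(0)\delta v)$), and that $Df(0)$ is surjective if and only if $D\phi(0)$ is. Thus the hypothesis yields surjectivity of $D\phi(0)$, and the classical implicit function theorem produces a $C^\infty$-parametrization $v_0 \mapsto v_0 + \alpha(v_0)$ of $\phi^{-1}(0)$ near $0$ with $\alpha(0) = 0$ and $D\alpha(0) = 0$; combined with $w = \delta(v)$, this parametrizes $f^{-1}(0)$ near $0$.

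To put the parametrization in the normal form $\Gamma(n) = n + A(n)$ with $A(n) \in N^\perp$, I would select an sc-complement $N^\perp$ using Proposition 2.7 of \cite{HWZ2} (which applies because $N$ consists of smooth points) and apply the reparametrization trick from the proof of Proposition \ref{newproposition5.5}: composing the parametrization just built with the projection $P_N: \mathbb{R}^n \oplus W \to N$ along $N^\perp$ produces a local diffeomorphism of $N$ whose linearisation at $0$ is the identity; inverting it yields $\Gamma$, and $A := (I - P_N)\circ\Gamma$ takes values in $N^\perp$. Property~(1) is then automatic: $A(0) = 0$, and since $D\Gamma(0)\delta n = \delta n + DA(0)\delta n$ must lie in the tangent space $N$ to $\{f=0\}$ at $0$, one has $DA(0)\delta n \in N \cap N^\perp = \{0\}$. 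Property~(2) is the uniqueness built into the two implicit function theorems. Property~(3) follows because sc-smoothness of $\delta$ at the smooth point $0$ delivers, for each pair $(m,j)$, an open neighborhood on which $\delta$, and hence $\Gamma$, is $C^j$ into the $m$-th level.

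The main obstacle is the persistence statement~(4). At any nearby solution $p = \Gamma(n)$, continuity of $D_2 B$ together with the sc-isomorphism property at $(0,0)$ forces $I - D_2 B$ evaluated at $p$ to remain an sc-isomorphism on $W$; consequently the same block decomposition reduces surjectivity of $Df(p)$ to surjectivity of $D\phi(v_p)$, where $v_p$ denotes the $\mathbb{R}^n$-coordinate of $p$. Since $D\phi(0)$ is surjective and surjectivity is open in finite dimensions, $D\phi(v_p)$ is surjective for $n$ near $0$. The same analysis identifies $\ker Df(p)$ with $\ker D\phi(v_p)$, which has constant dimension $\dim N$, and the projection $N\oplus N^\perp \to N$ restricted to $\ker Df(p)$ remains an isomorphism by openness of the set of isomorphisms between spaces of equal finite dimension. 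The delicate point throughout is that, unlike the classical Banach-space setup, surjectivity of $Df(0)$ alone does not control $f$ near $0$ — the contraction normal form has to be exploited at every step in order to reduce the problem to a finite-dimensional one where the classical implicit function theorem and standard continuity arguments apply.
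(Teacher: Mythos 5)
Your route is the same as the paper's: solve the $W$-component by the germ implicit function theorem to get $\delta$, reduce to the finite-dimensional map $G(v)=(1-P)f(v,\delta(v))$, check $DG(0)$ is surjective by the block structure of $Df(0)$, apply the classical implicit function theorem, and then reparametrize the resulting solution family as a graph $n\mapsto n+A(n)$ over $N$ along $N^\perp$; your treatment of (1)--(3), including the observation that sc-smoothness of $\delta$ at $0$ only yields $C^j$-regularity into level $m$ on $(j,m)$-dependent neighborhoods (so your earlier ``classically $C^\infty$'' should be read in this germ sense, which is all the finite-dimensional IFT needs), matches the paper's argument. The one step whose justification would fail as written is in (4): you argue that ``continuity of $D_2B$ together with the sc-isomorphism property at $(0,0)$'' forces $1-D_2B$ to remain an sc-isomorphism at a nearby solution $p=\Gamma(n)$. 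In the sc-category an $\ssc^1$ (even sc-smooth) map does not make the base point $x\mapsto D_2B(x)$ continuous in any operator norm --- only $(x,h)\mapsto D_2B(x)h$ is continuous --- so the usual ``isomorphisms form an open set'' argument is not available; indeed the paper stresses elsewhere (Appendix \ref{det}) that linearizations do not depend continuously, as operators, on the point of linearization. The correct argument, and the one the paper uses, needs no continuity at all: a nearby zero $q_0=(a,\delta(a))$ is a smooth point by the regularizing property and lies close to $(0,0)$ on every level for $a$ small, so the contraction hypothesis gives, exactly as in \eqref{thm5.3eq6}--\eqref{thm5.3eq7} in the proof of Theorem \ref{thm5.3}, the bound $\norm{D_2B(a,\delta(a))h}_m\leq \rho_m\norm{h}_m$ for all $h\in W_m$ and every $m$, whence $1-D_2B(q_0)$ is invertible on every level and hence an sc-isomorphism.

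Once this is repaired, the rest of your argument for (4) coincides with the paper's Lemma \ref{lem5.4.1.2}: writing $Df(q_0)$ in block form, inverting $A=1-D_2B(q_0)$, and reducing surjectivity and the kernel to the finite-dimensional matrix $M(a)=A_2A^{-1}A_1-A_3$, which is surjective for small $a$ because $M(0)$ is and surjectivity is open in finite dimensions. For the final claim that the projection $N\oplus N^\perp\to N$ restricts to an isomorphism on $\ker Df(q_0)$, a bare appeal to ``openness of isomorphisms'' again presupposes a continuity of the kernels that has to be exhibited; the way to make it precise is the graph description $\ker Df(q_0)=\{(b,A^{-1}A_1b)\ \vert\ b\in\ker M(a)\}$, where the relevant continuity in $a$ is only needed on the finite-dimensional space of $b$'s (via the Neumann series for $A^{-1}$, with the uniform bound $\rho_m$), together with $\dim\ker M(a)=\dim\ker M(0)$; this is the level of detail at which the paper argues as well.
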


It is useful to recall from \cite{HWZ2} that if an $\ssc^0$-map $f:U\subset E\to F$ on the open subset $U$ of the  sc-Banach space $E$ to the sc-Banach space $F$ is of class $\ssc^k$, then $f:U_{m+k}:=U\cap E_m\to F_m$ is of class $C^k$ for every $m\geq 0$. On the other hand,  if for the $\ssc^0$-map $f:U\subset E\to F$ the induced maps $f:U_{m+k}\to F_m$ are of class $C^{k+1}$ for every $m, k\geq 0$, then $f$ is sc-smooth.

Note that  
the proposition guarantees the $\ssc$-smoothness of the map $n\mapsto A(n)$ at the distinguished point $0$ only and not at other points in $Q$.
\mbox{}\\[1ex]
\noindent{\em Proof of Proposition  \ref{prop5.4.14}}\: First, by  our
results about contraction germs,  there exists an open neighborhood
$V$ of $0$ in $ {\mathbb R}^n$ and a continuous map
$\delta:V\rightarrow W$ satisfying  $\delta(0)=0$ and
$$
P f(v,\delta(v))=0.
$$
By the regularizing property of the section $f$ we conclude  that
$\delta(v)$ is a smooth point  since $f(v,\delta(v))=(1-P)f(v,
\delta (v))\in \R^N$ is a smooth point. Hence
$$
\delta:V\rightarrow W_\infty.
$$
Since $f$ is sc-smooth,  the regularity results in the germ-implicit
function theorem (Theorem \ref{newthm5.4}) guarantee that there is a
nested sequence of open neighborhoods of $0$ in $\R^n$, say
$$
V=V_0\supset V_1\supset\cdots \supset V_j\supset \cdots
$$
so that the restrictions $\delta:V_j\rightarrow W_j$ are  of class
$C^j$ for every $j\geq 0$. Now define $G:V\rightarrow {\mathbb R}^N$
by
$$
G(v):=(1-P)f(v,\delta(v)).
$$
The map $G$  restricted to $V_j$ is of class $C^j$. Indeed,  the map
$V_j\rightarrow {\mathbb R}^n\oplus W_j$, $v\to (v,\delta(v))$ is
$C^j$ and $f$ as a map from level $j$ to level $0$ is of class
$C^j$. We need the following fact.
\begin{lem}\label{lem5.4.1}
The map $DG(0)\in {\mathcal L}(\R^n, \R^N)$ is surjective.
\end{lem}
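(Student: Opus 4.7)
My plan is to reduce the surjectivity of $DG(0)$ to the assumed surjectivity of $Df(0)$ by computing $D\delta(0)$ via implicit differentiation and then substituting into the expression for $DG(0)$.

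First, I would differentiate the defining relation $\delta(v) = B(v,\delta(v))$ at $v=0$, using $\delta(0)=0$. This yields
$$
D\delta(0) = D_1 B(0,0) + D_2 B(0,0)\,D\delta(0),
$$
so that $(I - D_2 B(0,0))\,D\delta(0) = D_1 B(0,0)$. The operator $I - D_2 B(0,0)$ on $W$ is an sc-isomorphism, as was already established in the proof of Theorem \ref{thm5.3} (cf. estimate \eqref{thm5.3eq7}), because the contraction hypothesis on $B$ in the second variable passes to the linearization $D_2 B(0,0)$. Hence
$$
D\delta(0) = (I - D_2 B(0,0))^{-1} D_1 B(0,0).
$$

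Next, writing $f=((1-P)f,\,Pf)$ with $Pf(v,w)=w-B(v,w)$, and denoting by $D_1,D_2$ the partial derivatives with respect to $v$ and $w$, the operator $Df(0)$ on $\R^n\oplus W$ takes the block form
$$
Df(0)(\delta v,\delta w) = \Bigl(\,D_1((1-P)f)(0,0)\,\delta v + D_2((1-P)f)(0,0)\,\delta w,\; \delta w - D_1B(0,0)\,\delta v - D_2B(0,0)\,\delta w\,\Bigr).
$$
On the other hand, since $G(v)=(1-P)f(v,\delta(v))$, the chain rule gives
$$
DG(0) = D_1((1-P)f)(0,0) + D_2((1-P)f)(0,0)\,D\delta(0).
$$

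Now I would exploit surjectivity of $Df(0)$. Given an arbitrary $a\in\R^N$, by hypothesis there exists $(\delta v,\delta w)\in\R^n\oplus W$ with $Df(0)(\delta v,\delta w)=(a,0)$. The second component of this equation reads
$$
(I-D_2B(0,0))\,\delta w = D_1B(0,0)\,\delta v,
$$
which forces $\delta w = D\delta(0)\,\delta v$ by the formula for $D\delta(0)$. Substituting into the first component gives
$$
a = D_1((1-P)f)(0,0)\,\delta v + D_2((1-P)f)(0,0)\,D\delta(0)\,\delta v = DG(0)\,\delta v.
$$
Hence $DG(0)$ is surjective onto $\R^N$.

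The only delicate point is justifying the implicit differentiation in the first step; but since $f$ is sc-smooth at the smooth point $0$ and the solution germ $\delta$ is at least $C^1$ on a neighborhood of $0$ in $\R^n$ (a fact already produced by the germ implicit function theorem and invoked just above this lemma), all partial derivatives appearing above exist and are continuous linear operators in the classical sense, so the computation is rigorous.
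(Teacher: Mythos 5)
Your proposal is correct and follows essentially the same route as the paper: both arguments use the surjectivity of $Df(0)$ to solve $Df(0)(b,h)=(a,0)$, observe that the $W$-component equation $(I-D_2B(0,0))h=D_1B(0,0)b$ together with the invertibility of $I-D_2B(0,0)$ (from the proof of Theorem \ref{thm5.3}) forces $h=\delta'(0)b$, and then read off $a=DG(0)b$ from the $\R^N$-component. The only cosmetic difference is that you obtain $\delta'(0)=(I-D_2B(0,0))^{-1}D_1B(0,0)$ by differentiating the fixed-point relation directly (which is exactly formula \eqref{thm5.3eq8} at $v=0$), whereas the paper phrases the same step as a uniqueness argument after linearizing $Pf(a,\delta(a))=0$.
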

\begin{proof}
The linearization of $G$ at the point $0$ is equal to
\begin{equation}\label{eq5.4.1.20}
DG(0)(b)=(1-P)Df(0)\cdot (b, \delta '(0)b)
\end{equation}
for $b\in \R^n$. Given $(r, 0)\in \R^N\oplus W$ we can solve the
equation
$$(r, 0)=Df(0)(b, h)$$
for $(b, h)\in \R^n\oplus W$, in view of the surjectivity of
$Df(0)$. Equivalently, there exists $(b, h)\in \R^n\oplus W$ solving
the two equations

\begin{equation*}\label{eq5.4.1.21}
\begin{aligned}
r&=(1-P)Df(0)(b, h)\\
0&=PDf(0)(b, h).\mbox{}\\[2pt]
\end{aligned}
\end{equation*}
Explicitly, the second equation is the following equation
\begin{equation}\label{eq5.4.1.22}
0=-D_1B(0)\cdot b+(1-D_2B(0))\cdot h
\end{equation}
Recalling the proof of Theorem \ref{thm5.3}, the operator
$1-D_2B(0):W\to W$ is a linear isomorphism. Therefore, given $b\in
\R^n$ the solution $h$ of the equation \eqref{eq5.4.1.22} is
uniquely determined. On the other hand, linearizing $Pf(a, \delta
(a))=0$ at the point $a=0$ leads to $PDf(0)(b,\delta' (0)b)=0$ for
all $b\in \R^n$. Hence, by uniqueness $h=\delta'(0)b$, so that the
linear map $DG(0)$ in \eqref{eq5.4.1.20} is indeed surjective.
\end{proof}
Continuing with the proof of Proposition  \ref{prop5.4.14},  we denote  the
kernel of $DG(0)$  by $C$ and take its orthogonal complement
$C^{\perp}$ in $\R^n$, so that
$$C\oplus C^{\perp}=\R^n.$$
Then $G:C\oplus C^{\perp}\to \R^N$ becomes a function of two
variables, $G(c_1, c_2)=G(c_1+c_2)$ for which $D_1G(0)=0$ while
$D_2G(0)\in {\mathcal L}(C^{\perp}, \R^N)$ is an isomorphism. By the
implicit function theorem there exists a unique map $c:C\mapsto
C^{\perp}$ solving
\begin{equation}\label{eq5.4.1.23}
G(r+c(r))=0
\end{equation}
for $r$ near $0$ and satisfying
\begin{equation*}\label{eq5.4.1.24}
c(0)=0,\quad Dc(0)=0.
\end{equation*}
Moreover, given any $j\geq 1$, there is an open neighborhood $U$ of
the origin in $C$ such that $c\in C^j(U, C^{\perp})$. Summarizing we
have demonstrated so far,   that all solutions $(v, w)\in \R^n\oplus
W$ of $f(v, w)=0$ near the origin are represented by
$$
f(r+c(r), \delta (r+c(r)))=0
$$
for $r$ in an open neighborhood of $0$ in $C=\text{ker} DG(0)\subset
\R^n$.  We introduce the function $\beta: C\to \R^n\oplus W$  defined near $0$ by
\begin{equation*}
\beta (r)= (r+c(r), \delta (r+c(r))).
\end{equation*}
Then $f(\beta (r))=0$ for $r$ near $0$ in $C$. The function $\beta$ satisfies $\beta (0)=0$ and, of course, for
every level $m$ and every integer $j\geq 1$, there exists an open
neighborhood $U$ of $0\in C$ such that if $r\in U$, then $\beta
(r)\in \R^n\oplus W_m$ and $\beta \in C^j(U, \R^n\oplus W_m)$.
Moreover, by the regularizing property of the section $f$, we know
that all points $\beta(r)$ are smooth.  In order to represent the
solution set as a graph over the kernel of the linearized equation
at $0$ we introduce
\begin{equation}\label{eq5.4.1.26}
\begin{split}
N:&=\text{ker}(Df(0)).
\end{split}
\end{equation}
We have an $\ssc$-splitting
$$
N\oplus N^\perp=\R^n\oplus W.
$$
By construction, the image of the linearization $D\beta (0)\in
{\mathcal L}(C, \R^n\oplus W)$ is equal to the kernel of $Df(0)$ and
$D\beta (0):C\to N$ is a linear isomorphism. We define the map
$\alpha:N\to \R^n\oplus W$ near $0\in N$ by
$$\alpha (n)=\beta (D\beta (0)^{-1}\cdot n).
$$
The solution set is now parametrized by $\alpha$ so that $f(\alpha
(n))=0$ for $n$ near $0\in N$. Let $P:N\oplus N^\perp\to N$ be the
 projection along $N^{\perp}$ and consider the map $P\circ \alpha :N\to N$ near
$0$. Since $D(P\circ \alpha )(0)=1$, it is a local
diffeomorphism leaving the origin fixed. We denote by $\gamma$ the
inverse of this local diffeomorphism satisfying $P\circ \alpha
(\gamma (n))=n$ for all small $n$. So,
\begin{equation*}
\begin{split}
\alpha (\gamma (n))&=P\circ \alpha (\gamma (n))+(1-P)\alpha (\gamma (n))\\
&=n+(1-P)\alpha (\gamma (n)).
\end{split}
\end{equation*}
Define the  map
$$
A:N\to N^\perp
$$
near the origin in $N$  by $A(n)=(1-P)\alpha (\gamma (n))$. Then
$A(0)=0$ and $DA(0)=0$   since the image of $\alpha' (0)$ is equal
to the kernel $N$. Moreover, $\alpha (\gamma (n))=n+A(n)$ and
$$
f(n+A(n))=0.
$$
In addition, given any level $m$ and any integer $j$ there exists an
open neighborhood $U$ of $0$ in
 $N$ such that if $n\in U$, then  $A(n)\in N^{\perp}\cap ( \R^n\oplus W_m)$ and
 $A\in C^j(U,  \R^n\oplus W_m)$.
 We have demonstrated that the solution set of $f(v, w)=0$
 near the transversal point $0$
 is represented as a graph over the kernel of
 the linearized map $Df(0)$ of a function which is sc-smooth at the point $0$.
 In order to complete the proof of Proposition   \ref{prop5.4.14}  it remains to
 verify  property (4) of  the proposition  for a suitable $Q$. This is the next lemma.
\begin{lem}\label{lem5.4.1.2}
If $n_0$ is small enough and $q_0=\Gamma(n_0)$, then  the
linearization $Df(q_0):\R^n\oplus W\to\R^N\oplus  W$ is surjective. Moreover,
$$
\text{dim}\ [\text{ker}\ Df(q_0)]=\text{dim}\ [\text{ker} \
Df(0)]=\text{dim}\ N.
$$
Setting $N'=\text{ker}\
Df(q_0)$, the natural projection $P:N\oplus N^\perp=\R^n\oplus W\to
N$ induces an isomorphism $P\vert N':N'\to N$.
\end{lem}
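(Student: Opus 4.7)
The plan is to reduce the surjectivity of $Df(q_0)$ to a finite-dimensional surjectivity statement that propagates from $q_0 = 0$ by continuity, and then to identify $\ker Df(q_0)$ by differentiating the parametrization $\Gamma$ from the identity $f \circ \Gamma \equiv 0$.

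The first step is to upgrade the contraction property of $B$ to a strict contraction estimate for its partial derivative $D_2B$: at a smooth point $q_0 = (v_0, w_0)$ sufficiently close to $(0,0)$ in $W_m$, applying the contraction inequality to the difference quotient $t^{-1}[B(v_0, w_0 + th) - B(v_0, w_0)]$ and letting $t \to 0$, using $C^1$-regularity from level $m+1$ to level $m$, gives $\|D_2B(q_0)h\|_m \leq \varepsilon \|h\|_m$ for $h \in W_{m+1}$; density of $W_{m+1}$ in $W_m$ extends the bound to all of $W_m$. Hence $L(q_0) := 1 - D_2B(q_0)$ is an $\ssc$-iso\-morphism satisfying $\|L(q_0)^{-1}\|_{{\mathcal L}(W_m, W_m)} \leq (1-\varepsilon)^{-1}$ uniformly on a neighborhood of $0$. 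Splitting the equation $Df(q_0)(\delta v, \delta w) = (r, s) \in \R^N \oplus W$ through $P$, the $P$-component forces $\delta w = L(q_0)^{-1}(s + D_1B(q_0)\delta v)$, and surjectivity of $Df(q_0)$ thus reduces to surjectivity of the finite-dimensional map
\begin{equation*}
\tilde G_{q_0}: \R^n \to \R^N, \qquad \tilde G_{q_0}(\delta v) = (1-P)Df(q_0)\bigl(\delta v,\, L(q_0)^{-1} D_1B(q_0)\delta v\bigr).
\end{equation*}
At $q_0 = 0$ this is exactly $DG(0)$ from Lemma~\ref{lem5.4.1}, which is surjective. Pointwise continuity of $q_0 \mapsto \tilde G_{q_0}$ follows from $\ssc$-continuity of the tangent map (making $D_1B(q_0)\delta v$ and $(1-P)Df(q_0)(\delta v, \cdot)$ continuous at fixed arguments) combined with the identity $L(q_0)^{-1}h - L(0)^{-1}h = L(q_0)^{-1}(D_2B(q_0) - D_2B(0))L(0)^{-1}h$ and the uniform bound on $\|L(q_0)^{-1}\|$. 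Since $\R^n$ is finite-dimensional, $\tilde G_{q_0} \to \tilde G_0$ as linear maps, so $\tilde G_{q_0}$ remains surjective on a sufficiently small neighborhood of $0$, and so does $Df(q_0)$.

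For the kernel, property~(3) of Proposition~\ref{prop5.4.14} ensures that $\Gamma = I + A$ is $C^1$ from a neighborhood of $0 \in N$ into $\R^n \oplus W$. Differentiating $f \circ \Gamma \equiv 0$ at $n_0$ yields $Df(q_0)(\delta n + DA(n_0)\delta n) = 0$ for every $\delta n \in N$. The map $\delta n \mapsto \delta n + DA(n_0)\delta n$ is injective since its two summands lie in the complementary $\ssc$-subspaces $N$ and $N^\perp$, so its image is a $(\dim N)$-dimensional subspace of $N' = \ker Df(q_0)$. On the other hand, the reduction above identifies $N'$ with $\ker \tilde G_{q_0}$, whose dimension is $n - N = \dim N$ by rank-nullity applied to the surjective $\tilde G_{q_0}$. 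The inclusion is therefore an equality, and since the natural projection $P: N \oplus N^\perp \to N$ sends $\delta n + DA(n_0)\delta n$ back to $\delta n$, the restriction $P\vert N'$ is the claimed linear isomorphism onto $N$.

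The main obstacle is the first step: extracting the uniform contraction estimate on $D_2B(q_0)$ simultaneously on every level from the contraction hypothesis on $B$ requires careful attention to the level- and contraction-constant-dependent notion of closeness, together with uniform invertibility of $L(q_0)$ across levels. Once this uniform control is in place, the propagation of surjectivity via pointwise continuity of the finite-dimensional reduction $\tilde G_{q_0}$ and the kernel identification via the differentiated parametrization $\Gamma$ are essentially routine.
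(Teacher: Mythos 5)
Your proposal is correct and follows essentially the paper's own argument: invertibility of $1-D_2B(q_0)$ on $W$ via the contraction estimate from the proof of Theorem \ref{thm5.3}, reduction of surjectivity of $Df(q_0)$ to a finite-dimensional family of linear maps $\R^n\to\R^N$ (the paper's matrix $M(a)$ is your $\tilde G_{q_0}$), and continuity of this family together with surjectivity at the origin to propagate surjectivity to small $n_0$. The only divergence is the final step: the paper reads the kernel off the matrix reduction and then asserts that $P\vert N'$ is an isomorphism, whereas you realize $N'$ as the image of $\delta n\mapsto \delta n+DA(n_0)\delta n$ by differentiating $f\circ\Gamma\equiv 0$ and comparing dimensions, a harmless (indeed slightly more explicit) variant that uses the already-established regularity of $\Gamma$.
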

\begin{proof}
Consider the linearized operator
\begin{equation}\label{eq5.4.1.27}
Df(q_0):\R^n\oplus W\to \R^N\oplus W\mbox{}\\[3pt]
\end{equation}
at the solution  point
$$
q_0=(a, \delta (a))=\Gamma(n_0).
$$
In order to prove surjectivity we consider the equation $Df(q_0)(b,h)=(r, w)$. In matrix notation we interpret this equation as a system 
\begin{equation}\label{eq5.4.1.28}
\begin{bmatrix}
A&A_1\\A_2&A_3
\end{bmatrix}\cdot
\begin{bmatrix}
h\\b
\end{bmatrix}=
\begin{bmatrix}
w\\r
\end{bmatrix}
\end{equation}
with the linear operators
\begin{align*}
Ah&=(1-D_2B(q_0)) \cdot h\\
A_1b&=D_1B(q_0)\cdot b\\
A_2h&=(1-P)D_2f(q_0)\cdot h\\
A_3b&=(1-P)D_1f(q_0)\cdot b
\end{align*}
where $(b, h)\in \R^n\oplus W$ and $(r, w)\in \R^N\oplus W$. Since the point $q_0$ is smooth, the linear operator $A:W\to W$
is an $\ssc$-isomorphism  in view
of the proof of Theorem \ref{thm5.3}. Hence the equation \eqref{eq5.4.1.28} for
$(h, b)$ becomes
\begin{equation}\label{eq5.4.1.29}
\begin{gathered}
h=A^{-1}w-A^{-1}A_1b\\
(A_2A^{-1}A_1-A_3)b=A_2A^{-1}w-r.
\end{gathered}
\end{equation}
We abbreviate the continuous family of matrices
$$M(a):=[A_2A^{-1}A_1-A_3]\in {\mathcal L} (\R^n, \R^N ).$$
By assumption, $Df(0)$ is surjective. Therefore, if $a=0$, then the
matrix $M(0)$ is surjective. Consequently, for small $a$ the matrix
$M(a)$ is also surjective and so is the linear operator $Df(a,
\delta (a))$. Choosing $w=0$ and $r=0$ in \eqref{eq5.4.1.29}, the
kernel of $Df(q_0)$ is determined by the two equations
\begin{gather*}
h=A^{-1}A_1b\\
[A_2A^{-1}A_1-A_3]b=0.
\end{gather*}
Consequently, the kernel is determined by the kernel of the matrix
$M(a)$.  Recalling that $N'=\text{ker}Df(q_0)$ and $N=\text{ker}
Df(0)$ we conclude  from the surjectivity of $M(a)$ for small $a$
that $\text{dim}N'=\text{dim}N$. In addition, the natural projection
$P:N\oplus N^\perp=\R^n\oplus W\to N$ induces the isomorphism
$P\vert N':N'\to N$ and the lemma follows and therefore also Proposition
\ref{prop5.4.14} is proved.
\end{proof}
\begin{figure}[htbp]
\centerline{\relabelbox \epsfxsize 2.7truein \epsfbox{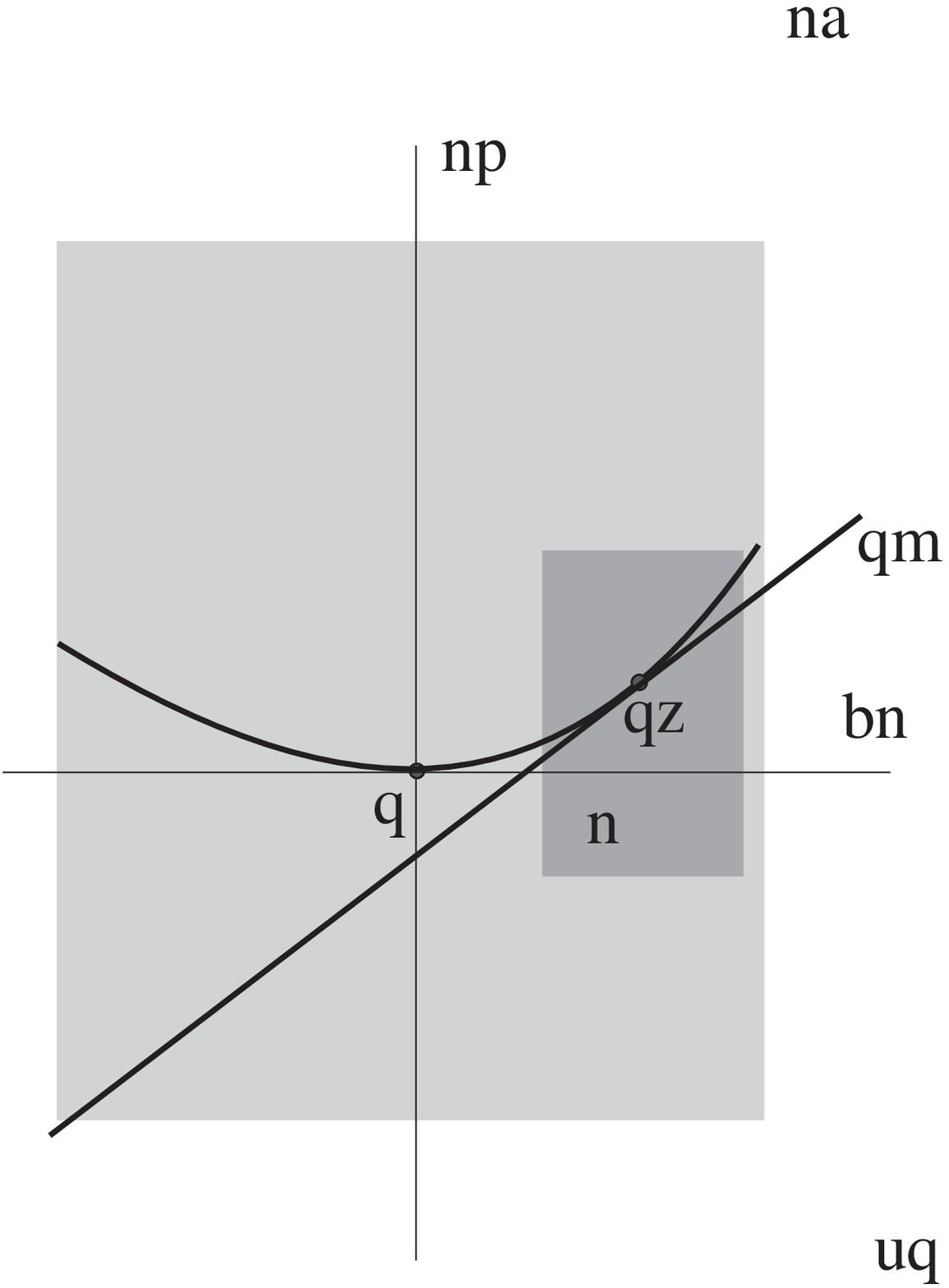}
\relabel {qm}{$q_0+M$} \relabel {q}{$q$} \relabel {n}{$U(q_0)$}
\relabel {bn}{$N$} \relabel {np}{$N^{\perp}$} \relabel {qz}{$q_0$}
\endrelabelbox}
\end{figure}
\begin{proof}[Proof of Theorem \ref{newtheoremA}]
After all these preparations  we are ready to prove Theorem
\ref{newtheoremA}  about the Fredholm section $f$ of the fillable  local
M-polyfold bundle $b:K\to O$. By  assumption,  the distinguished
smooth point $q\in O$ solves the equation $f(q)=0$ and   the
linearized map $f'(q)$ is surjective. In view of the Fredholm property of the section $f$ as defined in Definition \ref{defoffred} and the surjectivity of $f'(0)$, there exist coordinates
in which $q$ corresponds to $0$ in $\R^n\oplus W$ and in which the  the filled version
$\bar{\bf f}:\R^n\oplus W\to \R^N\oplus W$ of the section  $f$, which is defined in an open neighborhood of $0$,  meets the
assumptions of Proposition  \ref{prop5.4.14}. We have used here the fact proved in section \ref{subsection3.2} that the linearization $D\bar{\bf f}(0)$ of every  filled version is surjective if and only if $f'(q)$ is surjective. From section \ref{subsection3.2} we know that the zero set of $\bar{\bf f}$ is equal to the zero set of the local representation of $f$ in our coordinates. In view of the discussion  in section \ref{subsection3.2} and the invariance of good parametrizations (Proposition \ref{newproposition5.5}) under coordinate transformations, it is sufficient  to establish a good parametrization for the filled section $\bar{\bf f}$ near $0$.

From Proposition \ref{prop5.4.14} we deduce, using the notation of the proposition (but replacing $f$ by $\bar{\bf f}$),  that there exist an open neighborhood $Q$
of $0$ in  the kernel $ N$ of $\bar{\bf f}$, an open neighborhood $U(0)$ in $\R^n\oplus W$  and a mapping
$$\Gamma:Q\to U(0),\quad n\mapsto n+A(n)$$
of class $C^1$ satisfying $A(0)=0$,  $DA(0)=0$ and $A(n)\in
N^{\perp}$ and having the following additional properties.
\begin{itemize}\label{listx}
\item $\bar{\bf f}(\Gamma(n))=0$ for all $n\in Q$,  and
every solution $p\in U(0)$ of $\bar{\bf f}(p)=0$ lies in the image of  the map
$\Gamma$.
\item For every given  pair $j,m\in {\mathbb N}$,  there exists an open
neighborhood $Q^{j,m}\subset Q$ of $0$ in $N$ so that
$$
\Gamma:Q^{m,j}\rightarrow {\mathbb R}^n\oplus W_m
$$
is  of class $C^j$.
\item At every  point $q=\Gamma(n)$ with $n\in Q$,  the linearization
$D\bar{\bf f}(q)$ is surjective, and if $N'=\text{ker}\ D{\bf f} (q)$, then the projection $P:N\oplus N^{\perp}\to N$ induces an isomorphism $P\vert N':N'\to N$ between the kernels.
\end{itemize}
We conclude, in particular, that the map $\Gamma:Q\to U(0)$ is
sc-smooth at the distinguished point $0\in Q$ and we have to prove
that $\Gamma$ is sc-smooth at every other point of $Q$, since all
the other properties of a good parametrization near $0$
already hold true.  So, we choose $n_0\in Q$ and prove that the
map $\Gamma$ is sc-smooth at this point.   We already know  that
$q_0=\Gamma (n_0)$ solves $\bar{\bf f}(q_0)=0$ and that the linearization
$D\bar{\bf f}(q_0)$ is surjective.  Consequently, the linearization  $f'(q_0)$ is surjective and therefore also the linearization of every  other filled version at the point $q_0$ is surjective.
Now we observe that the section $\bar{\bf f}$ is a
filled version with respect to the distinguished point $q=0$ and not
necessarily with respect to the point $q_0$. Therefore, we cannot
conclude that $\bar{\bf f}$ possesses all the above nice properties near $q$.
However, by the definition of a Fredholm section, after some
coordinate change perhaps using some other filler and an additional
change of coordinates, there exists a contraction normal form near
$q_0$. In view of the surjectivity of the linearization at the point $q_0$, we can apply Proposition \ref{prop5.4.14} to this new
situation. Then going back with all these coordinate transformations
we obtain a map
\begin{gather*}
\Theta:Q'\rightarrow U(q_0)\subset U (0)\\
\Theta(m)=q_0+m+B(m),
\end{gather*}
where $Q'$ is an open neighborhood of $0$ in the kernel $M$ of
$D\bar{\bf f}(q_0)$. Moreover,  $B(m)$ lies in  some  $\ssc$-complement  $M^\perp$.
Near $m=0$, the map $B$ has the same properties as the map $A$, in
particular, $B$ is sc-smooth at $m=0$ in $M$. We know that $\Theta
(0)=q_0=\Gamma (n_0)$ and that for every $m$ near $0$, there exists
a unique  $n$ close to $n_0$ solving the equation  $\Theta(m)=\Gamma(n)$.
More explicitly,
\begin{equation}\label{plic}
q_0+m+B(m)=n+A(n).
\end{equation}
If  $P:{\mathbb R}^n\oplus W=N\oplus N^{\perp}\rightarrow N$ is
the projection along $N^\perp$,   we  define the map  $\alpha:M\to N$ near
$m=0$ by
$$
\alpha (m)=P(q_0+m+B(m)).
$$
It satisfies
$$
\alpha(0)=P(q_0)=P(n_0+A(n_0))=n_0.
$$
The linearization  $D\alpha(0)$ is equal to the projection
$P:M\rightarrow N$ which  is an isomorphism since $M$ is a graph of
a linear map $N\to N^\perp$. Moreover,  the map $\alpha$ has
arbitrarily high differentiability near $m=0$. From \eqref{plic} we
conclude
$$
n =\alpha (m)
$$
and taking the inverse near $m=0$, we  have $m=\alpha^{-1}(n)$.
Using \eqref{plic} again, we obtain $
n+A(n)=q_0+\alpha^{-1}(n)+B(\alpha^{-1}(n)), $ and applying $1-P$
to both sides we arrive at
$$
A(n)=(1-P)(q_0+\alpha^{-1}(n)+B(\alpha^{-1}(n))).
$$
The inverse map $\alpha^{-1}$ possesses for $n$ near $n_0$ arbitrary
high differentiability  and we conclude that the map $A$ is
sc-smooth at the point $n_0$. The argument applies to every $n_0\in
Q$ and we conclude that the map $\Gamma :Q\to N^{\perp}$ is
sc-smooth, hence a good parametrization of  the filled version $\bar{\bf f}$  of $f$ near $0$.  The  proof
of Theorem \ref{newtheoremA} is complete.
\end{proof}

\subsection{Good Parameterizations in the Boundary Case}\label{boundarycase}

We consider the fillable  local strong M-polyfold bundle
$$
b:K\rightarrow O
$$
where $O$ is an open subset in the splicing core $K^{\mathcal S}$.
This time the splicing ${\mathcal S}=(\pi,E,V)$ has the property
that the parameter set $V$ is a relatively open subset of  a partial
quadrant $C\subset G$. We may assume that  $C$ is of the form
$$
C=[0,\infty)^k\oplus W
$$
for some sc-Banach space $W$ and $0\in V$. We study a  section
$$
f:O\to K
$$
of the bundle $b$ and assume that the boundary point $0\in \partial O$ is a
solution of $f(q)=0$   and that the linearization
$$
f'(0):T_0O\rightarrow K_0
$$
is surjective. The aim is to describe the solution set $\{f=0\}$
near $0\in \partial O$ under the assumption that $f$ is a Fredholm
section.  Since $0$ is a boundary point we are confronted with  some
subtleties. If,  for example,  the kernel $N$ of $f'(0)$ is in a bad
position to the boundary,  there might be no solutions of $f=0$
apart from $0$, even if the Fredholm index is large and the
linearization $f'(0)$ is surjective.

\begin{defn}\label{defn5.6.1} Let $C$ be a partial quadrant in the sc-Banach space $E$.  
\begin{itemize}
\item[$\bullet$] The closed linear subspace $N$ of $E$ is in {\bf good position to the partial
quadrant} $C$  of $E$, if $N\cap C$ has a nonempty interior in $N$
and if there exist an sc-complement $N^{\perp}$ of $N$ in $E$ and a
positive constant $c$, so that for a point $(n,m)\in N\oplus_{sc}
N^{\perp}$ satisfying
$$
\norm{m}_E\leq c\cdot \norm{ n}_E
$$
the statements $n+m\in C$ and  $n\in C$ are equivalent. We call
$N^\perp$ a {\bf good complement}.

\item[$\bullet$]  A  finite-dimensional subspace  $N$ of $E$
is called {\bf neat with
respect to  the partial quadrant}  $C$  if  there exists an $\ssc$-complement $N^\perp$ of
$N$ in $E$ satisfying $N^\perp\subset C$.
\end{itemize}
\end{defn}
\begin{figure}[htbp]
\mbox{}\\[2ex]
\centerline{\relabelbox \epsfxsize 2.5truein \epsfbox{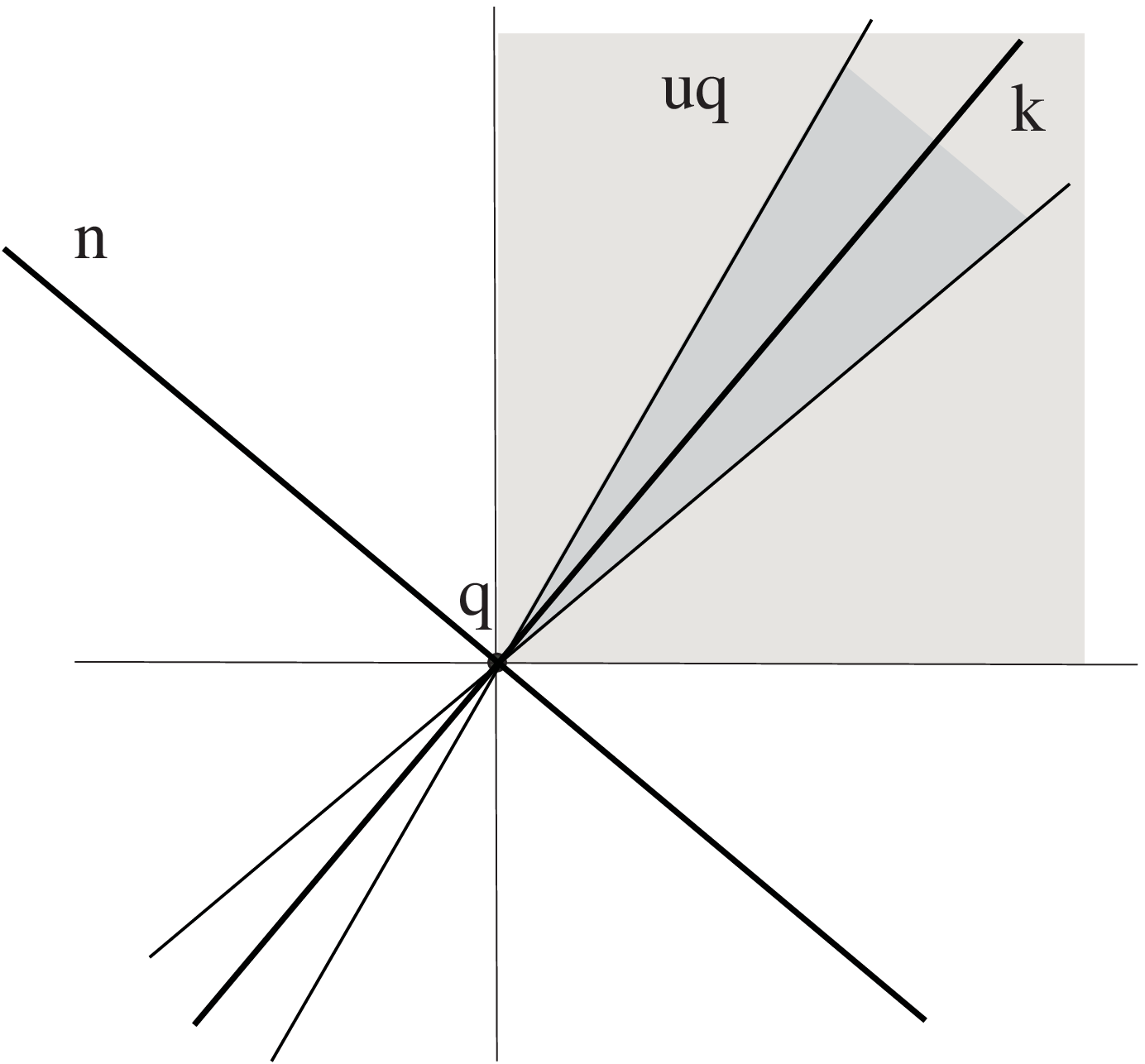}
\relabel {q}{$0$} \relabel {uq}{$C$} \relabel {k}{$N$} \relabel
{n}{$N^{\perp}$}
\endrelabelbox}
\end{figure}
The choice of the complement $N^\perp$ is important, i.e. if the
defining condition holds for one choice it does not need to hold for
another choice (at least in higher dimensions). This definition is
very practical for the local theory. Observe that  if
$N\subset {\mathbb R}^n\oplus W$ is a neat subspace with respect to the  partial quadrant
$C=[0,\infty)^n\oplus W$, then necessarily $\dim N\geq n$.
Moreover,  the following holds.
\begin{prop}\label{neat}
A neat subspace is in good position to the partial quadrant.
\end{prop}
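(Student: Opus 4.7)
The plan is to use the defining property of a neat subspace to produce, essentially for free, a good complement in the sense of Definition \ref{defn5.6.1}. The key observation is that if a linear subspace lies inside a partial quadrant, then its image under the ``cone'' projection must vanish, which will make the equivalence $n+m \in C \Leftrightarrow n \in C$ hold \emph{without any} smallness assumption on $m$.

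First I would fix, via the sc-isomorphism in the definition of a partial quadrant, an identification $E \cong \R^n \oplus W$ under which $C$ corresponds to $[0,\infty)^n \oplus W$. Let $p: E \to \R^n$ denote the projection onto the first factor, so that $C = p^{-1}([0,\infty)^n)$. By hypothesis there is an sc-complement $N^\perp$ of $N$ in $E$ with $N^\perp \subset C$. Since $N^\perp$ is a linear subspace, $p(N^\perp)$ is a linear subspace of $\R^n$ contained in $[0,\infty)^n$, and the only such subspace is $\{0\}$. Hence $N^\perp \subset \ker p$, i.e.\ $N^\perp \subset \{0\} \oplus W$.

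Next I would verify the two conditions of a good position. From $E = N \oplus N^\perp$ and $N^\perp \subset \ker p$ it follows that $p|_N : N \to \R^n$ is surjective. Choose any $n_0 \in N$ with $p(n_0) \in (0,\infty)^n$; then for $n \in N$ sufficiently close to $n_0$ we still have $p(n) \in (0,\infty)^n$, so $n \in N \cap C$. Thus $N \cap C$ contains an open neighborhood of $n_0$ in $N$, which gives nonempty interior.

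For the equivalence, let $(n,m) \in N \oplus N^\perp$. Since $p(m) = 0$, we have
\[
p(n+m) \;=\; p(n) + p(m) \;=\; p(n),
\]
so the conditions $n+m \in C$ and $n \in C$ are both equivalent to $p(n) \in [0,\infty)^n$, hence equivalent to each other. This equivalence holds for \emph{all} $m \in N^\perp$, so any positive constant $c$ (say $c=1$) works in Definition \ref{defn5.6.1}. There is no real obstacle here; the only delicate point is recognizing that a linear subspace of a partial quadrant must lie in its linear part $\{0\} \oplus W$, after which the conclusion is automatic.
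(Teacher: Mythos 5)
Your proof is correct and takes essentially the same route as the paper's argument (Proposition \ref{ropp} in the Appendix): both hinge on the observation that a linear complement $N^\perp\subset C$ must lie in $\{0\}\oplus W$, and then use $E=N\oplus N^\perp$ to produce an interior point of $N\cap C$. The only difference is the final equivalence check, which the paper does with $c=1$ via convexity of $C$ and $\pm m\in C$, while you read it off directly from $p(n+m)=p(n)$, which indeed gives the equivalence for every $m\in N^\perp$ with no smallness restriction.
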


The proof of Proposition \ref{neat} is carried out in  Appendix \ref{appendix-x} (Proposition \ref{ropp}). 

Now assume that $q$ is a smooth point in the boundary of the open
subset $O$ of the splicing core $K^{\mathcal S}$. Then $q=(v,e)\in
V\oplus E$ satisfies $\pi_v(e)=e$. Recall that
$$
V\subset C\subset \R^k\oplus W=G.
$$
The tangent space $T_qO$ is the space
$$
T_qO=\{(\delta v,\delta e)\in G\oplus E\vert \,  \delta
e=\pi_v\delta e +D_v(\pi_ve)\cdot \delta v\}.
$$
With every tangent vector  $(\delta v,\delta e)\in T_qO$ we can
associate the continuous path
$$
t\rightarrow (v+t\delta v,\pi_{v+t\delta v}(e+t\delta e))
$$
in $G\oplus E$. The  path starts at $q=(v,e)$ at time $t=0$ and
belongs to $O$ as long as  $v+t\delta v$ is close to $v$ and belongs
to $V$. Denote by $C_v\subset G$ the collection of all tangent
vectors $\delta v$ so that $v+t\delta v$ belongs to $V$ for $t> 0$
small enough. The set $C_v$ constitutes a partial quadrant in the
space $G$. Hence $C_v\oplus \ker(1-\pi_v)$ is a partial quadrant in
the space $G\oplus \ker( 1-\pi_v)$.  

Taking the derivative of the identity $\pi_v^2(e)=\pi_v (e)$ in the variable $v$ one  obtains  
$D_v\pi_v (\pi_v (e))\cdot \delta v+\pi_v (D_v \pi_v(e)\cdot \delta v)=D_v \pi_v (e)\cdot \delta v$ and one concludes, in view of 
$\pi_v (e)=e$, that 
$\pi_v (D_v\pi_v(e)\cdot \delta v)=0$. Assuming, in addition,  that $\pi_v (\delta e)=\delta e$ one finds  that 
$\delta e+D_v \pi_v (e)\cdot \delta v =\pi_v (\delta e+D_v \pi_v (e)\cdot \delta v)+D_v\pi_v (e)\cdot \delta v$. Thus,
$$
(\delta v, \delta e+D_v \pi_v (e)\cdot \delta v)\in T_qO
$$
for all $\delta v\in G$ and 
 $\delta e\in \ker (1-\pi_v)$, where $q=(v, e)$ satisfies $\pi_v (e)=e$. The map $A:G\oplus E\to G\oplus E$,  defined by 
$$A:(\delta v,\delta e)\mapsto  (\delta v,\delta e+
D_v\pi_v(e)\cdot \delta v \bigr), 
$$
maps  the space $ G\oplus\ker(1-\pi_v)$ sc-isomorphically onto the
tangent space $T_qO$ and consequently the partial quadrant 
$C_v\oplus \ker(1-\pi_v)$  onto the partial quadrant $C_q:=A(C_v\oplus \ker (1-\pi_v))$ in the tangent space $T_qO$.

\begin{defn}\label{defn5.6.4}
The partial quadrant $C_q\subset T_qO$ is called the {\bf partial
tangent quadrant} at the smooth point $q\in O$.
\end{defn}
The following result  is readily verified.
\begin{lem}
If  an sc-diffeomorphism $\varphi:O\rightarrow O'$ maps the smooth
point $q\in O$ to the smooth point $q'\in O'$, then  its tangent map
$T\varphi(q)$ at $q$ maps $C_q$ to $C_{q'}'$. If $N\subset T_{q}O$
is a  finite-dimensional smooth
subspace which is in good position to $C_q$ or which is neat with
respect to $C_q$,  then its image $T\varphi(q)N$ has the same property
with respect to $C_{q'}'$.
\end{lem}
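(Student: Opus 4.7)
The plan is to deduce the statement in three stages: first a coordinate characterization of the partial tangent quadrant $C_q$; then the identity $L(C_q)=C_{q'}'$ for $L:=T\varphi(q)$; and finally the preservation of both positional properties as consequences of this identity together with $L$ being an sc-isomorphism.

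For the first two stages, I would observe that the construction of $C_q$ via the sc-isomorphism $A$ means $(\delta v,\delta e')\in C_q$ precisely when $\delta v\in C_v$. For smooth elements of $C_q$, the sc-smooth curve
$$
\gamma(t):=(v+t\delta v,\ \pi_{v+t\delta v}(e+t\delta e'))
$$
lies in $O$ for small $t\geq 0$, starts at $q$, and satisfies $\gamma'(0)=(\delta v,\delta e')$ via the identity $\pi_v\delta e'=\delta e'-D_v\pi_v(e)\delta v$. Then $\varphi\circ\gamma$ is an sc-smooth curve in $O'$ starting at $q'$ with derivative $L\gamma'(0)$, which therefore lies in $C_{q'}'$. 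Density of the smooth elements in $C_q$, continuity of $L$, and closedness of $C_{q'}'$ upgrade this to $L(C_q)\subseteq C_{q'}'$, and $\varphi^{-1}$ gives the opposite inclusion.

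Preservation of neatness is then immediate: $LN^\perp$ is an sc-complement of $LN$ in $T_{q'}O'$ and is contained in $L(C_q)=C_{q'}'$. For preservation of good position, $LN^\perp$ is again the natural complement; choosing the constant $c':=c/(\|L\vert_N\|\cdot\|L^{-1}\vert_{LN^\perp}\|)$ (both norms finite, the first because $N$ is finite-dimensional), a pair $(n',m')\in LN\oplus_{sc}LN^\perp$ with $\|m'\|\leq c'\|n'\|$ pulls back under $L^{-1}$ to a pair $(n,m)$ satisfying $\|m\|\leq c\|n\|$, and the good-position hypothesis on $N^\perp$ combined with $L(C_q)=C_{q'}'$ yields $n'+m'\in C_{q'}'\iff n'\in C_{q'}'$.

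The principal subtlety, and the only nontrivial point, is the density of smooth elements inside $C_q$, which is needed to upgrade the curve-based argument from smooth representatives to all of $C_q$; this follows from density of the smooth points in the ambient sc-Banach space together with the explicit model form $[0,\infty)^k\oplus W$ of the partial quadrant, in which any element may be approximated by smooth elements of the quadrant itself. All remaining ingredients are the chain rule and elementary operator-norm estimates.
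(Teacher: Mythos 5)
The paper gives no proof of this lemma at all (it is dismissed as ``readily verified''), so there is no official argument to compare yours against; judged on its own terms, your verification is correct and is surely the intended one: establish $T\varphi(q)C_q=C_{q'}'$ by a curve argument on smooth vectors plus density, then transport neatness and good position by linear algebra. That last part you do correctly: $L:=T\varphi(q)$ is an sc-isomorphism $T_qO\to T_{q'}O'$, so $LN^\perp$ is an sc-complement of $LN$, contained in $C_{q'}'$ in the neat case, and your constant $c'=c/(\|L\vert_N\|\cdot\|L^{-1}\vert_{LN^\perp}\|)$ works for the quantitative clause of good position. Do note also the first clause of that definition: $LN\cap C_{q'}'=L(N\cap C_q)$ has nonempty interior in $LN$ because $L\vert_N$ is a linear isomorphism onto $LN$; you omit this, though it is immediate.

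Two steps, however, are asserted where they should be argued. First, the phrase ``which therefore lies in $C_{q'}'$'' hides the crux of the forward inclusion. Writing $q'=(v',e')$ and $\varphi\circ\gamma(t)=(w(t),f(t))$, you must use that $w(t)$ stays in $V'$, a relatively open subset of $[0,\infty)^{k'}\oplus W'$, with $w(0)=v'$: for every coordinate $i$ with $v'_i=0$ the function $w_i\geq 0$ vanishes at $t=0$, so its one-sided derivative at $0$ is nonnegative, whence $\dot w(0)\in C'_{v'}$; combined with the automatic fact that $T\varphi(q)\gamma'(0)\in T_{q'}O'$ and the target-side analogue of your coordinate characterization (an element of $T_{q'}O'$ lies in $C_{q'}'$ exactly when its $G'$-component lies in $C'_{v'}$), this yields membership in $C_{q'}'$. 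You should also justify that $\varphi\circ\gamma$ is differentiable at $t=0$ into level $1$, so that these one-sided derivatives exist and equal the components of $T\varphi(q)\gamma'(0)$: since $q$ and $\gamma'(0)$ are smooth, $\gamma$ is $C^1$ into every level, and $\varphi$, being sc-smooth, induces $C^1$ maps from level $m+1$ to level $m$, so the chain rule applies. Second, the density of smooth vectors in $C_q$ is not literally a statement about the model quadrant $[0,\infty)^k\oplus W$: one should pass through the sc-isomorphism $A$, which maps $C_v\oplus\ker(1-\pi_v)$ onto $C_q$ and preserves smooth vectors (its off-diagonal entry $D_v\pi_v(e)$ is an sc-operator because $q$ is a smooth point), and then approximate in the two factors separately; for $C_v$ your remark applies verbatim, while for $\ker(1-\pi_v)$ one uses that $\pi_v$ maps $E_\infty$ into $E_\infty$ (again because $q$ is smooth and $\pi$ is sc-smooth), so that $\pi_v(E_\infty)$ is a dense set of smooth vectors in $\ker(1-\pi_v)$. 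With these two points supplied, your proof is complete.
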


The previous lemma allows to introduce the following intrinsic notions.
\begin{defn}\label{defn5.6.5}
Let $x$ be a smooth point  in a M-polyfold $X$ and let $\varphi:(U(x),x)\rightarrow (O,q)$ be a local chart around $x$.
\begin{itemize}
\item The {\bf  partial tangent quadrant} $Q_x$ in $T_xX$ is the
subset which is mapped under $T\varphi(x)$ onto the partial tangent
quadrant $C_q$ of $T_qO$.
\item A smooth finite-dimensional subspace $N$ of $T_xX$ is in
{\bf good position} to the corner structure of $X$ at $x$  (or in  {\bf good position} to the partial tangent quadrant
$Q_x$) if this is true for  the subspace $T\varphi(x)N$ with respect to $C_q$ in $T_qO$.
\item A smooth finite-dimensional subspace $N$ of $T_xX$ is {\bf neat}
with respect to the corner structure of $X$ at $x$ (or
{\bf neat} with respect to the partial tangent quadrant $Q_x$),  if the same holds for
$T\varphi(x)N$ with respect to $C_q$ in $T_qO$.
\end{itemize}
\end{defn}
\begin{rem} \label{rem0}
Since an sc-isomorphism $A:G\oplus E\to G\oplus E$ 
introduced above leaves the linear spaces $\{0\}\oplus \ker \pi_v$ invariant, the tangent spaces $T_qO$ at the smooth point $q=(v, e)\in O$ has $\ker \pi_v$ as natural sc-complement in $T_q(V\oplus E)$ so that 
$$
G\oplus E=T_q(V\oplus E)=T_qO\oplus \ker \pi_v.
$$
If $N\subset T_{(v,e)}O$ is in good position to the corner structure and
$N^\perp\subset T_qO$ is a good complement of $N$ in
$T_qO$, then $N^\perp \oplus \ker \pi_v$ is a good complement 
of $N$ in $G\oplus E$. In particular, $N\subset G\oplus E$ is in good position to the partial quadrant $C\oplus E$ in $G\oplus E.$
\end{rem}
Next we define what it means that a Fredholm section is in a good
position at a solution $q$ of $f(q)=0$.
\begin{defn}\label{defn5.6.6}
Consider a Fredholm section  $f$ of the fillable strong M-polyfold bundle
$p:Y\rightarrow X$ and assume that $f(q)=0$. Then $f$ is said to be
in {\bf good position to the corner structure}  at the point $x$
provided
\begin{itemize}
\item The linearization $f'(x)$ is surjective.
\item The kernel $N$ of $f'(x)$ is in good position to the corner
structure of $X$ at $x$.
\end{itemize}
\end{defn}

The previous concept of a good parametrization, adapted to the
situation with boundary, is useful to describe the solution set near
the boundary.

\begin{defn}\label{defn5.6.7}
Consider a local strong bundle $b:K\rightarrow O$ where $O$ has no
boundary. Assume that the section   $f$  of the  bundle $b$  is 
regularizing and at every solution $p\in O$ of $f(p)=0$  the section $f$ is   linearized Fredholm and its  linearization 
$f'(p):T_pO\to K_p$ is surjective.  Let $q\in \partial O$ be a solution of $f(q)=0$
at the boundary of $O$ and assume that the kernel $N$ of the linearization
$f'(q):T_qO\to K_q$ is in  good position to the partial tangent
quadrant $C_q\subset T_qO$. Then the section $f$ is said to have
  a {\bf good parametrization of the solution set} $\{f=0\}$ near $q$,
  if  there exist an open neighborhood $Q$ of $0$ in $N\cap C_q$  (which in view of
Proposition \ref{pointt} is a partial quadrant in $N$),
  an open neighborhood $U(q)$ of $q$ in $O$, and an sc-smooth
map
$$
A:Q\rightarrow N^{\perp}
$$
into  some good complement $N^{\perp}$ of $N$ in $G\oplus E$ such
that the following properties are satisfied.
\begin{itemize}
\item [(1)] $A(0)=0$ and $DA(0)=0$.
\item[(2)]  The map
$$
\Gamma:Q\rightarrow G\oplus E,\quad n\mapsto q+n+A(n)
$$
has its image in $U(q)$ and parameterizes all solutions $p\in U(q)$
of the equation $f(p)=0$.
\item [(3)] At every solution $p\in U(q)$ of $f(p)=0$,  the map
\begin{gather*}
\ker f'(q) \to  \ker  f'(p)\\
\delta n\mapsto  \delta n+DA(n_0)\cdot \delta n
\end{gather*}
is a linear isomorphism, where $p=q+n_0+A(n_0)$.
\end{itemize}
The map  $\Gamma$ is called a  {\bf good parametrization of the zero
set}  $\{f=0\}$  near  the zero $q\in \partial O. $
\end{defn}

 The calculus of good parameterizations in the
boundary case is similar to the previously discussed interior case.
Let us observe that,  in view of Proposition \ref{pointt}, the set
$C_q\cap N$ is a partial quadrant in $N$. Hence a good parametrization is
automatically parameterizing locally a manifold with boundary with
corners.

\subsection{Local  Solutions Sets in the Boundary
Case} The following main  result of this section guarantees the
existence of good parameterizations  of solution sets near  boundary
points.

\begin{thm}\label{LOCAX1}
Let $b:K \rightarrow O$ be a fillable  local strong M-polyfold bundle and
$f:O\to K$ a Fredholm section of the bundle $b$. Assume that $q$ is
a  smooth point in $\partial O$ solving the equation $f(q)=0$. If
the section $f$ is in good position to the corner structure 
at the point $q$ (in the sense of Definition \ref{defn5.6.6}),  then
there exists a good parametrization of the solution set $\{f=0\}$
near $q$.
\end{thm}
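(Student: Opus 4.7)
The plan is to follow the strategy of the interior case (Theorem \ref{newtheoremA}) but replace the ordinary germ-implicit function theorem \ref{newthm5.4} by its partial-quadrant version \ref{newthmboundary5.4}, and add one finite-dimensional implicit function theorem argument that uses the ``good position'' hypothesis to respect the corner structure.

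First I would reduce to a filled version. Using a strong bundle chart near $q$ together with a filler, I may assume by the Fredholm hypothesis that the section is represented, in coordinates identifying $q$ with $0$, by an element of $\mathfrak{C}_{\text{basic}}$:
\[
g:\mo(([0,\infty)^k\oplus\R^{n-k})\oplus W,0)\to\R^N\oplus W,\qquad
Pg(v,w)=w-B(v,w),
\]
with $B$ an $\ssc^0$-contraction germ in the second variable. Since good parametrizations are preserved by strong bundle isomorphisms (Proposition \ref{newproposition5.5}) and the linearization of the filled section is surjective iff $f'(q)$ is, it suffices to produce a good parametrization for $g$ at $0$. By the regularizing property, every solution near $0$ is automatically smooth.

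Next I would apply Theorem \ref{newthmboundary5.4} to the contraction germ $(v,w)\mapsto w-B(v,w)$, where now the parameter $v$ ranges in the partial quadrant $V=[0,\infty)^k\oplus\R^{n-k}$. This yields an $\ssc$-smooth germ $\delta:\mo(V,0)\to(W,0)$ with $\delta(0)=0$ and $Pg(v,\delta(v))=0$, and such that for every $m,j\in\N$ the map $\delta$ is of class $C^j$ into $W_m$ on some neighborhood $V_{m,j}$ of $0$ in $V$. Solutions of $g(v,w)=0$ near $0$ are then in bijection with zeros of the finite-dimensional sc-smooth map
\[
G:V\to\R^N,\qquad G(v):=(1-P)g(v,\delta(v)).
\]
Exactly as in Lemma \ref{lem5.4.1}, using the surjectivity of $Dg(0)$ together with the fact that $1-D_2B(0)$ is an isomorphism of $W$, one shows that $DG(0):\R^n\to\R^N$ is surjective.

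The key new step is to solve $G(v)=0$ inside the partial quadrant $V$. Let $C_0:=\ker DG(0)\subset\R^n$. The kernel $N$ of $Dg(0)$ is the graph $\{(b,\delta'(0)b):b\in C_0\}\subset\R^n\oplus W$. By hypothesis $N$ is in good position to the partial tangent quadrant $C_q$, which corresponds in these coordinates (see Remark \ref{rem0}) to $N$ being in good position to $V\oplus W$ in $\R^n\oplus W$; the projection $\R^n\oplus W\to\R^n$ along $W$ sends $N$ isomorphically onto $C_0$, and pushes this property down to give that $C_0$ is in good position to $V$ in $\R^n$. This allows me to choose a linear complement $C_0^\perp$ of $C_0$ in $\R^n$ with the property that, for some $c>0$, any $(r,s)\in C_0\oplus C_0^\perp$ with $\|s\|\le c\|r\|$ satisfies $r+s\in V\iff r\in V$. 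Since $D_2G(0):C_0^\perp\to\R^N$ is an isomorphism, the ordinary $C^\infty$ implicit function theorem on $\R^n$ produces a $C^\infty$-map $c:\mo(C_0,0)\to C_0^\perp$ with $c(0)=0$, $Dc(0)=0$, and $G(r+c(r))=0$; the good-position estimate $\|c(r)\|\le c\|r\|$ (achieved by shrinking the neighborhood) guarantees that $r+c(r)\in V$ precisely when $r\in V\cap C_0$, which by Proposition \ref{pointt} is a partial quadrant in $C_0$.

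Finally I would assemble the good parametrization. Define $\beta(r):=(r+c(r),\delta(r+c(r)))$ for $r\in C_0\cap V$ near $0$. Then $g(\beta(r))=0$, $D\beta(0):C_0\to N$ is an isomorphism onto the kernel $N$, and $\beta(0)=0$. Precomposing with $(D\beta(0))^{-1}$ gives $\alpha:\mo(N\cap C_q,0)\to \R^n\oplus W$; projecting along a good complement $N^\perp$ in $\R^n\oplus W$ (guaranteed by the good position, see Remark \ref{rem0}) and inverting the resulting local diffeomorphism on $N$ exactly as in the interior proof yields a map
\[
\Gamma:Q\to \R^n\oplus W,\qquad n\mapsto n+A(n),\quad A(n)\in N^\perp,\ A(0)=0,\ DA(0)=0,
\]
parametrizing all solutions near $0$, with $Q$ an open neighborhood of $0$ in the partial quadrant $N\cap C_q$. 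Surjectivity of the linearization at each $\Gamma(n_0)$ and the kernel-isomorphism statement follow exactly as in Lemma \ref{lem5.4.1.2}, since the matrix argument used there is insensitive to the presence of corner directions (only the $W$-direction contraction is used).

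The main obstacle is the sc-smoothness of $\Gamma$ at points $n_0\ne 0$, which cannot be read off from Theorem \ref{newthmboundary5.4} applied at $0$. Here I would reuse the trick from the interior case: at $q_0=\Gamma(n_0)$ the Fredholm hypothesis produces (perhaps via a different filler and a further strong bundle isomorphism) a new contraction normal form near $q_0$ whose linearization is again surjective; applying the construction above at $q_0$ produces a second local parametrization $\Theta(m)=q_0+m+B(m)$, sc-smooth at $m=0$. The reparametrization map $n\mapsto\Theta^{-1}\circ\Gamma(n)$ between open subsets of finite-dimensional spaces is a local diffeomorphism near $n_0$ by the isomorphism statement for kernels, and expressing $\Gamma$ via $\Theta$ and this reparametrization shows $\Gamma$ is sc-smooth at $n_0$. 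The delicate point is that at a boundary point $q_0$ the partial tangent quadrant may have lower degeneracy than at $q$, but the good-position property of $N$ transports to the new kernel by the kernel-isomorphism statement, so the boundary-case construction applies at $q_0$ as well. This completes the proof.
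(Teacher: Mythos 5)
Your overall route is the paper's route: reduce to a filled contraction normal form at $q=0$, apply the boundary version of the germ implicit function theorem (Theorem \ref{newthmboundary5.4}) to obtain $\delta$ on the partial quadrant $V$, reduce to the finite-dimensional map $G(v)=(1-P)g(v,\delta(v))$ with $DG(0)$ surjective, use the good-position hypothesis to solve $G=0$ over the partial quadrant $N'\cap C'$ (with $N'=C_0=\ker DG(0)$) by the classical implicit function theorem after a Whitney extension, and finally recover sc-smoothness of the parametrization away from $0$ by re-centering with a new filler, exactly as in the interior case. One small remark on the transfer of good position: a projection does not in general push good position forward; what makes your claim true is the paper's linear shear $T(h,k)=(h,k+\delta'(0)h)$, which maps $C$ onto $C$ (only the unconstrained $W$-directions are moved) and carries $N'\oplus\{0\}$ onto $N$, so the preimage of a good complement of $N$ is a good complement of $N'$, and its intersection with $\R^n$ is a good complement $M$ of $C_0$ in $\R^n$. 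Your use of the estimate $\Vert c(r)\Vert\leq c\Vert r\Vert$ to see that $r+c(r)\in V$ exactly when $r\in V\cap C_0$ is a point the paper leaves implicit, and your appeal to the matrix argument of Lemma \ref{lem5.4.1.2} for surjectivity and the kernel isomorphism is fine.

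The one step that, as written, would fail is the final assembly ``exactly as in the interior proof'': you project the solution graph along an \emph{arbitrary} good complement $N^\perp$ and invert the resulting map $\tau=P\circ\alpha$ on $N$. In the interior case this is harmless, but here $\tau$ is a nonlinear map of a relatively open neighborhood of $0$ in the partial quadrant $N\cap C_q$ with $D\tau(0)=\operatorname{id}$; its (extended) inverse $\gamma$ need not map $N\cap C_q$ into $N\cap C_q$, since the quadratic terms of $\tau$ generically move boundary parameters out of the quadrant, and for such parameters $\Gamma(n)=\alpha(\gamma(n))$ is not a zero of $f$, so property (2) of Definition \ref{defn5.6.7} is lost. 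The cure is to avoid the inversion: either use the shear $\sigma(r)=r+\delta'(0)r$, which is linear, preserves $C$, and carries $N'\cap C'$ onto $N\cap C$, and set $\Gamma=\Theta\circ\sigma^{-1}$ with $\Theta(r)=r+c(r)+\delta(r+c(r))$ (this is the paper's construction), or, equivalently, choose the good complement to be $N^\perp=M\oplus W$ (a good complement, since $M$ is one for $N'$ in $\R^n$ and the $W$-directions are unconstrained in $C$); then $\alpha(m)-m\in N^\perp$ automatically, $\tau$ is the identity, and $A(n)=\Gamma(n)-n$ lands in $N^\perp$ as required. With that modification your argument coincides with the paper's proof; the re-centering argument for sc-smoothness at $n_0\neq 0$, including the transport of good position to nearby boundary zeros, is treated by you at the same level of detail as in the paper.
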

\begin{proof}
The proof goes along the same lines as the proof of Theorem
\ref{prop5.4.14} in the case without boundaries with some minor
modifications we describe next.
  Recall that the Fredholm property tells us that,  after a
possible change of coordinates,  there is a filler so that after
another change of coordinates we have a contraction germ. Hence,
without loss of generality we assume that $q=0\in \partial O$ and
that $f$ is already filled and
$$
f:O\subset ([0,\infty)^k\oplus{\mathbb R}^{n-k})\oplus W\to {\mathbb
R}^N\oplus W.
$$
Since we started with a section which at the point $q$ is in good
position to the corner structure of $O$,  the filled section is,  by Remark \ref{rem0}, in
good position to the corner structure at $0$. Moreover, if
$P:{\mathbb R}^N\oplus W\rightarrow W$ is the canonical projection,
we have the representation
$$
Pf(v,w)=w-B(v,w),
$$
where $v$ is near $0$ in the partial quadrant
$[0,\infty)^k\oplus{\mathbb R}^{n-k}$  of the space $\R^n$, and
where $w\in W$. The contraction property holds near $(0,0)$ where
the notion of near depends on the level $m$ in $W$. Thus, in view of 
Theorem \ref{newthm5.4},  there exists a
unique  map 
$$
v\rightarrow \delta(v)
$$
from a neighborhood  of $0$ in $[0,\infty)^k\oplus{\mathbb R}^{n-k}$
into $W$ solving the equation
$$
Pf(v,\delta(v))=0
$$
and satisfying
$\delta(0)=0$. The map $\delta$ has the same properties as  in the
case without boundary.  Introduce the mapping
$g:[0,\infty)^k\oplus{\mathbb R}^{n-k}\to \R^N$ defined near $0$ by
$$
g(v):=(1-P)f(v,\delta(v)).
$$
The map $g$ has,  as in in the case without boundary, a  surjective
linearization at $0$ and the kernel $N'$ of $g'(0):\R^n\to \R^N$ is
mapped by
$$
h\mapsto  (h,\delta'(0)h)
$$
isomorphically onto the kernel $N$ of the linearization $f'(0)$.
Abbreviate the partial quadrants
\begin{align*}
C&:=[0,\infty)^k\oplus{\R}^{n-k}\oplus W\quad \text{in $\R^n\oplus W$}\\
C'&:=[0,\infty)^k\oplus{\R}^{n-k}\quad \text{in $\R^n$}.
\end{align*}

Since $N$ is in good position to the partial quadrant
$[0,\infty)^k\oplus {\mathbb R}^{n-k}\oplus W$ we can take a good
complement  $N^\perp$ of $N$ in ${\mathbb R}^n\oplus W$. Consider
the linear $\ssc$-isomorphism
$$
{\mathbb R}^n\oplus W\rightarrow {\mathbb R}^n\oplus
W, \quad (h,k)\mapsto  (h,k+\delta'(0)h).
$$
It maps $C$ onto $C$.  Identifying  the kernel $N'\subset \R^n$ with $N'\oplus \{0\}$ in $\R^n\oplus W$, we see that he image of $N'$ under the above  map is  the kernel $N$.  Hence
the preimage ${(N')}^\perp$ of the good complement $N^\perp$ of $N$
is a good complement of $N'$ in ${\mathbb R}^n\oplus W$. In
particular,  $M={(N')}^\perp \cap {\mathbb R}^n\oplus\{0\}$ is a good
complement of $N'$ in ${\mathbb R}^n$.

 Since the kernel  $N'$ is in good position to $C'$ in ${\mathbb
R}^n$,  we see that $N'\cap C'$ is a partial quadrant in view of Proposition \ref{pointt} in Appendix.  By the
finite-dimensional implicit function theorem (one might first extend
the map $g$ to a $C^1$-map on an open neighborhood of $0$ in $\R^n$
by Whitney's extension theorem),  we obtain a solution germ of
$g(r+c(r))=0$ where
\begin{equation}\label{neweq5.6.1}
r\mapsto  r+c(r).
\end{equation}
is a function from $N'\cap C'$ into $N'\oplus M=\R^n$ defined near
$r=0$. In particular, $c(r)\in M$, moreover, $c(0)=0$ and $Dc(0)=0$.
As a side remark we observe that the function \eqref{neweq5.6.1} is
a good parametrization for our finite-dimensional problem in the
sense of the above  definition.  Define the map
\begin{gather*}
\Theta:N'\cap C'\to N'\oplus M\oplus W\\
\Theta (r)= r+c(r)+\delta(r+c(r)).
\end{gather*}
Take the bijective map $\sigma :N'\cap C'\to N\cap C$ defined by
$\sigma (r)=r+\delta '(0)r\in \R^n\oplus W$ and introduce the map
\begin{gather*}
\Gamma : N\cap C\to \R^n\oplus W\\
\Gamma (n)=\Theta\circ\sigma^{-1}(n).
\end{gather*}
Then (recall that $q=0$) we can write
$$
\Gamma (n)=q+n+A(n)
$$
with $A(n)\in N^{\perp}$ and $A(0)=0$ and $DA(0)=0$. Moreover,
$f(\Gamma (n))=0$, so that $\Gamma$ parameterizes the solution set
$\{f=0\}$ near $q=0$. Arguing as in the proof of Theorem
\ref{prop5.4.14} one sees that the map $\Gamma$ is sc-smooth.
Therefore, $\Gamma$ is a good parametrization and the proof of
Theorem \ref{LOCAX1} is complete.
\end{proof}

The above existence proofs of good parametrizations of solution sets of Fredholm sections prompts the following concept of a  finite dimensional  submanifold of an M-polyfold.  Such submanifolds have the structure of a manifold induced in a natural way. 

 \begin{defn}\label{submanifold}
Let $X$ be an M-polyfold and $M\subset X$ a subset  equipped with the
induced topology. The subset  $M$ is called a  {\bf finite dimensional submanifold} of $X$
provided the following holds.
\begin{itemize}
\item[$\bullet$]  The subset $M$ lies in $X_\infty$.
\item[$\bullet$] At  every point $m\in M$  there exists an  M-polyfold chart
$$(U, \varphi, (\pi,E,V))
$$
where  $m\in U\subset X$ and where $\varphi:U\rightarrow
O$ is a homeomorphism satisfying $\varphi (m)=0$, onto the  open neighborhood $O$ of $0$ in
the splicing core $K$ associated with the sc-smooth splicing $(\pi, E, V)$. Here $V$ is an open neighborhood of $0$ in a
partial quadrant $C$ of the sc-Banach space $ W$. Moreover,  there
exists a finite-dimensional smooth linear subspace $N\subset W\oplus
E$ in good position to $C$ and a  corresponding sc-complement
$N^\perp$, an open neighborhood $Q$ of $0\in C\cap N$ and an
$\ssc$-smooth map $A:Q\rightarrow N^\perp$ satisfying  $A(0)=0$, $DA(0)=0$
so that the map
$$
\Gamma:Q\rightarrow W\oplus E:q\rightarrow q+A(q)
$$
has its image in $O$ and the image of the composition
$\Phi:=\varphi^{-1}\circ\Gamma:Q\rightarrow U$ is equal to  $M\cap U$.
\item[$\bullet$] The map $\Phi:Q\rightarrow M\cap U$ is a homeomorphism.
\end{itemize}
We recall that  $N\subset W\oplus E$ is a smooth subspace  if  $N\subset (W\oplus E)_{\infty}=W_{\infty}\oplus E_{\infty}$. The map $\Phi:Q\rightarrow U$ is called a {\bf good parametrization}
of a neighborhood of $m\in M$ in $M$.
\end{defn}

In  other words  a  subset $M\subset X$   of an  M-polyfold $X$  consisting of smooth points
is a submanifold if for every $m\in M$ there is  a good
parametrization of an open neighborhood of $m$ in $M$. The following
proposition shows that the transition maps  $\Phi\circ \Psi^{-1}$ defined  by two good
parameterizations $\Phi$ and $\Psi$  are smooth, so the inverses of the good parametrizations define an atlas of smoothly compatible charts.  Consequently, a finite dimensional submanifold is in a natural way a manifold with boundary with corners.

\begin{prop} Any two parametrizations of a finite dimensional submanifold $M$ of the M-polyfold $X$ are smoothly compatible. 
\end{prop}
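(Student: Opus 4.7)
The plan is to express the transition map $\Phi^{-1}\circ\Psi$ as a composition of sc-smooth maps and then invoke the fact (already used in the proof of Proposition \ref{newproposition5.6}) that an sc-smooth map between open subsets of finite dimensional smooth vector spaces is automatically of class $C^\infty$ in the usual sense.

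Let $\Phi:Q\to M\cap U$ and $\Psi:Q'\to M\cap U'$ be two good parametrizations whose images share a common point $m_0\in M$. Following Definition \ref{submanifold} I write them as $\Phi=\varphi^{-1}\circ\Gamma$ and $\Psi=(\varphi')^{-1}\circ\Gamma'$ with $\Gamma(q)=q+A(q)$ and $\Gamma'(q')=q'+A'(q')$, where $A:Q\to N^\perp$ and $A':Q'\to (N')^\perp$ are sc-smooth, $N$ and $N'$ are the distinguished smooth finite-dimensional subspaces in good position to the respective partial quadrants, and $\varphi:U\to O\subset K$, $\varphi':U'\to O'\subset K'$ are the underlying M-polyfold charts. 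Let $P:W\oplus E=N\oplus N^\perp\to N$ denote the sc-continuous linear projection along the good complement $N^\perp$. Since $A(q)\in N^\perp$, we have $P\circ\Gamma(q)=q$, so on the image of $\Gamma$ the left inverse is simply the restriction of $P$. Therefore $\Phi^{-1}(m)=P(\varphi(m))$ for every $m\in M\cap U$.

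Shrinking $Q'$ around $\Psi^{-1}(m_0)$ so that $\Psi(Q')\subset U\cap U'$, this gives the explicit formula
$$
\Phi^{-1}\circ\Psi(q')=P\circ\bigl(\varphi\circ (\varphi')^{-1}\bigr)\circ\Gamma'(q').
$$
Each of the three factors is sc-smooth: $\Gamma'$ by the defining properties of a good parametrization, the chart transition $\varphi\circ(\varphi')^{-1}$ as a transition between two M-polyfold charts, and $P$ because it is continuous linear. Hence $\Phi^{-1}\circ\Psi$ is sc-smooth; since its domain and codomain sit inside the finite dimensional smooth vector spaces $N'$ and $N$, this sc-smoothness coincides with classical $C^\infty$-smoothness. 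The only point requiring care is the identification of the left inverse of $\Gamma$, which is forced by the inclusion $A(Q)\subset N^\perp$; beyond that the argument is purely formal, so no serious obstacle is anticipated.
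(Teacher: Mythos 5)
Your proof is correct and follows essentially the same route as the paper: applying the sc-projection $P$ along the good complement to express the transition map as $P\circ(\varphi\circ(\varphi')^{-1})\circ\Gamma'$, a composition of sc-smooth maps, and then using that sc-smooth maps between open subsets of finite dimensional smooth spaces are $C^\infty$. No gaps.
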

\begin{proof}
Assume that $m_0:=\varphi^{-1}(q_0+A(q_0))=\psi^{-1}(p_0+B(p_0))$
for two good parameterizations. Since both good parameterizations
are local homeomorphisms onto an open neighborhood of $m_0$ in $M$, 
we obtain a local homeomorpism $O(p_0)\rightarrow O(q_0)$, $p\mapsto 
q(p)$, where the domain and codomain are relatively open
neighborhoods in partial quadrants. We have
$$
q(p)+A(q(p))=\varphi\circ\psi^{-1}(p+B(p)).
$$
Recall that $q(p)\in N$ and $A(q(p)\in N^\perp$, where $N\oplus
N^\perp=W\oplus E$ is an sc-splitting. If  $P:W\oplus E\rightarrow
N$ is an  sc-projection along $N^{\perp}$, then 
$$
q(p)= P(\varphi\circ\psi^{-1}(p+B(p)).$$
The map $p\mapsto  q(p)$ is sc-smooth as a  composition of sc-smooth
maps. However,  since the domain and codomain
lie in finite dimensional smooth  linear spaces, the map is of class $C^{\infty}$.
\end{proof}
Examples of finite dimensional submanifolds of M-polyfolds are the solution sets of Fredholm sections in the case of transversality. We would like to mention that also the strong finite dimensional submanifolds introduced in Definition 3.19 of \cite{HWZ2} are finite dimensional submanifolds in the sense of Definition \ref{submanifold}. The induced manifold structures are the same.

\section{Global Fredholm Theory}\label{globalfred}

This section is devoted to the Fredholm theory in $M$-polyfold
bundles. Putting together the local studies of the previous section
it will be proved, in particular, that the solution set $f^{-1}(0)$
of a proper Fredholm section $f$ of a fillable strong  M-polyfold bundle $p:Y\to X$
carries in a natural way the structure of a smooth compact manifold
with boundary with corners, provided at every point $q\in f^{-1}(0)$
the section $f$ is in good position to the corner structure of $X$.

For the deeper study of Fredholm operators it is useful to introduce
first some auxiliary concepts.
\subsection{Mixed Convergence and Auxiliary Norms}\label{sec6.2} 
We begin with  a notion of convergence  in the strong M-polyfold bundle
$p:Y\to X$ called mixed convergence, referring to a mixture of
strong convergence in the base space $X$ and  weak convergence in
the level $1$ fibers $Y_{0,1}$, assuming the fibers are  reflexive
$\ssc$-Banach spaces. 
These spaces are characterized by the property that bounded sequences have weakly convergent  subsequences, a fact useful  in compactness proofs.
We first give  the local definition in a strong $M$-polyfold bundle chart. 

To do this we consider the local $M$-polyfold bundle 
$$K\rightarrow O$$
where $K=K^{\mathcal R}$ is the splicing core  
$$K=\{ (v, e, u)\in O\oplus F\vert \, \rho_{(v, e)}(u)=u\}$$
of the strong bundle splicing  $ {\mathcal R}=(\rho,F,(O,{\mathcal S}))$. Here $F$ is an sc-Banach space and $O$ is an open subset of the splicing core 
$
K^{{\mathcal S}}=\{ (v, e)\in V\oplus E\vert\ \pi_v(e)=e\}
$
associated with the splicing ${\mathcal S}=(\pi , E, V)$ in which $V$ is an open subset in the  partial quadrant $C$ of the sc-Banach space $G$.

Abbreviating the elements in $O$ by $x=(v, e)$ we consider a
sequence
$$(x_k,y_k)\in K_{0,1}=\{(x, y)\in O\oplus F_1\vert \, \rho_x (y)=0\}.$$

\begin{defn}[{\bf Mixed convergence}] \label{def6.3}
The sequence $(x_k,y_k)\in K_{0,1}$ is called ${\mathbf
m}${\bf-convergent} to  $(x, y)\in (G\oplus E)_0\oplus F_1$   if
\begin{align*}
&\text{{\em (1)}\quad \quad $x_k\to x$ in $O$}\\
&\text{{\em (2)}\quad \quad $y_k\rightharpoonup y$ in $F_1$.}
\end{align*}
 Symbolically,
$$(x_k, y_k)\xrightarrow{m} (x, y).$$
The half arrow  $\rightharpoonup$ denotes the  weak convergence.
\end{defn}
\begin{lem}\label{lem6.2.4}
If $(x_k, y_k)\in K_{0,1}\subset (G\oplus E)_0\oplus F_1$ and $(x_k,
y_k)\xrightarrow{m} (x, y)$, then $(x, y)\in K_{0,1}$.
\end{lem}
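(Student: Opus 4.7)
The plan is to use the compact embedding $F_1\hookrightarrow F_0$ to upgrade the weak convergence of $(y_k)$ to norm convergence on the weaker level, and then pass to the limit in the splicing identity $\rho_{x_k}(y_k)=y_k$ using the sc$^0$-continuity of $\hat\rho(x,y):=\rho_x(y)$ at level $0$. Note that the membership $(x,y)\in K_{0,1}$ amounts to three conditions: $x\in O$, $y\in F_1$, and $\rho_x(y)=y$. The first two are already encoded in the definition of m-convergence, so the only content is to verify the splicing identity.

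First, I would invoke the defining property of an sc-Banach space that the inclusion $F_1\hookrightarrow F_0$ is a compact operator. Since $y_k\rightharpoonup y$ in $F_1$, the sequence $(y_k)$ is bounded in $F_1$, hence precompact in $F_0$; combined with the weak limit $y_k\rightharpoonup y$ in $F_1$ (which also holds weakly in $F_0$), one concludes that $y_k\to y$ in the norm of $F_0$.

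Next, since $x_k\to x$ in $O$ at level $0$ and $y_k\to y$ in $F_0$, the sc$^0$-continuity of the bundle splicing map $\hat\rho:O\oplus F\to F$ at level $0$ yields
$$\rho_{x_k}(y_k)\longrightarrow \rho_x(y)\quad\text{in }F_0.$$
On the other hand the hypothesis $(x_k,y_k)\in K_{0,1}$ gives $\rho_{x_k}(y_k)=y_k$, and we already established $y_k\to y$ in $F_0$. By uniqueness of limits in $F_0$ we obtain $\rho_x(y)=y$ as elements of $F_0$. Since $y\in F_1$ and $\rho_x$ preserves $F_1$, both sides lie in $F_1$ and the equality also holds there, so $(x,y)\in K_{0,1}$.

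The only genuine obstacle is the upgrade from weak convergence in $F_1$ to strong convergence in $F_0$, which is the standard Rellich-type argument and is built into the sc-structure. Everything else is continuity of $\rho$ at level $0$, which is part of what it means for the splicing to be sc-smooth. No regularity beyond the $0$-level is needed, and the reflexivity assumption on the fibers is what guarantees that bounded sequences actually admit weak subsequential limits to which this argument can be applied when the lemma is used in practice.
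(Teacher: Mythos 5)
Your proof is correct and follows essentially the same route as the paper: use the compact inclusion $F_1\to F_0$ to convert the weak convergence $y_k\rightharpoonup y$ in $F_1$ into norm convergence in $F_0$, pass to the limit in $\rho_{x_k}(y_k)=y_k$ via the $\ssc^0$-continuity of $(x,y)\mapsto\rho_x(y)$ on level $0$, and then observe that since $y\in F_1$ the identity $\rho_x(y)=y$ holds on level $1$. No gaps.
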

\begin{proof}
By assumption, $x_k=(v_k, e_k)$ and $\pi_{v_k}(e_k)=e_k$. Moreover,
$\rho_{x_k}(y_k)$ $ = y_k$. Since the inclusion  map $\imath :
F_1\to F_0$ is a compact operator, one concludes from the weak
convergence of $y_k\rightharpoonup y$ in $F_1$ that $y_k\to y$ in
$F_0$. Since $x_k\to x=(v, e)$,  it follows from the $\ssc^0$-continuity of the
projections $(x,y)\mapsto \rho_{x} (y)$ from $O\oplus F$ into $F$  that
$\rho_{x} (y)=y$ in $F_0$. Since $y\in F_1$,  one concludes
$\rho _{x} (y)=y$ in $F_1$ so that $(x, y)\in K_{0,1}$ as
claimed in the lemma.

\end{proof}

We shall  demonstrate next  that the concept of $m$-convergence is
compatible with $M$-polybundle maps provided the fibers are
reflexive spaces according to the following definition.
\begin{defn}\label{def6.5}
An  sc-Banach space $F$ is called {\bf reflexive} if all the spaces
$F_m$, $m\geq 0$, of its filtration are reflexive Banach spaces.
\end{defn}

In order to verify that the notion of mixed convergence is invariant under M-polyfold chart transformations we take  a strong  M-polyfold bundle map 
$$
\Phi:K^{\mathcal R}\rightarrow
K^{{\mathcal R}' }
$$
between the splicing cores associated with the strong bundle splicings ${\mathcal R}$
 and ${\mathcal R}'$.  We assume   that the underlying sc-Banach spaces $F$ and $F'$ are
reflexive. Abbreviating  $x=(v, e)\in O$,  the map $\Phi$ is of the form
$$
\Phi (x, y)= (\sigma(x),\varphi(x,y))
$$
where $\sigma:O\to O'$ is an sc-diffeomorphism and $\varphi$ linear in the fibers. 
Moreover,  $\Phi$ induces  two sc-diffeomorphisms $K^{\mathcal R}(i)\to K^{{\mathcal R}'}(i)$ for $i=0$ and $i=1$.  In the following we shall drop the indication of the strong splicings and simply write $K=K^{\mathcal R}$ and $K'=K^{{\mathcal R}'}$.

We consider a sequence $(x_k, y_k)\in K_{0,1}$. If $(x_k,
y_k)\xrightarrow{m} (x, y)$, then $(x, y)\in K_{0,1}$ in view of
Lemma \ref{lem6.2.4}, and $\sigma (x_k)\to \sigma (x)$. From $(x_k,
y_k)\to (x, y)$ in $K_{0,0}$ it follows that $\varphi  (x_k, y_k)\to
\varphi  (x, y)$ in $F'_0$, so that
$$
\Phi(x_k,y_k)\rightarrow \Phi(x,y)\:\:  \text{in $K'_{0,0}$.}
$$
By definition of a strong bundle map, the map $\Phi:K_{0,1}\to
K'_{0,1}$ is continuous. From the continuity we conclude $\lim_{k\to
\infty}\varphi (x_k, \rho_{x_k}y)=\varphi (x,\rho_x y)$ in $(F')_1$ for
all $y\in F_1$. By means of the uniform boundedness principle of linear functional analysis we
deduce that the sequence  of bounded linear operators
$$
T_k(h):=\varphi (x_k,\rho_{x_k}(h))
$$
belonging to $ {\mathcal L}(F_1, F'_1)$,   have uniformly bounded
operator norms so that  $\norm{T_k}\leq C$ for all $k$. Consequently,
$\norm{\varphi (x_k,y_k)}_1\leq C\cdot \norm{y_k}_1$. From the weak
convergence $y_k\rightharpoonup y$ in $F_1$ we know that
$\norm{y_k}_1$ is also a bounded sequence. Hence $\norm{\varphi
(x_k,y_k)}_1$ is a bounded sequence. Because $F_1'$ is a reflexive
Banach space, every subsequence of the bounded sequence $\varphi (x_k,
y_k)$ in $F_1'$ possesses a subsequence having a weak limit in
$F_1'$. The limit is necessarily  equal to $\varphi (x, y).$
Summarizing we have proved that
$$\sigma (x_k)\to \sigma (x)\:\:\:  \text{in $O'$}$$
and
$$\text{pr}_2\circ \Phi (x_k, y_k)\rightharpoonup
\text{pr}_2\circ \Phi (x, y) \quad \text{in $(F')_1$}$$
 i.e., on level $1$.

 The discussion shows that the {\bf mixed
 convergence is invariant under $M$-polyfold chart
 transformations}  and hence  an intrinsic concept for
 $M$-polyfold bundles having reflexive fibers so that
 we can introduce the following definition.
\begin{defn}\label{def6.6}
If $p:Y\to X$ is a strong $M$-polyfold bundle having  reflexive fibers, then
a sequence $y_k\in Y_{0,1}$ is said to {\bf converge in the m-sense}
to $y\in Y_{0,1}$, symbolically
$$y_k\xrightarrow{m}y\:\:\:  \text{in $Y_{0,1}$},$$
if the underlying sequence $x_k=p(y_k)\in X$ converges in $X$ to an
element $x$ and if there exists a strong $M$-polyfold bundle chart
$\Phi$ around the point $x\in X$ satisfying
$$\text{pr}_2\circ \Phi (x_k, y_k)\rightharpoonup \text{pr}_2\circ \Phi (x, y)$$
weakly in  $(F')_1$, i.e., on level $1$.
\end{defn}

As shown above, the definition does not depend on the choice of the
local $M$-polyfold bundle chart.

For the general perturbation theory we introduce another
concept in order  to estimate  the size of perturbations.
\begin{defn}
An {\bf auxiliary norm}  $N$ for the strong  M-polyfold bundle
$p:Y\rightarrow X$ consists of a continuous map
$N:Y_{0,1}\rightarrow [0,\infty)$ having the following properties.
\begin{itemize}
\item For every $x\in X$, the induced map $N|{(Y_{0,1})}_x\rightarrow
[0,\infty)$ on the fiber ${(Y_{0,1})}_x$ is a complete norm.

\item If $y_k\xrightarrow{m} y$, then
$$
N(y)\leq \liminf_{k\to \infty} N(y_k).
$$

\item If $N(y_k)$ is a bounded sequence and the underlying
sequence $x_k$ converges to $x\in X$,  then $y_k$ has an  m-convergent
subsequence.
\end{itemize}
\end{defn}
Using the
paracompactness of $X$,  one  can easily construct auxiliary norms as follows.
\begin{prop}\label{newprop6.8}
Let $p:Y\rightarrow X$ be a  fillable strong M-polyfold bundle having  reflexive
fibers.  Then there exists an  auxiliary norm.
\end{prop}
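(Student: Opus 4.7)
The plan is to build $N$ by gluing together local norms coming from strong bundle charts, using a partition of unity on $X$ arising from its paracompactness. The main ingredients are: (i) the local structure of a strong M-polyfold bundle chart gives, on every fiber, an identification with a reflexive sc-Banach space $F$ in which the level-$1$ norm $\|\cdot\|_1$ is well-defined; (ii) reflexive Banach spaces have the property that bounded sequences have weakly convergent subsequences, and (iii) the norm on a reflexive Banach space is weakly lower semicontinuous.

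First I would choose a locally finite open cover $\{U_\alpha\}$ of $X$ by domains of strong bundle charts
$\Phi_\alpha:Y|U_\alpha\rightarrow K^{{\mathcal R}_\alpha}|O_\alpha$
covering sc-diffeomorphisms $\varphi_\alpha:U_\alpha\to O_\alpha$, where ${\mathcal R}_\alpha=(\rho_\alpha,F_\alpha,(O_\alpha,{\mathcal S}_\alpha))$ with $F_\alpha$ reflexive. Since $X$ is metrizable and hence paracompact (see the earlier discussion in \cite{HWZ2}), we may in addition assume the cover is locally finite. Let $\{\beta_\alpha\}$ be a continuous partition of unity subordinate to $\{U_\alpha\}$. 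On each chart define the local function
$$
N_\alpha:(Y_{0,1})|U_\alpha\to [0,\infty),\quad N_\alpha(y)=\Norm{\text{pr}_2\circ\Phi_\alpha(y)}_{1},
$$
where $\text{pr}_2$ projects $K^{{\mathcal R}_\alpha}$ to its fiber coordinate in $F_\alpha$. Finally set
$$
N(y):=\sum_\alpha \beta_\alpha(p(y))\,N_\alpha(y).
$$

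It remains to verify the three defining properties. For the fiber-wise completeness, note that over any $x\in X$ only finitely many $\beta_\alpha(x)$ are positive; each $N_\alpha$ restricted to $(Y_{0,1})_x$ is the pull-back of $\|\cdot\|_1$ under a linear isomorphism onto $\ker(1-\rho_{\alpha,\varphi_\alpha(x)})\subset(F_\alpha)_1$, hence a complete norm. A positive finite convex combination of equivalent complete norms is again a complete norm. For the lower semicontinuity under m-convergence, if $y_k\xrightarrow{m}y$ with $x_k:=p(y_k)\to x$ then, eventually, $x_k$ and $x$ lie in a common $U_\alpha$ for each $\alpha$ with $\beta_\alpha(x)>0$, and by the intrinsic character of m-convergence established before Definition \ref{def6.6} we have $\text{pr}_2\circ\Phi_\alpha(y_k)\rightharpoonup \text{pr}_2\circ\Phi_\alpha(y)$ in $(F_\alpha)_1$. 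Weak lower semicontinuity of the Banach-space norm gives $N_\alpha(y)\leq \liminf N_\alpha(y_k)$, and continuity of $\beta_\alpha$ together with Fatou-type manipulation on the finite sum yields $N(y)\leq\liminf N(y_k)$. For the m-compactness property, suppose $N(y_k)$ is bounded and $x_k\to x$. Choose $\alpha_0$ with $\beta_{\alpha_0}(x)>0$; then for large $k$, $\beta_{\alpha_0}(x_k)$ is bounded below by a positive constant, so $N_{\alpha_0}(y_k)$ is bounded. Hence $\text{pr}_2\circ\Phi_{\alpha_0}(y_k)$ is a bounded sequence in the reflexive Banach space $(F_{\alpha_0})_1$, and passing to a subsequence it converges weakly to some $\eta\in(F_{\alpha_0})_1$. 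Setting $y:=\Phi_{\alpha_0}^{-1}(\varphi_{\alpha_0}(x),\eta)$, Lemma \ref{lem6.2.4} places $y$ in $Y_{0,1}$, and the subsequence m-converges to $y$.

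The main obstacle is verifying that the definition really is independent of chart choices in the sense needed for property (2); the key is precisely the content of the discussion preceding Definition \ref{def6.6}, which shows that mixed convergence is preserved under strong bundle chart transitions between reflexive models via the uniform boundedness principle. Once that invariance is in place, together with the elementary but crucial fact that on reflexive Banach spaces norms are weakly lower semicontinuous and bounded sets are weakly sequentially precompact, the partition-of-unity construction yields an auxiliary norm with the required three properties.
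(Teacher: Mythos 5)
Your proposal is correct and follows essentially the same route as the paper: local norms $\|\mathrm{pr}_2\circ\Phi(y)\|_1$ in strong bundle charts, glued by a partition of unity from paracompactness, with weak lower semicontinuity of the norm giving property (2) and reflexivity giving the m-compactness property (3). Your verification is somewhat more detailed (chart-invariance of m-convergence, fiberwise completeness via equivalence of the local norms), but the construction and the key ideas coincide with the paper's proof.
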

\begin{proof}
 Construct for every
$x\in X$ via local coordinates a norm $N_{U(x)}$ for $Y_{0,1}\vert
U(x)$ where $U(x)$ is a small open neighborhood of $x\in X$. This is
defined by $N_{U(x)}(y)=\parallel pr_2\circ \Psi(y)\parallel_1$,
where $\Psi$ is a strong M-polybundle chart and $pr_2$ is the
projection onto the fiber part. Observe that m-convergence of $y_k$
to some $y$ in $Y_{0,1}\vert U(x)$ implies weak convergence of
$pr_2\circ \Psi(y_k)$. Using the convexity of  the norm and standard
properties of weak convergence we conclude
$$
N_{U(x)}(y)\leq \liminf_{k\to \infty} N_{U(x)}(y_k).
$$
At this point  we have 
local expressions for auxiliary norms which cover $X$. Using the
paracompactness of $X$ we can find a subordinate partition of unity
$(\chi_{\lambda})_{\lambda \in \Lambda}$ and define
$$
N = \sum \chi_{\lambda} N_{U(x_{\lambda})}.
$$
Since the family $(\chi_{\lambda})$ is locally finite, the above  sum is well-defined. If $N(y_k)$ is bounded and the underlying  sequence $x_k$ converges
to some $x\in X$ it follows in local coordinates that the
representative of $y_k$ is bounded on level $1$. In view of the
reflexivity, the sequence  $y_k$ possesses an  m-convergent
subsequence. Hence $N$ is an  auxiliary  norm and the proof of
Proposition \ref{newprop6.8} is complete.
 \end{proof}
 
The following remarks will be important in the proof of the  compactness Theorem \ref{thm6.9}.
 Consider a local strong M-polyfold bundle $K^{\mathcal R}\to O$ in which $O$ is an open subset of the splicing core $K^{\mathcal S}$ associated with the splicing ${\mathcal S}=(\pi, E, V)$. The splicing core  $K^{\mathcal R}=\{((v, e), u)\in O\oplus F\vert \, \rho_{(v, e)}(u)=u\}$ is associated with the strong bundle splicing 
${\mathcal R}=(\rho, F, (O, {\mathcal S}))$.   Let $N:(K^{\mathcal R})_{0,1}\to [0,\infty )$ be an auxiliary norm of the bundle $K^{\mathcal R}\to O$. Associated with  the complementary splicing ${\mathcal R}^c=(1-\rho, F, (O, {\mathcal S}))$,  is  the local strong M-poylofd bundle 
$K^{{\mathcal R}^c} \to O$. In view of Proposition \ref{newprop6.8},  the  bundle $K^{{\mathcal R}^c}\to O$ can be equipped with an auxiliary norm $N^c:(K^{{\mathcal R}^c})_{0,1}\to [0,\infty )$.

Recalling that   $O\triangleleft F$ is the sc-Banach space $O\oplus F$ equipped with the bi-filtration $O_{m}\oplus F_{k}$ where $m\geq 0$ and $0\leq k\leq m+1$, and that 
$O\triangleleft F=K^{\mathcal R}\oplus_{O}K^{{\mathcal R}^c}$, we define the function $\ov{N}:O_0\oplus F_1\to [0,\infty )$ by 
\begin{equation}\label{eq00}
\ov{N}(x, h)=N(x, \rho_x(h))+N^c (x, (1-\rho_x)(h))
\end{equation}
where $x\in O_0$ and $h\in F_1$.
\begin{lem} The function $\ov{N}:O_0\oplus F_1\to [0,\infty )$ is an auxiliary norm.
\end{lem}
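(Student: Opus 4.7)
The plan is to verify the four defining properties of an auxiliary norm for the bundle $O\triangleleft F\to O$, by reducing each to the corresponding property of the already-constructed norms $N$ and $N^c$ via the fiberwise decomposition $h = \rho_x(h) + (1-\rho_x)(h)$. Throughout the argument the key observation is that the map $(x,h)\mapsto (x,\rho_x(h))$ sends $O_0\oplus F_1$ continuously into $(K^{\mathcal R})_{0,1}$ and sends m-convergent sequences to m-convergent sequences; the analogous statement holds for $1-\rho_x$.

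The first two properties (continuity and being a complete norm on each fiber) are routine. Continuity of $\ov N$ on $O_0\oplus F_1$ follows from continuity of $N$, $N^c$ and of the sc-projection $(x,h)\mapsto (x,\rho_x(h))$. On a fixed fiber $\{x\}\times F_1$, the decomposition $F_1=\rho_x(F_1)\oplus(1-\rho_x)(F_1)$ is an sc-splitting, so $\ov N(x,\cdot)$ is the sum of the pullbacks of two complete norms on complementary closed subspaces; this is itself a complete norm on $F_1$, and it vanishes only when both summands vanish, i.e. when $h=0$.

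The heart of the proof is the lower semicontinuity under m-convergence. Suppose $(x_k,h_k)\xrightarrow m (x,h)$ in $O_0\oplus F_1$, so that $x_k\to x$ in $O$ and $h_k\rightharpoonup h$ in $F_1$. I claim $\rho_{x_k}(h_k)\rightharpoonup \rho_x(h)$ in $F_1$. Write
$$\rho_{x_k}(h_k)-\rho_x(h)=\rho_{x_k}(h_k-h)+(\rho_{x_k}-\rho_x)(h).$$
The second term converges to $0$ in $F_1$ (even strongly) by continuity of $\rho$ on level $1$. For the first term, note that $\{h_k\}$ is bounded in $F_1$, hence $\{\rho_{x_k}(h_k)\}$ is bounded in $F_1$ (the family of projections is uniformly bounded on a neighborhood of $x$), while the compact embedding $F_1\hookrightarrow F_0$ combined with the weak convergence $h_k\rightharpoonup h$ gives $h_k\to h$ strongly in $F_0$; then $\rho$ is continuous on level $0$, so $\rho_{x_k}(h_k)\to \rho_x(h)$ in $F_0$. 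Reflexivity of $F_1$ together with uniqueness of the $F_0$-limit forces the full weak-$F_1$ convergence to $\rho_x(h)$. Consequently $(x_k,\rho_{x_k}(h_k))\xrightarrow m (x,\rho_x(h))$ in $K^{\mathcal R}$, and analogously for $1-\rho_x$. Applying the lower semicontinuity of $N$ and $N^c$ and the subadditivity of $\liminf$ yields
$$\ov N(x,h)\leq \liminf_{k\to\infty} N(x_k,\rho_{x_k}(h_k))+\liminf_{k\to\infty} N^c(x_k,(1-\rho_{x_k})(h_k))\leq \liminf_{k\to\infty}\ov N(x_k,h_k).$$

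Finally, for the compactness property, assume $\ov N(x_k,h_k)$ is bounded and $x_k\to x$ in $O$. Then the two summands $N(x_k,\rho_{x_k}(h_k))$ and $N^c(x_k,(1-\rho_{x_k})(h_k))$ are separately bounded, so the compactness clauses of $N$ and $N^c$ produce, after passing twice to subsequences, m-limits $(x,a)\in (K^{\mathcal R})_{0,1}$ and $(x,b)\in (K^{\mathcal R^c})_{0,1}$. Setting $h:=a+b\in F_1$, the weak convergences $\rho_{x_k}(h_k)\rightharpoonup a$ and $(1-\rho_{x_k})(h_k)\rightharpoonup b$ in $F_1$ add to give $h_k\rightharpoonup h$, so $(x_k,h_k)\xrightarrow m (x,h)$ in $O_0\oplus F_1$. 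The main technical obstacle is the weak-continuity-of-variable-projections step in the lower semicontinuity argument; once that is settled by the reflexivity and compact embedding, everything else reduces to the corresponding properties of $N$ and $N^c$.
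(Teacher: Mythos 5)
Your proof is correct and follows essentially the same route as the paper: reduce every property of $\ov N$ to those of $N$ and $N^c$ via $h=\rho_x(h)+(1-\rho_x)(h)$, with the key step being that $x_k\to x$, $h_k\rightharpoonup h$ in $F_1$ forces $\rho_{x_k}(h_k)\rightharpoonup\rho_x(h)$ in $F_1$ (boundedness in $F_1$, compact embedding into $F_0$, continuity of $\rho$ on levels $0$ and $1$, reflexivity, uniqueness of limits), and the compactness clause obtained by adding the two weak limits produced by $N$ and $N^c$. The one loose point is your parenthetical claim that the projections are ``uniformly bounded on a neighborhood of $x$'', which is not obvious and not needed: as in the paper, the pointwise convergence $\rho_{x_k}(g)\to\rho_x(g)$ in $F_1$ for each fixed $g$ together with the uniform boundedness principle gives $\sup_k\norm{\rho_{x_k}}_{{\mathcal L}(F_1,F_1)}<\infty$ along the sequence, which is exactly what your boundedness step requires.
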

\begin{proof}
Clearly, $\ov{N}$ is continuous and for fixed $x\in O_0$, the restriction $\ov{N}(x, \cdot ):F_1\to [0,\infty )$ defines a complete norm. Assume that $(x_n, h_n)\xrightarrow{m} (x, h)$. Then $x_n\to x$ and $h_n\rightharpoonup  h$ in $F_1$. Since $F_1$ is compactly embedded in $F_0$, we have $h_n\to h$ in $F_0$ and since $\rho:O_0\oplus F_0\to F_0$ is continuous, it follows that $\rho_{x_n}(h_n)\to \rho_{x}(h)$ in $F_0$. 
By assumption,  ${\mathcal R}=(\rho, F, (O, {\mathcal S}))$ is a strong bundle splicing.  Hence  ${\mathcal R}^1=(\rho, F^1, (O, {\mathcal S}))$ is a general splicing. 
This implies implies that $\rho: O_0\oplus F_1\to F_1$ is continuous and $\rho_{x_n}:F_1\to F_1$ is a bounded linear projection for every $x_n$. Since the sequence $(h_n)$ being weakly convergent is bounded in $F_1$, it follows from the uniform boundedness principle that the sequence $(\rho_{x_n}(h_n))$ is bounded in $F_1$.  After possibly taking a subsequence we may assume that  there is $h'\in F_1$ such that $\rho_{x_n}(h_n)\rightharpoonup  h'$ in $F_1$.  Therefore,  $\rho_{x_n}(h_n)\to h'$ in $F_0$ and hence    $h'=\rho_x (h)$ and  $\rho_x (h)\in F_1$.  We conclude that $\rho_{x_n}(h_n)\rightharpoonup \rho_x (h)$ and $(1-\rho_{x_n})(h_n)\rho\rightharpoonup (1-\rho_{x}(h))$ in $F_1$. Consequently, 
\begin{equation*}
\begin{split}
\ov{N}(x, h)&\leq \ov{N}(x, \rho_{x}(h))+\ov{N}(x, (1-\rho_{x})(h))\\
&=N(x, \rho_{x}(h))+N^c(x, (1- \rho_{x})(h))\\
&\leq \liminf N(x_n, \rho_{x_n}(h_n))+ \liminf N^c(x_n, (1-\rho_{x_n})(h_n))\\
&\leq 
\liminf [N(x_n, \rho_{x_n}(h_n))+N^c(x_n, (1-\rho_{x_n})(h_n))]\\
&=\liminf \ov{N}(x_n, h_n)
\end{split}
\end{equation*}
as claimed. Finally, assume that $\ov{N}(x_n, h_n)$ is bounded and $x_n\to x$. 
Then $N(x_n, \rho_{x_n}(h_n))$ and $N^c(x_n, (1-\rho_{x_n})(h_n))$ are bounded and since $N$ and $N^c$ are auxiliary norms, it follows that after possibly taking subsequences 
$\rho_{x_n}(h_n)\rightharpoonup  h'$ and $(1-\rho_{x_n})(h_n)\rightharpoonup  h''$ in $F_1$. Hence $h_n=\rho_{x_n}(h_n)+(1-\rho_{x_n})(h_n)\rightharpoonup  h=h'+h''$ in $F_1$. This completes the proof of the lemma.
\end{proof}

The function  $\ov{N}:O_0\oplus F_1\to [0,\infty )$ is continuous and for  fixed $x\in O_0$, the restriction $\ov{N}(x,\cdot )$ to the space $F_1$  is a complete norm on $F_1$. It follows that there exists  a positive  constant $C_x$ such that 
$$\ov{N}(x, h)\leq C_x\cdot \norm{h}_1$$ 
for all $h\in F_1$. Therefore,  by the bounded inverse theorem of linear  functional analysis applied to the identity map 
from the Banach space $(F_1, \norm{\cdot}_1)$ to the Banach space $(F_1, \ov{N}(x,\cdot ))$, the two norms 
$\norm{\cdot}_1$ and $\ov{N}(x,\cdot )$ are equivalent, i.e., there exist two  positive constants $c_x<C_x$ such that 
\begin{equation}\label{equiv01}
c_x\cdot \norm{h}_1\leq \ov{N}(x, h)\leq C_x\cdot \norm{h}_1
\end{equation}
for  all $h\in F_1$. The next lemma claims a local uniform estimates in $x\in O$.

\begin{lem}\label{lemequiv01}
Let $\ov{N}:O_0\oplus F_1\to [0, \infty )$  be an auxiliary norm as defined \eqref{eq00}. Fix a   point  $x_0=(v_0, e_0)\in O_0$. Then there exist two positive constants $c<C$ and an open neighborhood $U\subset O$ of $x_0$ 
such that 
$$c\norm{h}_1\leq \ov{N}(x, h)\leq C\norm{h}_1$$
for all $x\in U$  and all $h\in F_1$.
\end{lem}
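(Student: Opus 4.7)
The plan is to prove the two bounds separately, using Banach--Steinhaus for the upper bound and a direct contradiction based on the compactness clause in the definition of an auxiliary norm for the lower bound.

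For the upper bound I would fix a closed neighborhood $\ov{U}\subset O_0$ of $x_0$ and view $\{\ov{N}(x,\cdot):x\in\ov{U}\}$ as a family of continuous seminorms on the Banach space $(F_1,\Norm{\cdot}_1)$; each is continuous in view of the pointwise equivalence \eqref{equiv01}. The joint continuity of $\ov{N}$ on $O_0\oplus F_1$ implies that, for every fixed $h\in F_1$, the map $x\mapsto\ov{N}(x,h)$ is continuous on $\ov{U}$ and therefore bounded. The Banach--Steinhaus principle for families of continuous seminorms (proved by the standard Baire category argument) then furnishes a constant $C>0$ with
\[
\ov{N}(x,h)\leq C\Norm{h}_1\qquad\text{for all }x\in\ov{U}\text{ and }h\in F_1.
\]

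For the lower bound I would argue by contradiction. By homogeneity of $\ov{N}(x,\cdot)$, the desired inequality $\Norm{h}_1\leq C'\ov{N}(x,h)$ on some neighborhood $U\times F_1$ is equivalent to the statement that the set $\{h\in F_1:\ov{N}(x,h)\leq 1\}$ is bounded in $F_1$ uniformly in $x\in U$. If this were to fail, one could find sequences $x_n\to x_0$ in $O_0$ and $h_n\in F_1$ with $\ov{N}(x_n,h_n)\leq 1$ but $\Norm{h_n}_1\to\infty$. Because $\ov{N}$ is an auxiliary norm on the bundle $O\triangleleft F\to O$, its third defining property yields, after passage to a subsequence, an $m$-limit of $(x_n,h_n)$; by Definition \ref{def6.3} this means $h_n\rightharpoonup h$ weakly in $F_1$ for some $h\in F_1$. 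Weak convergence in a Banach space forces norm-boundedness, contradicting $\Norm{h_n}_1\to\infty$. This produces the promised neighborhood $U$ and constant $C'$.

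Intersecting the two neighborhoods and setting $c:=1/C'$ delivers the required two-sided estimate $c\Norm{h}_1\leq\ov{N}(x,h)\leq C\Norm{h}_1$ on a common neighborhood of $x_0$. The main obstacle to be aware of is that a direct Banach--Steinhaus argument is \emph{not} available for the lower bound, since the Banach-space structures $(F_1,\ov{N}(x,\cdot))$ genuinely vary with $x$; the compactness clause built into the definition of an auxiliary norm (which packages reflexivity of the fibers together with continuity of the projections $\rho_x$) is the precise substitute that allows uniformity to be recovered on a neighborhood of $x_0$.
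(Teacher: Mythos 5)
Your treatment of the lower bound is correct and in substance coincides with the paper's own argument: the contradiction sequence is fed into the third defining property of the auxiliary norm $\ov{N}$ (which the preceding lemma establishes for the bundle $O\triangleleft F\to O$), m-convergence yields weak convergence of $(h_n)$ in $F_1$ along a subsequence, and weak convergence forces norm-boundedness, contradicting $\norm{h_n}_1\to\infty$. The paper normalizes the sequence slightly differently (it compares $\ov{N}(x,\cdot)$ with $\ov{N}(x_0,\cdot)$ and then invokes the pointwise equivalence \eqref{equiv01}), but the mechanism is the same, so this half of your proposal stands.

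The upper bound, however, contains a genuine gap. You fix a closed neighborhood $\ov{U}$ of $x_0$ and assert that, for each fixed $h\in F_1$, the continuous map $x\mapsto\ov{N}(x,h)$ is ``therefore bounded'' on $\ov{U}$. That inference requires compactness of $\ov{U}$, and no compact neighborhood of $x_0$ exists here: $O$ is an open subset of a splicing core inside $V\oplus E$, which is in general infinite dimensional, so closed neighborhoods are not compact. Without this pointwise boundedness the Banach--Steinhaus principle for the family $\{\ov{N}(x,\cdot)\}_{x\in\ov{U}}$ cannot be invoked, and your constant $C$ is not produced. The repair is elementary and does not need any uniform boundedness principle (and is what the paper's one-line argument amounts to): since $\ov{N}$ is continuous on $O_0\oplus F_1$ and $\ov{N}(x_0,0)=0$, there exist a neighborhood $V_1$ of $x_0$ and $\delta>0$ with $\ov{N}(x,h)\leq 1$ whenever $x\in V_1$ and $\norm{h}_1\leq\delta$; homogeneity of the fiberwise norm $\ov{N}(x,\cdot)$ then gives $\ov{N}(x,h)\leq \delta^{-1}\norm{h}_1$ for all $x\in V_1$ and all $h\in F_1$. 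With this substitution your proof is complete.
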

\begin{proof}
By the continuity of $\ov{N}$ we find an open neighborhood $V_1\subset O$ of $x_0$ and a constant $C>0$ such that $\ov{N}(x, h)\leq C\norm{h}_1$ for all $x\in V_1$ and $h\in F_1$. We claim that there exist a positive constant $c_0$ and an open neighborhood $V_2\subset O$ of $x_0$ such that 
\begin{equation}\label{eq000}
c_0\cdot \ov{N}(x_0, h)\leq \ov{N}(x, h)
\end{equation}
for all $x\in V_2$ and $h\in F_1$, so that the lemma follows from \eqref{equiv01}. In order to prove the estimate \eqref{eq000}  we
 argue by contradiction and assume that  there are two sequences $(x_n)\subset O$  and $(h_n)\subset F_1$ satisfying  $x_n\to x_0$, $\norm{h_n}_1=1$,  and 
$$\dfrac{1}{n}\ov{N}(x_0, h_n)\geq \ov{ N}(x_n, h_n)\quad \text{for $n\geq 0$}.$$
Set $h_n'=h_n/\ov{N}(x_n, h_n)$. Since $\ov{N}(x_n, h_n')=1$ and $x_n\to x_0$, it follows from  the property (3) of the auxiliary norm that,  after  possibly passing to a subsequence, 
the sequence $(h_n')$ converges weakly in $F_1$.  Hence  the sequence $\norm{h_n}_1$ is bounded and since the norms $\norm{\cdot }_1$ and  $N(x_0,\cdot )$ are equivalent on $F_1$, it follows that  the sequence $N(x_0, h_n' )$ is bounded.  Hence $\frac{1}{n}\ov{N}(x_0,h_n')\to 0$ contradicting our assumption  $\frac{1}{n}\ov{N}(x_0,h_n')\geq 1$.
The proof of the lemma is complete.
\end{proof}

We recall that  the open set $\wh{O}\subset V\oplus E$ is defined by  $\wh{O}=\{(v, e)\in V\oplus E\vert \, (v, \pi_v e)\in O\}$.  The auxiliary norm $\ov{N}:O\oplus F_1\to [0,\infty )$ can be extended to $\wh{O}\oplus F_1$  by  defining 
\begin{equation}\label{newnorm1}
\wh{N}((v, e), h):=\ov{N}((v, \pi_v (e)), h)\quad \text{for all  $(v, e)\in \wh{O}$ and $h\in F_1$.}
\end{equation}

The following result will be useful in compactness investigations.
\begin{thm} [{\bf Local Compactness}] \label{thm6.9}
Consider the fillable strong $M$-polyfold bundle $p:Y\to X$ having reflexive
fibers and let   $f$ be a Fredholm section of $p$. Assume that an
auxiliary norm $N:Y_{0,1}\to [0,\infty )$ is given. Then, for every smooth point $q\in X$,  there
exists an open neighborhood $U(q)$ in $X$ so that the following holds.
\begin{itemize}
\item The subset $Z\subset X$ defined by
$$
Z=\{x\in\overline{U(q)}\vert\ \text{$f(x)\in Y_{0,1}$  \text{and}
$N(f(x))\leq 1$}\}
$$
is a compact subset of $X$.
\item Every sequence $(x_k)$  in $\overline{U(q)}$ satisfying $f(x_k)\in Y_{0,1}$ and 
$$
\liminf_{k\to \infty}N(f(x_k))\leq 1
$$
possesses a  convergent subsequence.
\end{itemize}
\end{thm}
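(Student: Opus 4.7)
\emph{Reduction to the local basic form.} Since $f$ is Fredholm at the smooth point $q$, we choose a strong bundle chart around $q$, pass to a filled version, and use the strong equivalence with the basic element to put the germ of $f$ in the form
\begin{equation*}
g:\mo(V_0\oplus W,0)\to \R^N\oplus W,\qquad V_0=[0,\infty)^k\oplus \R^{n-k},
\end{equation*}
with $g(v,w)-g(0,0)=(a(v,w),w-B(v,w))$ and $B$ an $\ssc^0$-contraction germ. Pushing $N$ through the strong bundle isomorphism and extending as in \eqref{newnorm1} gives an auxiliary norm on the basic bundle; Lemma \ref{lemequiv01} then furnishes constants $0<c<C$ and a neighborhood $\hat U$ of $0$ in $V_0\oplus W$ on which the transported $N$ is sandwiched between $c\|h\|_1$ and $C\|h\|_1$. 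Shrinking $\hat U$ further, we may also require that $\overline{\hat U}$ is bounded and that $B(v,\cdot)$ is a uniform level-$0$ contraction in $w$ with some constant $\rho_0<1$ on $\overline{\hat U}$. We let $U(q)$ be the image of $\hat U$ under the chart.

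\emph{Sequence compactness (statement (2)).} Let $(x_k)\subset\overline{U(q)}$ satisfy $f(x_k)\in Y_{0,1}$ and $\liminf N(f(x_k))\leq 1$; after extraction we may assume $N(f(x_k))$ is bounded, whence in the chart $r_k:=w_k-B(v_k,w_k)\in W_1$ is bounded in $W_1$ (writing $x_k=(v_k,w_k)$). The finite-dimensional boundedness of $v_k$ yields $v_k\to v_\ast$ along a subsequence, and the compact embedding $W_1\hookrightarrow W_0$ gives a further subsequence with $r_k\to r_\ast$ in $W_0$. For $(v,r)$ small, let $\Phi(v,r)$ denote the unique fixed point in $\overline{\hat U}\cap(\{v\}\oplus W)$ of the level-$0$ contraction $w\mapsto B(v,w)+r$; the standard parametrized contraction estimate
\begin{equation*}
(1-\rho_0)\|\Phi(v,r)-\Phi(v',r')\|_0\leq \|B(v,\Phi(v,r))-B(v',\Phi(v,r))\|_0+\|r-r'\|_0
\end{equation*}
together with the level-$0$ continuity of $B$ shows that $\Phi:V_0\oplus W_0\to W_0$ is continuous. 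By uniqueness $w_k=\Phi(v_k,r_k)$, so $w_k\to w_\ast:=\Phi(v_\ast,r_\ast)$ in $W_0$ and $x_k\to x_\ast$ in $X$.

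\emph{Compactness of $Z$ (statement (1)) and main obstacle.} Every sequence in $Z$ fulfills the hypothesis of (2), so admits a subsequence with $x_k\to x_\ast\in\overline{U(q)}$; it remains to verify $x_\ast\in Z$. In the chart, $r_k$ is bounded in the reflexive space $W_1$, and after one further extraction $r_k\rightharpoonup\tilde r$ in $W_1$; since also $r_k\to r_\ast$ in $W_0$, uniqueness of limits forces $\tilde r=r_\ast\in W_1$, so that $f(x_\ast)\in Y_{0,1}$. By construction we then have $f(x_k)\xrightarrow{m}f(x_\ast)$, and the lower semicontinuity of the auxiliary norm yields $N(f(x_\ast))\leq \liminf N(f(x_k))\leq 1$, proving $x_\ast\in Z$. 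The main obstacle is the second step: extracting strong $W_0$-convergence of the $w_k$'s from the mere weak-type compactness of the residuals $r_k=w_k-B(v_k,w_k)$ in $W_1$. The parametrized Banach fixed point theorem, applied to the contraction $w\mapsto B(v,w)+r$ supplied by the Fredholm structure, is exactly what converts strong $W_0$-convergence of the $r_k$'s (obtained from the compact embedding $W_1\hookrightarrow W_0$) into strong $W_0$-convergence of the $w_k$'s themselves.
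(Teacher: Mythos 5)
Your proof is correct and follows essentially the same route as the paper: reduce via a chart and a filled version to the contraction normal form, use Lemma \ref{lemequiv01} to trade the transported auxiliary norm for the level-$1$ norm, obtain level-$1$ boundedness, and combine reflexivity and the compact embedding $W_1\hookrightarrow W_0$ with the parametrized contraction principle to convert convergence of the residuals into strong convergence of the $w$-components. Your explicit verification that the limit lies in $Z$ (weak $W_1$-convergence of the residuals, m-convergence of $f(x_k)$, and lower semicontinuity of $N$) is a slightly more careful finish than the paper's normalization $\wh{N}(x,h)=\norm{h}_1$, but it is the same argument in substance.
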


\begin{proof}
The theorem is of local nature. Hence we shall study in a strong M-polyfold chart the Fredholm section 
$f$ of the fillable  strong  local M-polyfold bundle $p:K^{\mathcal R}\to O$ where  as above $O$ is an open subset of the splicing core $K^{\mathcal S}$ and ${\mathcal R}=(\rho, F, (O, {\mathcal S}))$ a strong bundle splicing in which $(O, {\mathcal S})$ is the local model of the M-polyfold $X$. Let $x_0\in O$ be the  smooth point representing $q\in X$ in the local chart.  In view of  Definition \ref{defoffred} of a  Fredholm section, there exists a  filled version $\ov{f}:\wh{O}\to \wh{O}\triangleleft F$ of the section $f$ 
of the form 
$$\ov{f}(x)=f(x)+f^c(x)$$
for $x$ contained in the open set $\wh{O}\subset V\oplus E$ defined by  $\wh{O}=\{(v, e)\in V\oplus E\vert \, (v, \pi_v e)\in O\}$ where $f^c$ is the filler.  In the local chart we have  the auxiliary norms $N:(K^{\mathcal R})_{0,1}\to [0,\infty )$ and  $N^c:(K^{{\mathcal R}^c})_{0,1}\to [0,\infty )$. We define by the formula \eqref{eq00} the auxiliary norm $\ov{N}:O\oplus F_1\to [0,\infty )$ and finally obtain the auxiliary norm $\wh{N}((v, e), h):=\ov{N}((v, \pi_v (e)), h)$ on $\wh{O}\oplus F_1$. 
We claim that in order to prove Theorem \ref{thm6.9} for the section $f$, it  is sufficient to prove the following statement  
for the filled section $\ov{f}$ of the bundle $\wh{O}\triangleleft F\to \wh{O}$.\\[2pt]

{\bf ($\ast$)}\quad There exists an open neighborhood $\wh{U}(x_0)\subset \wh{O}$ of $x_0$ having the property that every sequence $(\wh{x}_n)\subset \wh{U}(x_0)$ satisfying $\ov{f}(\wh{x}_n )\in \wh{O}\oplus F_1$ and 
$\liminf \wh{N}(\ov{f}(x_n))\leq 1$ possesses a convergent subsequence.\\[2pt]

Indeed, assume that ($\ast$) holds true and define the open neighborhood $U(x_0)=\wh{U}(x_0)\cap  O$. Take a sequence $x_n\in U(x_0)$ satisfying $f(x_n)\in O\oplus F_1$ and $\liminf N(f(x_n))\leq 1$. Since $x_n\in O$, it follows from the definition that the filler term vanishes, $f^c(x_n)=0$, and hence $\wh{N}(\ov{f}(x_n))=N(f(x_n))+N^c(f^c(x_n))=N(f(x_n))$. Therefore, $\liminf \wh{N}(\ov{f}(x_n))\leq 1$ so that by  ($\ast$) the  sequence $(x_n)$ has indeed a converging subsequence as claimed.

It remains  to prove the statement ($\ast$) for the filled section. By Definition \ref{defoffred}, after a  further  change of coordinates,  the filled section if of the form 
$$f:V\oplus W\to \R^N\oplus W$$
where $V\subset \R^n$ is a partial quadrant and $f$ is defined in a sufficiently small closed neighborhood $\ov{U}_0$ of the origin  in $(V\oplus W)_0$.  Moreover, if $P:\R^N\oplus W\to W$ is the canonical projection, then 
$$
P[f(a,w)-f(0)]=w-B(a, w).
$$
is an $\ssc^0$-contraction germ in the sense of Definition \ref{def5.1}. In addition, we have the transported auxiliary norm $\wh{N}$ defined on $\ov{U}_0\oplus (\R^N\oplus W)_1$.  In view of  Lemma \ref{lemequiv01}, possibly choosing a smaller neighborhood,  we may assume that $\wh{N}(x, h)=\norm{h}_1$  on $(\R^N\oplus W)_1$.

We  now consider
  the set of $(a, w)\in \ov{U}_0$ satisfying $f(a, w)\in (\R^N\oplus W)_1$ and
 $$\norm{f(a, w)}_1\leq 1.$$
 The section $f$ splits according to the splitting of the target space into
 \begin{equation*}
 f(a, w)=\begin{bmatrix}(1 -P)f(a, w)\\
 Pf(a, w)\end{bmatrix}=
 \begin{bmatrix}(1-P)f(a, w)\\
 w-B(a, w)+Pf(0)\end{bmatrix}.
 \end{equation*}
Using the contraction property one finds for given $u\in W$ close to
$Pf(0)$ and given $a\in V$ close to $0$ a unique $w(a, u)\in W$
solving the equation $Pf(a, w(a, u))=u$. Moreover, the map $(a,
u)\mapsto w(a, u)$ is continuous on the $0$-level. Now, given a
sequence $(a_k, w_k)\in \ov{U}_0$ such that
$$
 f(a_k, w_k)=:(b_k, u_k)
$$
belongs to $(\R^N\oplus W)_1$ and satisfies
$$
\liminf_{k\rightarrow\infty}\ \norm{f(a_k, w_k)}_1\leq 1,
$$
 we have
the equations
\begin{gather*}
Pf(a_k, w_k)=u_k\\
(1 -P)f(a_k, w_k)=b_k\\
w_k=w(a_k, u_k).
\end{gather*}
We shall show that $(a_k, w_k)$ possesses a convergent subsequence
in $\ov{U}_0$. By assumption, $(\R^N\oplus W)_1$ is a reflexive
Banach space so that going over to a subsequence and using the compact embedding $W_1\subset W_0$,
\begin{gather*}
(b_k, u_k)\rightharpoonup (b', u')\quad \text{in $(\R^N\oplus W)_1$}\\
(b_k, u_k)\rightarrow (b', u')\quad \text{in $(\R^N\oplus W)_0$},
\end{gather*}
and $\norm{(b', u')}_1\leq 1$. In $\R^N$ we may assume $a_k\to a'\in
V$. Consequently, $w_k=w(a_k, u_k)\to w':=w(a', u')$. Hence $f(a',
w')=(b', u')$ and $\norm{f(a', w')}_1\leq 1$. The proof of
 Theorem \ref{thm6.9} is complete.
\end{proof}

\subsection{Proper Fredholm Sections}

In this section we introduce the important class of proper Fredholm
sections.
\begin{defn}
A Fredholm section $f$ of the fillable strong M-polyfold bundle  $p:Y\rightarrow X$ is  called {\bf proper}
provided $f^{-1}(0)$ is compact in $X$.
\end{defn}
The first observation is the following.
\begin{thm}[$\infty$-Properness]\label{infty-p}
Assume that $f$ is a proper  Fredholm section of the fillable strong
M-polyfold bundle $p:Y\rightarrow X$. Then $f^{-1}(0)$ is compact in
$X_{\infty}$.
\end{thm}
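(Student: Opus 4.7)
The plan is to upgrade the convergence provided by properness in $X_0$ to convergence in every level $X_m$, exploiting the contraction normal form that characterizes Fredholm sections locally at smooth zeros. Given any sequence $(q_k) \subset f^{-1}(0)$, properness yields a subsequence (still denoted $q_k$) converging to some $q \in f^{-1}(0)$ in $X_0$. Since $f$ is regularizing and $f(q) = 0 \in Y_{m,m+1}$ for every $m$, the point $q$ lies in $X_\infty$. I will show that $q_k \to q$ in $X_m$ for every $m$; together with the metrizability of $X_\infty$ (as a countable inverse limit of the metrizable $X_m$), this yields sequential compactness, hence compactness, of $f^{-1}(0)$ in $X_\infty$.

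The regularity upgrade will be performed in a strong M-polyfold bundle chart around $q$, using the Fredholm property of $f$ to pass to a filled version $\bar{\mathbf{f}}$ which is strongly equivalent to a basic germ. After composition with the corresponding sc-diffeomorphisms and strong bundle isomorphisms, I may assume the section has the basic normal form
$$
g:\mathcal{O}(V\oplus W,0)\to\mathbb{R}^N\oplus W,
$$
with $V = [0,\infty)^k \oplus \mathbb{R}^{n-k}$, the point $q$ corresponding to $(0,0)$, and
$$
Pg(a,w) = w - B(a,w)
$$
an $\text{sc}^0$-contraction germ. Theorem \ref{newthm5.4} then furnishes a unique sc-smooth solution germ $\delta$ such that, near $0$, $\delta(a)$ is the unique $w$ solving $w = B(a,w)$. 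Sc-smoothness of $\delta$ at the smooth point $0$ provides, for each $m$, a neighborhood $V^{m}$ of $0$ on which $\delta : V^m \to W_m$ is continuous.

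The crucial step is then to identify $q_k$ locally with $(a_k,w_k)\in V\oplus W$ and to observe that $w_k = \delta(a_k)$. Since sc-diffeomorphisms are continuous on every level, $q_k \to q$ in $X_0$ yields $(a_k,w_k)\to(0,0)$ in level $0$, so for large $k$ the pair $(a_k,w_k)$ lies in the uniqueness domain of $\delta$ and hence $w_k = \delta(a_k)$. Because $a_k$ lies in the finite-dimensional space $\mathbb{R}^n$, the convergence $a_k\to 0$ is automatic on every level, and for $k$ large $a_k \in V^{m}$. Continuity of $\delta : V^{m}\to W_m$ then forces $w_k=\delta(a_k)\to \delta(0)=0$ in $W_m$, so $(a_k,w_k)\to(0,0)$ in level $m$. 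Pulling back through the chart sc-diffeomorphisms yields $q_k\to q$ in $X_m$, for every $m\geq 0$.

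The main conceptual obstacle is that the regularizing property by itself is insufficient: it only ensures that the limit lies in $X_\infty$, not that the convergence takes place in higher levels. One genuinely needs the full Fredholm structure, specifically the sc-smoothness of the solution germ $\delta$ at the distinguished smooth point $0$, to convert the free level-independent convergence $a_k\to 0$ in the finite-dimensional parameter space into level-$m$ convergence of $w_k = \delta(a_k)$.
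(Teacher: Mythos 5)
Your proof is correct and follows essentially the same route as the paper: pass to a subsequence converging on level $0$, use the contraction normal form at the limit point to write $w_k=\delta(a_k)$, and upgrade to convergence on every level via the level-wise continuity of the solution germ $\delta$ near the distinguished point, the finite-dimensionality of the parameter $a_k$ doing the rest. The only cosmetic difference is that you invoke the full germ-implicit function theorem (Theorem \ref{newthm5.4}) for sc-smoothness of $\delta$, whereas the paper gets by with the continuity of $\delta_m$ on each level $m$ coming directly from Banach's fixed point theorem.
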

\begin{proof}
Properness implies by definition that $f^{-1}(0)$ is compact on
level $0$. Of course, $f^{-1}(0)$ is a subset of $X_{\infty}$ since
$f$ is regularizing. Assume that $(x_k)$ is a sequence of solutions of
$f(x)=0$. We have to show that it possesses  a subsequence
converging to some solution $x_0$ in $X_\infty$. After taking a
subsequence we may assume that $x_k\rightarrow x_0$ on level $0$. We
choose  a contraction germ representation for $[f,x_0]$. After a
suitable change of coordinates the sequence $((a_k,w_k))$ corresponds
to $(x_k)$, the point $(0,0)$ to $x_0$,  and
$$
w_k=B(a_k,w_k),
$$
with $(a_k,w_k)\rightarrow (0,0)$ on level $0$. Consequently,
$$
w_k=\delta(a_k)
$$
for the map $a\rightarrow \delta(a)$ constructed  using  Banach's
fixed point theorem on level $0$. We know that for every level $m$
there is an open neighborhood $O_m$ of $0$ so that
$\delta_m:O_m\rightarrow W_m$ is continuous  given by Banach's fixed
point theorem on the $m$-level. We may assume that $O_{m+1}\subset
O_m$. Fix a level $m$. For $k$ large enough,  we have $a_k\in O_m$.
Then $\delta_m(a_k)$ is the same as $\delta(a_k)=w_k$. Hence
$a_k\rightarrow 0$ implies $w_k\rightarrow 0$ on level $m$.
Consequently, $(a_k,w_k)\rightarrow (0,0)$ on every level, implying
convergence on the $\infty$-level. Hence $x_k\rightarrow x_0$ in
$X_{\infty}$ as claimed.
\end{proof}
As a consequence of the local Theorem \ref{thm6.9} we obtain the
following global result for proper Fredholm sections.
\begin{thm}\label{thm6.12}
Let $p:Y\to X$ be a fillable strong $M$-polyfold bundle with reflexive fibers
and assume that $f$ is a proper  Fredholm
section. Assume that $N$ is a given auxiliary norm. Then there
exists an open neighborhood $U$ of the compact set $S=f^{-1}(0)$ so
that the following holds true.
\begin{itemize}
\item[$\bullet$] For every section $s\in \Gamma^+(p)$ having
its support in $U$ and satisfying $N(s(x))\leq 1$, the section $f+s$
is a proper  Fredholm section  of the fillable strong M-polyfold bundle $p^1:Y^1\rightarrow X^1$ and $(f+s)^{-1}(0)\subset U.$
\item[$\bullet$] Every sequence $(x_k)$ in $\ov{U}$
satisfying $f(x_k)\in Y_{0,1}$ and
$$\liminf_{k\to \infty} N(f(x_k))\leq 1$$
possesses a convergent subsequence.
\end{itemize}
\end{thm}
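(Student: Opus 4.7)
The plan is to cover the compact solution set $S:=f^{-1}(0)$ by finitely many of the local compactness neighborhoods produced by Theorem~\ref{thm6.9}, then to shrink so as to obtain an open $U$ whose closure is trapped inside this finite union. Since $f$ is proper, $S$ is compact, and because $f$ is regularizing every $q\in S$ is smooth; hence Theorem~\ref{thm6.9} applies at each $q\in S$ and yields an open neighborhood $U(q)$ with the stated local compactness property. Using compactness of $S$, I would choose a finite subcover $S\subset V:=U(q_1)\cup\cdots\cup U(q_n)$, and then invoke paracompactness (hence normality) of the $M$-polyfold $X$ to produce an open neighborhood $U$ of $S$ with $\overline{U}\subset V$ via the standard shrinking argument.

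I would address the second bullet first. Given $(x_k)\subset\overline U$ with $f(x_k)\in Y_{0,1}$ and $\liminf_k N(f(x_k))\leq 1$, I first extract a subsequence along which $\lim_k N(f(x_k))\leq 1$ and then, by the pigeonhole principle applied to the finite cover of $\overline U$ by the sets $\overline{U(q_i)}$, arrange that the subsequence lies entirely in some single $\overline{U(q_i)}$. Theorem~\ref{thm6.9} applied in $\overline{U(q_i)}$ then extracts a convergent subsequence.

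For the first bullet, Theorem~\ref{prop5.21} directly implies that $f+s$ is a Fredholm section of $p^1$. The inclusion $(f+s)^{-1}(0)\subset U$ is immediate, because at any $x\notin U$ the section $s$ vanishes so $(f+s)(x)=f(x)$, and a zero there would lie in $S\subset U$, a contradiction. For properness of $f+s$, I take a sequence of zeros $(x_k)\subset (f+s)^{-1}(0)\subset U\subset\overline U$. Since $s\in\Gamma^+(p)$ takes values in $Y_{0,1}$, we have $f(x_k)=-s(x_k)\in Y_{0,1}$ and $N(f(x_k))=N(s(x_k))\leq 1$, so the second bullet supplies a subsequence convergent in $X$ to some $x_\ast\in\overline U$; continuity of $f+s$ then forces $(f+s)(x_\ast)=0$, whence $x_\ast\in U$.

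The main technical point to watch is that properness of $f+s$ is demanded as a section of the shifted bundle $p^1$, so the convergence produced above must be upgraded from the topology of $X$ to the finer topology of $X^1=X_1$. For this I would adapt the argument of Theorem~\ref{infty-p}: in a local contraction-germ representation of $f+s$ at $x_\ast$ the equation $(f+s)(x)=0$ reduces to a fixed-point equation $w=\widetilde B(a,w)$, and the level-dependent uniqueness of the Banach fixed point $w=\widetilde\delta(a)$ combined with the level-$0$ convergence $a_k\to 0$ forces $w_k\to 0$ on every level, giving convergence of $x_k$ to $x_\ast$ on all levels and in particular in $X^1$. Apart from this bookkeeping step, the argument is a short reduction to the local compactness Theorem~\ref{thm6.9} together with the Fredholm stability Theorem~\ref{prop5.21}.
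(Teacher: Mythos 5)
Your argument is essentially the paper's own proof: cover $S$ by finitely many of the neighborhoods $U(q_i)$ from Theorem \ref{thm6.9}, take $U$ to be (essentially) their union, use the support of $s$ together with $S\subset U$ to get $(f+s)^{-1}(0)\subset U$, invoke Theorem \ref{prop5.21} for the Fredholm property of $f+s$ over $p^1$, and obtain compactness of the zero set from the inclusion $\{(f+s)=0\}\subset\{x\in \ov{U}\,\vert\, f(x)\in Y_{0,1},\ N(f(x))\leq 1\}$, which by Theorem \ref{thm6.9} sits in a finite union of compact sets; the pigeonhole step for the second bullet is exactly what the paper does implicitly. Your extra shrinking $\ov{U}\subset V$ via normality is harmless but unnecessary, since $\ov{U}\subset\bigcup_i\ov{U(q_i)}$ already suffices. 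The only place where you go beyond the paper is the last paragraph, and there the mechanism you propose needs adjustment: the contraction-germ normal form for $f+s$ is produced in the proof of Theorem \ref{prop5.21} only on levels $m\geq 1$ (this is precisely why $f+s$ is Fredholm only as a section of $p^1$), so on level $0$ you cannot invoke Banach fixed-point uniqueness for $w=\widetilde B(a,w)$ to identify $w_k$ with $\widetilde\delta(a_k)$ from level-$0$ convergence alone. If you want the convergence of zeros of $f+s$ on higher levels, the correct route is to use the contraction germ of $f$ itself (valid on all levels) and treat the equation as $Pf(a,w)=u$ with $u=-P[s(a,w)]$, where $s$ being $\ssc^+$ makes the right-hand side converge on level $1$; uniqueness of the level-$0$ fixed point then identifies $w_k$ with the level-$1$ solution germ, and one bootstraps level by level as in Theorems \ref{thm6.9} and \ref{infty-p}. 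Note the paper itself does not perform this upgrade inside Theorem \ref{thm6.12}: it settles for compactness of the zero set in $X$ and defers compactness in $X_\infty$ to Theorem \ref{infty-p}.
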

\begin{proof}
We  know from the local Fredholm  theory (Theorem \ref{prop5.21})
that $f+s$ is a Fredholm  section of the
fillable bundle $p^1:Y^1\to X^1$  if $s$ is an $\ssc^+$-section.
For every $q\in S=f^{-1}(0)$ there exists an open neighborhood
$U(q)$ having the properties as  described in Theorem  \ref{thm6.9}.
Since $S$ is a compact  set  we find finitely many $q_i$ so that the
open sets $U(q_i)$ cover $S$. We denote their union by $U$. Next
assume  that the  support of $s\in \Gamma^+(p)$  is contained  in
$U$. If $(f+s)(x)=0$, then necessarily $x\in U$ because otherwise
$s(x)=0$ implying $f(x)=0$ and hence $x\in S\subset U$, a
contradiction. Consequently, the set of solutions of $(f+s)(x)=0$
belongs to $\{x\in U\vert \ \text{$f(x)\in Y_{0,1}$ and
$N(f(x))\leq 1$}\}$ which by construction  and Theorem \ref{thm6.9} is contained in a finite
union of compact sets. The proof of Theorem \ref{thm6.12} is
complete.\end{proof}

\subsection{Transversality and Solution Set}\label{sec6.4} From our
local considerations in the previous chapter  we shall deduce the
main results about the solution set of Fredholm sections.
\begin{defn}
A Fredholm section  $f$ of the fillable strong M-polyfold bundle $p:Y\rightarrow
X$ is said to be {\bf transversal} to the zero section if at every solution $x$ of
$f(x)=0$ the linearization $f'(x):T_xX\rightarrow Y_x$ is surjective.
\end{defn}
The first result is concerned with a proper Fredholm section of a fillable
strong M-polyfold bundle $p:Y\rightarrow X$ where $X$ has no boundary.

\begin{thm}\label{transthm}
Let $f$ be a proper and transversal Fredholm section of the fillable strong
M-polyfold bundle $p:Y\rightarrow X$ where $X$ has no boundary.
Then the solution set ${\mathcal M}=f^{-1}(0)$ is a smooth compact
manifold (without boundary).
\end{thm}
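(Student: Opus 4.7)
The plan is to verify that $\mathcal{M}:=f^{-1}(0)$ is a finite-dimensional submanifold of $X$ in the sense of Definition \ref{submanifold} and then invoke the proposition on smooth compatibility of good parametrizations. Because $f$ is regularizing (a component of the definition of a Fredholm section), every zero $q$ is automatically a smooth point, so $\mathcal{M}\subset X_\infty$. Fix $q\in\mathcal{M}$. Since $\partial X=\emptyset$, the degeneracy index $d(q)$ vanishes, so one can choose an $M$-polyfold chart around $q$ whose image is an open subset $O$ of a splicing core $K^{\mathcal{S}}$ whose parameter set $V$ is open in the full sc-Banach space $G$, with no partial quadrant appearing.

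Next I would pass to a filled version of the pushed-forward local section and apply Theorem \ref{newtheoremA} at the image of $q$. Transversality of $f$ at $q$ translates, via the discussion in Section \ref{subsection3.2}, to surjectivity of the linearization of the filled version at the origin, which is precisely the hypothesis of Theorem \ref{newtheoremA}. The theorem then produces a good parametrization
\[
\Gamma: Q\to U(q), \quad n\mapsto q+n+A(n),
\]
where $Q$ is an open neighborhood of $0$ in the finite-dimensional smooth kernel $N=\ker f'(q)\subset G\oplus E$ and $A:Q\to N^\perp$ is sc-smooth with $A(0)=0$, $DA(0)=0$. The data $(N,N^\perp,Q,A)$ are exactly those demanded by Definition \ref{submanifold}, so $\mathcal{M}$ is a finite-dimensional submanifold of $X$. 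By the proposition following that definition, the inverses of these good parametrizations constitute an atlas of $C^\infty$-compatible charts, so $\mathcal{M}$ acquires a canonical smooth manifold structure. Since $\partial X=\emptyset$ each $Q$ is an open subset of the vector space $N$ (rather than of a partial quadrant), so $\mathcal{M}$ carries no boundary. The local dimension equals $\dim N=\ind(f,q)$, which by Proposition \ref{newproposition5.4} is constant on each connected component of $\mathcal{M}$.

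Compactness is immediate from the properness hypothesis: $\mathcal{M}=f^{-1}(0)$ is by definition compact in $X$. I do not expect a substantial obstacle here, since the genuinely hard analysis---the existence of good parametrizations (Theorem \ref{newtheoremA}) and their smooth compatibility (Proposition \ref{newproposition5.6})---has already been carried out. The one minor point to check is that the manifold topology of $\mathcal{M}$ agrees with the subspace topology from $X$; this is built into Definition \ref{submanifold}, which requires each $\Gamma$ to be a homeomorphism onto its image in $\mathcal{M}$, and follows for our $\Gamma$ from the explicit formula $n\mapsto q+n+A(n)$ together with the fact that $P\circ(\Gamma-q)=\mathrm{id}_Q$ provides a continuous inverse, where $P:G\oplus E\to N$ is the projection along $N^\perp$.
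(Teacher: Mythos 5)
Your proposal is correct and follows essentially the same route as the paper: at each zero one passes to local coordinates and a filled version, invokes Theorem \ref{newtheoremA} to obtain a good parametrization over the kernel of the (surjective) linearization, uses the smooth compatibility of good parametrizations to get a $C^{\infty}$-atlas, and gets compactness directly from properness. Your detour through Definition \ref{submanifold} and the compatibility proposition following it is only a cosmetic repackaging of the paper's direct appeal to the calculus of good parametrizations (Proposition \ref{newproposition5.6}), as the paper itself notes when it remarks that transversal solution sets are examples of finite-dimensional submanifolds.
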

\begin{proof}
Near a solution $x$ of $f(x)=0$ we take suitable strong local bundle
coordinates $ \Phi:Y\vert U(x)\rightarrow K$ covering an
sc-diffeomorphism $\varphi:U(x)\rightarrow O$ satisfying $\varphi(x)=0$. Here $K\to O$ is the fillable local strong M-polyfold bundle. The local expression $f_{\Phi}$ of the Fredholm  section $f$ is also a  Fredholm section,  has $0$ as a solution,  and its linearization  $f_{\Phi}'(0)$  is, by assumption,  a surjective sc-Fredholm
operator. Hence,   by Theorem \ref{newtheoremA},  there exists a good parametrization of the solution set near $0$,
$$
\Gamma_\Phi(n)=n+A(n)
$$
defined on an open neighborhood of $0$ in the kernel of $f'_{\Phi}(0)$.
Then we define on a small open neighborhood of $0\in\ker f'(0)$ the
map  $\Gamma_x=\varphi^{-1}\circ\Gamma_\Phi\circ T\varphi (x)$. We can
carry out this construction near every  solution $x$ of $f(x)=0$. 
By the calculus of good parameterizations in section \ref{goodpar}, the transition maps $\Gamma_x^{-1}\circ \Gamma_y$ are sc-smooth and since they are defined and take values in smooth  finite dimensional  vector spaces they  are of class $C^{\infty}$. Consequently,  the inverses of the maps $\Gamma_x$  define a smooth atlas for ${\mathcal M}$.

\end{proof}

In the case where the ambient M-polyfold has a boundary,  the solution
set of a proper and transversal Fredholm section $f$ of the bundle
$p:Y\rightarrow X$ is in general not a smooth compact manifold with
boundary due to a possible bad position of the solution set to the
boundary. In order to draw conclusions about the nature of the
solution set we need some  knowledge about the position of the kernels  of the linearized
operators at solutions on  the boundary. Some of these positions are
in some sense generic and genericity questions are being
investigated in the next section. We continue with a definition.
\begin{defn}\label{gposition}
Let $f$ be a Fredholm section of the  fillable strong M-polyfold bundle
$p:Y\rightarrow X$, where $X$ possibly has a  boundary.  The section $f$
is  called in {\bf good position} if at every point $x\in f^{-1}(0)$ the
Fredholm section $f$  is in good position to the corner structure
of $X$  as defined in  Definition \ref{defn5.6.6}. Recall that this requires,  in particular,  that
  the linearisation $f'(x)$ is surjective at  every solution $x$ of $f(x)=0$.
\end{defn}
The next result gives  a generalization of the previous theorem. We
consider the situation of a proper Fredholm section on a M-polyfold
with possible boundary.
\begin{thm}\label{thm6.13}
Consider  a proper Fredholm section $f$ of the fillable strong M-polyfold
bundle $p:Y\rightarrow X$ which is in good position to the corner
structure. Then the solution set $f^{-1}(0)$ carries in a natural
way the structure of a smooth compact manifold with boundary $\partial  f^{-1}(0)$  with
corners where  $\partial  f^{-1}(0)=f^{-1}(0)\cap \partial X$.
\end{thm}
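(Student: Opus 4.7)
The plan is to mimic the proof of Theorem \ref{transthm}, replacing the interior parametrization result (Theorem \ref{newtheoremA}) by its boundary version (Theorem \ref{LOCAX1}), and then to identify the boundary and corner structure intrinsically.

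First, fix a zero $x\in f^{-1}(0)$. Since $f$ is regularizing, $x$ is smooth, and since $f$ is assumed in good position to the corner structure at $x$, the linearization $f'(x):T_xX\to Y_x$ is surjective and its kernel $N_x$ is in good position to the partial tangent quadrant $Q_x\subset T_xX$. Pick a fillable strong bundle chart $\Phi:Y|U(x)\to K$ covering $\varphi:U(x)\to O$ with $\varphi(x)=0$, and a filler for the local bundle $K\to O$; Theorem \ref{LOCAX1} then produces a good parametrization $\Gamma_x^\Phi:Q\to U(0)$ of the solution set of the filled section near $0$, where $Q$ is an open neighborhood of $0$ in $N\cap C$ with $C$ a partial quadrant in the relevant sc-Banach space. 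Transporting via the coordinate change as in Proposition \ref{newproposition5.5} (whose construction carries over verbatim to the boundary case, since strong bundle isomorphisms preserve partial quadrants and good complements) we obtain a map $\Gamma_x:=\varphi^{-1}\circ\Gamma_x^\Phi\circ T\varphi(x)$ which is a homeomorphism from a neighborhood of $0$ in $N_x\cap Q_x$ onto an open neighborhood of $x$ in $f^{-1}(0)$.

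Second, carry out this construction at every $x\in f^{-1}(0)$ and verify smooth compatibility of the resulting charts. By the boundary analogue of Proposition \ref{newproposition5.6}, the transition map $\Gamma_x^{-1}\circ\Gamma_y$ is sc-smooth, and since its domain and codomain lie in finite-dimensional smooth vector spaces, it is automatically of class $C^\infty$. The good position condition at $y$ ensures $N_y\cap Q_y$ is a partial quadrant in $N_y$ (via Proposition \ref{pointt}), so these transitions are smooth maps between relatively open subsets of partial quadrants in finite-dimensional vector spaces, giving $f^{-1}(0)$ the structure of a smooth finite-dimensional manifold with boundary with corners. Compactness follows directly from properness of $f$, and the dimension of the chart at $x$ equals $\ind(f,x)$ by Proposition \ref{prop5.9}.

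Third, identify the boundary. One must verify that the degeneracy index on $f^{-1}(0)$ inherited from the good parametrizations agrees with the restriction of the degeneracy index $d:X\to\N$ of $X$. Since $N_x$ is in good position to $Q_x$ with a good complement $N_x^\perp$, the defining condition of good position together with Remark \ref{rem0} implies that a point $n\in N_x\cap Q_x$ close to $0$ lies in the interior of $N_x\cap Q_x$ (as a subset of $N_x$) if and only if $n+A(n)$ lies in the interior of $Q_x$ (as a subset of $T_xX$), and similarly the number of bounding hyperplanes of $N_x\cap Q_x$ meeting $n$ equals the number of bounding hyperplanes of $Q_x$ meeting $n+A(n)$. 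Hence the degeneracy index of $\Gamma_x(n)$ as a point of the manifold $f^{-1}(0)$ equals $d(\Gamma_x(n))$ in $X$. In particular $\partial f^{-1}(0)=f^{-1}(0)\cap\partial X$.

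The main obstacle is this last identification: verifying that the corner structure coming from the parametrizations is intrinsic and matches the ambient corner structure of $X$. This reduces, via the good position hypothesis, to the infinitesimal statement that for a finite-dimensional subspace $N$ in good position to a partial quadrant $C$, the partial quadrant $N\cap C$ in $N$ has at each point the same number of bounding faces as $C$ does at the corresponding point; this in turn is a direct consequence of the defining inequality $\|m\|\le c\|n\|$ in Definition \ref{defn5.6.1} applied to graphs over $N$. Once this is in hand, the rest of the argument is a routine assembly via the calculus of good parametrizations.
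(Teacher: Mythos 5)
Your proposal is correct and follows essentially the same route as the paper's proof: local good parametrizations supplied by Theorem \ref{LOCAX1} in a filled local representative, transported back by the bundle chart, and then smooth compatibility of the transition maps via the (boundary version of the) calculus of good parametrizations, giving a smooth atlas on the compact set $f^{-1}(0)$. Your third step, identifying $\partial f^{-1}(0)$ with $f^{-1}(0)\cap\partial X$, is a welcome addition that the paper leaves implicit, but as written it is under-justified: the matching of degeneracy indices is not a direct consequence of the inequality in Definition \ref{defn5.6.1} applied to graphs over $N$ alone. To conclude that $n$ and $\Gamma(n)=q+n+A(n)$ have the same degeneracy index one should invoke the structure of a good complement from the appendix — Lemma \ref{cone1} gives $N^{\perp}\subset {\mathbb R}^{\Sigma^c}\oplus W$, so $A(n)$ does not disturb the coordinates indexed by $\Sigma$, while a compactness/homogeneity argument (as in the proof of Proposition \ref{pointt}) gives a uniform lower bound $n_i\geq \varepsilon\norm{n}$ for $i\in\Sigma^c$ on $N\cap C$, so these coordinates of $n+A(n)$ stay strictly positive for $n$ small; together with the identification of $(N, N\cap C)$ with a standard partial quadrant via the projection onto ${\mathbb R}^{\Sigma}$ this yields the asserted equality of corner strata.
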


The proof of the theorem is almost identical to the proof of 
Theorem \ref{transthm}  with the exception that, in addition, the  local expression $f_{\Phi}$  is in  good position to the corner structure of $O$ at $0$ and we use Theorem \ref{LOCAX1} to guarantee the existence of  the parametrization $\Gamma_{\Phi}$ in this case.

\begin{proof}
Near a solution $x$ of $f(x)=0$ we take suitable strong local bundle
coordinates,
$$
\Phi:Y|U(x)\rightarrow K
$$
covering an sc-diffeomorphism $\varphi:U(x)\rightarrow O$ satisfying $\varphi (x)=0$, where
$K\rightarrow O$ is the  local strong M-polyfold bundle. The local expression $f_{\Phi}$  of the section $f$ has
$0$ as a solution and $f_\Phi$, by assumption, is in good position
to the corner structure of $O$ at  the point $0$. Hence, by Theorem \ref{LOCAX1}, there is a good parametrization of the solution set near $0$,
$$
\Gamma_\Phi(n)=n+A(n).
$$
This parametrization is defined on a relatively open neighborhood of $0$
in a partial quadrant $P:=\ker f'_{\Phi}(0 )\cap C$ of the kernel $\ker f'_{\Phi}(0 )$. We define the map $\Gamma_x=\varphi^{-1}\circ\Gamma_\Phi\circ T\varphi(x)$
on a small relatively open neighborhood of $0$ in a partial quadrant
contained in $\ker(f'(x))\subset T_xX$. We can carry out this
construction near every solution $x\in X$ of $f(x)=0$. By our
discussion of the calculus of good parameterizations the transition
maps are smooth ( i.e., of class $C^{\infty}$). Hence the inverses of the maps $\Gamma_x$ define a
smooth atlas equipping  the solution set $f^{-1}(0)$ with the structure of a smooth
compact manifold with  boundary with corners.
\end{proof}

If we have a proper Fredholm section of a fillable strong M-polyfold bundle we can
introduce the notion of being in general position. Intuitively this
means two things. First of all it requires  that at a solution the linearisation
is surjective. Secondly the solution set should  avoid finite intersections of
(local) faces if the dimension of this intersection has too large
codimension. For example,  a one-dimensional solution set would only
hit good  boundary points ($d(x)=1$) and would not hit corners
($d(x)>1$) and similarly a two-dimensional solution set would not
hit boundary points $x$ with $d(x)\geq 3$. The importance of the
notion of being in general position comes from the fact that it can
be achieved by an arbitrarily small perturbation as shown in
section \ref{sec6.5}. We would, however, like to point out that
in SFT sometimes certain symmetry requirements do not allow to bring
a solution set into general position by a perturbation compatible
with the symmetries. Nevertheless one can still achieve a good
position. This will be studied  in \cite{HWZ5} and in an entirely abstract
framework in \cite{HWZ6}.

Consider a proper Fredholm section $f$ of a fillable strong M-polyfold bundle
$p:Y\rightarrow X$ where the M-polyfold $X$ possibly has a  boundary.  If $x$ is a point in $X$ it belongs to $d(x)$-many local
faces.  Fixing
 a smooth point $x$, we  consider the $d(x)$ local faces numbered as ${\mathcal F}^1, \ldots , {\mathcal F}^{d}$ where $d=d(x)$. Each local face ${\mathcal F}^j$ has the  tangent space
$T_x{\mathcal F}^j$ at the point $x\in X$. We denote by $T^\partial_xX$ the intersection
$$
T^\partial_x X=\bigcap_{1\leq j\leq d} T_x {\mathcal F}^j.
$$
If $d(x)=0$, i.e., if $x$ is an interior point, we define
$T^\partial_x X=T_x X$.
\begin{defn}\label{generalpos}
The Fredholm section $f$  is in {\bf general position} to the boundary $\partial X$ provided the following
holds for every  solution $x$ of $f(x)=0$.
\begin{itemize}
\item[(i)] The linearization $f'(x)$ is
surjective.
\item[(ii)] The kernel
of $f'(x)$ is transversal to $T^\partial_x X$ in the tangent space $T_xX$.
\end{itemize}
\end{defn}

\begin{thm}\label{manwbc}
Assume that the proper Fredholm section $f$ of the fillable strong M-polyfold
bundle $p:Y\rightarrow X$  is in general position to the boundary $\partial X$. Then the solution set $f^{-1}(0)$ is a smooth compact manifold with boundary with corners.
\end{thm}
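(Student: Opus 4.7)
The plan is to reduce the statement to Theorem \ref{thm6.13}, which already asserts that any proper Fredholm section in good position to the corner structure has a smooth compact manifold with boundary with corners as its zero set. Concretely, I would show that if $f$ is in general position to $\partial X$ in the sense of Definition \ref{generalpos}, then at every solution $x$ of $f(x)=0$ the section $f$ is automatically in good position to the corner structure in the sense of Definition \ref{defn5.6.6} (equivalently Definition \ref{gposition}). Condition (i) of general position already supplies the surjectivity of $f'(x)$, so all the work lies in showing that $N := \ker f'(x)$ is in good position to the partial tangent quadrant $Q_x$ in $T_xX$.

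Fix a smooth solution $x$ and pass to a local M-polyfold chart around $x$. Using Definition \ref{defn5.6.5} together with Remark \ref{rem0}, I would identify $T_xX$ with a model sc-Banach space of the form $G \oplus E$ and the partial tangent quadrant $Q_x$ with an explicit partial quadrant $C_q$ of the form $[0,\infty)^d \oplus W \oplus E'$, where $d = d(x)$ and $W$, $E'$ are sc-Banach spaces. In these coordinates the $d$ local faces of $X$ through $x$ are precisely the hyperplanes cut out by setting one of the $d$ distinguished coordinates equal to zero, so the intersection of their tangent spaces, which by definition equals $T^\partial_x X$, becomes $\{0\}^d \oplus W \oplus E'$.

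Next I would unwind the transversality hypothesis (ii). The condition $N + T^\partial_x X = T_xX$ amounts to the statement that the projection $\pi: T_xX \to \mathbb{R}^d$ onto the corner factor restricts to a surjection on the finite-dimensional subspace $N$. Choose then a $d$-dimensional complement $N_0$ of $N \cap T^\partial_x X$ inside $N$ on which $\pi$ is an isomorphism, and choose an sc-complement $P$ of $N \cap T^\partial_x X$ inside $T^\partial_x X$. A direct check (using $N = N_0 \oplus (N \cap T^\partial_x X)$) shows that $P$ is an sc-complement of $N$ in $T_xX$ and that $P \subset T^\partial_x X \subset C_q$; hence $N$ is neat with respect to $C_q$ in the sense of Definition \ref{defn5.6.1}, and Proposition \ref{neat} then yields that $N$ is in good position to $C_q$. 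Since $x$ was arbitrary, $f$ satisfies the hypotheses of Theorem \ref{thm6.13}, and the conclusion follows. The main subtlety I anticipate is ensuring that the intrinsic definition of $T^\partial_x X$ via the faces of $X$ really coincides in local coordinates with the stratum $\{0\}^d \oplus W \oplus E'$ of the partial tangent quadrant; once this chart-level identification is in place (using the invariance of the degeneracy index and the transformation rules for $C_q$ recorded in Section 4.3), the remainder is routine linear algebra together with invocations of Proposition \ref{neat} and Theorem \ref{thm6.13}.
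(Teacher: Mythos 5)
Your proposal is correct and follows essentially the same route as the paper, which isolates your key step as Lemma \ref{nl5.17}: general position implies good position, proved by passing to filled local coordinates, using transversality of $\ker f'(x)$ to $T^\partial_xX$ to produce an sc-complement of the kernel contained in the partial quadrant (neatness), and then invoking Proposition \ref{neat} and Theorem \ref{thm6.13}. Your explicit splitting $N=N_0\oplus(N\cap T^\partial_xX)$ with a complement chosen inside $T^\partial_xX$ just spells out the step the paper states in one line.
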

This theorem is an immediate consequence of Theorem \ref{thm6.13}
and the following observation.
\begin{lem}\label{nl5.17}
If the Fredholm section $f$ of the fillable strong M-polyfold bundle
$p:Y\rightarrow X$ is in general position,  then it is in good
position as well.
\end{lem}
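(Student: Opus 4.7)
My plan is to reduce the statement to a purely local and finite-dimensional claim about how the kernel $N$ sits relative to the partial tangent quadrant, and then to invoke Proposition \ref{neat} (neat implies good position). The surjectivity clause of Definition \ref{defn5.6.6} is already part of Definition \ref{generalpos}, so the content of the lemma is the positional condition on $N := \ker f'(x)$ at a zero $x$ with $d:=d(x)$.

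First I would pass to M-polyfold coordinates around $x$ in which, after the identifications explained in Section \ref{boundarycase}, the tangent space is written as $T_xX = \R^d \oplus E'$ with partial tangent quadrant $C_q = [0,\infty)^d \oplus E'$. The $d$ local faces $\mathcal{F}^1,\ldots,\mathcal{F}^d$ through $x$ correspond in these coordinates to the hyperplanes obtained by setting one of the first $d$ coordinates to zero, so that
\[
T^\partial_x X \;=\; \bigcap_{j=1}^{d} T_x\mathcal{F}^j \;\cong\; \{0\}^d \oplus E'.
\]
The decisive translation is then that condition (ii) of Definition \ref{generalpos}, namely $N + T^\partial_x X = T_xX$, is equivalent to saying that the projection $\pi: T_xX \to \R^d$ onto the quadrant factor restricts to a \emph{surjection} $\pi|_N: N \twoheadrightarrow \R^d$.

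Next I would build a good sc-complement of $N$ that sits inside the quadrant. Since $N$ is finite-dimensional and smooth and $T^\partial_x X$ is a closed sc-subspace, the intersection $N\cap T^\partial_x X$ is a finite-dimensional smooth subspace of $T^\partial_x X$, so by Proposition 2.7 of \cite{HWZ2} it admits an sc-complement $N^\perp$ inside $T^\partial_x X$. I then claim $N^\perp$ is an sc-complement of $N$ in all of $T_xX$: the direct-sum property $N\cap N^\perp=\{0\}$ follows since any $v\in N\cap N^\perp$ lies in $N\cap T^\partial_x X$ and is also $N^\perp$-orthogonal to that subspace; spanning is a short diagram chase using surjectivity of $\pi|_N$ — given $w = (w_1,w_2)\in \R^d\oplus E'$, pick $n\in N$ with $\pi(n)=w_1$, note $w-n\in T^\partial_x X$, and decompose $w-n = a+b$ with $a\in N\cap T^\partial_x X$ and $b\in N^\perp$.

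By construction $N^\perp \subset T^\partial_x X \cong \{0\}^d\oplus E' \subset C_q$, so $N$ is \emph{neat} with respect to the partial tangent quadrant at $x$ in the sense of Definition \ref{defn5.6.1}. Proposition \ref{neat} then immediately gives that $N$ is in good position to $C_q$, which is condition (ii) of Definition \ref{defn5.6.6}. Combined with the surjectivity of $f'(x)$ from Definition \ref{generalpos}(i), this says $f$ is in good position to the corner structure at $x$, as required. The only mildly delicate point in the argument is the diagram chase showing that an sc-complement chosen inside $T^\partial_x X$ is automatically a complement of $N$ in the full tangent space; everything else is bookkeeping. There is no analytic obstacle because all of $N$, $N\cap T^\partial_x X$, and $T^\partial_x X$ are sc-subspaces for which the existence of sc-complements is already guaranteed by Proposition 2.7 of \cite{HWZ2}.
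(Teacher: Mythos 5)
Your argument is correct and follows essentially the same route as the paper: in local coordinates the transversality of $N=\ker f'(x)$ to $T^\partial_x X$ yields an sc-complement of $N$ contained in the partial quadrant, so $N$ is neat and Proposition \ref{neat} gives good position, while surjectivity of $f'(x)$ is immediate from Definition \ref{generalpos}. The only difference is cosmetic: you construct the complement explicitly (as an sc-complement of $N\cap T^\partial_x X$ inside $T^\partial_x X$, then checked to complement $N$ in all of $T_xX$) and work directly with $C_q\subset T_qO$, whereas the paper passes to the filled normal form and simply asserts that transversality provides such a complement.
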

\begin{proof}
Take a solution $x$ of $f(x)=0$.   The linearization $f'(x)$ is surjective since $f$ is in general position. Now we  assume  that $d(x)\geq 1$.  By assumption, the kernel of $f'(x)$ is transversal to $T_x^\partial X$ implying
$\dim \ker f'(x)\geq \ \codim T_x^\partial X$ and hence,
$$\dim\ \ker f'(x)\geq d(x).
$$
Going into local coordinates,  we may assume  that the section  $f$
is already  filled,  that $x=0$ and that $0\in C=[0,\infty)^d\oplus {\mathbb R}^{n-d}\oplus
W$ where  $d=d(x)$. The linearisation
$$
f'(0):{\mathbb R}^n\oplus W\rightarrow {\mathbb R}^N\oplus W
$$
is surjective and its index satisfies
$i(f,0)=\dim \ker f'(0)=n-N$ in view of Proposition \ref{prop5.9}. Further,  $\ker f'(0)$ is transversal to $\{0\}^d\oplus {\mathbb R}^{n-d}\oplus W$. This implies that
$\ker f'(0)$ has an $\ssc$-complement $N^{\perp}$ contained in $C$. Therefore,
$\ker f'(0)$ is neat  with respect to $C$ and hence, by Proposition \ref{neat},   in good position to the partial quadrant $C$.
\end{proof}

\subsection{Perturbations}\label{sec6.5}

Using the results from the previous section we shall  prove some useful perturbation results.  In order to have an
sc-smooth partition of unity on the M-polyfold $X$ guaranteed,  we shall assume for simplicity 
throughout this section \ref{sec6.5} that the {\bf local models of $X$ are splicing
cores built on separable  sc-Hilbert spaces.} For the sc-smooth partitions of unity on M-polyfolds we refer to section 4.3 in \cite{HWZ7}.

We begin with a useful  lemma.
\begin{lem}\label{lem7.9.4}
Consider the strong M-polyfold bundle $p:Y\rightarrow X$ equipped with an
auxiliary norm $N:Y_{0,1}\to [0,\infty)$.  Assume that the local models of $X$ are built on separable sc-Hilbert spaces. Let   $x_0$ be a smooth point of $X$, $h_0$ a smooth point belonging to the fiber $Y_{x_0}$,  and  $U\subset X$ an open neighborhood of $x_0$. Then  there exists an $\ssc^+$-section $s$ of the strong
bundle $p$  satisfying
$$
s(x_0)=h_0
$$
and having its support in $U$. Moreover,  if $N(h_0)<\varepsilon$ we can
choose  the section $s$ in such a way that
$$
N(s(y))<\varepsilon.
$$
for all $y\in X$.
\end{lem}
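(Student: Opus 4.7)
The plan is to construct $s$ locally in a strong bundle chart around $x_0$ from the smooth value $h_0$ by means of the bundle-splicing projection, and then to localize by multiplying by an sc-smooth bump function.

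Choose a strong bundle chart $\Phi:Y|U'\to K^{\mathcal R}|O$ covering an sc-diffeomorphism $\varphi:U'\to O$ with $U'\subset U$ and $\varphi(x_0)=q$, where $\mathcal R=(\rho,F,(O,\mathcal S))$ is the local strong bundle splicing. Let $k_0\in F_\infty$ be the fiber representative of $h_0$, so that $\rho_q(k_0)=k_0$. Define the local section $\sigma:O\to K^{\mathcal R}$ by $\sigma(x)=(x,\rho_x(k_0))$. Since $\rho_x$ is a projection, $\sigma$ is indeed a section, and $\sigma(q)$ represents $h_0$. Because $\mathcal R$ is a \emph{strong} bundle splicing and $k_0\in F_\infty$, the value $\rho_x(k_0)$ lies in $F_{m+1}$ whenever $x\in O_m$, so $\sigma$ is an $\ssc^+$-section of the local bundle. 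Pulling back, let $\sigma_0:=\Phi^{-1}\circ\sigma\circ\varphi$ denote the corresponding $\ssc^+$-section of $p$ over $U'$.

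Next, invoke the sc-smooth partitions of unity on $X$ (available since the local models of $X$ are built on separable sc-Hilbert spaces, cf.\ Section 4.3 of \cite{HWZ7}) to produce an sc-smooth function $\beta:X\to[0,1]$ with $\beta\equiv 1$ on some neighborhood of $x_0$ and $\operatorname{supp}\beta\subset U'\subset U$. Define
$$
s(x)=\begin{cases}\beta(x)\cdot\sigma_0(x),&x\in U',\\ 0,& x\notin U'.\end{cases}
$$
Multiplication of an $\ssc^+$-section by an sc-smooth scalar function preserves sc-smoothness and keeps fibers at the same level, so $s$ is an $\ssc^+$-section whose support is contained in $U$ and which satisfies $s(x_0)=\beta(x_0)h_0=h_0$.

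For the norm estimate, $N$ is continuous on $Y_{0,1}$ and $N(h_0)<\varepsilon$ by hypothesis. By continuity of $y\mapsto N(\sigma_0(y))$ at $x_0$, there is an open neighborhood $W\subset U'$ of $x_0$ on which $N(\sigma_0(y))<\varepsilon$. Choose $\beta$ at the outset with $\operatorname{supp}\beta\subset W$; then the scalar homogeneity of the fiberwise norm $N$ gives
$$
N(s(y))=|\beta(y)|\cdot N(\sigma_0(y))\le N(\sigma_0(y))<\varepsilon
$$
on $W$, and trivially $N(s(y))=0$ elsewhere. The main technical point is the verification that the local section $\sigma$ is genuinely an $\ssc^+$-section, which relies both on the smoothness of $k_0$ and on the level-shift behavior of $\rho$ built into the definition of a strong bundle splicing; closure of $\ssc^+$-sections under multiplication by sc-smooth scalar cutoffs is then routine because scalar multiplication is level-preserving on each fiber.
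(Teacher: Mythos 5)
Your construction is correct and follows essentially the same route as the paper: represent $h_0$ in a strong bundle chart by a smooth fiber point, spread it over the base via the splicing projection $x\mapsto\rho_x(k_0)$ (the level-shift property of the strong bundle splicing giving the $\ssc^+$ character), and cut off with an sc-smooth bump function supported in $U$, available since the local models are separable sc-Hilbert spaces. Your explicit treatment of the $\varepsilon$-estimate via continuity of $y\mapsto N(\sigma_0(y))$ and fiberwise homogeneity of $N$ is exactly the step the paper leaves implicit, and it is fine.
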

\begin{proof}
The result is local.  Hence we take  a strong M-polyfold bundle chart 
$\Phi:p^{-1}(U)\to K^{\mathcal R}$ covering the sc-diffeomorphism $\varphi:U\to O$ as defined in Definition 4.8  of \cite{HWZ2}.
The set $O$ is an open subset of 
of the splicing core $K^{\mathcal S}=\{(v, e)\in V\oplus E\vert \pi_v (e)=e\}$ associated with the  splicing ${\mathcal S}=(\pi, E, V)$ and  $K^{\mathcal R}=\{((v, e), u)\in O\oplus F\vert \, \rho_{(v, e)}(u)=u\}$ is the splicing core associated with the strong bundle splicing ${\mathcal R}=(\rho, F, (O, {\mathcal S}))$. 
 In these coordinates the smooth  point  $(v_0, e_0)=\varphi (x_0)\in O$  corresponds to  $x_0$. Moreover,   $\Phi (x_0,h_0)=((v_0, e_0), h_0')\in K^{\mathcal R}$ where $h_0'$ is a smooth point in $F$, that is, $h_0'\in F_{\infty}$.  For points $(v, e)\in O$ close to $(v_0, e_0)$ we define the local section $s:K^{\mathcal R}\to O$ by 
$$s(v, e)=((v, e), \rho_{(v, e)}(h_0'))\in K^{\mathcal R}.$$
At the point $(v_0, e_0)$ we then have $\rho_{(v_0, e_0)}(h_0')=h_0'$ and hence $s(v_0, e_0)=((v_0, e_0), h_0')$ as desired. In view of the definition of a strong bundle splicing,  $\rho_{(v, e)}(u)\in F_{m+1}$ if $(v, e)\in O_m\oplus F_{m}$ and $u\in F_{m+1}$. Moreover,  by Definition 4.2 in \cite{HWZ2}, 
the triple ${\mathcal R}^1=(\rho,  F^1, (O, {\mathcal S}))$ is also a general sc-splicing which 
together with the fact that $h_0'$ is a smooth point in $F$ implies that the section $s$ of the bundle $K^{{\mathcal R}^1}\to O$ is sc-smooth. Consequently, $s$ is an $\ssc^+$-section  of the local strong M-polyfold bundle $K^{\mathcal R}\to O$.  We transport this section by means of the map $\Phi$ to obtain a local  $\ssc^+$ -section $s$ of  the bundle $p:Y\to X$. It satisfies $s(x_0)=h_0$. Using Lemma 5.7  from \cite{HWZ7} we find an sc-smooth  bump-function $\beta$ having its support in $U$ and  equal to $1$ near $x_0$ so that the section $\beta \cdot s$  has the desired properties.
\end{proof}

In the following we shall use the notation 
$${\mathcal M}^{f}=\{x\in X\vert \, f(x)=0\}$$
for the solution set of the section $f$ of the strong M-polyfold bundle $p:Y\to X$.

\begin{thm}\label{thmsec6-trans}
Assume that $f$ is a  Fredholm section
of the fillable strong M-polyfold bundle $p:Y\rightarrow X$ and
assume $\partial X=\emptyset$.  We assume that the sc-structure on $X$ is  is built on separable sc-Hilbert spaces. Let $N$ be an auxiliary norm. Assume
that $U$ is the open neighborhood of $f^{-1}(0)$ guaranteed by
Theorem \ref{thm6.12} having the property that for every section
$s\in\Gamma^+(p)$ whose support lies  in $U$ and which satisfies
$N(s(x))\leq 1$,  the section $f+s$ is  a proper Fredholm
section of the fillable strong M-polyfold bundle $p^1:Y^1\to X^1$. Denote the space of all such sections in $\Gamma^+(p)$ as
just described by ${\mathcal O}$. Then given  $t\in
\frac{1}{2}{\mathcal O}$ and $\varepsilon\in (0,\frac{1}{2})$, there
exists  a section $s\in {\mathcal O}$ satisfying $s-t\in
\varepsilon{\mathcal O}$ so that $f+s$ has at every solution $x$  of
$(f+s)(x)=0$ a linearization which is surjective. In particular, the
solution set
$$
{\mathcal M}^{f+s}=\{x\in X^1\vert \, (f+s)(x)=0\}
$$
is a smooth compact manifold without boundary in view of Theorem \ref{transthm}.
\end{thm}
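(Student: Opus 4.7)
The plan is to reduce to a standard Sard-Smale argument in a finite-dimensional parameter family, so that the hard infinite-dimensional work has already been done by Theorem~\ref{thm6.12} and Theorem~\ref{transthm}. Set $g = f+t$. Since $t\in\tfrac{1}{2}\mathcal{O}\subset\mathcal{O}$, Theorem~\ref{prop5.21} and the choice of $U$ imply that $g$ is a proper Fredholm section of $p^1:Y^1\to X^1$, and by Theorem~\ref{infty-p} the compact zero set $\mathcal{M} := g^{-1}(0)$ lies in $X_\infty$. At each $x\in\mathcal{M}$ the linearization $g'(x)$ is an sc-Fredholm operator, so the cokernel is a finite-dimensional smooth subspace of $Y_x$.

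I construct the parameter family as follows. For each $x\in\mathcal{M}$ pick smooth elements $h_1^x,\ldots,h_{k(x)}^x\in Y_x$ whose classes span $\mathrm{coker}\,g'(x)$. By Lemma~\ref{lem7.9.4}, for any prescribed threshold $\eta>0$ there exist $\ssc^+$-sections $\tau_j^x$ with support in an arbitrarily small neighborhood of $x$ contained in $U$, satisfying $\tau_j^x(x)=h_j^x$ and $N(\tau_j^x(y))<\eta$ for all $y\in X$. Since surjectivity of a Fredholm operator is an open condition, the set $V(x)$ of points $y\in X$ where $g'(y)+L_y^x$ is surjective (with $L_y^x(\delta\lambda):=\sum_j\delta\lambda_j\,\tau_j^x(y)$) is an open neighborhood of $x$. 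Compactness of $\mathcal{M}$ yields finitely many such neighborhoods $V(x_1),\ldots,V(x_n)$ covering $\mathcal{M}$; relabel the totality of the corresponding $\tau_j^{x_i}$ as $\tau_1,\ldots,\tau_K$ and choose the threshold $\eta$ so small that $\sum_{j=1}^K|\lambda_j|\,\tau_j\in\varepsilon\mathcal{O}$ for all $|\lambda|\le 1$. Now define
$$
F:\mathbb{R}^K\oplus X\longrightarrow (r^\ast Y)^1,\qquad F(\lambda,x)=g(x)+\sum_{j=1}^K\lambda_j\,\tau_j(x).
$$
By Theorem~\ref{prop5.21}, $F$ is a Fredholm section. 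By construction, at every $(0,x)$ with $x\in\mathcal{M}$ the linearization $DF(0,x)$ is surjective, and by openness of surjectivity the same holds on an open neighborhood $W$ of $\{0\}\times\mathcal{M}$. Applying Theorem~\ref{thm6.12} to $F$ together with the compactness of $\mathcal{M}$, there exists $r>0$ such that $F^{-1}(0)\cap(B_r(0)\times X)\subset W$.

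Now I apply the classical Sard theorem. By Theorem~\ref{transthm} applied to $F\vert W$, the solution set $\mathcal{S}:=F^{-1}(0)\cap W$ is a smooth finite-dimensional manifold, and the natural projection $\pi:\mathcal{S}\to\mathbb{R}^K$ is a $C^\infty$-map between smooth finite-dimensional manifolds. By Sard's theorem the set of regular values of $\pi$ is residual, hence dense in $\mathbb{R}^K$. Pick a regular value $\lambda^\ast\in B_r(0)$ with $|\lambda^\ast|<1$; the usual chain-rule computation shows that $\lambda^\ast$ being a regular value of $\pi$ at $(\lambda^\ast,x)$ is equivalent to the surjectivity of the linearization $(g+\sum_j\lambda_j^\ast\tau_j)'(x):T_xX\to Y_x$ at each zero $x$. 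Set $\sigma:=\sum_j\lambda_j^\ast\tau_j\in\varepsilon\mathcal{O}$ and $s:=t+\sigma$. Then $s-t=\sigma\in\varepsilon\mathcal{O}$; since $t\in\tfrac{1}{2}\mathcal{O}$ and $\varepsilon<\tfrac{1}{2}$, the section $s$ has support in $U$ and $N(s(x))\le N(t(x))+N(\sigma(x))<1$, so $s\in\mathcal{O}$. By construction $f+s=F(\lambda^\ast,\cdot)$ has surjective linearization at every zero, and Theorem~\ref{transthm} then gives the smooth compact manifold structure on $\mathcal{M}^{f+s}$.

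The main obstacle is arranging the local $\ssc^+$-sections $\tau_j$ so that (i) the parameterized linearization is surjective on all of $\mathcal{M}$, not merely at the finitely many points $x_i$, and (ii) their combined size controlled by the parameter $\lambda^\ast$ stays within the prescribed $\varepsilon\mathcal{O}$-ball. Point (i) is handled by the open-ness of surjectivity for sc-Fredholm operators together with finiteness of the cover of $\mathcal{M}$; point (ii) is handled by choosing the threshold $\eta$ in Lemma~\ref{lem7.9.4} sufficiently small relative to $\varepsilon$ and $K$. Once these are in place, the Sard step is purely finite-dimensional.
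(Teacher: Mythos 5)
Your overall strategy (stabilize $f+t$ by finitely many $\ssc^+$-sections supported in $U$, pass to the parametrized section $F(\lambda,x)$, and finish with Sard applied to the projection of the finite-dimensional solution manifold onto the parameter space) is the same as the paper's. But there is a genuine gap at the step where you propagate surjectivity: you invoke ``surjectivity of a Fredholm operator is an open condition'' to claim that the set $V(x)$ of base points $y$ where $g'(y)+L^x_y$ is surjective is open, and again to get surjectivity of $DF$ on a neighborhood $W$ of $\{0\}\times\mathcal{M}$. In the sc-/M-polyfold setting this argument is not available: the linearizations $g'(y)$ are only defined at smooth points (and, away from the zero set, only relative to a choice of $\ssc^+$-section), and—this is the central difficulty the whole theory is designed to circumvent—the map $y\mapsto g'(y)$ is \emph{not} continuous in the operator norm (the paper states this explicitly in the discussion of determinant bundles). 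Openness of surjectivity in the operator topology therefore does not transfer to openness in the base point, so $V(x)$ and $W$ need not be open, and the classical argument collapses exactly at this point.

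The paper replaces this step by the local implicit function theorem for Fredholm sections: after stabilizing at a single point $x_0\in\mathcal{M}$ so that $\widetilde F'(0,x_0)$ is surjective, it applies Theorem \ref{newtheoremA} (whose proof, via Proposition \ref{prop5.4.14}(4) and the contraction-germ normal form, shows that the surjectivity of the surjective matrix family $M(a)$ persists along the locally parametrized solution set) to conclude that $\widetilde F'(\lambda,x)$ is surjective at every \emph{solution} $(\lambda,x)$ in a neighborhood $V((0,x_0))$—surjectivity is obtained only along the zero set and only through the good parametrization, never by continuity of the linearization in the base point. If you replace your openness claim by an appeal to Theorem \ref{newtheoremA} in this way (and correspondingly obtain the manifold structure of the parametrized solution set $S_\delta$ from the good parametrizations, as in the proof of Theorem \ref{transthm}, rather than by applying Theorem \ref{transthm} to the restriction $F\vert W$, whose properness you have not checked), the rest of your argument—covering $\mathcal{M}$ by finitely many such neighborhoods, the compactness argument confining $F^{-1}(0)\cap(B_r(0)\times X)$ to their union, the Sard step, and your explicit bookkeeping ensuring $s-t\in\varepsilon\mathcal{O}$—goes through and matches the paper's proof.
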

\begin{proof}
If $t\in \frac{1}{2}{\mathcal O}$, then,  in view of Theorem \ref{thm6.12},  $f+t$ is a proper
Fredholm section of the bundle $p^1:Y^1\to
X^1$. The solution set $S=(f+t)^{-1}(0)$
is compact in $X_{\infty}$ in view of Theorem  \ref{infty-p} and at
every solution $x\in S$ the linearization $(f+t)'(x)$ is sc-Fredholm
by Proposition \ref{prop5.9}. 
Fixing $x_0\in S$, there is an sc-splitting  of the image space of the linearization $(f+t)'(x_0)$ into the range and a smooth  finite dimensional  cokernel in which we choose the basis $e_1, \ldots, e_m$. It consists of smooth vectors. By 
 Lemma \ref{lem7.9.4}, there exist finitely many sections $\wt{s}_1, \ldots ,\wt{s}_m\in \Gamma^+(p)$ having their supports in $U$ and satisfying 
 $\wt{s}_1(x_0)=e_1, \ldots ,\wt{s}_m(x_0)=e_m$. Therefore, the linear map
$$(\lambda_1, \ldots ,\lambda_m, h)\mapsto (f+t)'(x_0)h+\sum_{j=1}^m\lambda_je_j$$
is surjective. Now, we view $Y$ as a  fillable strong M-polyfold bundle over $\R^m\oplus X$. Then the mapping $\wt{F}:\R^m\oplus X^1\to Y^1$, defined by
$$\wt{F}(\lambda , x)=(f+t)(x)+\sum_{j=1}^m\lambda_j\wt{s}_j,$$
is a Fredholm section of the fillable bundle $Y^1\to \R^m\oplus X^1$ in view of 
Theorem \ref{prop5.21}. It has the property that  $\wt{F}(0, x_0)=0$ and that the linearization $\wt{F}'(0, x_0)$ is surjective. Using the good parametrizations  guaranteed by Theorem \ref{newtheoremA}, we find an open neighborhood $V((0, x_0))$ of the point $(0, x_0)\in \{0\}\oplus S$ on which the linearization $\wt{F}'(\lambda , x)$ is surjective at points $(\lambda , x)$ in the solution set $\{\wt{F}(\lambda , x)=0\}$.  Since $\{0\}\oplus S$ is compact, we can cover $\{0\}\oplus S$ with finitely many such open neighborhoods $V((0, x_i))$ for $i=1, \ldots ,N$.  Adding up  all the sections constructed in these open sets, we obtain the finitely many sections $s_1,\ldots ,s_k\in \Gamma^+(p)$ all having their supports in $U$ so that the Fredholm section  $F:\R^k\oplus X^1\to Y^1$ defined by 
$$F(\lambda ,x)=(f+t)(x)+\sum_{j=1}^k\lambda_j s_j (x)$$
has the following property. The linearization $F'(\lambda, x)$ is surjective at points $(\lambda , x)$ contained in an open neighborhood of $\{0\}\oplus S$ and solving the equation $F(\lambda , x)=0$. For $\delta >0$ sufficiently small we take the open ball $B_{\delta}(0)\subset \R^k$ centered at the origin and of radius $\delta$. Then the solution set 
$$S_{\delta}:=\{(\lambda , x)\in B_{\delta}(0)\oplus X^1\vert \, F(\lambda , x)=0\}$$
is a smooth finite dimensional manifold using the good parametrizations guaranteed by Theorem \ref{newtheoremA} and arguing as in the proof of Theorem \ref{transthm}.
We define the smooth map   $\beta:S_{\delta}\to \R^k$ as the composition $\beta=\pi\circ j$,
$$S_{\delta}\xrightarrow{j}\R^k\oplus X\xrightarrow{\pi} \R^k$$
where $j$ is the injection map and $\pi$ is the projection $\pi (\lambda , x)=\lambda$. By Sard's theorem, we find a regular value $\lambda^*\in \R^k$ of the map $\beta$ arbitrary close to $0$.
In view of the surjectivity of $F'(\lambda, x)$, the map $x\mapsto F(\lambda^*, x)$ has therefore the property  that the linearization $D_2F(\lambda^*, x)$ is surjective at all solutions $x\in X$ of the equation
 $F(\lambda^*, x)=0$. Moreover, the map $F(\lambda^*, \cdot )$ is a proper Fredholm section of the bundle $p^1:Y^1\to X^1$ in view of Theorem \ref{thm6.12}.
 Hence, by   Theorem \ref{transthm}, the solution set $\{x\in X^1\vert F(\lambda^*, x)=0\}$    is a smooth compact manifold without boundary and  the proof of Theorem \ref{thmsec6-trans} is complete.
 \end{proof}
 
Our main perturbation result concerning   Fredholm problems on 
M-polyfolds  possessing  boundaries  is as follows. 
\mbox{}
\begin{thm}\label{poil}
Let $p:Y\rightarrow X$ be a fillable  strong M-polyfold bundle
having nonempty boundary $\partial X$ and assume that $f:X\to Y$ is
a proper   Fredholm section. We assume that the sc-structure of $X$  is built on separable sc-Hilbert spaces. Fix  an auxiliary norm $N$  and assume that $U\subset X$ is  an open neighborhood of the compact solution set $\mcal M^f$ guaranteed by
Theorem \ref{thm6.12}.   Denote by ${\mathcal O}$ the set of sections 
$$\mcal O=\{s\in\Gamma^+(p)\ \vert
\ \text{$\supp (s)\subset U$ and $N(s(x))\leq 1$ for all $x\in
U$}\}.$$ Then, given $t\in \frac{1}{2}{\mathcal O}$  and $\varepsilon\in (0, \frac{1}{2})$, there
exists   a section $s\in {\mathcal
O}$ satisfying $s-t\in\varepsilon {\mathcal O}$ so that the solution
set
$$\mcal M^{f+s}=\{x\in X^1\vert \, (f+s)(x)=0\}$$
is in general position to
the boundary $\partial X$.  In particular,  $\mcal M^{f+s}$ is a smooth
compact manifold with boundary with corners in view of Theorem \ref{manwbc}.
\end{thm}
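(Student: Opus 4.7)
The plan is to mirror the strategy of Theorem \ref{thmsec6-trans}, with two crucial modifications to handle the boundary: the finitely many $\ssc^+$-sections used to build the parametrized Fredholm section must be chosen so that the resulting parametrized section is itself in general position to $\mathbb{R}^k \oplus \partial X$, and Sard's theorem must be applied not only to the projection onto the parameter space but also to its restriction to every stratum of the solution manifold.

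Concretely, write $S := (f+t)^{-1}(0)$, which is compact in $X_\infty$ by Theorems \ref{thm6.12} and \ref{infty-p}. At each smooth $x_0 \in S$ with linearization $L_{x_0} := (f+t)'(x_0)$, I would choose smooth vectors $e^{x_0}_1, \dots, e^{x_0}_{m(x_0)} \in Y_{x_0}$ spanning a complement of $L_{x_0}(T^\partial_{x_0}X)$ --- this is the key choice --- and, via Lemma \ref{lem7.9.4}, realize them as values at $x_0$ of $\ssc^+$-sections $\tilde s^{x_0}_j$ supported in $U$ with arbitrarily small $N$-norm. A short algebraic check shows that the spanning condition $\mathrm{span}\{e^{x_0}_j\} + L_{x_0}(T^\partial_{x_0}X) = Y_{x_0}$ forces the local parametrized section $\tilde F_{x_0}(\lambda, x) := (f+t)(x) + \sum_j \lambda_j \tilde s^{x_0}_j(x)$ to be in general position to $\mathbb{R}^{m(x_0)} \oplus \partial X$ at $(0, x_0)$ in the sense of Definition \ref{generalpos}. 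Since general position is an open property on the zero set of an sc-Fredholm section and $S$ is compact, a finite subcover combines into sections $s_1, \dots, s_k \in \Gamma^+(p)$ whose sum of $N$-norms can be prescribed arbitrarily small, and the parametrized section
\[
F(\lambda, x) := (f+t)(x) + \sum_{j=1}^k \lambda_j s_j(x)
\]
is (by Theorem \ref{prop5.21}) a Fredholm section on the pullback bundle $Y^1 \to (\mathbb{R}^k \oplus X)^1$ which is in general position to $\mathbb{R}^k \oplus \partial X$ in an open neighborhood $W$ of $\{0\} \times S$.

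For $\delta > 0$ small, Theorem \ref{thm6.12} applied to the pullback bundle (with the induced auxiliary norm) ensures that $\mathcal M^F := \{(\lambda, x) \in B_\delta(0) \oplus X^1 : F(\lambda, x) = 0\}$ is compact and contained in $W$, hence by Theorem \ref{manwbc} is a smooth compact manifold with boundary with corners. Its strata $\mathcal M^F(d) := \{(\lambda,x) \in \mathcal M^F : d(x) = d\}$ are finite-dimensional smooth manifolds. Applying the classical Sard theorem to the smooth projection $\beta(\lambda, x) = \lambda$ on $\mathcal M^F$ and on each of the finitely many nonempty $\mathcal M^F(d)$, and intersecting the resulting residual sets, I obtain a common regular value $\lambda^* \in B_\delta(0)$ arbitrarily close to $0$. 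Setting $s := t + \sum_j \lambda^*_j s_j$ gives $s \in \mathcal O$ with $s - t \in \varepsilon \mathcal O$ provided $\delta$ and the $N(s_j)$ were chosen small enough. A standard transversality argument --- lifting directions in $T_xX$ via general position of $F$ at $(\lambda^*, x)$ and correcting by kernel vectors tangent to the stratum $\mathcal M^F(d(x))$, whose existence follows from $\lambda^*$ being a regular value of $\beta$ restricted to that stratum --- yields both surjectivity of $(f+s)'(x)$ and transversality of $\ker(f+s)'(x)$ to $T^\partial_x X$ at every zero $x$, i.e.\ $f+s$ is in general position to $\partial X$. A final appeal to Theorem \ref{manwbc} delivers the conclusion that $\mathcal M^{f+s}$ is a smooth compact manifold with boundary with corners contained in $U$.

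The principal obstacle is the second step: a naive analogue of the interior construction of Theorem \ref{thmsec6-trans} --- spanning only a cokernel complement of $L_{x_0}$ --- achieves surjectivity of $F'(0, x_0)$ but leaves $\ker F'(0, x_0)$ failing to project onto $T_{x_0}X / T^\partial_{x_0}X$, so $F$ would not be in general position to the enlarged boundary. Replacing the cokernel complement by a complement of the possibly strictly smaller subspace $L_{x_0}(T^\partial_{x_0}X)$ is the technical heart of the argument; once this spanning property is in place, the Sard-on-strata step and the translation back to general position of $f+s$ are routine.
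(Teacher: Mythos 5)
Your proposal reproduces the paper's own argument: the paper likewise perturbs $f+t$ by finitely many $\ssc^+$-sections chosen at each zero so that the parametrized section $F(\lambda,x)=(f+t)(x)+\sum_j\lambda_j s_j(x)$ is in general position to the enlarged boundary (its two-step choice --- a cokernel complement together with vectors $-(f+t)'(x_0)h_j$ for $h_j$ spanning a complement of $T^\partial_{x_0}X$ --- is equivalent to your spanning of a complement of $L_{x_0}(T^\partial_{x_0}X)$), then covers the compact zero set, applies Sard to the projection onto $\lambda$ both on the solution manifold and on its boundary strata (the face intersections $S^{j,\sigma}_\varepsilon$, locally the same as your strata by degeneracy index), and finishes with the same lifting-and-correcting argument before invoking Theorem \ref{manwbc}. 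The only caution is that your appeal to ``general position is an open property on the zero set'' is exactly the nontrivial step --- since linearizations do not vary continuously here, the paper proves it (claims (a) and (b) in its Lemma \ref{lemth5.19}) via the good parametrizations of Theorem \ref{LOCAX1}, and similarly the compactness of the parametrized solution set is argued directly with the auxiliary norm rather than by quoting Theorem \ref{thm6.12} for an open parameter ball.
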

\mbox{}
\begin{proof}
Take $t\in \frac{1}{2}\mcal O$. Then the solution set $\mcal M^{f+t}$ is compact in $X$ as well as in $X_{\infty}$ by Theorem \ref{infty-p}.  If $x$ is a smooth point in $X$,  we denote by ${\mathcal F}^1, \ldots, {\mathcal F}^d$ the local faces of $X$ at the point $x$. Here $d=d(x)$.  Each local face ${\mathcal F}^j$ has the  tangent space
$T_x{\mathcal F}^j$ at the point $x\in X$.  Recall that  $T^\partial_xX$ is  the intersection
$
T^\partial_x X=\bigcap_{1\leq j\leq d} T_x{\mathcal  F}^j.
$
If $d(x)=0$, i.e., if $x$ is an interior point, we  set
$T^\partial_x X=T_x X$.
We shall proceed in  several  steps starting with the following lemma.
\begin{lem}\label{lemth5.19}
There exist finitely many
$\ssc^+$-sections $s_j\in \Gamma^+(p)$ for  $1\leq j\leq l$, supported in $U$ so that
the map $F:{\mathbb R}^l\oplus X^1\rightarrow Y^1$ defined by
$$F(\lambda,x):=(f+t)(x)+\sum_{j=1}^l \lambda_js_j(x)
$$
has the following properties.  There exists a small  $\varepsilon>0$ so that the solution set
$$\ov{S}_{\varepsilon}=\{(\lambda, x)\in \R^l\oplus X^1\vert \, \text{$F(x, \lambda )=0$ and $\abs{\lambda}\leq \varepsilon$}\}$$
is compact in $\R^l\oplus X_{\infty}$ and contained in $\ov{B}_{\varepsilon}\oplus U$. Moreover,  at every solution $(\lambda , x)\in \ov{S}_{\varepsilon}$, the section $F$ has the following properties.
\begin{itemize}
\item[(i)] the linearization $F'(\lambda, x)$ is surjective,
\item[(ii)] the kernel of $F'(\lambda,x)$ is transversal to the subspace
$ T_{(\lambda, x)}^\partial (\R^l\oplus X)$ of  the tangent space $T_{(\lambda , x)}({\mathbb
R}^l\oplus X)$,
\item[(iii)]  for every subset $\sigma \subset \{1,\ldots , d(x)\}$, where $d(x)$ is the order of the degeneracy of the point $x$, the lineraization  $F'(\lambda, x)$ restricted to the tangent space ${T_{(\lambda ,x)}(\R^l\oplus \bigcap_{j\in \sigma} \mathcal F}^j)$ is surjective and the kernel of this restriction is transversal to the subspace
$T^{\partial}_{(\lambda, x)}(\R^l\oplus \bigcap_{j\in \sigma} {\mathcal F}^j)$ in the tangent space $T_{(\lambda, x)}(\R^l\oplus \bigcap_{j\in \sigma} {\mathcal F}^j)$.
\end{itemize}
\end{lem}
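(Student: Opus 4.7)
The plan is a three-step construction: a pointwise construction at each zero of $f+t$, a finite-cover argument propagating the conditions to open neighborhoods, and a compactness check. The compact solution set $\mathcal M^{f+t}$ lies in $X_\infty$ by Theorem \ref{infty-p}, and on it the degeneracy index $d$ is bounded by upper semicontinuity.

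Fix a smooth $x_0 \in \mathcal M^{f+t}$ with local faces $\mathcal F^1,\ldots,\mathcal F^{d(x_0)}$ through $x_0$. By Proposition \ref{prop5.9} the linearization $(f+t)'(x_0) : T_{x_0}X \to Y_{x_0}$ is $\ssc$-Fredholm. For each of the finitely many subsets $\sigma \subset \{1,\ldots,d(x_0)\}$, the restriction of $(f+t)'(x_0)$ to $T_{x_0}(\bigcap_{j\in\sigma}\mathcal F^j)$ has finite-dimensional cokernel, and the transversality of the kernel of that restriction to $T^\partial_{x_0}(\bigcap_{j\in\sigma}\mathcal F^j)$ amounts to the surjectivity of a further finite-dimensional quotient map; each relevant cokernel class is represented by a smooth vector in $Y_{x_0}$. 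Invoking Lemma \ref{lem7.9.4}, I obtain finitely many $\ssc^+$-sections $s^{x_0}_1,\ldots,s^{x_0}_{l(x_0)}$ supported in $U$ whose values at $x_0$ span all these finite-dimensional defects simultaneously. The augmented section $F^{x_0}(\lambda,x) = (f+t)(x) + \sum_j \lambda_j s^{x_0}_j(x)$ then satisfies (i)--(iii) at $(0,x_0)$ by construction: the added $\lambda$-directions make the linearization surjective on every $\R^{l(x_0)}\oplus T_{x_0}(\bigcap_{j\in\sigma}\mathcal F^j)$ and provide enough free directions to ensure transversality of the kernel to the corresponding $T^\partial$.

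To extend these open conditions to neighborhoods, I use Theorem \ref{prop5.21} to conclude that $F^{x_0}$ is a Fredholm section of the fillable pullback bundle over $\R^{l(x_0)}\oplus X$, and Theorem \ref{LOCAX1} to obtain a good parametrization of $\{F^{x_0}=0\}$ near $(0,x_0)$. Along this parametrization the linearization varies continuously and the degeneracy index is upper semicontinuous, so surjectivity and the required transversalities persist on an open neighborhood $V(x_0)$ of $(0,x_0)$. Since $\{0\}\oplus\mathcal M^{f+t}$ is compact, finitely many $V(x_0^1),\ldots,V(x_0^k)$ cover it. I take $s_1,\ldots,s_l$ to be the concatenated list of all $s^{x_0^i}_j$, padding with zero $\lambda$-coordinates so that each still appears as an independent coordinate direction of the full $F$; adding extra $\ssc^+$-sections only enlarges each relevant image, so conditions (i)--(iii) remain valid on each $V(x_0^i)$.

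For the compactness of $\overline S_\varepsilon$, I apply Theorem \ref{thm6.12} to the pullback bundle over $\R^l\oplus X$ equipped with an auxiliary norm extending $N$: for $\varepsilon>0$ sufficiently small the perturbation $t+\sum_j\lambda_j s_j$ stays in $\mathcal O$ for all $|\lambda|\leq \varepsilon$, so any solution of $F(\lambda,x)=0$ with $|\lambda|\leq\varepsilon$ must have $x\in U$ (otherwise $s_j(x)=0$ forces $x\in \mathcal M^{f+t}\subset U$) and lies in the compact set furnished by the theorem. A contradiction argument---any sequence in $\overline S_{\varepsilon_n}$ with $\varepsilon_n\to 0$ has a subsequential limit in $\{0\}\oplus \mathcal M^{f+t}$, hence eventually inside $\bigcup_i V(x_0^i)$---lets me shrink $\varepsilon$ so that $\overline S_\varepsilon\subset \bigcup_i V(x_0^i)$, at which point (i)--(iii) hold throughout $\overline S_\varepsilon$. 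Compactness of $\overline S_\varepsilon$ in $\R^l\oplus X_\infty$ then follows from Theorem \ref{infty-p} applied to $F$. The main obstacle is the combinatorial bookkeeping needed to arrange transversality on every face-intersection at once, whose number grows with the corner complexity; propagating these pointwise open conditions to a full neighborhood relies essentially on the boundary good parametrization (Theorem \ref{LOCAX1}) rather than on the interior version.
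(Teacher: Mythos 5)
Your overall skeleton (pointwise construction at each zero via Lemma \ref{lem7.9.4}, finite cover of $\{0\}\oplus{\mathcal M}^{f+t}$, padding with extra $\lambda$-coordinates, and the contradiction/compactness argument via Theorem \ref{thm6.12} and an $\infty$-properness argument) matches the paper's step (c) and its final compactness discussion. But the central step --- propagating (i)--(iii) from the point $(0,x_0)$ to a neighborhood --- has a genuine gap: you justify persistence by saying that ``along this parametrization the linearization varies continuously \dots so surjectivity and the required transversalities persist.'' In this framework that is exactly what is \emph{not} available: for sc-smooth Fredholm sections the linearizations do not depend continuously (in the operator sense) on the base point, a point the paper stresses explicitly (see the discussion of determinant bundles in Appendix \ref{det}), so ``surjectivity is an open condition'' cannot be invoked. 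The paper's proof replaces this by structure, not continuity: since the kernel $N$ of $F'(0,x_0)$ is transversal to $T^\partial_{(0,x_0)}$, it is neat, so one may choose the good complement $N^\perp$ inside $\R^m\oplus\{0\}^d\oplus\R^{n-d}\oplus W$; Theorem \ref{LOCAX1} (built on Proposition \ref{prop5.4.14}) then gives surjectivity of $F'$ at all nearby solutions \emph{as part of the good parametrization}, and the kernels at nearby solutions are the graphs $\{\delta n+DA(n)\delta n\}$ with $DA(n)$ valued in $N^\perp$. From this one checks by elementary linear algebra (claims (a) and (b) in the paper) that the nearby kernels are transversal even to the smallest corner subspace $\R^m\oplus\{0\}^d\oplus\R^{n-d}\oplus W$, which yields (ii) and all of the face-restricted statements (iii) simultaneously. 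Your proposal supplies none of this; it asserts persistence and leaves the transversality of the restricted kernels to an appeal to upper semicontinuity of $d$, which by itself says nothing about how the kernel moves.

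A second, related point: the ``combinatorial bookkeeping'' you describe --- arranging, already at $x_0$, surjectivity of the restriction of the linearization to every face intersection and transversality of each restricted kernel --- is not needed, and as written it is also not carried out (you do not explain how the added sections produce the restricted transversalities, only that there are ``enough free directions''). In the paper, condition (iii) is \emph{derived}, not arranged: once the full linearization at $x_0$ is surjective and its kernel is transversal to $\R^m\oplus T^\partial_{x_0}X$ (achieved by choosing $s_j(x_0)=-(f+t)'(x_0)h_j$ for vectors $h_j$ spanning a complement of $T^\partial_{x_0}X$), the face-restricted surjectivity and transversality at all nearby solutions follow from the decomposition $v=(u+DA(n)u)+(u'-DA(n)u)$ with $u'-DA(n)u$ lying in $\R^m\oplus\{0\}^d\oplus\R^{n-d}\oplus W$, which is contained in every face tangent space. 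So the fix for your argument is not more sections but the good-parametrization linear algebra; without it the neighborhood step does not go through.
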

Note that $T^\partial_{(\lambda ,x)}({\mathbb R}^l\oplus X)={\mathbb R}^l\oplus T_x^{\partial}
X$ and $T_{(\lambda ,x)}(\R^l\oplus \bigcap_{j\in \sigma}{ \mathcal F}^j)=\R^l\oplus T_x\bigcap_{j\in \sigma}{\mathcal F}^j$ where $\sigma\subset \{1, \ldots ,d(x)\}$. The
codimension of $T^\partial_{(\lambda ,x)}({\mathbb R}^l\oplus X)$ in the tangent space
$T_{(\lambda,x)}({\mathbb R}^l\oplus X)$  is
independent of $l$ and is equal to $d(x)$.

\begin{proof}[Proof of Lemma \ref{lemth5.19}] 
Choose a solution $x_0$ of $(f+t)(x_0)=0$.   The linearization $(f+t)'(x_0)$ is sc-Fredholm
by  Proposition \ref{prop5.9}.  Hence,  in view of Lemma \ref{lem7.9.4}, we find finitely many sections
$s_1, \ldots ,s_k\in\Gamma^+(p)$ satisfying $N(s_j(x))<1$ for all $x\in X$ and  having  their supports in $U$ so that the vectors $s_1(x_0), \ldots ,s_k(x_0)$ span the smooth  sc-complement of the range of  the linearization $(f+t)'(x_0)$. Taking additional
sections $s_{k+1}, \ldots ,s_m$ in $\Gamma^+(p)$ having their  supports in $U$,  we  arrange that also  the kernel of the linear map
$$
(\lambda, h)\mapsto (f+t)'(x_0)\cdot h +\sum_{j=1}^m \lambda_js_j(x_0)
$$
is transversal to ${\mathbb R}^m\oplus T_{x_0}^\partial X$ in the tangent space $\R^m\oplus T_{x_0}X$.
This is done as follows. We first observe that the kernel of the above linear map is equal to $\{0\}\oplus \ker (f+t)'(x_0)$ together with the set of points $(\lambda ,h)$ solving the equation $(f+t)'(x_0)h=-\sum_{j=k+1}^m\lambda_j s_j (x_0)$. 
Since $T^{\partial }_{x_0}X$ has finite codimension in the tangent space $T_{x_0}X$,  there 
are finitely many smooth vectors $h_{k+1},\ldots ,h_{m}$ such that 
$\text{span}\{h_{k+1},\ldots ,h_{m}\}\oplus T^{\partial }_{x_0}X=T_{x_0}X$, using Lemma 2.12 in \cite{HWZ2}. Consequently, choosing the additional sections  $s_j$ for $k+1\leq j\leq m$ such that $s_j (x_0)=-(f+t)'(x_0)h_j$, the transversality of the kernel follows. Moreover, multiplying the vectors $h_j$ by suitable small constants we may achieve, in view of
 Lemma \ref{lem7.9.4},  that $N(s_j (x))<1$ for all $x\in X$ and  $k+1\leq j\leq m$. 

Viewing  $Y$ as a  fillable strong $M$-polyfold bundle over $\R^m\oplus X$, the map  $F:\R^m\oplus X^1\to Y^1$, defined by
\begin{equation}\label{fredholmeq}
F(\lambda, x)=(f+t)(x)+\sum_{k=1}^m\lambda_j s_j (x)
\end{equation}
is a Fredholm section of the bundle $Y^1\to \R^m\oplus X^1$
having the property that $F(0, x_0)=0$, that  the linearization $F'(0, x_0)$ is surjective,  and  that the kernel of $F'(0, x_0)$ is transversal to $\R^m\oplus T^{\partial}_{x_0}X$ in $T_{(0, x_0)}(\R^m\oplus X)$.  Note that after
adding $r$  more sections the kernel of the new linearized operator is automatically transversal to $T_{(0,x_0)}^{\partial }({\mathbb R}^{m+r}\oplus X)$ in  the tangent space $T_{(0, x_0)}(\R^{m+r}\oplus X)$, moreover, the linearized operator is still surjective.\\

{\bf  Claim (a)}\,  There is  an open neighborhood $U(0, x_0)$ of the point  $(0, x_0)$ in $\R^m\oplus X$ such that at every solution $(\lambda, x)$  of $F(\lambda , x)=0$ contained  in $U(0, x_0)$ , the linearization $F'(\lambda, x)$ is surjective and the kernel of $F'(\lambda, x)$ is transversal to $T^{\partial }_{(\lambda , x)}(\R^m\oplus X)$ in  $T_{(\lambda , x)}(\R^m\oplus X)$.

In order to  prove this  claim we work in our familiar  local coordinates and assume that  the section $F$ is already filled
and has the normal form
$$F:\R^m\oplus  [0,\infty)^d\oplus {\mathbb R}^{n-d}\oplus W \rightarrow {\mathbb R}^N\oplus W.$$
 The integer $d$ is equal to the order $d=d(x_0)$ of $x_0$.  In these coordinates
the point $x_0$ corresponds to the point $0\in [0,\infty)^d\oplus {\mathbb R}^{n-d}\oplus W$ so that
$(0, x_0)$ corresponds to $(0, 0)\in \R^m\oplus  [0,\infty)^d\oplus {\mathbb R}^{n-d}\oplus W$.
In order to  simplify the notation  we abbreviate, in abuse of  the notation,  $X= [0,\infty)^d\oplus {\mathbb R}^{n-d}\oplus W$.  Then
\begin{align*}
T_{(0, 0)}^{\partial }(\R^m \oplus X)=\R^m\oplus T^{\partial }_0X=\R^m\oplus \{0\}^d\oplus \R^{n-d}\oplus W
\end{align*}
and
$$
T_{(0, 0)}(\R^m \oplus X)=\R^m\oplus \R^d\oplus \R^{n-d}\oplus W.
$$
By construction,  the kernel $N$ of the linearization $F'(0, 0)$ is transversal to  the tangent space
$T_{(0, 0)}^{\partial }(\R^m \oplus X)$. Therefore, there is an sc-complement $N^{\perp}$ of $N$ satisfying
$N^{\perp}\subset \R^m\oplus \{0\}^d\oplus \R^{n-d}\oplus W$.
In particular, the kernel $N$ is neat with respect to the partial cone $C=\R^m\oplus [0,\infty)^d\oplus {\mathbb R}^{n-d}\oplus W$ according to Definition \ref{defn5.6.1}.
In view of  Theorem \ref{LOCAX1}, there is a good parametrization of the solution set $\{F=0\}$ near  the solution $(0, 0)$. Accordingly there exist  an open neighborhood $Q$ of $(0, 0)$ in $N\cap C$ and an sc-smooth map
\begin{equation}\label{eqa}
A:Q\to N^{\perp}
\end{equation}
satisfying $A(0, 0)=0$ and $DA(0, 0)=0$ so that  mapping  $\Gamma (n)=n+A(n)$ parametrizes all the solutions of $F=0$  near  $(0, 0)$.  Moreover,  the linearization $F'(\Gamma (n))$ is surjective for all $n\in Q$.
If $\Gamma (n)=(\lambda , x)$ is such a solution, we write $x=(x_1, \ldots ,x_d, x')\in [0,\infty )^d\oplus (\R^{n-d}\oplus W)$.
Denote by  $\Sigma$ the subset of  indices $i\in \{0, \ldots ,d\}$ for which $x_i=0$ and introduce the subspace
$\R^{\Sigma}\subset \R^d$ consisting of points $z\in \R^d$ for which $z_i=0$ if $i\in \Sigma$. Then   $T^{\partial}_{(\lambda, x)}(\R^m\oplus X)=\R^m\oplus \R^{\Sigma}\oplus \R^{n-d}\oplus W$ and we have to show that the kernel $N'$ of the linearization $F'(\lambda, x)$ is transversal to $\R^m\oplus \R^{\Sigma}\oplus \R^{n-d}\oplus W$. By the properties of the good parametrization $\Gamma$ in Definition  \ref{defn5.6.7},  the kernel $N'$ has the representation  $N'=\{\delta n+DA(n)\delta n\vert \, \delta n\in N\}$.
If  $v\in \R^m\oplus \R^d\oplus \R^{n-d}\oplus W$, then  $v=u+u'$ for some  $u\in N$ and some
$u'\in \R^{m}\oplus \{0\}^d\oplus \R^{n-d}\oplus W$  because the kernel $N$ and the subspace  $\R^m\oplus \{0\}^d\oplus \R^q\oplus W$ are transversal. Now observe that
$$v=(u+DA(n)u)+(u'-DA(n)u)$$
where   $u+DA(n)u\in N'$. Denoting by
$p:\R^m\oplus \R^d\oplus \R^q\oplus W\to \R^d$ the natural  projection, we have
$p(u'-DA(n)u)=0$  because $p(u')=0$ and
$DA(n)u\in N^{\perp}\subset R^m\oplus  \{0\}^d\oplus \R^q\oplus W$ so that
$p(DA(n)u)=0$. In particular, the kernel $N'$ is transversal to
$T^{\partial}_{(\lambda, x)}(\R^m\oplus X)$ as claimed in (a). \\

{\bf  Claim (b)}\,  For every solution $(\lambda, x)\in U(0, x_0)$
 and every subset $\sigma$ of $\{1,\ldots ,d(x)\}$,
 the linearization $F'(\lambda, x)$ restricted to
 $T_{(\lambda, x)}(\R^m\oplus \bigcap_{j\in \sigma}{\mathcal F}^j)$
 is surjective and the kernel of of this map is transversal to
 $T^{\partial}_{(\lambda ,x)}(\R^m\oplus  \bigcap_{j\in \sigma}{\mathcal F}^j)$ in
$T_{(\lambda ,x)}(\R^m\oplus  \bigcap_{j\in \sigma}{\mathcal F}^j)$.

In order to prove the claim (b) we observe that the point $(\lambda, x)$
belongs to $d(x)$- many faces $\R^m\oplus {\mathcal F}^j$.
 The faces $\R^m\oplus {\mathcal F}^j$ are in the coordinates represented  as follows. The set $\Sigma=\{i \in \{1, \ldots , d\}\vert \, x_i=0\}$ has exactly $d(x)$ elements. For $j\in \Sigma$,  the face ${\mathcal F}^j$ is the set
$$
{\mathcal F}^j=[0,\infty )^{(j)}\oplus \R^{n-d}\oplus W
$$
where $[0,\infty )^{(j)}=\{(z=(z_1, \ldots, z_d)\in [0,\infty )^d\ \vert \, z_j=0\}$. The tangent space to $\R^m\oplus {\mathcal F}^j$ at the solution  $(\lambda, x)$ is equal to
$$
T_{(\lambda, x)}(\R^m\oplus {\mathcal F}^j)=\R^m\oplus \R^{(j)}\oplus \R^{n-d}\oplus W
$$
where  the subspace $\R^{(j)}$  consists of those points $z\in \R^d$ whose $j$th coordinate is equal to $0$. Moreover, $T^{\partial}_{(\lambda ,x)}(\R^m\oplus {\mathcal F}^j)$ is equal to the subspace
$$
T^{\partial }_{(\lambda ,x)}(\R^m\oplus {\mathcal F}^j)=\R^m\oplus \R^{\Sigma}\oplus
\R^{n-d}\oplus W.
$$
We shall prove that the linearization of the map $F\vert{\R^m\oplus \mathcal F}^j:\R^m\oplus {\mathcal F}^j\to \R^N\oplus W$ is surjective at the solution $(\lambda, x)$ of $F( \lambda, x)=0$.  In order to do so, we choose  $y\in \R^N\oplus W$.
We already know that the linearization $F'(\lambda, x):\R^{m}\oplus \R^d\oplus \R^{n-d}\oplus W\to \R^N\oplus W$ is surjective and so there exists  $(\delta \lambda,\delta  u)\in \R^{m}\oplus (\R^d\oplus \R^{n-d}\oplus W) \to \R^N\oplus W$  solving $F'(\lambda, x)(\delta \lambda, \delta u)=y$.  We also know that the kernel $N'$ of $F'(\lambda, x)$ is transversal to
$\R^m\oplus \{0\}^d\oplus \R^{n-d}\oplus W$ in $\R^m\oplus \R^d\oplus \R^{n-d}\oplus W$.
Therefore, we have the representation
$(\delta \lambda, \delta u)=(\delta\lambda_1, \delta u_1)+(\delta\lambda_2, \delta u_2)$ where $(\delta\lambda_1, \delta u_1)\in N'$ and $(\delta\lambda_2, \delta u_2)\in \R^m\oplus \{0\}^d\oplus \R^{n-d}\oplus W$. Consequently, $(\delta\lambda_2, \delta u_2)$ belongs to $T_{(\lambda, x)}(\R^m\oplus {\mathcal F}^j)$ and satisfies $F'(\lambda, x)(\delta \lambda_2, \delta x_2)=y$ as we wanted to prove.

Next we shall show that the kernel of the linearization $F'(\lambda, x)\vert T_{(\lambda , x)}(\R^m\oplus {\mathcal F}^j)$ is transversal to the subspace  $T^{\partial }_{(\lambda ,x)}(\R^m\oplus {\mathcal F}^j)$ in $T_{(\lambda ,x)}(\R^m\oplus {\mathcal F}^j)$.
To do so we choose $(\delta \lambda ,\delta u)\in T_{(\lambda ,x)}(\R^m\oplus {\mathcal F}^j)=\R^m\oplus \R^{(j)}\oplus \R^{n-d}\oplus W$ and have the representation
$(\delta \lambda ,\delta u)=(\delta \lambda_1, \delta u_1)+(\delta \lambda_2, \delta u_2)$ where $(\delta \lambda_1, \delta u_1)\in N'$ and $(\delta \lambda_2, \delta u_2)\in \R^m\oplus \{0\}^d\oplus \R^{n-d}\oplus W\subset T^{\partial }_{(\lambda, x)}(\R^m\oplus {\mathcal F}^j)$. Moreover, denoting by
$$p:\R^m\oplus \R^d\oplus \R^{n-d}\oplus W\to \R^d$$
the projection, we conclude that
$p(\delta \lambda_1, \delta u_1)=p(\delta \lambda, \delta u)=a\in \R^d$ where the $j$th coordinate of $a$ is equal to $0$. Hence $(\delta \lambda_1, \delta u_1)\in T_{(\lambda, x)}(\R^m\oplus {\mathcal F}^j)$ as we wanted to show.  The same way one verifies the more general claim in (b).\\

 {\bf (c)}\, To finish the proof of Lemma \ref{lemth5.19}  we  carry out the above construction near every point $(0, x)$ for $x\in \mcal M^{f+t}$. Since the solution set $\mcal M^{f+t}$ is compact, we can select finitely many neighborhoods $U(0, x_j)$, $1\leq j\leq m$,  covering $\{0\}\oplus \mcal M^{f+t}$.
Now taking as a perturbation  the sum of all  the finitely many  sections  constructed in these neighborhoods,  we obtain the section
$$
F(\lambda,x)= (f+t)(x)+\sum_{j=1}^l \lambda_js_j(x)
$$
 of the bundle $Y^1\rightarrow {\mathbb R}^l\oplus X^1$ which is a Fredholm section in view of Theorem \ref{prop5.21}.  We claim that there  exists a sufficiently small $\varepsilon>0$  such that 
 $$
 \ov{S}_{\varepsilon}=\{(\lambda, x)\in \R^l\oplus X^1\vert
 \text{$ F( \lambda, x )=0$ and $\abs{\lambda}\leq \varepsilon$}\}\subset \bigcup_{j=1}^n U(0, x_j).
 $$

 Indeed, otherwise there exists   a sequence $(\lambda^k, x^k)\in \R^l\oplus X^1$ satisfying 
 $F(\lambda^k, x^k)=0$ and $\abs{\lambda^k}\leq \frac{1}{k}$ but 
 $(\lambda^k , x^k)\not \in \bigcup_{j=1}^n U(0, x)$. Since the supports of $t$ and $s_j$ are contained in $U$, we conclude that $x^k\in U$ for all $k$.  From $f(x^k)=-t(x^k)-\sum_{j=1}^l\lambda^n_js_j (x^k)$ we find  $N(f(x^k))\leq 1$  for $k$ large and since the set $U$ has  the property (ii) of Theorem \ref{thm6.12}, we may assume that the sequence $(x^k)$ converges to the point $x\in \ov{U}$. Because $(f+t)(x)=0$,  we have 
 $(0, x)\in \{0\}\oplus M^{f+t}\subset \bigcup_{j=1}^m U(0, x_j)$. 
 This contradicts our assumption $(\lambda^k, x^k)\not \in \bigcup_{j=1}^m U(0, x_j)$ for all $k$. Consequently, the section $F$ has the properties (i) and (ii) of Lemma \ref{lemth5.19}  if $\varepsilon>0$ is sufficiently small.

It remains to show that the solution set  
 $ \ov{S}_{\varepsilon}$ is compact in $\R^l\oplus U$ if $\varepsilon >0$ is sufficiently small.  To see this we  take the solution $(\lambda, x)\in \ov{S}_{\varepsilon}$, then $f(x)=-t(x)-\sum_{j=1}^l\lambda_js_j (x)$. Since $t\in \frac{1}{2}{\mathcal O}$ and $N(s_j (x))\leq 1$ for $1\leq j\leq l$ and all $x\in X$, we conclude 
 from $\abs{\lambda}\leq \varepsilon$ that $N(f(x))\leq 1$ if  $\varepsilon$ is sufficiently small.  In addition, since ${\mathcal M}^f\subset U$ and since the supports of $t$ and $s_j$ are contained in $U$,  it follows that $x\in U$.  Hence if $(\lambda^k, x^k)\in \ov{S}_{\varepsilon}$,  then $x^k\in U$  so that by Theorem \ref{thm6.12},  the sequence $(x^k)$ possesses a convergent subsequence. Since $\abs{\lambda^k}\leq \varepsilon$, we conclude that the sequence $(\lambda^k, x^k)$ has a  subsequence converging  to a  solution $(\lambda , x )$ of $F(\lambda, x )=0$. The solution belongs to $\ov{B}_{\varepsilon}\oplus U$.  Indeed, if $x\in  X\setminus U$, then $t(x)+\sum_{j=1}^l\lambda_j s_j (x)=0$ since the supports of $t$ and $s_j$ are contained in $U$. Hence $f(x)=0$ which implies $x\in f^{-1}(0)\subset U$, contradicting the assumption $x\in X\setminus U$. Consequently, the solution set $\ov{S}_{\varepsilon}$ is compact in $\R^l\oplus X$ and $\ov{S}_{\varepsilon}\subset  \ov{B}_{\varepsilon}\oplus U$. The compactness in $\R^l\oplus X_{\infty}$ follows arguing as in Theorem \ref{infty-p}. The proof of Lemma \ref{lemth5.19} is complete.
 \end{proof}

Abbreviate by $S_\varepsilon$ the set  consisting of  those points $(\lambda , x)$ in $\ov{S}_\varepsilon$ for which  $\abs{\lambda}<\varepsilon$.
\begin{lem}\label{lemthm5.19.2}
The  Fredholm section $F$ of  the fillable
strong bundle $Y^1\rightarrow B_\varepsilon \oplus X^1$ introduced in  Lemma \ref{lemth5.19} is in good
position to the corner structure of $B_\varepsilon\oplus X$.
Consequently, $F^{-1}(0)$ is a smooth manifold with boundary with
corners.
\end{lem}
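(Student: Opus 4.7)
The plan is to unwind the conditions delivered by Lemma \ref{lemth5.19} and then quote the global manifold statement from the previous subsection. First I would check that $F$ is in \emph{general position} to the boundary of $B_\varepsilon\oplus X$ in the sense of Definition \ref{generalpos}. At every solution $(\lambda,x)\in S_\varepsilon$ of $F(\lambda,x)=0$, item (i) of Lemma \ref{lemth5.19} gives surjectivity of $F'(\lambda,x)$ and item (ii) gives transversality of $\ker F'(\lambda,x)$ to $T^\partial_{(\lambda,x)}(\R^l\oplus X)$. These are precisely the two requirements of Definition \ref{generalpos}, so $F$ is in general position.

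Next I would invoke Lemma \ref{nl5.17}: since $F$ is in general position, it is automatically in good position to the corner structure, which is the first assertion of the lemma. For the reader's convenience one could note how the mechanism works in local coordinates exactly as in the proof of Lemma \ref{nl5.17}, namely that the transversality supplied by (ii) produces an sc-complement of $\ker F'(\lambda,x)$ lying inside the local partial quadrant, making the kernel neat and therefore, by Proposition \ref{neat}, in good position.

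To conclude that $F^{-1}(0)$ is a smooth manifold with boundary with corners, I would apply Theorem \ref{manwbc} (equivalently Theorem \ref{thm6.13}) to $F$ viewed as a Fredholm section of the fillable strong M-polyfold bundle $Y^1\to B_\varepsilon\oplus X^1$. The only hypothesis of that theorem beyond good position is properness of $F$ on the relevant domain, and this is handled by restricting to the open set $B_\varepsilon\oplus X^1$: by the last paragraph of the proof of Lemma \ref{lemth5.19}, $\ov{S}_\varepsilon$ is compact in $\R^l\oplus X$, so $F^{-1}(0)\cap (B_\varepsilon\oplus X^1)\subset S_\varepsilon\subset \ov S_\varepsilon$ has compact closure inside the ambient open set, which suffices since the manifold structure is a local property at each solution point.

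There is no real obstacle here; the content of Lemma \ref{lemthm5.19.2} is essentially a bookkeeping combination of Lemma \ref{lemth5.19}, Lemma \ref{nl5.17}, and Theorem \ref{manwbc}. The only small subtlety is to make explicit that ``good position'' in Definition \ref{defn5.6.6} is checked \emph{pointwise} at each zero, so that properties (i)--(ii) from Lemma \ref{lemth5.19} applied at every $(\lambda,x)\in S_\varepsilon$ suffice; property (iii) of Lemma \ref{lemth5.19} is not needed for this particular lemma but will be used subsequently to control how $F^{-1}(0)$ meets the faces of $B_\varepsilon\oplus X$.
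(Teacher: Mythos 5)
Your proposal is correct and follows essentially the same route as the paper: Lemma \ref{lemth5.19} (i)--(ii) give general position at every zero, hence neatness of the kernel and good position (the paper repeats the argument of Lemma \ref{nl5.17} rather than citing it), and the manifold structure on $F^{-1}(0)$ then follows by the local good-parametrization argument as in Theorem \ref{thm6.13}. Your aside about ``compact closure inside the ambient open set'' is not quite accurate (limit points with $\abs{\lambda}=\varepsilon$ may escape $B_\varepsilon\oplus X$) but also not needed, since, as you correctly note and as the paper does, the manifold-with-boundary-with-corners structure is a purely local matter at each solution and no compactness is asserted in this lemma.
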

\begin{proof}
By Lemma \ref{lemth5.19}, the  linearization $F'(\lambda , x)$ is surjective and its  kernel is transversal to
$T^{\partial }_{\lambda, x}(\R^l\oplus X)$ in $T_{(\lambda, x)}( \R^l\oplus X)$ at every point $(\lambda ,x)\in S_{\varepsilon}$. Arguing as in the proof of Lemma \ref{lemth5.19}, the kernel of
$F'(\lambda,x)$ is neat in  $T_{(\lambda,x)}({\mathbb R}^l\oplus X)$ at every point $(\lambda,x)\in S_\varepsilon$.  This implies that the kernel of
$F'(\lambda,x)$ is in good position  with respect to the corner structure of $\R^l\oplus X$ at $(\lambda, x)$. Consequently,  arguing as in the proof Theorem \ref{thm6.13},  the solution set
$S_\varepsilon=F^{-1}(0)$
carries a structure of  a smooth manifold with boundary with corners.
\end{proof}

Continuing with the proof of Theorem \ref{poil} we know from  the proof of 
Lemma \ref{lemth5.19} that the solution set $S_{\varepsilon}$ satisfies 
 $$
 S_{\varepsilon}=\{(\lambda, x)\in \R^l\oplus X^1\vert
 \text{$ F(x, \lambda )=0$ and $\abs{\lambda}< \varepsilon$}\}\subset
  \bigcup_{j=1}^n U(0, x_j)
$$
where $F$ is the  Fredholm section introduced in Lemma  \ref{lemth5.19}.

Every  point $(0, x_j)$, belongs to  $d(x_j)$-many  faces $\R^l\oplus {\mathcal F}^k_{x_j}$ for $1\leq k\leq d(x_j)$. If  $1\leq j\leq n$ and if  $\sigma \subset \{1, \ldots ,d(x_j)\}$,  we  define
$$S^{j, \sigma}_{\varepsilon}:=S_{\varepsilon}\cap U(0, x_j)\cap \bigl(\R^l\oplus \bigcap_{k\in \sigma}{\mathcal F}^k_{ x_j}\bigr).$$
In view of Lemma \ref {lemth5.19} and arguing as in Theorem \ref{thm6.13}, the set, $S^{j, \sigma}_{\varepsilon}$ is a manifold having a  boundary with corners.  We introduce the projections
$\beta:S_{\varepsilon}\to \R^l$ and $\beta_{j, \sigma}:S^{j, \sigma}_{\varepsilon}\to \R^l$
as the compositions
\begin{align*}
&\beta:S_{\varepsilon}\xrightarrow{j}\R^l\oplus X\xrightarrow{\pi}\R^l\\
&\beta_{j,\sigma}:S^{j,\sigma}_{\varepsilon}\xrightarrow{j}\R^l\oplus X\xrightarrow{\pi}\R^l
\end{align*}
where $j$ is the injection mapping and $\pi$ is the projection $\pi (\lambda ,x)=\lambda$.

In view of  Sard's theorem, the set of regular values of projections $\beta$ and $\beta_{j, \sigma}$  has full measure in $B_{\varepsilon}$.  Denote by $\lambda^*$  a common regular value  of  all  these  projections and introduce the $\ssc^+$-section
$$s^*=t+\sum_{j=1}^l \lambda_j^*s_j
$$
so that
$$(f+s^*)(x)=F(\lambda^*, x).$$
We claim that at every solution $x$ of $(f+s^*)(x)=0$, the linearization $(f+s^*)'(x)$ is surjective and that the kernel of $(f+s^*)'(x)$ is transversal to $T^{\partial }_xX$ in $T_xX$.

In order to prove the claim we take  a solution $x$ of $(f+s^*)(x)=0$ and first prove that $(f+s^*)'(x)$ is surjective. We have $F(\lambda^*, x)=0$ so that $(\lambda^*, x)\in S_{\varepsilon}$. By Lemma \ref{lemth5.19}, the linearization $F'(\lambda^*, x)$ is surjective. Hence given $y\in Y^1$, there exists $(\delta \lambda, \delta x)\in T_{(\lambda, x)}(\R^l\oplus X)$ solving $F'(\lambda^*, x)(\delta \lambda , \delta x)=y$.  The point $\lambda^*$ is a   regular value of the projection $\beta:S_{\varepsilon}\to \R^l$ and the tangent space to $S_{\varepsilon}$ at $(\lambda^*, x)$ coincides with the kernel $N$ of $F'(\lambda^*, x)$. Hence there exists $(\delta \lambda_1, \delta x_1)\in N$ solving the equation $d\beta(\lambda^*, x)(\delta \lambda_1, \delta x_1)=\delta \lambda$ and satisfying
$$F'(\lambda^*, x)(\delta \lambda_1, \delta x_1)=0.$$
From $d\beta(\lambda^*, x)(\delta \lambda_1, \delta x_1)=\delta \lambda_1$  we conclude
$\delta \lambda_1=\delta \lambda$  so that
$$F'(\lambda^*, x)(\delta \lambda, \delta x_1)=0.$$  Consequently,
\begin{equation*}
\begin{split}
y&=F'(\lambda^*, x)(0, \delta x-\delta x_1)\\
&=D_2F(\lambda^*, x)(\delta x-\delta x_1)\\
&=(f+s^*)' (\delta x-\delta x_1)
\end{split}
\end{equation*}
showing that  the linearization $(f+s^*)'(x)$ is indeed surjective.

Next we show that the kernel of the linearization $(f+s^*)'(x)$ is transversal to $T_x^{\partial }X$ at every solution $x$ of $(f+s^*)(x)=0$.  Since $F(\lambda^*, x)=0$ and $F^{-1}(0)\subset \bigcup_{1\leq j\leq n}U(0, x_j)$, the point $(\lambda^*, x)$ belongs to some open neighborhood $U(0, x_j)$.  To prove the transversality we work in  local coordinates as in the proof of Lemma \ref{lemth5.19}. We assume that $F$ is already
filled and is  the  form
$$F:\R^l\oplus [0,\infty )^d\oplus \R^{n-d}\oplus W\to \R^N\oplus W.$$
The integer $d$ is equal to the order $d=d(x_j)$ of $x_j$. In these coordinates the point $x_j$ corresponds to the point $0\in [0, \infty )^d\oplus \R^{n-d}\oplus W$ so that $(\lambda^*, x_j)$ corresponds to $(\lambda^*, 0)\in \R^l\oplus [0,\infty )^d\oplus \R^{n-d}\oplus W$.
The point  corresponding to $(\lambda^*, x)$ we shall  denote  again by $(\lambda^*, x)$. Its degree $d(x)$ satisfies $d (x)\leq d$.  We use the  notation $X=[0,\infty )^d\oplus \R^{n-d}\oplus W$.  If  $\Sigma=\{i\in \{1, \ldots , d\}\vert \, x_i=0\}$, the tangent space $T_{(\lambda^*, x)}(\R^l\oplus \bigcap_{j\in \Sigma}{\mathcal F}^j))$ is equal to
$$T_{(\lambda^*, x)}(\R^l\oplus \bigcap_{j\in \Sigma}{\mathcal F}^j)=\R^l\oplus \R^{\Sigma}\oplus \R^{n-d}\oplus W.$$
Here ${\mathcal F}^j$  for $j\in \Sigma$ are the faces containing the point $x$.
By Lemma \ref{lemth5.19}, the kernel $N$ of the linearization $F'(\lambda^*, x)$ is transversal to $T^{\partial }_{(\lambda^*, x)}(\R^l\oplus X)$. Hence,
given  $\delta x\in T_xX$, there exists  $(\delta \lambda_1, \delta x_1)$ belonging to the kernel $N$ of $F'(\lambda^*, x)$ and $(\delta \lambda_2, \delta x_2)$ in $T^{\partial }_{(\lambda^*, x)}(\R^l\oplus X)$ such that
$$(\delta \lambda_1, \delta x_1)+(\delta \lambda_2, \delta x_2)=(0, \delta x).$$
The point  $\lambda^*$ is a  regular value of the projection
$\beta_{j, \Sigma}:S^{j, \Sigma}_{\varepsilon}\to \R^l$. Since $d\beta_{j, \Sigma} (\lambda^*, x)$ is defined on
$N\cap T_{(\lambda^*, x)}(\R^l\oplus \bigcap_{j\in \Sigma}{\mathcal F}^j)=
N\cap (\R^l\oplus \R^{\Sigma}\oplus \R^{n-d}\oplus W)$ we find $(\delta \lambda ', \delta x')$ belonging to $N\cap (\R^l\oplus \R^{\Sigma}\oplus \R^{n-d}\oplus W)$ such that
$d\beta_{j, \Sigma}(\lambda^*, x)(\delta \lambda', \delta x')=\delta \lambda_1$. Using $d\beta_{j, \Sigma} (\lambda^*, x)(\delta \lambda', \delta x')=\delta \lambda_1$, we obtain
$$F'(\lambda^*, x)(0, \delta x_1-\delta x')=D_2F(\lambda^*, x)(\delta x_1-\delta x')=0$$
and
$$(\delta x_1-\delta x')+(\delta x_2+\delta x')=\delta x.$$
Since $\delta x_2+\delta x'$ belongs to $\R^l\oplus \R^{\Sigma}\oplus \R^{n-d}\oplus W$,
the kernel of the linearization $(f+s^*)'(x)$ is indeed transversal to $T_x^{\partial }X$ in $T_xX$ as we wanted to show. 

In order to finish  the proof of Theorem \ref{poil} we observe that so far we have proved that the Fredholm section $f+s^*$ is in general position to the boundary $\partial X$ according to Definition \ref{generalpos}.  Consequently,  in view of 
Theorem \ref{manwbc}, the solution set ${\mathcal
M}^{f+s^\ast}$ is a compact manifold with boundary with corners.
This completes the proof of Theorem \ref{poil}.
\end{proof}

The following result is proved along the lines of the previous result. In contrast to Theorem \ref{poil} we impose conditions on the Fredholm sections at those solutions  which are located at the boundary $\partial X$, while the perturbation has its support away from the boundary.

\begin{thm}\label{newgposition}
Let $p:Y\rightarrow X$ be a fillable strong M-polyfold bundle
having a nonempty boundary $\partial X$ and let  $f$ be a proper Fredholm section of $p$. We assume that the sc-structure on $X$ is built on separable sc-Hilbert spaces and assume that $U$ is an open neighborhood of the solution set ${\mathcal M}^f=f^{-1}(0).$ Moreover, let $N$ be an auxiliary norm guaranteed by Theorem \ref{thm6.12}. If at every point 
$x\in\partial X$ solving  $f(x)=0$ the linearization of $f$ is surjective and the kernel of the linearization  is in good position to the corner structure of $\partial X$, then there exists an arbitrarily small  $\ssc^+$-section
$s$ which has its support in $U$ and  which vanishes near $\partial X$ so that the Fredholm section 
$f+s$ is in good position as defined in Definition \ref{gposition}. In particular,  the solution set ${\mathcal M}^{f+s}=(f+s)^{-1}(0)$ is a smooth compact manifold with boundary with corners.
\end{thm}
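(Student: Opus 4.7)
The plan is to adapt the perturbation argument of Theorem \ref{poil} with a single modification: all perturbations are required to vanish in an open neighborhood of $\partial X$. The driving observation is that the good-position property is stable under small perturbations in both the base point and the section. Indeed, surjectivity of an sc-Fredholm operator is an open condition, and if a finite-dimensional subspace $N \subset E$ is in good position to a partial quadrant $C$ with sc-complement $N^\perp$ and constant $c$, then any nearby subspace of the same dimension is in good position with the same complement $N^\perp$ and a slightly smaller constant. Combined with continuity of linearizations, this implies that the hypothesis on $f$ at solutions on $\partial X$ extends to an open neighborhood $\Omega$ of ${\mathcal M}^f \cap \partial X$ in $X$: at every solution $x \in \Omega$ of $f(x)=0$, the section $f$ is in good position.

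Next, using compactness of ${\mathcal M}^f$ and normality of $X$, choose open sets $V_1$ and $V_2$ with $\partial X \subset V_1 \subset \overline{V_1} \subset V_2$ and $V_2 \cap {\mathcal M}^f \subset \Omega$. Since local models of $X$ are built on separable sc-Hilbert spaces, sc-smooth partitions of unity are available (Section 4.3 of \cite{HWZ7}), so one can construct an sc-smooth cutoff function $\beta : X \to [0,1]$ with $\beta \equiv 0$ on $V_1$ and $\beta \equiv 1$ outside $V_2$. Multiplication by the sc-smooth function $\beta$ preserves $\ssc^+$-regularity, so any section of the form $\beta \cdot s'$ with $s' \in \Gamma^+(p)$ lies in $\Gamma^+(p)$ and vanishes in an open neighborhood of $\partial X$.

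Now perform the transversality construction of Lemma \ref{lemth5.19} restricted to the compact set ${\mathcal M}^f \setminus V_1 \subset X \setminus \partial X$: for each $x_0$ there, use Lemma \ref{lem7.9.4} to obtain $\ssc^+$-sections $s_j$ supported in $U$ whose values at $x_0$ span an sc-complement of the range of $f'(x_0)$ (here $\beta(x_0)=1$). By compactness finitely many constructions yield sections $s_1,\ldots,s_l$, and the sc-smooth family
\begin{equation*}
F(\lambda,x) \;=\; f(x) \;+\; \sum_{j=1}^l \lambda_j\,\beta(x)\,s_j(x)
\end{equation*}
is a Fredholm section of $Y^1 \to \R^l \oplus X^1$ by Theorem \ref{prop5.21}. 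As in the proofs of Theorems \ref{thmsec6-trans} and \ref{poil}, Theorem \ref{thm6.12} confines the solution set of $F(\lambda,\cdot)=0$ to a compact subset of $\R^l \oplus U$ for small $|\lambda|$, and the Sard theorem, applied to the projections from the solution manifold of $F$ (and from its various boundary strata) to $\R^l$, yields a common regular value $\lambda^*$ of arbitrarily small norm.

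Setting $s = \sum_j \lambda_j^* \beta s_j$, one obtains an $\ssc^+$-section of arbitrarily small size, supported in $U$, and vanishing on $V_1 \supset \partial X$. Good position of $f+s$ is checked by partitioning its solution set into three regions. At solutions in $V_1$, $s$ vanishes locally so $(f+s)'(x)=f'(x)$ and the hypothesis applies via $V_1 \cap {\mathcal M}^{f+s} = V_1 \cap {\mathcal M}^f \subset \Omega$. At solutions in $X \setminus V_2$, one has $\beta \equiv 1$ locally, so $(f+s)'(x) = D_2 F(\lambda^*, x)$ is surjective by the Sard argument, and there is no corner structure to verify since $x$ lies in the interior of $X$. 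The main obstacle is the transition region $V_2 \setminus V_1$: for $|\lambda^*|$ sufficiently small, any solution $x$ there lies close to ${\mathcal M}^f \cap V_2 \subset \Omega$ (by Theorem \ref{thm6.12} and continuity in $\lambda$), so $f$ is in good position at a nearby base point and the stability observation from the first paragraph transfers good position to $f+s$ at $x$ itself. Once good position holds at all solutions, Theorem \ref{manwbc} supplies the smooth compact manifold structure with boundary with corners on ${\mathcal M}^{f+s}$.
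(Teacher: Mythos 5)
Your overall architecture (a cutoff $\beta$ vanishing near $\partial X$, the cokernel-spanning construction of Lemma \ref{lem7.9.4} over the part of ${\mathcal M}^f$ away from the boundary, Sard applied to the projections of the solution set of the parametrized section $F$ and of its boundary strata, and a region-by-region verification) is the same as the paper's. The gap is in the mechanism you use for the near-boundary and transition regions. You justify the extension of the good-position hypothesis from ${\mathcal M}^f\cap\partial X$ to an open neighborhood $\Omega$, and later the transfer of good position from $f$ at a nearby zero to $f+s$ at a zero $x\in V_2\setminus V_1$, by ``openness of surjectivity'' combined with ``continuity of linearizations.'' In this framework that tool does not exist: for a polyfold Fredholm section the map $x\mapsto f'(x)$ is in general \emph{not} continuous in the operator topology (the paper stresses exactly this point in the discussion of determinant bundles in Appendix \ref{det}), so neither the surjectivity of $f'(x)$ nor the position of $\ker f'(x)$ at nearby zeros can be deduced from the data at a single point by a perturbation argument. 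Moreover, your stability remark about subspaces in good position is formulated for a fixed partial quadrant $C$, whereas at nearby solutions the relevant partial tangent quadrant changes with the degeneracy index $d(x)$; handling this change is precisely the content of the face-by-face analysis (the $\R^{\Sigma}$, $\R^{(j)}$ computations) in the proof of Lemma \ref{lemth5.19}, which your one-line stability claim does not replace.

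The correct route — and the one the paper takes — is to use the boundary hypothesis to invoke Theorem \ref{LOCAX1}: at each zero $x_0\in{\mathcal M}^f\cap\partial X$ the section (after filling and passing to the contraction-germ normal form) admits a good parametrization of its zero set, and it is this parametrization, via the arguments of Lemma \ref{lem5.4.1.2} and of claims (a) and (b) in the proof of Lemma \ref{lemth5.19}, that yields surjectivity of the linearization and transversality of the kernels to the corner strata at \emph{all} zeros of $f$ (equivalently of $F(0,\cdot)$) in a neighborhood of $x_0$, including the interior zeros lying in the region where your cutoff forces the perturbation to vanish. With that in hand, the zeros of $f+s^*$ in the transition region are treated not by a smallness-of-$s$ stability argument but by the regular-value argument for the family $F$ exactly as in the proof of Theorem \ref{poil}: $(\lambda^*,x)$ lies in $S_\varepsilon$, where $F'(\lambda^*,x)$ is already known to be surjective with controlled kernel, and the choice of $\lambda^*$ as a common regular value of $\beta$ and the $\beta_{j,\sigma}$ then gives surjectivity of $(f+s^*)'(x)$ and the required transversality. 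So your statement of the plan is sound, but as written the two key steps rest on a continuity property the sc-setting does not provide; replacing them by the good-parametrization machinery of Theorem \ref{LOCAX1} and Lemma \ref{lemth5.19} is not optional here — it is the reason the boundary hypothesis is formulated the way it is.
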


The proof goes as follows. By the assumption on the solutions of $f(x)=0$ at the boundary  $\partial X$, 
the solution set of $f=0$ admits a good parametrization near the boundary points. Then one finds enough $\ssc^+$-sections which vanish near the boundary and have their supports  in $U$ so that  the Fredholm section $F(\lambda,x)=f(x)+\sum_{i=1}^k \lambda_i s_i(x)$
has,  at every solution $(0,x)$ of $F(0,x)=0$ with  $x\in U\setminus \partial X$,  a linearization which is surjective.  From the assumption of the theorem it follows that the linearization $F'(0, x)$ is automatically surjective also at the boundary points $x\in \partial X$ satisfying 
$f(x)=0$ and hence  $F(0, x)=0$, in addition, its kernel is again, by assumption, in good position to the corner structure of  $\R^k\oplus X$. Therefore, as in the proof of 
Theorem \ref{poil},  one  finds a suitable $\varepsilon$ so that the solutions space $S_\varepsilon=\{(\lambda,x)\vert \, F(\lambda,x)=0, \abs{\lambda}<\varepsilon\}$ is a smooth manifold with
boundary with corners. A small regular value $\lambda^\ast$ of the projection
$$
(\lambda,x)\mapsto  \lambda
$$
defines the  section $s^\ast=\sum_{i=1}^k \lambda_i^\ast s_i$  for which  $f+s^\ast$ has the desired properties. This completes the proof of Theorem \ref{newgposition}.

\subsection{Some Invariants for Fredholm Sections}\label{someinvariants}
The following discussion  extends  some standard material from the
classical nonlinear Fredholm theory to the M-polyfold context.
We  introduce the notion of an $\ssc$-differential
$k$-form starting with comments about our  notation.
For us the tangent bundle of $X$ is $TX\rightarrow X^1$,  that is, it is  only defined for the base points in $X^1$. An   {\bf sc-vector field} on $X$  is an  sc-smooth section $A$ of  the tangent bundle $TX\rightarrow X^1$ and hence it is  defined on $X^1$. Similarly, an  sc-differential form  on $X$ which we will define next,  is  only defined over the base points in $X^1$. The definition of a vector field and the following  definition of an sc-differential form
are justified since the construction of $TX$, though only defined over $X^1$, requires the knowledge of $X$.  If $X$ is a M-polyfold, then  $\oplus_k TX_\infty$ denotes  the $k$-fold Whitney sum of $k$ copies of the tangent space $TX$ .  

\begin{defn}
An {\bf $\boldsymbol{\ssc}$-differential ${\boldsymbol{k}}$-form on the $M$-polyfold $X$}  is
an sc-smooth map $\omega:\oplus_k TX \to \R$ which is
linear in each argument separately, and skew symmetric. \end{defn}

If $\omega$ is an  sc-differential form on $X$,  we may also  view it  as an  sc-differential form on $X^i$.  Denote by $\Omega^\ast(X^i)$ the graded commutative algebra of sc-differential forms on $X^i$. Then we have the inclusion map
$$
\Omega^\ast(X^i)\rightarrow\Omega^\ast(X^{i+1}).
$$
which  is injective since $X_{i+1}$ is dense in $X_i$ and the forms are sc-smooth. Hence we have a directed system whose  direct limit is denoted by 
$\Omega^\ast_\infty(X)$. An element $\omega$ of  degree $k$ in $\Omega^\ast_\infty(X)$ is a skew-symmetric map $\oplus_k(TX)_\infty\rightarrow {\mathbb R}$ such that it has an  sc-smooth extension to an  sc-smooth k-form
$\oplus_kTX^i\rightarrow {\mathbb R}$ for some $i$. We shall refer to an element of $\Omega^k_\infty(X)$ as an  sc-smooth differential form on $X_\infty$. We note, however, that it is part of the structure that the $k$-form is defined and sc-smooth on some $X^i$.

Next we associate with an sc-differential $k$-form $\omega$ its exterior differential
$d\omega$ which is a $(k+1)$-form on the M-polyfold  $X$. 
Let $A_0, \ldots , A_k$ be $k+1$ many sc-smooth vector fields on $X$. We define
$d\omega$ on $X^1$, using the familiar  formula,   by
\begin{equation*}
\begin{split}
d\omega (A_0, \ldots ,A_k)&=\sum_{i=0}^k(-1)^iD(\omega (A_0, \ldots , \wh{A}_i, \ldots , A_k))\cdot A_i\\
&+\sum_{i<j}(-1)^{(i+j)}\omega ([A_i, A_j], A_0, \ldots , \wh{A}_i,\ldots ,\wh{A}_j, \ldots ,A_k).
\end{split}
\end{equation*}

The right-hand side of the formula above only makes sense at the base points $x\in X_2$. This  explains why  $d\omega$ is a $(k+1)$-form on $X^1$.
By the previous discussion the differential $d$ defines a map
$$
d:\Omega^k(X^i)\rightarrow \Omega^{k+1}(X^{i+1})
$$
and consequently induces a map
$$
d:\Omega^\ast_\infty(X)\rightarrow \Omega^{\ast+1}_\infty(X)
$$
having the usual property $d^2=0$. Then $(\Omega^\ast_\infty(X),d)$ is a  graded differential algebra which we shall call the de Rham complex.

If $\varphi:M\to X$ is an sc-smooth map from a finite-dimensional manifold $M$ into an M-polyfold $X$, then it induces an  algebra homomorphism
 $$
 \varphi^\ast:\Omega^\ast_\infty(X)\rightarrow \Omega^\ast(M)_\infty
 $$
 satisfying
$$d (\varphi^*\omega )=\varphi^*d\omega.$$

Since $d^2=0$, we can define as usual   the deRham cohomology groups
$$H^\ast_{dR}(X,{\mathbb R})=\oplus_{i=0}^{\infty}H^i(X,{\mathbb
R}).$$
 A differential form $\omega$ is in the following a finite formal sum of
forms of possibly different degrees $\omega=\omega_0+\ldots +\omega_n$.

Next we consider a fillable strong M-polyfold bundle
 $p:Y\rightarrow X$ in which the local models of $X$ are built on separable sc-Hilbert spaces. We assume that $\partial X=\emptyset$. Let $f:X\to Y$ be  a  proper 
 Fredholm section  of the bundle $p$. Let  $U$ be the open neighborhood of the solution set $f^{-1}(0)$ and ${\mathcal O}\subset \Gamma^+(p)$ the space of sections as in
Theorem \ref{thmsec6-trans}. In view of this theorem, there exist small
 sections $s\in {\mathcal O}$ having their supports in $U$  and having the property that $f+s$ are 
 proper Fredholm sections of the bundle $p^1:Y^1\to X^1$. In addition, the linearizations of $f+s$ at the solution set
${\mathcal M}^{f+s}=\{x\in X\vert \, (f+s)(x)=0\}$ are surjective. 
\begin{defn}
A perturbation $s\in {\mathcal O}$  is called {\bf generic}, if the Fredholm section $f+s$ is in general position, i.e. the linearization is surjective at every
zero of $f + s$  and, in addition, the kernel of the lineraization is in
good position to the corner structure at the zeros belonging to the
boundary.
\end{defn}
Finally, we assume that the given section $f$ is  orientable and let
 $\mathfrak{o}$ be  an orientation of $f$ as defined in  Appendix \ref{det}. Then  a map
$$
\Phi_{(f,\mathfrak{o})}:H^\ast_{dR}(X,{\mathbb R})\rightarrow
{\mathbb R}
$$
can be defined as  follows. We take  a generic $s\in {\mathcal O}$. Then the associated solution set ${\mathcal M}^{f+s}$ is contained in $X_{\infty}$ and, in view of Theorem \ref{transthm}, is a compact smooth manifold without boundary which, in addition, is oriented since  $(f,\mathfrak{o})$ is an oriented Fredholm section. Hence we can
integrate a differential  form $\omega=\omega_0+\omega_1+\cdots +\omega_n$ over
${\mathcal M}^{f+s}$ where we put the integral  equal to zero on each component of
${\mathcal M}^{f+s}$ if the degrees don't match the local dimensions. This way one obtains the real number
$$
\int_{{\mathcal M}^{f+s}} \omega:=\sum_{i=1}^n \int_{{\mathcal M}^{f+s}}j^*\omega_i
$$
 with the sc-smooth inclusion  mapping $j:{\mathcal M}^{f+s}\to X_{\infty}$.
 
If $s'\in {\mathcal O}$ is a second generic perturbations of $f$, we
find  a generic sc-smooth homotopy  $s_t\in \Gamma^+(p)$  connecting
$s_0=s$ with $s_1=s'$ so that $F(t, x)=f(x)+s_t(x)$ is a 
proper  Fredholm section of the bundle
$Y^1\to [0,1]\times X^1$ transversal to the zero section  ${\mathcal
M}^F=\{(t, x)\in [0,1]\times X^1\vert \, F(t, x)=0\}$.  

This can be seen as follows.  The two $\ssc^+$-sections $s_i$  for $i=0$ and $i=1$ have their  supports comtained in $U$ and  $f+s_i$ are proper Fredholm sections of the bundle $p^1:Y^1\to X^1$. In addition,  $N(s_i) <1$ and the linearizations  $(f+s_i)'(x)$ are surjective  at every $x$ belonging to the solution set ${\mathcal M}^{f+s_i}$.   For every $t\in [0,1]$, the section $(1-t)s_0+ts_1$ is of class  $\ssc^+$ having  its support in $U$ and 
satisfying  $N((1-t)s_0+ts_1)<1$. Consider  the  Fredholm section $\bar{f}(t, x)=f(x)+(1-t)s_0(x)+ts_1(x)$ of the bundle $Y^1\to [0,1]\times X^1$.  Clearly, the section $\bar{f}$ is proper since  $f$ is proper and $[0,1]$ is compact.  It vanishes at the points $(t, x)\in \{i\}\times {\mathcal M}^{f+s_i}$ for $i=0$ and $i=1$. At these points  linearization of $\bar{f}$ is surjective  because the linearizations of $(f+s_i)$ are surjective at points $x$ belonging to ${\mathcal M}^{f+s_i}$ for $i=0$ and $i=1$.  Moreover, at every such point the kernel of the linearization is good position to the corner structure of $\partial ([0,1]\times X)$.
Then one finds a finite number of $\ssc^+$-sections $\bar{s_1}, \ldots \bar{s}_k$  which vanish  near the boundary $\partial ([0,1]\times X)$ and  have their supports in $[0,1]\times U$so that the Fredholm section 
$$F(\lambda , t, x)=\bar{f}(t, x)+\sum_{i=1}^k\lambda_i \bar{s}_i(t, x)$$
 has at every zero $(0, t, x)$ of $F(0, t, x)=0$  with $t\in (0, 1)$ a surjective  linearization. At  every zero of the form $(0, 0, x)$  or $(0, 1, x)$ of $F(0,t, x)=0$, the linearization is also surjective since  the linearization $(f+s_i)'(x)$ is surjective for $i=0$ and $i=1$.
 In addition, at every solution $(0, 0, x)$ or $(0, 1, x)$, the kernel of the linearization of $F$ is in good position the corner structure of $\R^k\oplus [0,1]\oplus X$.  Therefore, we find a suitable $\varepsilon$ so that the solution space $S_\varepsilon=\{(\lambda,t, x)\vert \, F(\lambda,t, x)=0, \abs{\lambda}<\varepsilon\}$ is a smooth manifold with
boundary with corners. A small regular value $\lambda^\ast$ of the projection
$$
(\lambda, t,x)\mapsto  \lambda
$$
defines the  section $s (t, x)=(1-t)s_0(x)+ts_1(x)+\sum_{i=1}^k \lambda_i^\ast \bar{s}_i(t,x)$ having   the desired properties.

The manifold 
${\mathcal M}^F$ is a compact smooth manifold whose boundary is
given by $\partial {\mathcal M}^F= {\mathcal M}^{f+s'}\cup
(-{\mathcal M}^{f+s})$. With the projection $\pi:[0,1]\times
X\rightarrow X$ we obtain for the closed form $\omega$ by Stokes'
theorem
$$
0=\int_{{\mathcal M}^F} d(\pi^\ast\omega ) =\int_{{\mathcal M}^{f+s'}}\omega
-\int_{{\mathcal M}^{f+s}}\omega.
$$
One concludes that
the number $\int_{{\mathcal M}^{f+s}} \omega$  does not  depend on the choice of a
generic  $s$. Therefore,  we can define $\Phi_{(f,\mathfrak{o})}$ by
$$
\Phi_{(f,\mathfrak{o})}([\omega]) =\int_{{\mathcal M}^{f+s}}\omega,
$$
where $s$ is any  generic small perturbation of the 
proper  Fredholm section $f$.

 Consider two
oriented proper Fredholm sections $(f_0,
\mathfrak{o}_0)$ and $(f_1,\mathfrak{o}_1)$ of the above M-polyfold
bundle $p:Y\to X$, and a proper homotopy $f_t$ of Fredholm
sections connecting $f_0$ with $f_1$ such that the map $(x,
t)\mapsto f(t, x)=f_t(x)$ is a proper  Fredholm section of the bundle
$Y\to [0,1]\times X$. The homotopy is called {\bf oriented} if
there exists an orientation $\ov{ \mathfrak{o}}$ inducing the given
orientations $( -\mathfrak{o}_0)\cup \mathfrak{o}_1$ at the ends.
Then an argument as above  shows that
$$
\Phi_{(f_0,\mathfrak{o}_0)}=\Phi_{(f_1,\mathfrak{o}_1)}.
$$
In order to relate this to the usual mapping degree of $f$ we assume that $(f,  \mathfrak{o})$ is an oriented proper Fredholm section whose Fredholm index is equal to $0$. If $s\in {\mathcal O}$ is a generic perturbation, then  $f+s$ is also a proper  Fredholm section whose Fredholm index is equal to $0$  (by Theorem \ref{prop5.21}) and which, in addition, is transversal to the zero section.
Hence, if $x\in {\mathcal M}^{f+s}$  then the linearization $(f+s)'(x)$ is surjective  and injective  and the local analysis of the zero set of a Fredholm map (Theorem \ref{LOCAX1} and Theorem \ref{newtheoremA})  shows that the solution $x$ is isolated. In view of the compactness, the zero set ${\mathcal M}^{f+s}=\{x_1, \ldots ,x_k\}$ consists of finitely many points. Hence we can define the degree of $f$ by
$$
\text{deg}(f,\mathfrak{o})=\Phi_{(f,\mathfrak{o})}([1]).
$$
where $[1]$ is the cohomology of the constant function equal to $1$
and where the integration of $[1]$ over ${\mathcal M}^{f+s}$ is the
signed sum over the finitely many points in ${\mathcal M}^{f+s}$. A
point $x_i\in {\mathcal M}^{f+s}$ counts as $+1$ if its orientation
$ \mathfrak{o}$ agrees with the natural orientation of the
isomorphism $(f+s)'(x_i)$ as defined in Appendix \ref{det}, and $-1$
otherwise. 

The degree is an invariant of oriented
proper  Fredholm sections under oriented proper 
homotopies as the above discussion shows.

\section{Appendix}\label{appendix-x}
We shall first study  subspaces which are in good position  to a partial quadrant according to  Definition \ref{defn5.6.1}.

\subsection{Two Results on Subspaces in  Good Position}
We consider  the sc-Banach space $E=\R^n\oplus W$ containing the partial quadrant
$C=[0,\infty)^n\oplus W$. Our aim is to prove the following  proposition.

\begin{prop}\label{pointt}
If the finite dimensional linear subspace $N$ of the sc-Banach space $E$ is in good
position to the partial quadrant $C$ in $E$, then $C\cap N$ is a partial quadrant in $N$.
\end{prop}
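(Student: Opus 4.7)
The plan is to reduce the statement to a question about linear coordinate projections and then exploit the good position hypothesis via a scaling argument. By the definition of a partial quadrant I may assume, after an sc-isomorphism, that $E=\mathbb{R}^{n}\oplus W$ and $C=[0,\infty)^{n}\oplus W$. Writing $\pi:E\to\mathbb{R}^{n}$ for the canonical projection and $\ell_{i}:N\to\mathbb{R}$ for the coordinate functionals $\ell_{i}(n):=\pi(n)_{i}$, the set
$$
K\;:=\;N\cap C\;=\;\{n\in N:\ell_{1}(n)\geq 0,\ldots,\ell_{n}(n)\geq 0\}
$$
is a convex polyhedral cone in the finite-dimensional space $N$.

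The first substantive step is to extract structure from the good position hypothesis. Let
$$
I^{*}\;:=\;\{i\in\{1,\ldots,n\}\,:\,\ell_{i}(n)=0\text{ for some }n\in K\setminus\{0\}\}
$$
be the set of indices whose defining inequality is active somewhere on $K\setminus\{0\}$. I claim $\pi(m)_{i}=0$ for every $m\in N^{\perp}$ and every $i\in I^{*}$. To see this, fix $i\in I^{*}$, pick $n\in K\setminus\{0\}$ with $\ell_{i}(n)=0$, and let $m\in N^{\perp}$ be arbitrary. Rescaling $n$ to $tn$ with $t>0$ large enough that $\|m\|_{E}\leq c\|tn\|_{E}$, the good position axiom applied to $(tn,\pm m)$ puts both $tn+m$ and $tn-m$ into $C$, so the $i$-th coordinate gives $\pi(m)_{i}\geq 0$ and $-\pi(m)_{i}\geq 0$, hence $\pi(m)_{i}=0$. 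Since $\mathbb{R}^{n}=\pi(E)=\pi(N)+\pi(N^{\perp})$, the linear map $L:N\to\mathbb{R}^{I^{*}}$ defined by $L(n):=(\ell_{i}(n))_{i\in I^{*}}$ is then surjective, and clearly $K\subseteq\widetilde{K}:=L^{-1}([0,\infty)^{I^{*}})$.

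The step I expect to be the main obstacle is the reverse inclusion $\widetilde{K}\subseteq K$. In the main case $I^{*}\neq\emptyset$, I would run a ray argument based at an interior point $n_{0}\in\operatorname{int}_{N}(K)$ (supplied by good position, and satisfying $\ell_{j}(n_{0})>0$ for every $j$). Given $n\in\widetilde{K}$, consider the ray $n_{0}+tn$, $t\geq 0$. Because $\ell_{i}(n)\geq 0$ for $i\in I^{*}$, one has $\ell_{i}(n_{0}+tn)\geq\ell_{i}(n_{0})>0$ along the entire ray, so if the ray were to leave $K$ at some finite first time $t^{*}>0$, any responsible coordinate $j$ (one with $\ell_{j}(n_{0}+t^{*}n)=0$) would satisfy $j\notin I^{*}$. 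But $n_{0}+t^{*}n$ would then be a nonzero element of $K$ on which $\ell_{j}$ vanishes, forcing $j\in I^{*}$ by definition --- contradiction. (Non-vanishing of $n_{0}+t^{*}n$: otherwise $n=-n_{0}/t^{*}$ would yield $\ell_{i}(n)=-\ell_{i}(n_{0})/t^{*}<0$ for every $i\in I^{*}$, contradicting $n\in\widetilde{K}$.) Hence the ray stays in $K$ for all $t\geq 0$, and since $K$ is a closed cone, $n=\lim_{t\to\infty}(n_{0}+tn)/t\in K$. The degenerate case $I^{*}=\emptyset$ is handled separately by a short connectedness argument on $N\setminus\{0\}$, which forces either $K=N$ or $\dim N=1$ with $K$ a half-line --- both already partial quadrants.

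With $K=\widetilde{K}=L^{-1}([0,\infty)^{I^{*}})$ and $L$ surjective linear, the conclusion is immediate: choose any linear complement $N'$ of $\ker L$ in $N$ so that $L|_{N'}:N'\to\mathbb{R}^{I^{*}}$ is an isomorphism. Then $N=\ker L\oplus N'$ and
$$
K\;=\;\ker L\;\oplus\;L|_{N'}^{-1}\bigl([0,\infty)^{I^{*}}\bigr)\;\cong\;\mathbb{R}^{\dim N-|I^{*}|}\;\oplus\;[0,\infty)^{|I^{*}|},
$$
which exhibits $K$ as a partial quadrant in $N$ in the standard normal form.
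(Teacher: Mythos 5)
Your proof is correct in substance, and it takes a genuinely different route from the paper's. Your index set $I^{*}$ is exactly the paper's $\Sigma$, and your scaling argument showing that every $m\in N^{\perp}$ has vanishing $I^{*}$-coordinates is precisely Lemma \ref{cone1}. From there the two arguments diverge: the paper first splits $N=\wt{N}\oplus (N\cap W)$ and shows that $\wt{N}$ is again in good position with good complement $(N\cap W)\oplus N^{\perp}$ (Lemma \ref{ntilde}), and then runs an extreme-ray analysis of the cone $C\cap\wt{N}$ --- the Krein--Milman type Lemma \ref{kreinmilman}, the count $\dim\wt{N}-1=\sharp\sigma_a$ for extreme generators (Lemma \ref{roxy}), and the dichotomy of Lemma \ref{ll1} --- to conclude that the coordinate projection onto $\R^{\Sigma}$ carries $(\wt{N},C\cap\wt{N})$ isomorphically onto $(\R^{\Sigma},[0,\infty)^{\Sigma})$. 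You bypass the splitting and the extreme-ray machinery altogether: surjectivity of $L$ plus the first-exit ray argument based at an interior point shows directly that the inactive constraints are redundant, i.e.\ $C\cap N=L^{-1}([0,\infty)^{I^{*}})$, after which the normal form is pure linear algebra. Your route is shorter and more elementary; the paper's yields extra structural information about the extreme rays of $C\cap N$ at the cost of the auxiliary lemmas.

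One assertion should be proved rather than asserted: that the interior point $n_{0}$ satisfies $\ell_{j}(n_{0})>0$ for every $j$ --- you use $\ell_{i}(n_{0})>0$ for $i\in I^{*}$ both to see that a responsible index cannot lie in $I^{*}$ and to see that $n_{0}+t^{*}n\neq 0$. Interiority of $n_{0}$ in $N\cap C$ only gives a dichotomy: for each $j$, either $\ell_{j}(n_{0})>0$ or $\ell_{j}$ vanishes identically on $N$. The second alternative must be excluded using good position: if $\ell_{j}\equiv 0$ on $N$, choose a nonzero $a\in N\cap C$ (available since $N\cap C$ has nonempty interior in $N$) and $m\in N^{\perp}$ with $\ell_{j}(m)<0$ (possible because $\ell_{j}\not\equiv 0$ on $E=N\oplus N^{\perp}$), rescaled so that $\norm{m}\leq c\norm{a}$; then $a\in C$ while $\ell_{j}(a+m)=\ell_{j}(m)<0$, so $a+m\notin C$, contradicting the good-position equivalence. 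This is the same scaling trick you already use for $N^{\perp}$, so the gap closes in two lines. The degenerate case $I^{*}=\emptyset$ is fine as sketched, since a nonzero relative boundary point of $N\cap C$ in $N$ would carry a vanishing coordinate and force $I^{*}\neq\emptyset$.
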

In higher dimensions one easily can construct a subspace $N$ for
which $C\cap N$ has a nonempty interior, but $C\cap N$ is not a
partial quadrant.

Next we prove Proposition \ref{neat} restated as Proposition \ref{ropp}.
\begin{prop}\label{ropp}
If $N\subset E$ is neat with respect to the partial quadrant $C$,
then $N$ is in good position to $C$ and  $C\cap N$ is a partial quadrant in $N$.
\end{prop}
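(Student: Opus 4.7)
The entire argument rests on a single observation: a linear subspace contained in a partial quadrant must lie in the ``lineality part'' of that quadrant. Concretely, since $N^\perp$ is a linear subspace of $E=\R^n\oplus W$ contained in $C=[0,\infty)^n\oplus W$, any $v\in N^\perp$ satisfies $v,-v\in N^\perp\subset C$; looking at the first $n$ coordinates forces both $v_i\geq 0$ and $-v_i\geq 0$, hence $v_i=0$ for $i=1,\ldots,n$. Therefore $N^\perp\subset \{0\}^n\oplus W$. This is the only nontrivial step; everything else is bookkeeping.

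Next I would use this to verify the good position inequality with essentially any constant. Let $\pi:\R^n\oplus W\to \R^n$ be the canonical projection. By the previous step $\pi|_{N^\perp}=0$. Since $\pi$ is surjective and $E=N\oplus N^\perp$, it follows that $\pi|_N:N\to \R^n$ is surjective. Now for any decomposition $x=n+m$ with $n\in N$ and $m\in N^\perp$, one has $\pi(x)=\pi(n)+\pi(m)=\pi(n)$. Because membership in $C$ is determined solely by non-negativity of the $\R^n$-coordinates, this gives
$$
x=n+m\in C\quad\Longleftrightarrow\quad \pi(n)\in[0,\infty)^n\quad\Longleftrightarrow\quad n\in C,
$$
with no restriction at all on the relative sizes of $\norm{m}_E$ and $\norm{n}_E$. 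In particular the good-position condition of Definition \ref{defn5.6.1} holds for every positive constant $c$, and $N^\perp$ is a good complement.

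Finally, to show that $C\cap N$ is a partial quadrant in $N$, I would set $N_0:=\ker(\pi|_N)=N\cap(\{0\}^n\oplus W)$ and pick any linear complement $N_1$ of $N_0$ inside the finite-dimensional space $N$. Then $\pi|_{N_1}:N_1\to \R^n$ is a linear isomorphism. Writing $n=n_0+n_1\in N_0\oplus N_1$, the equivalence $n\in C\Leftrightarrow \pi(n)\in[0,\infty)^n$ gives
$$
C\cap N \;=\; N_0\,\oplus\,(\pi|_{N_1})^{-1}\bigl([0,\infty)^n\bigr),
$$
which under the linear isomorphism $N\cong N_0\oplus\R^n$ induced by $\mathrm{id}_{N_0}\oplus \pi|_{N_1}$ corresponds to $N_0\oplus[0,\infty)^n$. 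This is a partial quadrant in $N$, and its interior (coming from the open orthant $(0,\infty)^n$ in the second summand) is nonempty. This supplies the remaining requirement of being in good position and completes the proof.
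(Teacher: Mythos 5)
Your proof is correct, and its first half coincides with the paper's: the paper likewise begins by noting that the linear subspace $N^\perp\subset C$ forces $N^\perp\subset\{0\}\oplus W$, and then verifies the good-position condition (with $c=1$) by a convexity argument — $n+m\in C$ iff $n=2\bigl[\tfrac{n+m}{2}+\tfrac{-m}{2}\bigr]\in C$, using that $C$ is a convex set with $\R^+\cdot C=C$ and $-m\in N^\perp\subset C$ — while the interior point of $N\cap C$ is produced as a point $(1,\ldots,1,e)\in N$ coming from the surjectivity of the projection onto $\R^n$ restricted to $N$. Your projection argument ($\pi$ annihilates $N^\perp$, and membership in $C$ depends only on $\pi$) reaches the same equivalence, in fact without any norm restriction, so the two verifications are essentially interchangeable. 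Where you genuinely diverge is the second conclusion: the paper simply quotes Proposition \ref{pointt} (good position $\Rightarrow$ $C\cap N$ is a partial quadrant), whose proof occupies the rest of the appendix and rests on the Krein--Milman-type Lemma \ref{kreinmilman} and an analysis of extreme rays, whereas you exhibit the partial-quadrant structure directly by splitting $N=N_0\oplus N_1$ with $N_0=\ker(\pi|_N)$ and transporting $[0,\infty)^n$ through the isomorphism $\pi|_{N_1}$. Your route is more elementary and self-contained and shows that the neat case needs none of the convex-cone machinery; the paper's route is shorter on the page because Proposition \ref{pointt} is needed elsewhere anyway and is already available.
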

\begin{proof}
By assumption, the subspace $N$ possesses  an $\ssc$-complement $N^{\perp}$ in $E$ which is  contained in $C$. This implies that $N^\perp=\{0\}\oplus Q$ for some sc-subspace  $Q$ in $W$.
 Since $E=N\oplus N^\perp$,   we find a point of the form $(1,1,1,..,1,e)\in N$, which implies that $N\cap C$ has a nonempty interior. 
Next we take  $c=1$  and assume that $(n,m)\in N\oplus N^{\perp}$ satisfies the estimate  $\norm{m}\leq \norm{n}$. Then
$n+m\in C$ if and only if $n=2[\frac{n+m}{2}+\frac{-m}{2}]\in C$,  because  $C$ is convex, $\R^+\cdot C=C$,  and $-m\in C$. Hence $N$ is in good position to the partial quadrant $C$ and,  by
Proposition \ref{pointt}, the subset $C\cap N$ is a partial quadrant in $N$. The proof of
Proposition \ref{ropp} is complete.
 \end{proof}

\subsection{Quadrants and Cones}
 A {\bf closed convex cone} $P$ in a finite-dimensional vector space
 $N$ is a closed convex
subset so that $P\cap (-P)=\{0\}$ and ${\mathbb R}^+\cdot P=P$.
 An
{\bf extreme ray} in a closed convex cone $P$ is a subset $R$ of the
form
$$
R={\mathbb R}^+x
$$
where  $x\in P\setminus \{0\}$ so that if   $y\in P$ and  $x-y\in P$, then $y\in R$. If the cone $P$ has a nonempty interior it generates the vector space  $N$, i.e.,  $N=P-P$.

The following  version of the Krein-Milman theorem
is well-known. See Exercise 30 on page 72 in cite{Schaefer}. 
\begin{lem}\label{kreinmilman}
A closed convex cone $P$ in a finite-dimensional vector space $N$ is
the  closed convex hull of its extreme rays.
\end{lem}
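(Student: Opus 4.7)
My plan is to prove Lemma \ref{kreinmilman} by slicing the cone $P$ with an affine hyperplane to reduce it to the classical finite-dimensional Krein--Milman (Minkowski) theorem about compact convex sets. First I would reduce to the case that $P$ has non-empty interior: if $P$ does not generate $N$, replace $N$ by the linear span $M = P - P$; extreme rays of $P$ inside $M$ coincide with extreme rays of $P$ inside $N$, so it suffices to work in $M$. So assume from now on that $P-P = N$.

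Next, I would construct a linear functional $\varphi\in N^{\ast}$ with $\varphi(x) > 0$ for every $x \in P\setminus\{0\}$. Because $P$ is a closed convex cone with $P\cap(-P)=\{0\}$, the dual cone
\[
P^{\ast} \;=\; \{\varphi \in N^{\ast} \,:\, \varphi(x)\geq 0 \ \text{for all $x\in P$}\}
\]
has non-empty interior (this is standard Minkowski duality in finite dimensions: $P^{\ast}$ has non-empty interior if and only if $P$ is pointed, and is pointed if and only if $P$ has non-empty interior). Pick $\varphi$ in the interior of $P^{\ast}$; then $\varphi$ is strictly positive on $P\setminus\{0\}$. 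Define
\[
K \;:=\; \{x\in P \,:\, \varphi(x)=1\}.
\]
Then $K$ is convex, and it is compact: closedness is clear, and boundedness follows because $\varphi$ is bounded below by a positive constant on the unit sphere of $P$ (using continuity and compactness of that sphere in finite dimensions). Moreover every ray in $P$ meets $K$ in exactly one point, so we have the bijection ${\mathbb R}^{+}\!\cdot K = P$.

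The third step is to identify extreme points of $K$ with extreme rays of $P$. If $x\in K$ is extreme in $K$, then ${\mathbb R}^{+}x$ is an extreme ray of $P$: suppose $y\in P$ and $x-y\in P$ with $y\neq 0$; writing $y = \lambda y'$ and $x-y = \mu z'$ with $y',z'\in K$ and $\lambda,\mu\geq 0$, the identity $x = \lambda y' + \mu z'$ together with $\varphi(x) = \lambda + \mu = 1$ and the extremality of $x$ in $K$ forces $y'=x$, so $y\in {\mathbb R}^{+}x$. The reverse correspondence is analogous. Now apply Minkowski's theorem to the compact convex set $K\subset N$: $K$ equals the convex hull of its extreme points. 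Scaling by ${\mathbb R}^{+}$ and using $P = {\mathbb R}^{+}\!\cdot K$, we conclude that $P$ is the convex hull (hence, by Carath\'eodory's theorem in finite dimensions, the closed convex hull) of its extreme rays.

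The main obstacle is the construction of the strictly positive functional $\varphi$ and the verification that $K$ is compact; everything else is a bookkeeping exercise matching extreme rays with extreme points of the slice. One could alternatively proceed by induction on $\dim N$, using supporting hyperplanes at boundary points of $P$ to reduce to lower-dimensional cones, but the hyperplane-slice argument above is cleaner because it invokes the classical Minkowski theorem as a black box.
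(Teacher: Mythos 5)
Your proof is correct and follows essentially the same route as the paper: slice the cone by an affine hyperplane to obtain a compact convex base, apply the classical finite-dimensional Krein--Milman (Minkowski) theorem to that base, and transfer its extreme points to extreme rays of $P$. The only real difference is that you justify the existence of the slicing hyperplane via a strictly positive functional taken from the interior of the dual cone, a point the paper's proof simply asserts by choosing a hyperplane $A$ with $A\cap P=\{0\}$ and then verifying compactness of $P\cap(a+A)$ directly.
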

\begin{proof}
Take a  hyperplane  $A$ in  $N$ such that $A\cap P=\{0\}$. Choose  a point  $a\in P\setminus \{0\}$ and define the affine subspace $A'=a+A$. Note that  for every $x\in P\setminus \{0\}$, there is $t>0$ such that $tx\in P\cap A'$.
Indeed,  if  $x\in P\setminus \{0\}$, then  $x=sa+y$ for some $s\in \R$ and $y\in A$.  If $s=0$, then $x=y\in P\cap A=\{0\}$ contradicting $x\neq 0$. . Hence $s\neq 0$. If $s<0$, then $x+(-s)a=y$ and since $x$ and $(-s)a\in P$, we conclude that $y\in P\cap A=\{0\}$.  So, $x=sa$ with $s<0$ contradicting that $x\in P\setminus \{0\}$.  The  set $P\cap A'$ is convex and closed. It is also bounded since if $x_n\in P\cap A'$ and $\norm{x_n}\to \infty$, then $x_n=a+y_n$ with $x_n\in A$ satisfying $\norm{y_n}\to \infty$. We may assume that $y_n/\norm{y_n}\to y\in A$. Then  $x_n/\norm{y_n}\in P$  and $x_n/\norm{y_n}\to y$. Hence $y\in P\cap A=\{0\}$ contradicting $\norm{y}=1$.  By the  Krein-Milman theorem applied to the set $K=P\cap A'$, the set  $K$ is equal to the closure of the convex hull of the extreme points of $K$.  Hence to prove the lemma it suffices to show that  an extreme point $x$ of $K$  generates an extreme ray $R=\R^+\cdot x$ of $P$.  To see this take $y\in P\setminus \{0\}$ such that  $x-y\in P$.  We have  to show that $y=t\cdot x$ for some $t>0$. If  $x=y$, then we are done.  Otherwise, $y=ta+y'$ with $t>0$ and $y\in A$,  and  $x-y=sa+y''$ with $s>0$ and $y''\in A$. Then $x=y+(x-y)=(t+s)a+(y'+y'')\in K$ and therefore  $s+t=1$. 
By assumption the point $x$ is an extreme point of $K$. Consequently, since $x$ can be written as 
$$x=t  \biggl(\frac{1}{t}y \biggr)+s \biggl(\frac{1}{s} (x-y)\biggr)$$
with the points 
$\frac{1}{t}y=a+\frac{1}{t}y'$ and $\frac{1}{s}(x-y)=a+\frac{1}{s}y''$ belonging  to $K$,  it 
follows that $x=\frac{1}{t}y=\frac{1}{s} (x-y)$.  Hence $y=t\cdot x$ as claimed. 
\end{proof}

We observe that a quadrant
in $N$ has precisely $\dim(N)$ many extreme rays.  A
closed convex cone $P$ is called {\bf finitely generated} provided $P$ has
finitely many extreme rays. In that case $P$ is the convex hull of
its finitely many extreme rays. For example,  if $C$ is a partial
quadrant in $E$ and $N\subset E$ is a finite-dimensional subspace of
$E$ so that $C\cap N$ is a closed convex cone, then $C\cap N$ is
finitely generated.

\begin{lem}\label{nomer}
Let $N$ be a finite-dimensional vector space and $P\subset N$ a
closed convex cone with nonempty interior. Then $P$ is a quadrant if
and only if it has $\dim(N)$-many extreme rays.
\end{lem}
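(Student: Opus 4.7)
The plan is to prove both directions by first using Lemma \ref{kreinmilman} to identify $P$ with the closed convex hull of finitely many rays, and then to argue that having exactly $n=\dim(N)$ such rays forces their generators to be a linear basis.

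For the easy direction, if $P$ is a quadrant, then by definition there is a linear isomorphism $T:N\to \R^n$ mapping $P$ onto $[0,\infty)^n$. The standard quadrant $[0,\infty)^n$ has exactly the $n$ extreme rays $\R^+ e_1,\ldots,\R^+ e_n$ (this is immediate from the coordinate description: any ray other than $\R^+ e_i$ contains a point whose coordinates can be split nontrivially into a sum of two linearly independent nonnegative vectors). Transporting back through $T$, the cone $P$ has exactly $n$ extreme rays.

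For the converse, assume $P$ has exactly $n$ extreme rays $R_i = \R^+ x_i$ for $i=1,\ldots,n$, where the $x_i \in P\setminus\{0\}$. By Lemma \ref{kreinmilman}, $P$ equals the closed convex hull of $R_1\cup\cdots\cup R_n$. Since each $R_i$ is itself closed under nonnegative scaling, this closed convex hull coincides with the closure of $K:=\{\sum_{i=1}^n t_i x_i \mid t_i\geq 0\}$. Because $P$ has nonempty interior, it cannot lie in any proper linear subspace of $N$; but $P=\overline{K}$ is contained in $\mathrm{span}(x_1,\ldots,x_n)$, so the $x_i$ must span $N$. Having exactly $n=\dim N$ spanning vectors forces them to be linearly independent, hence a basis of $N$. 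The linear isomorphism $T:N\to \R^n$ defined by $T(x_i)=e_i$ then carries $K$ bijectively onto $[0,\infty)^n$, which is already closed; consequently $T(P)=\overline{T(K)}=[0,\infty)^n$, so $P$ is a quadrant.

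The only step that requires a little care is the passage from ``$P$ has nonempty interior'' to ``$\{x_1,\ldots,x_n\}$ spans $N$'', since in principle the closure of $K$ could be strictly larger than $K$. The subtlety is dispatched by the standard observation that $\overline{K}\subset \mathrm{span}(x_1,\ldots,x_n)$ because the span is itself closed in finite dimensions; once the $x_i$ are known to be a basis, closedness of $[0,\infty)^n$ under the isomorphism $T$ automatically gives $\overline{K}=K$, so no additional argument is needed to identify $P$ with the image $T^{-1}([0,\infty)^n)$. With the extreme-ray count and Krein-Milman in hand, the rest is linear algebra.
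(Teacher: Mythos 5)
Your proof is correct and follows essentially the same route as the paper: both directions reduce to Lemma \ref{kreinmilman}, the nonempty interior forces the $\dim(N)$ generators of the extreme rays to be linearly independent, and the linear isomorphism sending them to the standard basis identifies $P$ with $[0,\infty)^{\dim N}$. Your extra care about the closure of the convex hull and the explicit count of extreme rays of the standard quadrant only fills in details the paper leaves implicit.
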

\begin{proof}
Assume that $P$ has $\dim(N)$-many extreme rays, $R_1= \R^+\cdot x_j,\ldots , R_d=\R^+\cdot x_d$ where $d=\text{dim}\ N.$ In view of Lemma  \ref{kreinmilman}, the cone $P$ is the closed convex hull of $R_1,\ldots , R_d$. Since $P$ has nonempty interior, the vectors $x_1,\ldots , x_d$ are linearly independent.  Take the linear isomorphism $F:N\to \R^d$ mapping $x_j$ to the standard vector $e_j$. Then $F(P)$ is the standard quadrant  in $\R^d$. The converse is proved similarly.
\end{proof}

If $a\in C=[0,\infty )^n\oplus W\subset \R^n\oplus W$, we have the representation
$a=(a_1,\ldots ,a_n, a_{\infty})$ where $(a_1,\ldots ,a_n)\in \R^n$ and $a_{\infty}\in W$.
By $\sigma_a$ we shall denote the collection of all  indices $i\in \{1,\ldots ,n\}$ for which
$a_i=0$ and denote the complementary set of indices in $\{1,\ldots ,n\}$ by $\sigma_a^c$. Correspondingly, we introduce the following subspaces in $\R^n$,
\begin{align*}
\R^{\sigma_a}&=\{x\in \R^n\ \vert \ \text{$x_j=0$ for all $j\not \in \sigma_a$}\}\\
\R^{\sigma^c_a}&=\{x\in \R^n\ \vert \ \text{$x_j=0$ for all $j\not \in \sigma^c_a$}\}.
\end{align*}

\begin{lem}\label{roxy}
Let $N\subset E_{\infty}$ be a finite-dimensional smooth linear subspace of
$E={\mathbb R}^n\oplus W$ so that $C\cap N$ is a closed convex cone.
If $a\in C\cap N$ is nonzero and generates an extreme ray $R$ in
$C\cap N$, then
$$
\dim(N)-1\leq \sharp\sigma_a.
$$
If,  in addition, $N$ is in good position to $C$, then
$$
\dim(N)-1=\sharp\sigma_a.
$$
\end{lem}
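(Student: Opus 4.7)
The strategy is to analyze the coordinate-projection $\pi_a\colon N\to \R^{\sigma_a}$ obtained by restricting to $N$ the linear map that extracts the $\sigma_a$-coordinates of $x=(x_1,\ldots,x_n,x_\infty)\in \R^n\oplus W$. Determining $\ker \pi_a$ from the extremality of $R=\R^+a$ will produce the first inequality by rank-nullity; the equality in the good-position case will then follow from a duality argument combined with the quantitative estimate built into Definition \ref{defn5.6.1}.

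First I claim $\ker \pi_a = \R a$. The inclusion $\supset$ is clear since $a_i=0$ for $i\in \sigma_a$. Conversely, let $v\in N$ satisfy $v_i=0$ for every $i\in \sigma_a$. For sufficiently small $t>0$ both points $a\pm tv$ lie in $C$: the $\sigma_a$-coordinates of $a\pm tv$ vanish, the $\sigma_a^c$-coordinates remain strictly positive because $a_i>0$ there, and the $W$-component is automatically in $C$. Hence $a\pm tv\in C\cap N$, and writing $a=\tfrac{1}{2}(a+tv)+\tfrac{1}{2}(a-tv)$ the extremality of $R=\R^+a$ forces $\tfrac{1}{2}(a+tv)\in \R^+a$, so $v\in \R a$. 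Rank-nullity then yields $\dim N = 1+\dim \pi_a(N) \leq 1+\sharp\sigma_a$, which is the first inequality.

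Assume now that $N$ is in good position to $C$ with sc-complement $N^\perp$ and constant $c>0$, and suppose, toward a contradiction, that $\pi_a(N)$ is a proper subspace of $\R^{\sigma_a}$. Then there is a nonzero linear functional $\mu$ on $\R^{\sigma_a}$ vanishing on $\pi_a(N)$; writing its coefficients as $\{\alpha_i\}_{i\in \sigma_a}$, the extension $\tilde\mu\colon E\to \R$ defined by $\tilde\mu(x)=\sum_{i\in \sigma_a}\alpha_i x_i$ is a nonzero functional on $E$ with $\tilde\mu|_N\equiv 0$. Consequently $\tilde\mu|_{N^\perp}\not\equiv 0$, so we may pick $e\in N^\perp$ with $\tilde\mu(e)=1$.

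The contradiction is then obtained by applying the good-position hypothesis to the pair $(n,m)=(a,\lambda e)\in N\oplus N^\perp$ for scalars $\lambda$ satisfying $|\lambda|\cdot\|e\|_E\leq c\|a\|_E$. For each such $\lambda$ the condition $a\in C$ forces $a+\lambda e\in C$, and inspecting the $i$-th coordinate for $i\in \sigma_a$ gives $\lambda e_i = (a+\lambda e)_i \geq 0$. Allowing $\lambda$ to take both signs in a small interval about zero forces $e_i=0$ for every $i\in \sigma_a$, whence $\tilde\mu(e)=\sum_{i\in \sigma_a}\alpha_i e_i=0$, contradicting $\tilde\mu(e)=1$. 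The delicate point, and the place where the specific form of good position is used essentially, is that the bilateral quantitative bound $\|m\|_E\leq c\|n\|_E$ permits two-sided perturbations $\pm\lambda e$ of genuinely nonzero size; without this two-sided range one could only conclude $e_i\geq 0$ (or $e_i\leq 0$) and the duality argument would collapse.
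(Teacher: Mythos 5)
Your proof is correct and follows essentially the same route as the paper: the extremality of $\R^+ a$ is used to show that the $\sigma_a$-coordinate projection restricted to $N$ has kernel exactly $\R a$, and the good-position property applied to the pairs $(a,\pm\lambda e)$ with $\|\lambda e\|_E\leq c\|a\|_E$ forces the $\sigma_a$-coordinates of vectors in $N^{\perp}$ to vanish, which yields surjectivity of that projection and hence the equality. The only cosmetic differences are that the paper gets the one-dimensional kernel via large multiples $\lambda a+z$ landing in the cone rather than your small bilateral perturbations $a\pm tv$, and deduces surjectivity directly from $N^{\perp}\subset \R^{\sigma_a^c}\oplus W$ instead of your contradiction with an annihilating functional.
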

\begin{proof}
Assume $R=\R^+\cdot a$ is an extreme ray in $C\cap N$. Abbreviate $\sigma=\sigma_a$ and let $\sigma^c$ be a complement of $\sigma$ in $\{1, \ldots ,n\}$. Then $R\subset C\cap N\cap ({\mathbb R}^{\sigma^c}\oplus W)$. Let $y\in C\cap N\cap ({\mathbb R}^{\sigma^c}\oplus
W)$ be a nonzero element. Since $a_i>0$ for all $i\in\sigma^c$,  there exists $\lambda>0$ so that $\lambda a-y\in C\cap N\cap
({\mathbb R}^{\sigma^c}\oplus W)\subset C\cap N$. We conclude $y\in R$ because  $R$ is an
extreme ray. Given any element $z\in
N\cap({\mathbb R}^{\sigma^c}\oplus W)$ we find $\lambda>0$ so that
$\lambda a+z\in C\cap N\cap({\mathbb R}^{\sigma^c}\oplus W) $ and infer,
by the previous argument,  that $\lambda a+z\in R$. This implies that $z\in
{\mathbb R}a$. Hence
\begin{equation}\label{pol}
 \dim(N\cap({\mathbb R}^{\sigma^c}\oplus W))=1.
\end{equation}
The projection $p:\R^n\oplus W=\R^{\sigma}\oplus (\R^{\sigma^c}\oplus W) \to \R^{\sigma}$ induces a linear map
\begin{equation}\label{ohx}
p:N\rightarrow {\mathbb R}^{\sigma}
\end{equation}
which by (\ref{pol}) has  an one-dimensional kernel. Therefore,
$$
\sharp\sigma=\dim({\mathbb R}^{\sigma})\geq \dim(N)-1.
$$
Next assume $N$ is in good position to $C$. Hence there exist a constant $c>0$ and an sc-complement $N^{\perp}$ such that $N\oplus N^{\perp}=\R^n\oplus W$ and if $(n, m)\in N\oplus N^{\perp}$ satisfies $\norm{m}\leq c\norm{n}$, then $n+m\in C$ if and only if $n\in C$.  We claim that  $N^{\perp}\subset {\mathbb R}^{\sigma^c}\oplus W$. Indeed, let $m$ be any element of $N^{\perp}$.  Multiplying $m$ by a real number we may assume  $\norm{m}\leq c\norm{a}$ .  Then $a+m\in C$ since $a\in C$. This implies that
$m_i\geq 0$ for  all indices $i\in\sigma_a$. Replacing $m$ by $-m$,  we conclude
$m_i=0$ for all $i\in\sigma_a$.  So $N^{\perp}\subset  \R^{\sigma^c}\oplus W$ as claimed.
Take  $k\in {\mathbb R}^{\sigma_a}$
and write $(k, 0)=n+m\in N\oplus N^{\perp}$.  From  $N^{\perp}\subset {\mathbb
R}^{\sigma^c}\oplus W$, we conclude $k=p(n)$. Hence the map $p$  in
(\ref{ohx}) is surjective  and the desired result follows.
\end{proof}
\subsection{Proof of  Propositions \ref{pointt}}
Assume that $N$ is a smooth  finite-dimensional subspace of
$E={\mathbb R}^n\oplus W$ in good position to the partial quadrant $C=[0,\infty)^n\oplus
W$. Thus, by definition, there is  an sc-complement  $N^{\perp}$ of $N$ in $E$ and a constant $c>0$ so
that if  $(n,m)\in N\oplus N^{\perp}$ satisfies  $\norm{m}\leq c\norm{n}$, then  the statements
$n\in C$ and $n+m\in C$ are equivalent.

We introduce the subset
$$\Sigma=\bigcup_{a\in C\cap N, a\neq 0}\sigma_{a}\subset \{1,\ldots ,n\}.$$
and denote by $\Sigma^c$ the complement $\{1,\ldots ,n\}\setminus \Sigma$. The associated subspaces of $\R^n$ are defined by
$\R^{\Sigma}=\{x\in \R^n\ \vert \ \text{$ x_j=0$ for $j\not \in \Sigma$}\}$ and
$\R^{{\Sigma}^c}=\{x\in \R^n\ \vert \ \text{$ x_j=0$ for $j\not \in \Sigma^c$}\}$.

\begin{lem}\label{cone1}
$N^{\perp}\subset {\mathbb
R}^{\Sigma^c}\oplus W$.
\end{lem}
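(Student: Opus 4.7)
My plan is to recycle the ``good position'' argument that already appeared in the proof of Lemma \ref{roxy} and then take a union. The key observation is that the argument there did not really need $a$ to generate an extreme ray: it only used that $a$ is a nonzero element of $C\cap N$ and that $N$ is in good position to $C$.

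More precisely, the first step is the following pointwise claim: for every nonzero $a\in C\cap N$ one has
\[
N^{\perp}\subset \R^{\sigma_a^c}\oplus W.
\]
To see this, let $m\in N^{\perp}$. Since the good-position estimate is scale invariant, after multiplying $m$ by a small positive constant we may assume $\norm{m}\leq c\norm{a}$. By the defining property of good position applied to the pair $(a,m)\in N\oplus N^{\perp}$, the element $a\in C$ forces $a+m\in C$, and similarly $(a,-m)$ yields $a-m\in C$. For an index $i\in \sigma_a$ the $i$-th coordinate of $a$ vanishes, so the two inclusions give $m_i\geq 0$ and $-m_i\geq 0$, hence $m_i=0$. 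Thus the $\R^n$-components of $m$ are supported on $\sigma_a^c$, as claimed.

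The second step is simply to take a union. By the very definition of $\Sigma$, every index $j\in\Sigma$ lies in $\sigma_a$ for some nonzero $a\in C\cap N$; for that particular $a$, the previous step gives $m_j=0$ for every $m\in N^{\perp}$. Therefore every $m\in N^{\perp}$ has vanishing $\R^n$-components on $\Sigma$, which is exactly the inclusion $N^{\perp}\subset \R^{\Sigma^c}\oplus W$.

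There is essentially no obstacle here: the whole argument is a direct rerun of the computation used for a single extreme ray in Lemma \ref{roxy}, with the trivial additional observation that since the good-position property is homogeneous one may rescale $m$ freely, so the conclusion is independent of the size of $m$ and hence extends from one $a$ to all of $C\cap N$ and then unions cleanly over the defining family of $\Sigma$.
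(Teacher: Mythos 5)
Your proof is correct and follows essentially the same argument as the paper: for each index in $\Sigma$ one picks a nonzero $a\in C\cap N$ with that index in $\sigma_a$, rescales so the good-position estimate applies (you rescale $m$, the paper rescales $a$ — both are harmless), and uses $a\pm m\in C$ together with $a_i=0$ to force $m_i=0$. This is the same computation already used in Lemma \ref{roxy}, as you note, so no further comment is needed.
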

\begin{proof}
Take   $m\in N^{\perp}$. We  have to show that $m_i=0$ for all $i\in\Sigma$.
So fix an index  $i\in\Sigma$ and let $a$ be a nonzero element of  $C\cap N$ such that
$i\in\sigma_a$. Multiplying $a$ by a suitable positive number we may
assume $\norm{m}\leq c\norm{a}$.  Since $a\in C$,   we infer that $a+m\in C$. This implies
that $a_i+m_i\geq 0$. By definition of $\sigma_a$, we have $a_i=0$  implying $m_i\geq 0$.
Replacing $m$ by $-m$ we find $m_i=0$. Hence $N^{\perp}\subset {\mathbb
R}^{\Sigma^c}\oplus W$ as claimed.
\end{proof}

Identifying $W$ with $\{0\}\oplus W$ we take   an algebraic complement $\wt{N}$ of $N\cap W$ in $N$ so that
\begin{equation}\label{hoferx}
N=\wt{N}\oplus (N\cap W)\quad \text{and}\quad\quad E= \wt{N}\oplus(N\cap W)\oplus N^{\perp}.
\end{equation}

\begin{lem}\label{ntilde}
If the subspace $N$ of $E$ is in good position to the quadrant  $C$, then
$\wt{N}$ is also  in good position to $C$ and the  subspace $\wt{N}^{\perp}=(N\cap W)\oplus N^{\perp}$ is a good complement of $\wt{N}$ in $E$.
\end{lem}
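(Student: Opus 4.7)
The plan is to verify the three ingredients required by Definition~\ref{defn5.6.1}: that $\wt{N}^\perp:=(N\cap W)\oplus N^\perp$ really is an sc-complement of $\wt{N}$ in $E$; that $\wt{N}\cap C$ has nonempty interior in $\wt{N}$; and that the good-position estimate holds for some constant $\tilde c>0$. The decisive observation is that $N\cap W\subset\{0\}\oplus W\subset C$, so adding an element of $N\cap W$ to any vector in $E$ does not alter its $\R^n$-components, and hence does not affect membership in the partial quadrant $C=[0,\infty)^n\oplus W$.

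First I would record that since $N$ is finite-dimensional every algebraic decomposition is topological, so \eqref{hoferx} is automatically an sc-direct sum and $\wt{N}^\perp=(N\cap W)\oplus N^\perp$ is an sc-complement of $\wt N$ in $E$. For the interior condition, pick $a$ in the relative interior of $N\cap C$ and write $a=\tilde a+w$ with $\tilde a\in\wt N$ and $w\in N\cap W$; because $w$ has vanishing $\R^n$-part, $a$ and $\tilde a$ have the same $\R^n$-coordinates, so $\tilde a\in C$. For any small $\delta\tilde a\in\wt N$ we have $(\tilde a+\delta\tilde a)+w=a+\delta\tilde a\in C$ by the interior property of $a$ in $N\cap C$, and again the $\R^n$-coordinates of $\tilde a+\delta\tilde a$ coincide with those of $a+\delta\tilde a$, giving $\tilde a+\delta\tilde a\in C$. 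Thus $\tilde a$ lies in the relative interior of $\wt N\cap C$.

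For the main estimate, let $P:\wt{N}^\perp\to N^\perp$ be the bounded projection along $N\cap W$. Given $(\tilde n,\tilde m)\in\wt N\oplus\wt N^\perp$, decompose $\tilde m=w+m$ with $w\in N\cap W$ and $m=P\tilde m\in N^\perp$, so that $\|m\|\le\|P\|\cdot\|\tilde m\|$. Choose the constant
\[
\tilde c:=\frac{c}{\|P\|},
\]
where $c$ is the constant from the good position of $N$. If $\|\tilde m\|\le\tilde c\,\|\tilde n\|$, then $\|m\|\le c\,\|\tilde n\|$, and since $(\tilde n,m)\in N\oplus N^\perp$ (because $\wt N\subset N$), the good-position hypothesis for $N$ gives the equivalence $\tilde n+m\in C\Longleftrightarrow\tilde n\in C$. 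But $\tilde n+\tilde m=(\tilde n+m)+w$ with $w\in W$, and since $w$ contributes nothing to the $\R^n$-coordinates, $\tilde n+\tilde m\in C\Longleftrightarrow\tilde n+m\in C$. Chaining the two equivalences yields $\tilde n+\tilde m\in C\Longleftrightarrow\tilde n\in C$, which is the required property for $\wt N$.

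The only mildly subtle point is the verification of the interior condition, which rests on the fact that passing from $N$ to $\wt N$ only discards the $W$-direction and so cannot destroy an open neighborhood of a point inside $C$; the norm comparison via the projection $P$ is routine. Once this is in place, Proposition~\ref{pointt} applied to $\wt N$ (which is in good position) would give that $\wt N\cap C$ is a partial quadrant, although this last consequence is not needed for the statement of the lemma itself.
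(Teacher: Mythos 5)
Your proposal is correct and follows essentially the same route as the paper: split $\wt m\in\wt N^\perp$ into its $N\cap W$ and $N^\perp$ components, note that the $N\cap W$ part lies in $\{0\}\oplus W$ and hence never affects membership in $C$, and apply the good-position estimate for $N$ to the pair $(\tilde n,m)\in N\oplus N^\perp$; your constant $c/\norm{P}$ plays exactly the role of the paper's $c_1c$ obtained from the norm comparison on $(N\cap W)\oplus N^\perp$. The interior argument (subtracting $w\in N\cap W$ does not change the $\R^n$-coordinates, so an interior point of $N\cap C$ projects to an interior point of $\wt N\cap C$) is also the paper's argument.
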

\begin{proof}
Since $N$ is in good position to the quadrant $C$ in $E$, there exist a constant $c>0$ and an sc-complement $N^{\perp}$ of $N$ in $E$ such that if $(n, m)\in N\oplus N^{\perp}$ satisfies $\norm{m}\leq c\norm{n}$, then
 the statements $n\in C$ and $n+m\in C$ are equivalent.  Since $E$ is a Banach space and $N$ is a finite dimensional subspace of $E$, there is a constant $c_1>0$ such that
 $\norm{n+m}\geq c_1[ \norm{n}+\norm{m}]$ for all $(n, m)\in N\oplus N^{\perp}$.  To prove  that $\wt{N}$ is in good position to $C$,  we shall show that $\wt{N}^{\perp}:=(N\cap W)\oplus N^{\perp}$ is a good complement of $\wt{N}$ in $E$ in the sense of
 Definition \ref{defn5.6.1}. Let $(\wt{n}, \wt{m})\in \wt{N}\oplus \wt{N}^{\perp}=E$ and assume that  $\norm{\wt{m}}\leq c_1c\norm{\wt{n}}$. Write $\wt{m}=n_1+n_2\in (N\cap W)\oplus N^{\perp}$. Since $c_1[\norm{n_1}+\norm{n_2}]\leq \norm{n_1+n_2}=\norm{\wt{m}}\leq c_1c\norm{\wt{n}}$, we get $\norm{n_2}\leq c\norm{\wt{n}}$.  Note that $\wt{n}+\wt{m}=\wt{n}+n_1+n_2\in C$ if and only if $\wt{n}+n_2\in C$ since $n_1\in  \{0\}\oplus W$.  Since  $\norm{n_2}\leq c\norm{\wt{n}}$, this is equivalent to $\wt{n}\in C$. It remains to show that $\wt{N}\cap C$ has a nonempty interior. By assumption $N\cap C$ has nonempty interior. Hence there is a point $n\in N\cap C$ and $r>0$ such that the ball $B^{N}_r(n)$ in $N$ is contained in $N\cap C$. Write $n=\wt{n}+w$ where $\wt{n}\in \wt{N}$ and $w\in N\cap W$. Since $n\in C$ and  $w\in W$, we conclude that $\wt{n}\in C$. Hence $\wt{n}\in \wt{N}\cap C$. 
Take $\nu \in B^{\wt{N}}_r(\wt{n})$,  the open ball in $\wt{N}$ centered at 
  $\wt{n}$ and of radius $r>0$.  We want to prove that $\nu \in C$. Since $C=[0,\infty )^n\oplus W\subset \R^n\oplus W$, we have to prove for $\nu= (\nu', \nu'')\in \R^n\oplus W$ that $\nu'\in [0,\infty )^n$.  We estimate  $\norm{(\nu +w)-n}=\norm{(\nu +w)-(\wt{n}+w)}=\norm{\nu -\wt{n}}<r$ so that $\nu +w\in B^{N}_r(n)$ and hence  $\nu +w\in N\cap C$ . Having identified $W$ with $\{0\}\oplus W$, we have $w=(0, w'')\in \R^n\oplus W$.  Consequently, $\nu+w=(\nu', \nu''+w'')\in N\cap C$ implies $\nu' \in [0,\infty )^n$.  Since also $\nu \in \wt{N}$, one concludes  that $\nu \in \wt{N}\cap C$ and that $\wt{n}$ belongs to the interior of $\wt{N}\cap C$ in $\wt{N}$.  The proof of Lemma \ref{ntilde} is complete.
 \end{proof}
Since by  Lemma \ref{cone1},  $N^{\perp}\subset {\mathbb R}^{\Sigma^c}\oplus W$,  the  good complement   $\wt{N}^{\perp}$ satisfies
$$
\wt{N}^{\perp}\subset {\mathbb R}^{\Sigma^c}\oplus W.
$$
In particular,  $\dim \wt{N}=\text{codim}\  \wt{N}^{\perp}\geq \sharp \Sigma$.

Moreover, since   $C$ is a closed convex cone in $E$ and $\wt{N}$ a subspace of $E$,  we have proved  the following lemma.
\begin{lem}
The intersection $C\cap\wt{N}$ is a closed convex cone in
$\wt{N}$.
\end{lem}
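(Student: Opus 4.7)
The plan is to verify directly the three defining properties of a closed convex cone from the definition given earlier in the paper: $C\cap\wt N$ should be (i) closed and convex, (ii) invariant under multiplication by $\mathbb{R}^+$, and (iii) pointed in the sense $(C\cap\wt N)\cap(-(C\cap\wt N))=\{0\}$. Properties (i) and (ii) are essentially formal: closedness follows since both $C$ and $\wt N$ are closed subsets of $E$ (the latter because $\wt N\subset N$ is a finite-dimensional subspace), convexity follows because both $C$ and $\wt N$ are convex, and $\mathbb{R}^+\cdot(C\cap\wt N)=C\cap\wt N$ follows from $\mathbb{R}^+\cdot C=C$ and the fact that $\wt N$ is a linear subspace.

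The only nontrivial point is (iii). Here I would argue as follows. Since $C=[0,\infty)^n\oplus W$, an elementary calculation gives $C\cap(-C)=\{0\}^n\oplus W$, which we identify with $W$. Therefore
\[
(C\cap\wt N)\cap\bigl(-(C\cap\wt N)\bigr) = C\cap(-C)\cap\wt N = W\cap\wt N.
\]
So the lemma reduces to proving $\wt N\cap W=\{0\}$.

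This is where the construction of $\wt N$ comes in: by \eqref{hoferx}, $\wt N$ was chosen as an algebraic complement of $N\cap W$ inside $N$, so $N=\wt N\oplus (N\cap W)$. Since $\wt N\subset N$, any element of $\wt N\cap W$ lies in $\wt N\cap N\cap W=\wt N\cap(N\cap W)=\{0\}$ by the direct-sum decomposition. This closes the argument.

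I don't anticipate any real obstacle: the whole statement is a formal consequence of the choice of $\wt N$ combined with the observation that $C\cap(-C)=W$, which is why the authors present it as something already established by the preceding construction rather than as a substantive lemma. The only subtlety worth emphasizing is that one must use the definition of ``cone'' given earlier (which includes the pointedness requirement $P\cap(-P)=\{0\}$), since otherwise the statement would be entirely trivial.
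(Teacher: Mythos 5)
Your proof is correct. The paper itself offers no argument: it simply asserts, right before the lemma, that it is ``proved'' because $C$ is a closed convex cone in $E$ and $\wt{N}$ is a subspace — a justification that is strictly speaking imprecise, since the partial quadrant $C=[0,\infty)^n\oplus W$ fails the pointedness condition $C\cap(-C)=\{0\}$ whenever $W\neq\{0\}$. You identify exactly the point the paper glosses over: closedness, convexity and $\R^+$-invariance of $C\cap\wt{N}$ are formal, while pointedness reduces via $C\cap(-C)=\{0\}^n\oplus W$ to $\wt{N}\cap W=\{0\}$, which is precisely what the choice of $\wt{N}$ as an algebraic complement of $N\cap W$ in $N$ (equation \eqref{hoferx}) provides. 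So your route is the intended one, but made explicit and correct where the paper's one-line remark is sloppy; the only thing to add is nothing — the argument is complete as written.
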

Using the above lemma and  Lemma \ref{ntilde}, we conclude from Lemma  \ref{roxy}
\begin{equation}\label{Nsigma}
\dim \wt{N}-1 =\sharp{\sigma_a}
\end{equation}
for every generator $a$ of an extreme ray in $C\cap \wt{N}$.

The position of $\wt{N}$ with
respect to ${\mathbb R}^{\Sigma^c}\oplus W$ is as follows.
\begin{lem}\label{ll1}
Either $\wt{N}\cap ({\mathbb R}^{\Sigma^c}\oplus W)=\{0\}$ or
$\wt{N}\subset {\mathbb R}^{\Sigma^c}\oplus W$.  In the second case
$\dim \wt{N}=1$ and $\Sigma=\emptyset$.
\end{lem}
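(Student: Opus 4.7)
The plan is to prove the lemma by contradiction: I will assume $\wt{N}\cap({\mathbb R}^{\Sigma^c}\oplus W)$ contains a nonzero vector $x$ and deduce $\dim\wt{N}=1$ and $\Sigma=\emptyset$, in which case the inclusion $\wt{N}\subset{\mathbb R}^{\Sigma^c}\oplus W={\mathbb R}^n\oplus W$ is automatic. The tools at my disposal are: Lemma \ref{ntilde} (so $\wt{N}$ is in good position to $C$, and $C\cap\wt{N}$ is a closed convex cone with nonempty interior in $\wt{N}$); Lemma \ref{kreinmilman} (so this cone is the closed convex hull of its extreme rays, and hence the extreme-ray generators span $\wt{N}$); the identity $\sharp\sigma_a=\dim\wt{N}-1$ from \eqref{Nsigma} for every generator $a$ of an extreme ray of $C\cap\wt{N}$; the inclusion $\sigma_a\subset\Sigma$ for every nonzero $a\in C\cap N$; and the elementary fact $\wt{N}\cap W=\{0\}$ which follows from \eqref{hoferx} since $\wt{N}\subset N$ and $\wt{N}\cap(N\cap W)=\{0\}$.

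Fix a generator $a$ of an extreme ray of $C\cap\wt{N}$. Because $\sigma_a\subset\Sigma$, equivalently $\Sigma^c\subset\sigma_a^c$, every $\Sigma^c$-coordinate of $a$ is strictly positive; by hypothesis every $\Sigma$-coordinate of $x$ vanishes. Hence for all sufficiently small $t\in{\mathbb R}$ the vector $a+tx$ has the same (nonnegative) $\Sigma$-coordinates as $a$ and still-positive $\Sigma^c$-coordinates, so $a\pm tx\in C\cap\wt{N}$. Writing
\[
a=\tfrac{1}{2}(a+tx)+\tfrac{1}{2}(a-tx)
\]
and setting $y=\tfrac{1}{2}(a+tx)$, we have $y\in C\cap\wt{N}$ and $a-y=\tfrac{1}{2}(a-tx)\in C\cap\wt{N}$, so the paper's definition of an extreme ray forces $y\in{\mathbb R}^+\cdot a$. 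Solving this gives $x\in{\mathbb R}a$ for $t\neq 0$. Since this argument applies to every extreme-ray generator, all generators are collinear with $x$, and because they span $\wt{N}$ I conclude $\dim\wt{N}=1$.

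With $\dim\wt{N}=1$ in hand, \eqref{Nsigma} gives $\sigma_a=\emptyset$, so every ${\mathbb R}^n$-coordinate of $a$ is strictly positive. To finish I would decompose any nonzero $b\in C\cap N$ as $b=\wt{b}+w$ with $\wt{b}\in\wt{N}$ and $w\in N\cap W$; since $w$ has vanishing ${\mathbb R}^n$-part, the ${\mathbb R}^n$-coordinates of $b$ agree with those of $\wt{b}\in{\mathbb R}a$. If $N\cap W\neq\{0\}$, then any nonzero $w\in N\cap W\subset C$ contributes $\sigma_w=\{1,\ldots,n\}$ to $\Sigma$, forcing ${\mathbb R}^{\Sigma^c}\oplus W=W$; but then $\wt{N}\cap({\mathbb R}^{\Sigma^c}\oplus W)=\wt{N}\cap W=\{0\}$, contradicting the existence of $x$. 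Therefore $N\cap W=\{0\}$, every nonzero $b\in C\cap N$ is a positive multiple of $a$, so $\sigma_b=\emptyset$ and $\Sigma=\emptyset$.

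The main obstacle I anticipate is the extreme-ray step: one must carefully ascertain that $a\pm tx\in C\cap\wt{N}$ for small $|t|$, which rests on the strict positivity of the $\Sigma^c$-coordinates of $a$, and this in turn requires unpacking the definition of $\Sigma$ together with the inclusion $\sigma_a\subset\Sigma$. Once the collinearity $x\in{\mathbb R}a$ has been extracted via the paper's precise formulation of an extreme ray, everything that follows is routine linear algebra.
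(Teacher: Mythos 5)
Your proposal is correct and follows essentially the same route as the paper: assume a nonzero $x\in\wt{N}\cap({\mathbb R}^{\Sigma^c}\oplus W)$, use that every extreme-ray generator $a$ of $C\cap\wt{N}$ has positive $\Sigma^c$-coordinates to force $x\in{\mathbb R}a$ via the extreme-ray property, conclude $\dim\wt{N}=1$ from Lemma \ref{kreinmilman}, and then get $\sigma_a=\emptyset$ from \eqref{Nsigma}. The only (harmless) difference is the last step: the paper reads off $\Sigma=\emptyset$ directly from $a\in{\mathbb R}x\subset{\mathbb R}^{\Sigma^c}\oplus W$ together with $a_i>0$ for all $i$, whereas you take a slightly longer but equally valid detour through $N\cap W=\{0\}$ and the classification of $C\cap N$ as ${\mathbb R}^+a$.
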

\begin{proof}
Assume that  $\wt{N}\cap ({\mathbb R}^{\Sigma^c}\oplus
W)\neq\{0\}$. Take  a nonzero point $x\in \wt{N}\cap (\R^{\Sigma^c}\oplus W)$. We know that $\wt{N}\cap C$ has a
nonempty interior in $\wt{N}$ and is therefore generated as the convex hull of
its extreme rays by Lemma \ref{kreinmilman}.  Let $a\in C\cap \wt{N}$ be a generator of an
extreme ray $R$. Then $a_i>0$ for all $i\in\Sigma^c$ and  hence $\lambda a +x\in C\cap \wt{N}$ for large  $\lambda >0$. Taking another large  number  $\mu>0$, we get   $\mu a- (\lambda a+x)\in C\cap \wt{N}$. Since $R=\R^+\cdot a$ is an extreme ray, we conclude $\lambda a+x\in \R^+\cdot a$ so that $x\in {\mathbb R}\cdot a$. Consequently, 
there is only one extreme ray in $\wt{N}\cap C$, namely 
 $R=\R^+\cdot a$ with  $a\in \R^{\Sigma^c}\oplus W$.  Since $\wt{N}\cap C$ has a nonempty interior in $\wt{N}$, we conclude that $\dim \wt{N}=1$  . Hence $\wt{N}=\R\cdot a$ and 
 $\wt{N}\subset  \R^{\Sigma^c}\oplus W$. From equation \eqref{Nsigma} we also conclude that $a_i>0$ for all $1\leq i\leq n$.  This in turn implies that $\Gamma=\emptyset$ since  $a\in \R^{\Sigma^c}\oplus W$. The proof of Lemma \ref{ll1} is complete.
\end{proof}

In order to complete the proof  of Proposition \ref{pointt},  we have to
consider, according to Lemma \ref{ll1},  two cases.
Starting with the first case  we assume that $\wt{N}\cap ({\mathbb R}^{\Sigma^c}\oplus
 W)=\{0\}$. The projection $p:\wt{N}\oplus \wt{N}^{\perp}=\R^{\Sigma}\oplus (\R^{\Sigma^c}\oplus W)\to \R^{\Sigma}$  induces the linear map
 \begin{equation}\label{opl}
 p:\wt{N}\rightarrow {\mathbb R}^{\Sigma}.
 \end{equation}
Take $k\in{\mathbb R}^{\Sigma}$ and write
 $(k,0)=n+m\in\wt{N}\oplus\wt{N}^{\perp}$. Since $\wt{N}^{\perp}\subset
\R^{\Sigma^c}\oplus W$,  we conclude that
 $$
 p(n+m)=p(n)=k
 $$
so that $p$ is surjective. If $n\in \wt{N}$ and $p(n)=0$, then $n\in  \wt{N}\cap ({\mathbb
 R}^{\Sigma^c}\oplus W)=\{0\}$ by assumption. Hence the map in
 (\ref{opl}) is a bijection.  By  Lemma \ref{nomer},
$C\cap\wt{N}$ is a quadrant in $\wt{N}$. We shall show that $p$ maps the quadrant $C\cap \wt{N}$ onto the standard quadrant $Q^{\Sigma}=[0,\infty )^{\Sigma}$ in $\R^{\Sigma}$.
Let $a$ be a nonzero element in $C\cap \wt{N}$ generating  an extreme ray $R=\R^+\cdot a$. Then, by Lemma \ref{roxy}, 
 $$
 \dim \wt{N}-1=\sharp\sigma_a
 $$
 and since $\sharp \Sigma=\text{dim}\ \wt{N}$ there is
exactly one index $i\in\Sigma$ for which
$a_i>0$. Further,  $a_i>0$ for all $i\in\Sigma^c$ by definition of $\Sigma$.  This implies that there can be
at most $\dim(\wt{N})$-many extreme rays. Indeed, if $a$ and $a'$
generate extreme rays and
$a_i,a_i'>0$  for some $i\in \Sigma$, then $a_k=a_k'=0$ for all
$k\in\Sigma\setminus\{i\}$.  Hence,  from  $a_j>0$ for all $j\in \Sigma^c$,   we conclude $\lambda
a-a'\in C$ for large $\lambda>0$. Therefore,
$a'\in {\mathbb R}^+a$ implying that $a$ and $a'$ generate the same
extreme ray. As a consequence,  $\wt{N}\cap C$ has  precisely $\dim \wt{N}$-many extreme rays because $\wt{N}\cap C$ has a nonempty interior in view of Lemma \ref{ntilde}.  Hence  the map $p$ in \eqref{opl}  induces an isomorphism
$$
(\wt{N},C\cap\wt{N})\rightarrow ({\mathbb R}^\Sigma,Q^\Sigma).
$$
This implies that $(N,C\cap N)$ is isomorphic to
$\bigl( {\R}^{\dim(N)}, [0,\infty)^{\sharp\Sigma}\oplus {\R}^{\dim(N)-\sharp\Sigma}\ \bigr).$

In the second case we assume  that $\wt{N}\subset {\mathbb R}^{\Sigma^c}\oplus W$.  From Lemma  \ref{ll1},  $\Sigma=\emptyset$ and $\wt{N}=\R\cdot a$ for an element $a\in
C\cap\wt{N}$  satisfying  $a_i>0$ for all $1\leq i\in\leq n$. Hence
$(\wt{N},\wt{N}\cap C)$ is isomorphic to
$( {\R}, {\R}^+)$ and therefore $(N,C\cap N)$ is isomorphic to
$( {\R}, \R^+)$ since in this case $N=\wt{N}$. The proof of Proposition \ref{pointt} is complete. \hfill  \qedsymbol

\subsection{Determinants and Orientation.}\label{det}
We shall outline the definition of  determinant bundles
associated to families of linearized polyfold Fredholm operators. While such
constructions are  well-known in the classical case there
are some subtleties in our case. The essential problem is that the
linearisations do not depend as operators  continuously on the
points where they were linearized. Nevertheless what makes the
construction possible is the fact that a Fredholm section has the
contraction germ property. We shall discuss the construction in more
detail in \cite{HWZ8} and just give some outline in the following.
 We recall some facts about determinants of
linear Fredholm operators. For more details we refer to  \cite{DK} and
\cite{FH}. If $A$ is a finite-dimensional real vector space we denote
by $\wedge^{\max}A$ its maximal (nontrivial)  wedge. In case
$A=\{0\}$ we put $\wedge^{\max}A={\mathbb R}$. We begin with the
linear algebra fact that given an exact sequence
$$
0\to  A_0\to  A_1\to A_2\to \cdots \to A_n\to  0
$$
between vector
spaces,
 there is a natural isomorphism (constructed from the maps in the
exact sequence by formulae  depending smoothly on the ingredients)
$$
\otimes_{i\ \text{even}} (\wedge^{\max}A_i) \rightarrow \otimes_{i\ \text{odd}}
(\wedge^{\max}A_i).
$$
There exist  also  natural isomorphisms $A\otimes B\rightarrow B\otimes
A$, $(\wedge^{\max}E)\otimes (\wedge^{\max}E)^\ast\rightarrow {\mathbb
R}$ and $\wedge^{\max}(E^\ast)\rightarrow (\wedge^{\max}E)^\ast$ where $\ast$ refers to the dual space. Of
course,  there are natural isomorphisms $A\otimes {\mathbb R}\rightarrow A$.

Assume that $T:E\rightarrow F$ is a linear Fredholm operator between
Banach spaces. Then we define its {\bf determinant} $\det(T)$ as the
 one-dimensional real vector space
$$
\det(T)=(\wedge^{\max}\ker(T))\otimes
(\wedge^{\max}\text{coker}(T))^\ast.
$$
An orientation of the Fredholm operator  $T$
is by definition an orientation of  the vector space $\det(T)$. An orientation of
$\det(T)$ can be given by a pair of orientations for
$\wedge^{\max}\ker(T)$ and $\wedge^{\max}\text{coker}(T)$. Of course,
reversing both orientations  defines the same orientation for $T$.
If  $T$ is an isomorphism, then  $\det(T)={\mathbb R}\otimes
{\mathbb R}^\ast$ which is naturally isomorphic to ${\mathbb R}$ by the map $e\otimes e^*\mapsto e^*(e)$. In
this case $1$ orients $T$. We call it  the {\bf natural orientation
of an isomorphism}. If $T:E\rightarrow F$ is a surjective Fredholm
operator, then  $\det(T)=(\wedge^{\max}\ker(T))\otimes {\mathbb
R}^\ast$. In this case  an orientation $\mathfrak{o}$ of $\det(T)$ determines
an orientation of $\ker (T)$ by requiring that,  paired with the
canonical orientation of ${\mathbb R}^\ast$ by $1^\ast$,  it gives the
orientation of $T$. Hence an orientation for a surjective Fredholm
operator can be viewed as being equivalent to an orientation of
$\ker(T)$. Since $T:E\rightarrow F$ is Fredholm,  we can
take a projection $P:F\rightarrow F$ so that the range $R(P)$ has finite
codimension and $PT:E\rightarrow P(F)$ is surjective. From
 the exact sequence of  maps
$$
0\rightarrow \ker(T)\rightarrow
\ker(PT)\xrightarrow{T}(I-P)F\rightarrow  F/R(T)\rightarrow 0
$$
 we  derive  the natural isomorphism
$$
(\wedge^{\max}\ker(T))\otimes(\wedge^{\max}(I-P)F)\rightarrow
(\wedge^{\max}\ker(PT))\otimes (\wedge^{\max}\hbox{coker}(T)).
$$
Tensoring with $(\wedge^{\max}(I-P)F)^\ast$ from the right we obtain
the natural isomorphism
\begin{eqnarray*}
&\wedge^{\max}\ker(T)\rightarrow (\wedge^{\max}\ker(PT))\otimes
(\wedge^{\max}\hbox{coker}(T))\otimes(\wedge^{\max}(I-P)F)^\ast&\\
&\rightarrow
(\wedge^{\max}\ker(PT))\otimes(\wedge^{\max}(I-P)F)^\ast\otimes(\wedge^{\max}\hbox{coker}(T)).&
\end{eqnarray*}
By  tensoring from the right with
$(\wedge^{\max}\hbox{coker}(T))^\ast$ we finally end up with the
natural isomorphism
$$
\det(T)\rightarrow \det(PT).
$$
Next we consider  a continuous (in the operator topology) arc of Fredholm operators $t\mapsto T_t $ for 
$t\in [0,1]$. We claim that  there is a projection $P$ so that the  operator $PT_t:E\rightarrow PF$ is surjective for every $0\leq t\leq 1$.

To prove the claim, take  $0\leq t^*\leq 1$. The Banach spaces  $E$ and $F$ split as  follows $E=E' \oplus \ker (T_{t^*})$ and $F=C_{t^*}\oplus R(T_{t^*})$ where 
$C_{t^*}=\text{coker}\ T_{t^*}$ is the cokernel of $T_{t^*}$. Denoting by $Q$ the projection $Q:C_{t^*}\oplus R(T_{t^*})\to R(T_{t^*})$, the map 
$QT_{t^*}\vert E':E'\to R(T_{t^*})$ is an isomorphism and we  
 find an open interval $I(t^*)$  around $t^*$ in $[0,1]$  such that  $QT_{t}\vert E':E'\to R(T_{t^*})$ is an isomorphism for $t\in I(t^*)$. Hence  $QT_{t}:E\to R(T_{t^*})$ is a surjection for $t\in I(t^*)$. 
Now we  cover $[0,1]$ by a finite number of such intervals $I(t_1), \ldots , I(t_n)$ and denote by $Q_i:C_i\oplus  R_i \to R_i$ the corresponding projections onto $R_i=R(Q_i)=R(T_{t_i})$  so that operators  $Q_iT_{t}:E\to  R_i$ are  surjective for every $t\in I(t_i)$. 
With  $C=\text{span} \{C_1,\ldots ,C_n\}$, there exists a topological complement $R$ of $C$ so that $F=C\oplus R$.  Denoting by $P:C\oplus R\to R$ the projection onto $E$ along $C$, we claim that $PT_t:E\to R$ is a surjection for every $t\in [0,1]$. Indeed,  fix  $t\in [0,1]$ and choose $y\in R$. Then $t\in I(t_i)$ for some $i$ and $y=a+b$ where $a\in C_i$ and $b\in R_i$. Hence there exists  a point $x\in E$ solving  $Q_iT_{t}(x)=b$.  This implies that $T_t(x)=a'+b$ with $a'\in C_i$. 
Hence $T_t(x)=(a'-a)+y$ and since $(a'-a)\in C_i\subset C$, it follows that $PT_t (x)=y$.
Consequently, $PT_t:E\to P(F)$ is surjective for every $0\leq t\leq 1$  as claimed.

Proceeding as before  we obtain a natural family of isomorphisms
$$
\varphi_t:\det(T_t)\rightarrow \det(PT_t).
$$
Now we observe that the family $t\mapsto  PT_t$ is a continuous arc of surjective Fredholm operators  so that $\dim \ker PT_t$ is constant. Consequently, the family defines in a natural
way a topological line bundle over $[0,1]$ by
$$
L=\bigcup_{t\in [0,1]} \{t\}\times (\wedge^{\max}\ker(PT_t))\otimes (\wedge^{\max}(I-P)F)^\ast.
$$

We  have a natural bijection of  the line bundle $L$ to the line bundle
$$
K=\bigcup_{t\in [0,1]} \{t\}\times (\wedge^{\max}\ker(T_t))\otimes (\wedge^{\max}\text{coker} (T_t))^\ast.
$$
It  is linear in the
fibers. The punch-line is that taking another projection $Q$ so that
still $QT_t:E\rightarrow R(Q)$ is surjective we obtain another
topological line bundle $L'$ and the transition map $L\rightarrow
L'$ is a topological bundle isomorphism. This implies that $K$
carries in a natural way the structure of a topological line bundle.
This fact is a priori not obvious since the kernel and cokernel
dimensions vary. It is our aim to carry these ideas over to the
polyfold framework. One has to be somewhat careful since our notion
of differentiability is so weak. For example,  we might have a section
$f$ whose linearisations $f'(x(t))$ along a path $x(t)$ are all
Fredholm, but these operators need not be continuously depending on $x(t)$ as linear
operators. As we will see it will  nevertheless be possible to carry out the above
ideas.

Assume we are given a  fillable M-polyfold bundle
$p:E\rightarrow X$ and a  Fredholm section
$f$. From now on we assume everything is built on  separable sc-Hilbert spaces.
We want to introduce the notion of an orientation $\mathfrak{o}$ for
$f$. Assume that $x\in X$ is a smooth point, i.e.,~$x\in X_\infty$.
Then we can look at the set of all linearisations of $f$ at $x$. Any
two such linearisation differ by an $\ssc^+$-operator.  Every  loop
of linearisations is contractible. Hence if we have oriented one
linearisation, then  there is a natural orientation for all the
other linearisations. Therefore,  we can talk at the point $x$ of an
orientation for the linearisation.

Next we study the question of
continuation. Assume that $\gamma:[0,1]\rightarrow X_\infty$ is an
$\ssc^\infty$-path connecting $x=\gamma(0)$ with $y=\gamma(1)$. We
find  an  sc-smooth $\ssc^+$-map $s:[0,1]\times X\rightarrow E$
so that 
$f(\gamma(t))+s(t,\gamma(t))=0$ for all $0\leq t\leq 1$.  Fix $t_0\in (0,1)$ (the cases $t_0=0$ or $t_0=1$ are done in a similar way) and consider the linearisations at $\gamma(t_0)$,
$$
(f+s_{t_0})'(\gamma(t_0)):T_{\gamma(t_0)}X\rightarrow E_{\gamma(t_0)}.
$$
We consider $f$ as a section of the bundle $Y\to [0,1]\times X$. Locally around the point 
$(t_0, \gamma (t_0))\in [0,1]\times X$, the section $f$ has a  filler and  so the section   $f+s$ also has a filler.  It can be chosen in
such a way that in suitable strong bundle coordinates we have a
contraction germ. The linearisation of a filled section is a
stabilization of the linearized unfilled section by an isomorphism of the complements of the fibers.
Hence the determinant $\det((f+s_t)'(\gamma(t)))$  for  $t$ close to $t_0$ can be identified
using the local coordinates with that of the filled object.   Therefore,  we may  assume that  the section $f+s$ is already filled and has the contraction normal form. More precisely we assume that the  section $f+s$ is of the form 
$$f+s: O\subset (\R\times [0,\infty )^k\oplus  R^{n-k})\oplus W\to \R^N\oplus W$$
where $O$ is a relatively open neighborhood of $(t_0, 0, 0)$  which corresponds to $(t_0, \gamma (t_0))$. With the projection $P:\R^N\oplus W\to W$, the map 
$$P(f+s)( t, a, b)=b-B(t, a, b), $$
in which  $a\in [0,\infty )^k\oplus \R^{n-k}$ and $b\in W$,  has the contraction germ property 
near $(t_0, 0, 0)$. 

Denote by $(a(t), b(t))\in [0,\infty )^k\oplus  R^{n-k})\oplus W$ a point which corresponds in our local coordinates to $\gamma (t)$ for $t$ close to $t_0$. Keeping $t$ fixed and  linearizing the above map at the point $(t, a(t), b(t))$ with respect to the variable $(a,b)$ we find that 
$$P(f+s_t)'(a(t), b(t))=\text{1}-D_2(t, a(t), b(t))- D_3B(t, a(t), b(t))$$
where $1$ stands for the identity map $W\to W$.  In view of the proof of Theorem 2.3, the linear map $1-D_3B(t, a(t), b(t)):W\to W$  is an isomorphism so that 
the linearization $P(f+s_t)'(a(t), b(t)):\R^n\oplus W\to W$ is a surjection.
Consequently, we obtain a family of
surjective sc-Fredholm operators
$$
P(f+s_t)'(a(t), b(t)):\R^n\oplus W\rightarrow W,
$$
for $t$ close to $t_0$. Moreover, the kernel is changing smoothly with $t$ near $t_0$.
This implies that the associated locally defined determinant bundle
is locally a topological line bundle which,  by the discussion above
and rolling back the coordinate changes,  implies that  the original family
$t\mapsto  \det(f+s_t)'(\gamma(t))$ defines  a   topological line
bundle
$$
L(f)_\gamma:=\bigcup_{t\in [0,1]} \{t\}\times
\det (f+s_t)'(\gamma(t))
$$
in a natural way.

Two  orientations $\mathfrak{o}_x$ and
$\mathfrak{o}_y$ for the linearisation of $f$ at the points  $x$ and $y$,
respectively, are called {\bf related by continuation} along an
$\ssc$-smooth path connecting $x$ and $y$ if the associated topological line bundle $L(f)_\gamma$
can be oriented in such a way that it induces the given orientations
at the ends.
\begin{defn}
Let $f$ be a Fredholm section of the fillable strong M-polyfold
bundle $p:E\rightarrow X$ whose local models are  built on separable sc-Hilbert spaces. Then $f$ is called
{\bf orientable}  if at every smooth point the linearizations can be
oriented in such a way that they are related by continuation along
arbitrary pathes. If $f$ is orientable a coherent choice of
orientations $x\rightarrow \mathfrak{o}_x$ of the linearisations of
$f$ at $x$ is called an {\bf orientation} for $f$. We write
$(f,\mathfrak{o})$ for an {\bf oriented Fredholm section}.
\end{defn}
\section{Glossary}\label{glossary}
In this section we  recall some of the basic notions from \cite{HWZ2}.
\begin{itemize}
\item[$\bullet$] {\bf sc-structure}.\quad An sc-structure on the Banach space $E$  is 
 a nested sequence 
$$E=E_0\supset E_1\supset E_2\cdots \supset E_{\infty}=\bigcap_{k\geq 0}E_k$$
of Banach spaces $E_m$,  $m \in \N_{0}= \N\cup\{0\}$,  having the following properties.
\begin{itemize}
\item[(1)] If  $m < n$ , the inclusion  $E_n\to E_m$ is a compact operator. 
\item[(2)] The vector space $E_{\infty}$  is dense in $E_m$  for every $m\geq 0$. 
\end{itemize}
A Banach space $E$ equipped with an sc-structure $(E_m)$  is called {\bf sc-smooth}.  Each of the spaces $E_m$ is an sc-Banach space and  denoted by $E^m$. The sc-structure on $E^m$ is given by $(E_{m+k})_{k\geq 0}$. 
Points and sets contained in $E_{\infty}$ are called {\bf smooth points} and {\bf smooth sets}.

\item[$\bullet$] {\bf  direct sum and $\triangleleft$-sum}\quad 
If  $(E_n)_{n\geq 0}$ and $(F_n)_{n\geq 0}$ are sc-smooth structures of $E$ and $F$, then $E\oplus F$ carries the sc-structure defined by $(E\oplus  F)_n=E_n\oplus F_n$. By $E\triangleleft F$ we denote the Banach space $E\oplus F$ equipped with the bi-filtration 
$(E\triangleleft F)_{m, k}=E_m\oplus F_k$ for pairs $(m, k)$ satisfying $m\geq 0$ and $0\leq k\leq m+1$.

\item[$\bullet$]  {\bf $\ssc^0$-map}.\quad A map $\varphi:U\to V$ between open subsets of sc-Banach spaces is said to be  class $\ssc^0$ or  $\ssc^0$,   if 
 $\varphi (U_m)\subset V_m$  and the induced maps $\varphi:U_m\to V_m$ are continuous for all $m\geq 0$. 

\item[$\bullet$]{\bf sc-operator}.\quad    A bounded linear operator $T:E\to F$ 
between sc-Banach spaces is called an {\bf sc-operator} if $T$ is of class $\ssc^0$.  If, in addition,  $T$ is bijective and $T^{-1}:F\to E$ is $\ssc^0$, then $T$ is called an {\bf sc-isomorphism}.

\item[$\bullet$]{\bf partial quadrant}. \quad Given an sc-Banach space $W$, a subset $C\subset W$ is a partial quadrant of $W$ if there is an sc-Banach space $Q$ and an  sc-isomorphism 
$T:W\to \R^n\oplus Q$ such that $T(C)=[0,\infty )^n\oplus Q$.
 
 \item[$\bullet$] {\bf induced sc-structure}\quad 
 If $U$ is a relatively  open subset of a partial quadrant $C$ in the sc-Banach space $E$, then the nested sequence of sets $U_m=U\cap E_m$ is called  the {\bf induced sc-structure} on $U$. The set $U_m$ inherits sc-smooth structure defined by $(U_{m+k})_{k\geq 0}$. The set $U_m$ equipped with this induced sc-structure is denoted by 
 $U^m$.

\item[$\bullet$] {\bf tangent bundle TU}.\quad  Given a relatively  open subset $U$ of the partial quadrant $C$ in the  sc-Banach space $E$,  the tangent bundle $TU$ of $U$ is defined as $TU=U^1\oplus E$. That is, the sc-smooth structure of $TU$, is given by the  nested sequence $(TU)_m:=U_{m+1}\oplus E_m$ for all $m\geq 0$. The canonical projection $p:TU\to U^1$ is of class  $\ssc^0$. The higher order tangent bundles  $T^kU$ are defined iteratively as   $T^1U=TU$ and  $T^kU=T(T^{k-1}U)$ for $k\geq 2$. 

\item[$\bullet$] {\bf sc-subspace}. \quad  If $E$ is an sc-Banach space, then a closed subspace $F$ of $E$ is called sc-subspace of $E$ if the nested sequence $F_m=F\cap E_m$ is an sc-structure for $F$. An sc-subspace $F\subset E$  {\bf  splits} $E$  if there exists another sc-subspace $G$ of $E$ so that $E_m=F_m\oplus G_m$ for all $m\geq 0$.
\item[$\bullet$]{\bf Fredholm operator}. \quad An sc-operator $T:E\to Y$ is called Fredholm provided that there are sc-splittings $E=K\oplus X$ and $F=Y\oplus C$ having the following properties.
\begin{itemize}
\item[(1)] 
$K=\text{kernel} (T)$ is finite dimensional.
\item[(2)] $C$  is finite dimensional.
\item[(3)] $Y=T(X)$ and $T:X\to Y$ is an sc-isomorphism.
\end{itemize}
The finite dimensional vector spaces $K\subset E$ and $C\subset F$ are smooth.
\item[$\bullet$]{\bf $\ssc^+$-operator}.\quad An sc-operator $T:E\to F$ is called an $\ssc^+$-operator  if $T (E_m)\subset E_{m+1}$ for every $m\geq 0$ and $T:E\to E^1$ is of class $\ssc^0$.
\item[$\bullet$]{\bf $\ssc^1$-map}\quad Let $E, F$ be sc-Banach spaces and let $U$ be a relatively  open subset 
of a partial quadrant $C$ contained in the sc-Banach space $E$.  An $\ssc^0$-map $f:U\to F$ is said to be {\bf $\ssc^1$} or of {\bf class $\ssc^1$}  if the following holds.
\begin{itemize}
\item[(1)] For every $x\in U_1$, there exists a  bounded linear map $Df (x)\in {\mathcal L}(E_0, F_0)$ satisfying (with $x+h\in U_1$)
$$\dfrac{1}{\norm{h}_1}\norm{f(x+h)-f(x)-Df (x)h}_0\to 0\quad \text{as $\norm{h}_1\to 0$.}$$
\item[(2)] The {\bf tangent map}  $Tf:TU\to TF$, defined by 
$$Tf(x, h)=(f(x), Df (x)h)$$
is an $\ssc^0$-map.
\end{itemize}
\item[$\bullet$]{\bf $\ssc^k$-map}.\quad  Let $U$ be a relatively  open subset 
of a partial quadrant $C$ contained in the sc-Banach space $E$ and let $F$ be another sc-Banach space. A map  $f:U\subset E\to F$ is an $\ssc^k$-map or of class $\ssc^k$ if the $\ssc^0$-map $T^{k-1}f:T^{k-1}U\to T^{k-1}F$ is of
 class $\ssc^1$. In this case  the tangent map $T(T^{k-1}f):T(T^{k-1}U)\to T(T^{k-1}F)$ is denoted by $T^kf$. 
If $f:U\subset E\to F$ is of class $\ssc^k$ for every $k\geq 0$, then it is called {\bf sc-smooth} or of {\bf class 
$\ssc^{\infty}$}.
 \item[$\bullet$]{\bf sc-diffeomorphism}.\quad A homeomorohism $f:U\to V$ between relatively open subsets of partial quadrants in sc-Banach spaces is called sc-diffeomorphism if $f$ and $f^{-1}$ are sc-smooth.
\
\item[$\bullet$]{\bf sc-smooth splicing}. \quad  Let  $V$  be an open subset of a partial quadrant $C\subset W$, let $E$ be  an sc-Banach space and  let $\pi_v:E\to E$ be a bounded linear projection for every $v\in V$ such that  the map
$$\pi :V\oplus E\to E,  \quad (v, e)\mapsto \pi_v (e)$$
is sc-smooth. Then the triple ${\mathcal S}=(\pi, E, V)$ is called an {\bf sc-smooth splicing}.
\item[$\bullet$]{\bf splicing core}.\quad  Let ${\mathcal S}=(\pi, E, V)$ be an sc-smooth splicing. The associated splicing core is  the  subset of $V\oplus E$ defined by 
$$K^{\mathcal S}=\{(v, e)\in V\oplus E\vert \, \pi_v (e)=e\}.$$
\item[$\bullet$] {\bf tangent splicing of ${\mathcal S}$}.\quad Given a splicing ${\mathcal S}=(\pi, E, V)$, the tangent splicing of ${\mathcal S}$ is the triple defined by 
$$T{\mathcal S}=(T\pi, TE, TV).$$
\item[$\bullet$]  The  {\bf splicing core of the tangent splicing $T{\mathcal S}$} is the set 
$$K^{T{\mathcal S}}=\{(v, \delta v, e, \delta e)\in TV\oplus TE\vert \, (T\pi)_{(v, \delta v)}(e, \delta e)=(e,\delta e)\}.$$
\item[$\bullet$] A {\bf local M-polyfold model}\quad consists of a pair $(O, {\mathcal S})$ in which $O$ is an open subset of the splicing core $K^{\mathcal S}$ associated with the sc-smooth splicing ${\mathcal S}=(\pi, E, V)$. The {\bf tangent of the local M-polyfold model $(O, {\mathcal S})$} is the object defined by 
$$T(O, {\mathcal S})=(K^{T{\mathcal S}}\vert O^1, T{\mathcal S})$$
where $K^{T{\mathcal S}}\vert O^1$ denotes the collection of all points in $K^{T{\mathcal S}}$ which project under the canonical projection $K^{T{\mathcal S}}\to (K^{\mathcal S})^1$ onto the points in $O^1$.
\item[$\bullet$]{\bf  smooth maps between splicing cores}.\quad 
Given  open subsets $O, O'$ of splicing cores $K^{\mathcal S}\subset V\oplus E$ and $K^{\mathcal S'}\subset V'\oplus E'$ where $V$and $V'$ are open subsets of partial quadrants in the sc-Banach spaces $W$ and $W'$, define the open set $\wh{O}\subset V\oplus E$ by  $\wh{O}=\{(v, e)\in V\oplus E\vert (v, \pi_v (e))\in O\}$. An $\ssc^0$-map $f:O\to O'$ is  of class $\ssc^1$
provided  the map 
$$\wh{f}: \wh{O}\subset V\oplus E\to W'\oplus E', \quad \wh{f}(v, e)=f(v, \pi_v (e))$$
is of class $\ssc^1$.  The  tangent map $T\wh{f}$ associated with the $\ssc^1$-map $\wh{f}$  satisfies $T\wh{f}(K^{T{\mathcal S}}\vert O^1)\subset K^{T{\mathcal S}'}\vert O'$ and induces a map $TO\to TO'$ which is denoted by $Tf$ and called the  {\bf tangent map} of $f$. The tangents $TO$ and $TO'$ are open subsets of the splicing cores $K^{T{\mathcal S}}$ and 
$K^{T{\mathcal S}'}$, and the notion of $f$ to be of {\bf class $\ssc^k$} is defined iteratively.
\item[$\bullet$] {\bf M-poyfold}.  Let $X$ be a second countable Hausdorff space. An {\bf M-polyfold  chart} for $X$ is a triple $(U, \varphi, {\mathcal S})$ in which $U$ is an open subset of $X$, ${\mathcal S}=(\pi, E, V)$ is an sc-smooth splicing and $\varphi:U\to K^{\mathcal S}$ is a homeomorphism onto an open subset of the splicing core $K^{\mathcal S}=\{(v, e) \in V\oplus E\vert \, \pi_v (e)=0\}$. Two such charts are compatible if the transition maps between open subsets of splicing cores are sc-smooth. A maximal atlas of sc-smoothly compatible M-poyfold charts is called an {\bf M-polyfold structure} on $X$, and $X$ equipped with such a structure is called {\bf M-polyfold of type 0}. By definition,  an M-polyfold looks locally like an open subset of a splicing core.
\item[$\bullet$] {\bf sc-smooth map between M-polyfolds}. A map $f:X\to X'$  is called of class $\ssc^0$, resp. $\ssc^k$ or sc-smooth if for every point $x\in X$  there exist a chart $(U, \varphi, {\mathcal S})$ around $x$ and a chart $(U', \varphi', {\mathcal S}')$ around $f(x)$ so that $f(U)\subset U'$ and 
$$\varphi'\circ f\circ \varphi (U)\to \varphi' (U')$$
is of class $\ssc^0$, resp. $\ssc^k$ or sc-smooth.
\item[$\bullet$] A {\bf general sc-smooth splicing }  is a triple ${\mathcal R}=(\rho, F, (O, {\mathcal S}))$ in which $(O, {\mathcal S})$ is a local M-polyfold model associated with the sc-smooth splicing 
${\mathcal S}=(\pi, E, V)$ and $O$ is an open subset of the splicing core $K^{\mathcal S}=\{(v, e)\in V\oplus E\vert \, \pi_v (e)=e\}$. The space $F$ is an sc-Banach space and the  map
$$\rho: O\oplus F\to F,\quad ((v, e), u)\mapsto \rho (v, e, u)$$
is sc-smooth. Moreover,  for every $(v, e)\in O$, the map $\rho_{(v, e)}=\rho (v, e, \cdot ):F\to F$ is a bounded  linear projection. A second countable Hausdorff space equipped with a maximal atlas where the local models are open subsets of general splicings are called {\bf M-polyfolds of type 1}.
\item[$\bullet$] The {\bf tangent of a general splicing ${\mathcal R}=(\rho, F, (O, {\mathcal S}))$} is the triple 
$$T{\mathcal R}=(T\rho, TF, (TO, T{\mathcal S})).$$
\item[$\bullet$] A {\bf strong bundle splicing} is a general sc-smooth splicing 
$${\mathcal R}=(\rho, F, (O, {\mathcal S}))$$
having the following {\bf additional property}. If $(v, e)\in O_m$ and $u\in F_{m+1}$, then $\rho ((v, e), u)\in F_{m+1}$, and the triple ${\mathcal R}^1=(\rho, F^1, (O, {\mathcal S}))$ is also a general sc-smooth splicing. The {\bf complementary strong bundle splicing} ${\mathcal R}^{\text{c}}$ is defined by  ${\mathcal R}^{\text{c}}=(1-\rho, F, (O, {\mathcal S})).$ 
\item[$\bullet$]{\bf splicing core of the strong bundle splicing}.  Given a strong bundle splicing ${\mathcal R}=(\rho, F, (O, {\mathcal S})$, the set 
$$K^{\mathcal R}=\{(w, u)\in O\oplus F\vert \, \rho (w, u)=u\}$$
is called the splicing core of the strong bundle splicing ${\mathcal R}$. The splicing core 
$K^{\mathcal R}$ has  the  bi-filtration 
$$K^{\mathcal R}_{m,k}=\{(w, u)\in K^{\mathcal R}\vert \, w\in O_m,u\in F_k\}$$
where $m\geq 0$ and $0\leq k\leq m+1$.  The splicing core $K^{\mathcal R}$ can be viewed as a subset of $O\triangleleft F$ and its bi-filtration is the induced one. The bundle $K^{\mathcal R}\to O$ is called a {\bf local strong bundle}.
With the strong bundle splicing ${\mathcal R}$ there are associated two splicing cores $K^{{\mathcal R}^0}$ and $K^{{\mathcal R}^1}$, denoted by 
$K^{\mathcal R}(0)$ and $K^{\mathcal R}(1)$, and equipped with the filtrations
$$K^{\mathcal R}(0)_m=K^{\mathcal R}_{m,m}\quad \text{and}\quad 
K^{\mathcal R}(1)_m=K^{\mathcal R}_{m,m+1}$$
for $m\geq 0$.
A {\bf  special local strong bundle} is associated with the special strong bundle splicing $ {\mathcal R}=(\text{id}, F, (O, {\mathcal S}))$, where $O\subset K^{\mathcal S}$ is an open set in the splicing core associated with the splicing ${\mathcal S}=(\text{id}, E, V)$. The special local strong bundle is then given by 
$$K^{\mathcal R}=O\triangleleft  F\to O$$
with the filtrations $K^{\mathcal R}(0)_m=O_m\oplus F_m$ and $K^{\mathcal R}(1)_m=O_m\oplus F_{m+1}$. We can view $O$ as a local model of an sc-manifold and $O\triangleleft F$ as a model of a local sc-bundle having the base $O$.
\item[$\bullet$] {\bf $\ssc^1_{\triangleleft}$-maps}. \quad Let ${\mathcal R}=(\rho, F, (O, {\mathcal S}))$ and $ {\mathcal R}'=(\rho', F', (O', {\mathcal S}'))$  be general sc-smooth splicings  with associated splicing cores $K^{\mathcal R}\subset O\oplus F$ and $K^{{\mathcal R}'}\subset O'\oplus F'$.  Let the bundle map $f:K^{\mathcal R}\to K^{{\mathcal R}'}$ be of the form 
$$f(w, u)=(\varphi (w), \Phi (w, u))$$
where $\varphi:O\to O'$ and $\Phi :O\oplus F\to F'$. Then 
\begin{itemize}
\item[(1)] $f$ is  of {\bf class $\ssc^0_{\triangleleft}$} if it induces $\ssc^0$-maps $K^{\mathcal R}(i)\to K^{{\mathcal R}'}(i)$ for $i=0$ and $i=1$.
\item[(2)] $f$ is of {\bf class $\ssc^1_{\triangleleft}$} it it is  $\ssc^0_{\triangleleft}$ and 
 induces $\ssc^1$-maps $K^{\mathcal R}(i)\to K^{{\mathcal R}'}(i)$ for $i=0$ and $i=1$.
\end{itemize}
If $f: K^{\mathcal R}\to K^{{\mathcal R}'}$ is  a map of class $\ssc^1_{\triangleleft}$, then tangent map $Tf:TK^{\mathcal R}\to TK^{{\mathcal R}'}$ is of class $\ssc^0_{\triangleleft}$. If  the tangent map $Tf$ is of class $\ssc^1_{\triangleleft}$, then $f$ is said to be of {\bf class 
$\ssc^2_{\triangleleft}$}.  The $\ssc^k_{\triangleleft}$-classes are defined inductively. The map $f: K^{\mathcal R}\to K^{{\mathcal R}'}$  is of {\bf class $\ssc^{\infty}_{\triangleleft}$} or $\ssc_{\triangleleft}$-smooth if it is of class $\ssc^k_{\triangleleft}$ for every $k$.

The map $f:K^{\mathcal R}\to K^{{\mathcal R}'}$  as above is called a {\bf strong bundle map of class $\ssc^0_{\triangleleft}$} if it induces $\ssc^0$-maps $K^{\mathcal R}(i)\to K^{{\mathcal R}'}(i)$ for $i=0$ and $i=1$. It is called a {\bf strong bundle map of class $\ssc^1_{\triangleleft}$} if it is of class $\ssc^0_{\triangleleft}$ and induces $\ssc^1$-maps $K^{\mathcal R}(i)\to K^{{\mathcal R}'}(i)$ between the splicing cores of the splicings 
${\mathcal R}$ and ${\mathcal R}'$,  for $i=0$ and $i=1$. Proceeding inductively one defines strong bundle maps of class $\ssc^{\infty}_{\triangleleft}$.

\item[$\bullet$] {\bf $\ssc$-smooth section of a local strong bundle $p:K^{{\mathcal R}}\to O$}.\quad Given a 
a local strong bundle $p:K^{{\mathcal R}}\to O$, a section $f$ of  $p$ is called sc-smooth, if $f$ is an sc-smooth section of the bundle $K^{{\mathcal R}}(0)\to O$. The section $f$  is called an  {\bf $\ssc^+$-smooth section}, if it defines an sc-smooth section of the bundle $K^{{\mathcal R}}(1)\to O$. 
\item[$\bullet$]{\bf strong M-polyfold bundle}. \quad  Let $Y$ be an $M$-polyfold of type $1$,  let $X$ an M-polyfold of type $0$, and let $p:Y\to X$ be a surjective sc-smooth map.  It is assumed that each fiber $p^{-1}(x)=Y_x$ is a Banach space. A {\bf strong M-polyfold bundle chart} for the $p:Y\to X$ is a triple $(U, \Phi, (K^{\mathcal R}, {\mathcal R}))$ in which $U\subset X$ is an open set and ${\mathcal R}=(\rho, F, (O, {\mathcal S}))$ a strong bundle splicing with the local model $(O, {\mathcal S})$ of the M-polyfold $X$. The map $\Phi$ is an sc-diffeomorphism $p^{-1}(U)\to K^{{\mathcal R}}$ which is linear on the fibers and which covers the sc-diffeomorphism $\varphi:U\to O$ so that the following diagram commutes,

\mbox{}\\
\begin{equation*}
\begin{CD}
p^{-1}(U)@>\Phi>>K^{{\mathcal R}}\\
@VVpV   @VV\text{pr}_1V  \\
U'@>\varphi >>O. \\
\end{CD}
\end{equation*}
Moreover, $\Phi$ resp. $\varphi$ are smoothly compatible with the M-polyfold structures on $Y$ and $X$, respectively.\\
Two M-polyfold bundle charts $(U, \Phi, (K^{\mathcal R}, {\mathcal R}))$ and $(U', \Psi, (K^{{\mathcal R}'}, {\mathcal R}'))$ with $\Phi$ covering the sc-diffeomorphism $\varphi:U\to O$ and $\Psi$ covering the sc-diffeomorphism $\psi:U'\to O'$ are $\ssc_{\triangleleft}$-compatible  if the the transition map
$$\Psi\circ \Phi^{-1}:K^{\mathcal R}\vert \varphi (U\cap U')\to K^{{\mathcal R}'}\vert \psi (U\cap U')$$
between their splicing cores $K^{\mathcal R}$ and $K^{{\mathcal R}'}$ are $\ssc_{\triangleleft}$-smooth.\\
An {\bf M-polyfold bundle atlas} consists of a family of M-polyfold bundle charts $(U, \Phi, (K^{\mathcal R}, {\mathcal R}))$ so that the underlying open sets cover $X$ and so  that  any two charts are $\ssc_{\triangleleft}$-compatible. A maximal atlas of M-polyfold bundle charts is called an M-polyfold bundle structure and the map
$$p:Y\to X$$
is called a {\bf strong M-polyfold bundle}.
\item[$\bullet$] {\bf sc-smooth section}.\quad   Given a strong M-polyfold  bundle  $p:Y\to X$, a section $f:X\to Y$ is called sc-smooth, if its local representations in the strong M-polyfold bundle charts are sc-smooth.  It is  called  an {\bf  $\ssc^+$-smooth section} if 
its local representations in the strong M-polyfold bundle charts are $\ssc^+$-smooth sections.
\item[$\bullet$] {\bf linearization of an sc-smooth section}. \quad Given a strong M-polyfold bundle $p:Y\to X$ and an sc-smooth section $f:X\to Y$. If $q\in X$ is a smooth point at which the section $f$ vanishes, the linearization of $f$ at $q$ is defined by 
$$f'(q):T_qX\to Y_q,\quad h\mapsto P_q\circ Tf (q)h$$
where $P_q$ is the projection $T_qX\oplus Y_q\to Y_q$.
If at the smooth point $q\in X$ the section does not vanish, then the linearization of $f$ at $q$ is defined as follows.  Take any $\ssc^+$-section defined near $q$ satisfying $s(q)=f(q)$.  Then  the section $f-s$ vanishes at the smooth point $q$  and the {\bf linearization of $f$ with respect to $s$} is defined by 
$$f'_{[s]}(q):T_qX\to Y_q,\quad h\mapsto P_q\circ T(f-s) (q)h.$$
If $s$ and $t$ are two $\ssc^+$-sections such that $s(q)=t(q)=f(q)$, then the linearizations $f'_{[s]}(q)$ and $f'_{[t]}(q)$ differ by an $\ssc^+$-operator.  In particular, if one linearization is an sc-Fredholm operator, then also the other linearization  is an sc-Fredholm operator having the same Fredholm index in view of Proposition 2.11 in \cite{HWZ2}.
\item[$\bullet$]{\bf linearized Fredholm section}.\quad  An sc-smooth section $f$ of the strong M-polyfold bundle $p:Y\to X$  is called linearized Fredholm at the smooth point $q\in X$ if the linearization of $f$  at $q$ is  an sc-Fredholm operator. The section is called linearized Fredholm, if this holds true at all smooth points $q$.

\end{itemize}


\begin{thebibliography}{99}
\bibitem{BSV} V. Borisovich, V. Zvyagin\ and\ V. Sapronov, Nonlinear Fredholm maps and Leray-Schauder degree, Russian
Math. Survey's 32:4 (1977), p 1-54.
\bibitem{BEHWZ} F.~Bourgeois, Y.~Eliashberg, H.~Hofer,
K.~Wysocki and E.~Zehnder,  Compactness Results in Symplectic Field
Theory, {\em Geometry and Topology}, Vol. 7, 2003, pp.799-888.
\bibitem{DK} S. Donaldson\ and\ P. Kronheimer, The geometry of
four-manifolds, Oxford Mathematical Monographs. Oxford Science
Publications. The Clarendon Press, Oxford University Press, New
York, 1990.
\bibitem{D} J. Dungundji,   \textit{Topology},  Allyn and Bacon, 1966.
\bibitem{EGH} Y. Eliashberg, A. Givental\ and\ H. Hofer, Introduction to Symplectic Field Theory,
 Geom. Funct. Anal. {\bf 2000}, Special Volume, Part II, 560--673.
\bibitem{El} H. Eliasson, Geometry of manifolds of maps, J. Differential Geometry {\bf 1}(1967), 169--194.
\bibitem{FH} A. Floer\ and\ H. Hofer, Coherent orientations for periodic orbit problems in symplectic geometry, Math. Z. {\bf 212} (1993), no.~1, 13--38.
\bibitem{G} M.~Gromov, Pseudoholomorphic Curves in
Symplectic Geometry, {\it Inv. Math.} Vol. 82 (1985), 307-347.
\bibitem{Hofer} H. Hofer, A General Fredholm Theory and
Applications, Current Developments in Mathematics, edited by D.
Jerison, B. Mazur, T. Mrowka, W. Schmid, R. Stanley, and S. T. Yau,
International Press, 2006.
\bibitem{HWZ-polyfolds1} H. Hofer, K. Wysocki\ and\ E. Zehnder,
Fredholm Theory in Polyfolds {I}: Functional Analytic Methods, Book
in preparation.
\bibitem{HWZ-polyfolds2} H. Hofer, K. Wysocki\ and\ E. Zehnder,
Fredholm Theory in Polyfolds {II}: The Polyfolds of Symlectic Field
Theory, Book in preparation.
\bibitem{HWZ2} H. Hofer, K. Wysocki\ and\ E. Zehnder,
A General Fredholm Theory {I}: A Splicing-Based Differential
Geometry, JEMS, Vol. 4, Issue 4, 2007, 841-876.
\bibitem{HWZ3} H. Hofer, K. Wysocki\ and\ E. Zehnder,
A General Fredholm Theory {III}: Fredholm Functors and Polyfolds,
paper in preparation.
\bibitem{HWZ7} H. Hofer, K. Wysocki\ and\ E.
Zehnder, Integration on the Zero Set of Polyfold Fredholm Operators,
Preprint.
\bibitem{HWZ4} H. Hofer, K. Wysocki\ and\ E. Zehnder,
Applications of Polyfold Theory {I}: Gromov-Witten Theory, paper in
preparation.
\bibitem{HWZ5} H. Hofer, K. Wysocki\ and\ E. Zehnder,
Applications of Polyfold Theory {II}: The Polyfolds of Symplectic
Field Theory, paper in preparation.
\bibitem{HWZ6} H. Hofer, K. Wysocki\ and\ E. Zehnder,
A General Fredholm Theory {IV}: Operations , paper in preparation.
\bibitem{HWZ8} H. Hofer, K. Wysocki\ and\ E. Zehnder,
Connections and Determinant Bundles for Polyfold Fredholm Operators,
paper in preparation.
\bibitem{J} K. Janich, On the classification of $O(n)$-manifolds,
Math. Annalen 176 (1968), 53--76.
\bibitem{LA} S. Lang, {\it Introduction to differentiable manifolds}, Second edition, Springer, New York, 2002.
\bibitem{Mc} D. McDuff, Groupoids, Branched Manifolds and
Multisection, to appear J. of Sympl. Geometry, (Preprint arxiv
math.DG/050964).
\bibitem{MS} D. McDuff \ and\ D. Salamon, {\it Introduction
 to symplectic topology}, 2nd edition, Oxford University Press, 1998.
\bibitem{Mj} I. Moerdijk, Orbifolds as Groupoids: An Introduction,
Preprint (arxiv math.DG/0203100).
\bibitem{MM} I. Moerdijk and J. Mr\v cun, {\it Introduction to
Foliation and Lie Groupoids}, Cambridge studies in advanced
mathematics, Vol. 91, 2003.
\bibitem{R}W. Rudin, {\it Functional analysis}, Second edition, McGraw-Hill, New York, 1991.
\bibitem{Schaefer} H. Schaefer,  Topological vector spaces.
Graduate Texts in Mathematics, Vol. 3. Springer-Verlag, New
York-Berlin,  1971. xi+294 pp.
\bibitem{SCHW} J. T. Schwartz,
{\it Nonlinear functional analysis}, Gordon and Breach, New York,
1969.
\bibitem{Tr} H. Triebel, {\it Interpolation theory, function spaces, differential operators}, North-Holland, Amsterdam, 1978.



\end{thebibliography}
\end{document}